\theoremstyle{plain}
\newtheorem{thm}{Theorem}[section]
\newtheorem*{thm*}{Theorem}
\newtheorem{lem}[thm]{Lemma}
\newtheorem*{lem*}{Lemma}
\newtheorem*{cor*}{Corollary}
\newtheorem{prop}[thm]{Proposition}
\newtheorem*{prop*}{Proposition}
\newtheorem*{conj*}{Conjecture}
\newtheorem*{ques*}{Question}
\theoremstyle{definition}
\newtheorem{df}[thm]{Definition}
\newtheorem*{df*}{Definition}
\newtheorem*{dfs*}{Definitions}
\newtheorem*{exercise*}{Exercise}
\theoremstyle{remark}
\newtheorem{rem}[thm]{Remark}
\newtheorem*{rem*}{Remark}
\newtheorem*{example}{Example}
\newtheorem*{examples}{Examples}
\patchcmd{\thmhead}{(#3)}{#3}{}{}
\g@addto@macro\bfseries{\boldmath}
\newcommand{\cl}[1]{\mathcal{#1}}
\newcommand{\fk}[1]{\mathfrak{#1}}
\newcommand{\sr}[1]{\mathscr{#1}}
\newcommand{\Z}{\mathbf{Z}} 
\newcommand{\Q}{\mathbf{Q}} 
\newcommand{\R}{\mathbf{R}} 
\newcommand{\C}{\mathbf{C}} 
\newcommand{\A}{\mathbf{A}} 
\newcommand{\G}{\mathbf{G}} 
\newcommand{\F}{\mathbf{F}}
\newcommand{\x}{\times}
\newcommand{\emp}{\emptyset}
\renewcommand{\sl}{\fk{sl}}
\newcommand{\qbinom}[2]{\genfrac{[}{]}{0pt}{}{#1}{#2}}
\newcommand{\ol}[1]{\overline{#1}}
\newcommand{\rKh}{\smash{\overline{\Kh}}}
\newcommand{\rKR}{\smash{\overline{\KR}}}
\newcommand{\B}{\mathrm{B}}
\newcommand{\E}{\mathrm{E}}
\DeclareMathOperator{\Isharp}{I^\sharp}
\DeclareMathOperator{\Inat}{I^\natural}
\DeclareMathOperator{\rank}{rank}
\DeclareMathOperator{\rk}{rk}
\DeclareMathOperator{\sign}{sgn}
\DeclareMathOperator{\Hom}{Hom}
\DeclareMathOperator{\Tor}{Tor}
\DeclareMathOperator{\Id}{Id}
\DeclareMathOperator{\SU}{SU}
\DeclareMathOperator{\U}{U}
\DeclareMathOperator{\BU}{BU}
\DeclareMathOperator{\Kh}{Kh}
\DeclareMathOperator{\KR}{KR}
\DeclareMathOperator{\KRC}{KRC}
\DeclareMathOperator{\Sym}{Sym}
\renewcommand{\coloneq}{\coloneqq}
\newcommand{\Thetak}{
	\quad\:\:\qquad\begin{gathered}
			\vspace{-3pt}
			\centering			
			\labellist
			\pinlabel {\small${b-k}$} at 26 81
			\pinlabel {\small$k$} at 59 44
			\pinlabel {\small${a+b-k}$} at -22 46
			\pinlabel {\small$a$} at 33 55
			\pinlabel {\small${a - k}$} at 35 18
			\pinlabel {\small$b$} at 70 5
			\endlabellist
			\includegraphics[width=.13\textwidth]{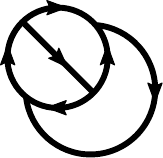}
		\end{gathered}\:
}
\newcommand{\HopfLink}{
	\:\begin{gathered}
		\vspace{-3pt}
		\centering
		\labellist
		\pinlabel {\small$a$} at 8 19
		\pinlabel {\small$b$} at 52 20
		\endlabellist
		\includegraphics[width=.11\textwidth]{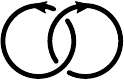}
	\end{gathered}\:
}
\newcommand{\Xdoti}{
	\:\begin{gathered}
		\labellist
		\pinlabel {\large$\bullet$} at 51 49
		\pinlabel {\small$e_i$} at 61 41
		\endlabellist
		\includegraphics[width=.08\textwidth]{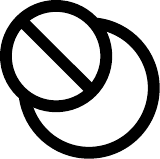}
	\end{gathered}\:
}
\newcommand{\Ydoti}{
	\:\begin{gathered}
		\labellist
		\pinlabel {\large$\bullet$} at 27 25
		\pinlabel {\small$e_i$} at 19 40
		\endlabellist
		\includegraphics[width=.08\textwidth]{smallerTheta}
	\end{gathered}\:
}
\newcommand{\Zdoti}{
	\:\begin{gathered}
		\labellist
		\pinlabel {\large$\bullet$} at 27 72
		\pinlabel {\small$e_i$} at 33 60
		\endlabellist
		\includegraphics[width=.08\textwidth]{smallerTheta}
	\end{gathered}\:
}
\newcommand{\Xprimedoti}{
	\:\begin{gathered}
		\labellist
		\pinlabel {\large$\bullet$} at 49 23
		\pinlabel {\small$e_i$} at 61 24
		\endlabellist
		\includegraphics[width=.09\textwidth]{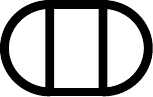}
	\end{gathered}\:
}
\newcommand{\Yprimedoti}{
	\:\begin{gathered}
		\labellist
		\pinlabel {\large$\bullet$} at 24 23
		\pinlabel {\small$e_i$} at 36 24
		\endlabellist
		\includegraphics[width=.09\textwidth]{smallThetaFewer}
	\end{gathered}\:
}
\newcommand{\Zprimedoti}{
	\:\begin{gathered}
		\labellist
		\pinlabel {\large$\bullet$} at 71 23
		\pinlabel {\small$e_i$} at 60 24
		\endlabellist
		\includegraphics[width=.09\textwidth]{smallThetaFewer}
	\end{gathered}\:
}
\newcommand{\Wprimeprimedoti}{
	\:\begin{gathered}
		\labellist
		\pinlabel {\large$\bullet$} at 24 23
		\pinlabel {\small$e_i$} at 35 24
		\endlabellist
		\includegraphics[width=.12\textwidth]{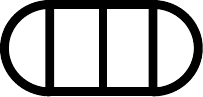}
	\end{gathered}\:
}
\newcommand{\Yprimeprimedoti}{
	\:\begin{gathered}
		\labellist
		\pinlabel {\large$\bullet$} at 49.5 23
		\pinlabel {\small$e_i$} at 61 25
		\endlabellist
		\includegraphics[width=.12\textwidth]{smallTheta}
	\end{gathered}\:
}
\newcommand{\Xprimeprimedoti}{
	\:\begin{gathered}
		\labellist
		\pinlabel {\large$\bullet$} at 73.5 23
		\pinlabel {\small$e_i$} at 62 25
		\endlabellist
		\includegraphics[width=.12\textwidth]{smallTheta}
	\end{gathered}\:
}
\newcommand{\Zprimeprimedoti}{
	\:\begin{gathered}
		\labellist
		\pinlabel {\large$\bullet$} at 95 23
		\pinlabel {\small$e_i$} at 84 25
		\endlabellist
		\includegraphics[width=.12\textwidth]{smallTheta}
	\end{gathered}\:
}
\newcommand{\Nprimeprimedoti}{
	\:\begin{gathered}
		\labellist
		\pinlabel {\large$\bullet$} at 2 23
		\pinlabel {\small$e_i$} at -8 25
		\endlabellist
		\includegraphics[width=.12\textwidth]{smallTheta}
	\end{gathered}\:
}
\title{Colored sl(N) homology and SU(N) representations}
\author{Joshua Wang}
\date{}
\begin{document}
\maketitle

\begin{abstract}
	We provide the first complete computations of colored $\sl(N)$ homology for a nontrivial knot. In doing so, we show that the colored $\sl(N)$ homology of the trefoil labeled by an exterior power of the defining representation is isomorphic to the cohomology of a closed manifold naturally associated to the trefoil. This manifold is the set of homomorphisms from the fundamental group of the complement of the trefoil to $\SU(N)$ that send meridians to a particular conjugacy class depending on the label. We also provide complete computations and analogous isomorphisms for the first nontrivial link, the Hopf link.
\end{abstract}

\tableofcontents

\section{Introduction}\label{sec:introduction}

\subsection{Statement of the main result}

Colored $\sl(N)$ homology \cite{MR3234803,MR3545951,MR4164001}, also known as colored Khovanov--Rozansky homology, is an invariant of oriented \textit{labeled} links, where the labeling is a choice of integer $a$ satisfying $0 \leq a \leq N$ for each component of the link. The label $a$ may be interpreted as the $a$th exterior power of the defining representation of $\sl(N)$. We generally let $L$ or $K$ denote an oriented labeled link and do not include the labels explicitly in our notation. Like Khovanov homology, the colored $\sl(N)$ homology of an oriented labeled link $L$, denoted $\KR_N(L)$, takes the form of a bigraded abelian group and can now be defined combinatorially \cite{MR4164001}. Khovanov homology is the special case that $N = 2$ and all labels are $1$, and (uncolored) Khovanov--Rozansky homology \cite{MR2391017} is the case that $N$ is arbitrary and all labels are $1$. 

We provide the first complete computations of colored $\sl(N)$ homology for a nontrivial knot and a nontrivial link. In contrast to Khovanov homology and (uncolored) Khovanov--Rozansky homology, colored $\sl(N)$ homology is very difficult to compute. The only prior computations in the mathematical literature for links with all labels greater than $1$ are the unlinks, though there are a number of relevant calculations for closely related invariants of the Hopf link \cite{MR2863366,MR3470705,https://doi.org/10.48550/arxiv.2107.09590} and other computations in the physics literature \cite{MR2670927,MR2985329,MR3863060}. 

We now describe a space naturally associated to an oriented labeled link that is directly analogous to the moduli space studied by Lobb and Zentner \cite{MR3190356} and Grant \cite{MR3125899}, and generalizes the space of meridian-traceless $\SU(2)$ representations, see for example \cite{MR2860345,https://doi.org/10.48550/arxiv.0806.2902}.

\begin{df}\label{df:representationSpace}
	Let $L$ be an oriented labeled link. If $\mu$ is a meridian of a component $C$ of $L$, let $a(\mu)$ be the label of $C$. Let $\sr R_N(L)$ be the space of homomorphisms\[
		\sr R_N(L) \coloneq \left\{\:\rho\colon \pi_1(\R^3\setminus L) \to \SU(N) \:\big|\: \rho(\mu) \text{ is conjugate to } \Phi_{a(\mu)} \text{ for each meridian }\mu \:\right\}
	\]where \[
		\Phi_a \coloneq e^{\,a\pi i/N} \begin{pmatrix}
			-\Id_{a} & 0\\0 & \Id_{N - a}
		\end{pmatrix} \in \SU(N).
	\]
\end{df}

In computing the colored $\sl(N)$ homology of the trefoil and Hopf link, we verify the following isomorphism.

\begin{thm}\label{thm:plainisomorphismforHopfLinkAndTrefoil}
	Fix integers $0 \leq a,b \leq N$, and let $L$ be either the right-handed trefoil labeled $a$ or the positive Hopf link with components labeled $a,b$. Then there is an isomorphism of abelian groups \[
		\KR_N(L) \cong H^*(\sr R_N(L)).
	\]
\end{thm}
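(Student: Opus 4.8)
The plan is to compute both sides of the claimed isomorphism explicitly and then match them. Since the statement only asserts an isomorphism of abelian groups, it suffices to check that $\KR_N(L)$ and $H^*(\sr R_N(L))$ are both torsion-free of the same total rank; in fact I expect the two computations, once in hand, to organize themselves along the same combinatorial index and thereby to produce an isomorphism that also respects the natural gradings.

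\textbf{The representation variety.} First I would analyze $\sr R_N(L)$ directly from a presentation of $\pi_1(\R^3\setminus L)$. For the positive Hopf link this group is $\Z^2$, generated by the two meridians, so a point of $\sr R_N(L)$ is a commuting pair $(A,B)$ with $A$ conjugate to $\Phi_a$ and $B$ conjugate to $\Phi_b$. As $A$ and $B$ are each diagonalizable with exactly two eigenvalues and commute, $\C^N$ splits into four simultaneous eigenspaces, and the only modulus is the dimension $k$ of the common ``$(-,-)$''-eigenspace, constrained by $\max(0,a+b-N)\le k\le\min(a,b)$; hence $\sr R_N(L)$ is the disjoint union over such $k$ of the partial flag manifolds $\U(N)/(\U(k)\x\U(a-k)\x\U(b-k)\x\U(N-a-b+k))$, each with torsion-free cohomology in even degrees. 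For the right-handed trefoil I would use the torus-knot presentation $\langle u,v\mid u^2=v^3\rangle$, with $u,v$ words in the meridians, so that the central element $u^2=v^3$ acts by a scalar on each isotypic summand of a representation; decomposing into irreducibles and bounding the possible summands (as in the analyses of Lobb--Zentner \cite{MR3190356} and Grant \cite{MR3125899}) should again exhibit $\sr R_N(L)$ as a disjoint union of partial flag manifolds indexed by a discrete parameter playing the role of $k$, and in particular with free cohomology.

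\textbf{Colored $\sl(N)$ homology.} Independently, I would compute $\KR_N(L)$ from the combinatorial foam model: resolve the two (resp.\ three) crossings of the positive Hopf link (resp.\ right-handed trefoil) into the cube of colored webs, then apply the colored skein exact triangles together with the digon, square, and associativity (``fork-slide'') relations to reduce each vertex to a colored theta-web or a union of colored unknots, and identify the resulting complex and its differential. The state spaces of theta-webs and colored unknots are, by the foam evaluation, grading-shifted copies of the cohomologies of the relevant partial flag manifolds and Grassmannians, so after cancelling the acyclic part the homology should assemble, over the same discrete parameter $k$, into grading-shifted copies of $H^*\!\big(\U(N)/(\U(k)\x\cdots)\big)$ — matching the representation variety term by term. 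The bookkeeping is finite but delicate, and the fork-slide and associativity relations are exactly what keep the vertices manageable.

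\textbf{Matching and the main obstacle.} With both sides written as $\bigoplus_k H^*(\text{partial flag manifold}_k)$ up to grading shifts, the isomorphism of abelian groups is immediate, and I would record the shifts to upgrade it where possible. I expect the principal obstacle to be the colored $\sl(N)$ homology computation for the trefoil: unlike the Hopf link, its cube of resolutions carries a genuinely nontrivial differential, and controlling the maps between the large, highly shifted colored-web state spaces — verifying that the surviving homology is exactly the asserted sum of flag-manifold cohomologies and that no torsion is introduced — will require careful use of the foam relations and, most likely, an induction on $N$ or on the label $a$. A secondary point needing care is confirming that the trefoil representation variety is precisely the claimed disjoint union of smooth closed manifolds, with no lower-dimensional strata, so that its singular cohomology is indeed free of the expected rank.
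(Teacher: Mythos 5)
Your treatment of the Hopf link is essentially the paper's: the commuting-pair analysis does give $\sr R_N(L)=\bigsqcup_k \F(k,a-k,b-k;N)$, these have free even-degree cohomology, and the simplified complex has zero differential. But the trefoil half of your plan has a genuine gap, and it is exactly where the theorem's content lies. The components of $\sr R_N(K)$ for the trefoil are \emph{not} partial flag manifolds: analyzing the meridional presentation $\langle A,B\mid ABA=BAB\rangle$ (equivalently, the principal angles between the eigenspaces of $\rho(A)$ and $\rho(B)$, which must all be $0$ or $\pi/3$) shows that each component is $\U(N)/K_l$ with $K_l=\U(l)\x\Delta\!\U(a-l)\x\U(N-2a+l)$, where $\Delta\!\U(a-l)$ is a \emph{diagonally} embedded factor. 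Since $K_l$ has rank $N-a+l<N$ for $l<a$, Borel's theorem does not apply, $H^*(\U(N)/K_l)$ is not concentrated in even degrees, and — crucially — it has torsion (compare the $\Z_2,\Z_4,\Z_5,\Z_6$ summands in Tables~\ref{table:trefoil24}--\ref{table:trefoil36}). Your reduction ``it suffices to check both sides are torsion-free of the same total rank'' is therefore false for the trefoil, and matching the torsion is the hard part: the paper does this by showing the simplified trefoil complex is $\Z\otimes_{H^*(\BU(N))}K(y_1-z_1,\dots,y_l-z_l)$, a Koszul resolution computing $\Tor_{H^*(\BU(N))}(\Z,H^*(\B K_l))$, and then invoking Gugenheim--May's degeneration of the Eilenberg--Moore spectral sequence (Theorem~\ref{thm:gugenheimMay}) to identify this $\Tor$ with $H^*(\U(N)/K_l)$.

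A second, independent problem is your proposed computation of $\KR_N(K)$ via ``colored skein exact triangles'': as noted in the introduction, colored $\sl(N)$ homology satisfies no skein exact triangle — the Rickard complex of a crossing between strands labeled $a,b$ has $\min(a,b)-\max(a+b-N,0)+1$ terms, not two — so there is no such cube-of-two-resolutions reduction. The paper instead needs the homological perturbation lemma over a poset to collapse one crossing, plus an integral adaptation of Hogancamp--Rose--Wedrich's simplification of the full twist on two strands, before the Koszul structure becomes visible. Without some replacement for these steps, the ``finite but delicate bookkeeping'' you defer to cannot be carried out as described.
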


Theorem~\ref{thm:plainisomorphismforHopfLinkAndTrefoil} is the most basic form of the isomorphism. We provide refinements that capture additional structure of $\KR_N(L)$ including the bigrading information, the module structure induced by a basepoint, and the reduced and equivariant versions of colored $\sl(N)$ homology all over $\Z$. See Tables~\ref{table:trefoil24} -- \ref{table:trefoil36} for explicit bigraded computations for the trefoil with small values of $a,N$.

\subsection{Context and motivation}

Kronheimer and Mrowka's landmark result that Khovanov homology detects the unknot \cite{MR2805599} uses a spectral sequence they construct from Khovanov homology to a link invariant $\Isharp$ that they define using singular $\SU(2)$ instantons. The construction of $\Isharp$ and the spectral sequence was motivated by an unexpected coincidence between the Khovanov homology of the trefoil $K$ and the cohomology of the space $\sr R_2(K)$ of homomorphisms $\pi_1(\R^3\setminus K) \to \SU(2)$ sending meridians to traceless matrices. 

Singular $\SU(N)$ instanton homology for oriented labeled links was defined in \cite{MR2860345} and was initially expected to coincide with the cohomology of the $\SU(N)$ representation space $\sr R_N(L)$, defined above, for certain simple knots and links. An analogous conjectural spectral sequence from colored $\sl(N)$ homology to singular $\SU(N)$ instanton homology would suggest a potential relationship between $\KR_N(L)$ and $H^*(\sr R_N(L))$. Lobb and Zentner \cite{MR3190356} and Grant \cite{MR3125899}, motivated by the conjectural spectral sequence, considered the relationship between these $\SU(N)$ representation spaces and colored $\sl(N)$ invariants in the context of planar webs. 

In contrast to the initial expectation, however, the $\SU(N)$ instanton homology that is currently defined is now known \textit{not} to coincide with the cohomology of $\sr R_N(L)$, even for the trefoil labeled $1$ when $N \ge 3$ \cite{privateCom}. A spectral sequence from $\KR_N(L)$ to an $\SU(N)$ instanton homology would likely require modifying the definition of $\SU(N)$ instanton homology, but the author is unaware of particularly promising candidates. Furthermore, a spectral sequence from \textit{uncolored} $\sl(N)$ homology to $\SU(N)$ instanton homology could likely be proved using a well-established technique related to skein exact triangles \cite{MR2141852,MR2764887,MR2805599,MR3394316,https://doi.org/10.48550/arxiv.1811.07848}. \textit{Colored} $\sl(N)$ homology does not satisfy a skein exact triangle, so a spectral sequence from colored $\sl(N)$ homology would require something new.

Despite these complications, one can still attempt to compare $\KR_N(L)$ and $H^*(\sr R_N(L))$. In fact, given the lack of explicit computations of colored $\sl(N)$ homology for links, checking whether $\KR_N(L)$ and $H^*(\sr R_N(L))$ agree for simple knots and links is not merely a test of the viability of a spectral sequence to singular $\SU(N)$ instanton homology, which is already known to be unviable in its current form. Instead, it provides a precise, concrete, and concise prediction for a complicated and mysterious mathematical object $\KR_N(L)$. This is unlike the case of planar webs considered in \cite{MR3190356,MR3125899}, where the colored $\sl(N)$ invariant is straightforward to compute using algorithmic reduction rules. 

We also note that even if the conjectural spectral sequence is established, Theorem~\ref{thm:plainisomorphismforHopfLinkAndTrefoil} would not follow as a consequence. A concrete illustration may be found in the uncolored $N = 2$ case for the reduced invariants where the spectral sequence has been established. There are three relevant invariants: reduced Khovanov homology $\rKh(L)$, the cohomology $H^*(\ol{\sr R_2}(L))$ of the reduced space of meridian-traceless $\SU(2)$ representations, and reduced singular $\SU(2)$ instanton homology $\Inat(L)$. Just among torus knots, there are examples where the ranks of any two of the three invariants agree but disagree with the third. See Table~\ref{table:ranksOf3Invariants}. 

\begin{table}[!ht]
	\centering
	\begin{tabular}{ |c|c|c|c| }
	\hline
	\phantom{\Big|}Torus knot $K$ & $\rk \rKh(K)$ & $\rk \Inat(K)$ & $\rk H^*(\ol{\sr R_2}(K))$\\
	\hline
	$T(2,3)$ & $3$ & $3$ & $3$\\
	\hline
	$T(3,4)$ & $5$ & $5$ & $7$\\
	\hline
	$T(4,5)$ & $9$ & $7$ & $9$\\
	\hline
	$T(4,7)$ & $17$ & $11$ & $15$\\
	\hline
	$T(5,7)$ & $29$ & $17$ & $17$\\
	\hline
	\end{tabular}
	\captionsetup{width=.8\linewidth}
	\caption{The ranks of three related invariants for certain torus knots. The ranks of any two of the three invariants can agree while disagreeing with the third. The ranks of all three can all agree or all disagree as well. The second and fourth columns are from \cite{MR3158776} and the third column is from \cite{MR4407491}.}
	\label{table:ranksOf3Invariants}
\end{table}

\begin{table}[!ht]
	\centering
	\def\arraystretch{1.1}
	\begin{tabular}{ |c|c|c|c|c|c|c|c| }
	\hline
	$30$ & $\Z$ & \hspace{2em} & \hspace{2em} & \hspace{2em} & \hspace{2em} & &\\
	\hline
	$28$ & & $\:\,\Z_2$ & & & & &\\
	\hline
	$26$ & & $\:\,\Z_4$ & & & & &\\
	\hline
	$24$ & & $\!\!\Z \hspace{-.1em}\oplus\hspace{-.1em}\Z_2\!\!\!$ & & & & &\\
	\hline
	$22$ & & $\Z$ & $\:\,\Z_2$ & $\Z$ & & &\\
	\hline
	$20$ & & & $\:\,\Z_4$ & $\:\,\Z^2$ & & &\\
	\hline
	$18$ & & & $\:\,\Z_2$ & $\:\,\Z^2$ & & &\\
	\hline
	$16$ & & & $\Z$ & $\Z$ & $\:\,\Z_4$ & &\\
	\hline
	$14$ & & & & & $\!\!\Z \hspace{-.1em}\oplus\hspace{-.1em} \Z_4\!\!\!$ & &\\
	\hline
	$12$ & & & & & $\:\,\Z^2$ & & $\Z$\\
	\hline
	$10$ & & & & & $\:\,\Z^2$ & & $\Z$\\
	\hline
	$8$  & & & & & $\Z$ & & $\!\:\,\Z^2\!$\\
	\hline
	$6$  & & & & & & & $\Z$\\
	\hline
	$4$  & & & & & & & $\Z$\\
	\hline
	& $-6$ & $-5$ & $-4$ & $-3$ & $-2$ & $-1$ & $0$\\
	\hline
	\end{tabular}
	\captionsetup{width=.8\linewidth}
	\caption{Colored $\sl(4)$ homology of the right-handed trefoil labeled $2$. The $q$-grading is vertical and the $h$-grading is horizontal. Its graded Euler characteristic is $q^{30} - q^{24} - 2q^{22} - 2q^{20} - 2q^{18} + q^{14} + 3q^{12} + 3q^{10} + 3q^8 + q^6 + q^4$ which is the A3 quantum invariant of weight $0,1,0$ on the Knot Atlas \cite{KnotAtlas}.}
	\label{table:trefoil24}
\end{table}

\begin{table}[!ht]
	\centering
	\def\arraystretch{1.1}
	\begin{tabular}{ |c|c|c|c|c|c|c|c| }
	\hline
	$40$ & $\Z$ & \hspace{2.2em} & \hspace{2.2em} & \hspace{2.2em} & \hspace{2.2em} & &\\
	\hline
	$38$ & $\Z$ & & & & & &\\
	\hline
	$36$ & $\Z$ & & & & & &\\
	\hline
	$34$ & & $\Z_{10}\!\!\!\!$ & & & & &\\
	\hline
	$32$ & & $\Z \hspace{-.1em}\oplus\hspace{-.1em} \Z_5\!\!$ & & & & &\\
	\hline
	$30$ & & $\:\,\Z^2$ & & $\Z$ & & &\\
	\hline
	$28$ & & $\:\,\Z^2$ & & $\:\,\Z^2$ & & &\\
	\hline
	$26$ & & $\Z$ & $\Z_5\!\!$ & $\:\,\Z^3$ & & &\\
	\hline
	$24$ & & & $\Z_{10}\!\!\!\!$ & $\:\,\Z^3$ & & &\\
	\hline
	$22$ & & & $\Z$ & $\:\,\Z^2$ & $\Z_5\!\!$ & &\\
	\hline
	$20$ & & & $\Z$ & $\Z$ & $\!\Z\hspace{-.1em}\oplus\hspace{-.1em}\Z_5\!\!\!\!$ & &\\
	\hline
	$18$ & & & $\Z$ & & $\!\!\!\Z^2\!\oplus\hspace{-.1em}\Z_5\!\!\!\!$ & & $\Z$\\
	\hline
	$16$ & & & & & $\:\,\Z^3$ & & $\Z$\\
	\hline
	$14$ & & & & & $\:\,\Z^3$ & & $\!\:\,\Z^2\!$\\
	\hline
	$12$ & & & & & $\:\,\Z^2$ & & $\!\:\,\Z^2\!$\\
	\hline
	$10$ & & & & & $\Z$ & & $\!\:\,\Z^2\!$\\
	\hline
	$8$  & & & & & & & $\Z$\\
	\hline
	$6$  & & & & & & & $\Z$\\
	\hline
	& $-6$ & $-5$ & $-4$ & $-3$ & $-2$ & $-1$ & $0$\\
	\hline
	\end{tabular}
	\captionsetup{width=.8\linewidth}
	\caption{Colored $\sl(5)$ homology of the right-handed trefoil labeled $2$. The $q$-grading is vertical and the $h$-grading is horizontal. Its graded Euler characteristic is $q^{40} + q^{38} + q^{36} - q^{32} - 3q^{30} - 4q^{28} - 4q^{26} - 3q^{24} - q^{22} + q^{20} + 4q^{18} + 4q^{16} + 5q^{14} + 4q^{12} + 3q^{10} + q^8 + q^6$ which is the A4 quantum invariant of weight $0,1,0,0$ on the Knot Atlas \cite{KnotAtlas}.}
	\label{table:trefoil25}
\end{table}

\begin{table}[!ht]
	\centering
	\def\arraystretch{1.1}
	\begin{tabular}{ |c|c|c|c|c|c|c|c| }
	\hline
	$50$ & $\Z$ & $\hspace{4em}$ & $\hspace{2.5em}$ & $\hspace{2.5em}$ & $\hspace{2.5em}$ & &\\
	\hline
	$48$ & $\Z$ & & & & & &\\
	\hline
	$46$ & $\!\:\,\Z^2\!$ & & & & & &\\
	\hline
	$44$ & $\Z$ & $\Z_2\!\!$ & & & & &\\
	\hline
	$42$ & $\Z$ & $\Z_6\!\!$ & & & & &\\
	\hline
	$40$ & & $\!\!\Z\hspace{-.1em} \oplus\hspace{-.1em} \Z_2\hspace{-.1em} \oplus\hspace{-.1em} \Z_6\!\!\!$ & & & & &\\
	\hline
	$38$ & & $\Z^2\!\oplus\hspace{-.1em}\Z_6\!$ & & $\Z$ & & &\\
	\hline
	$36$ & & $\Z^3\!\oplus\hspace{-.1em}\Z_2\!$ & & $\:\,\Z^2$ & & &\\
	\hline
	$34$ & & $\:\,\Z^3$ & $\Z_2\!\!$ & $\:\,\Z^3$ & & &\\
	\hline
	$32$ & & $\:\,\Z^2$ & $\Z_6\!\!$ & $\:\,\Z^4$ & & &\\
	\hline
	$30$ & & $\Z$ & $\Z_2\!\oplus\hspace{-.1em} \Z_6\!$ & $\:\,\Z^4$ & & &\\
	\hline
	$28$ & & & $\Z\hspace{-.1em}\oplus\hspace{-.1em}\Z_6\!\!\!$ & $\:\,\Z^3$ & $\Z_6\!\!$ & &\\
	\hline
	$26$ & & & $\Z\hspace{-.1em}\oplus\hspace{-.1em}\Z_2\!\!\!$ & $\:\,\Z^2$ & $\!\Z\hspace{-.1em}\oplus\hspace{-.1em} \Z_6\!\!\!\!$ & &\\
	\hline
	$24$ & & & $\:\,\Z^2$ & $\Z$ & $\!\!\Z^2\!\oplus\hspace{-.1em} \Z_6\!\!\!$ & & $\Z$\\
	\hline
	$22$ & & & $\Z$ & & $\!\!\Z^3\!\oplus\hspace{-.1em} \Z_6\!\!\!$ & & $\Z$\\
	\hline
	$20$ & & & $\Z$ & & $\:\,\Z^4$ & & $\!\:\,\Z^2\!$\\
	\hline
	$18$ & & & & & $\:\,\Z^4$ & & $\!\:\,\Z^2\!$\\
	\hline
	$16$ & & & & & $\:\,\Z^3$ & & $\!\:\,\Z^3\!$\\
	\hline
	$14$ & & & & & $\:\,\Z^2$ & & $\!\:\,\Z^2\!$\\
	\hline
	$12$ & & & & & $\Z$ & & $\!\:\,\Z^2\!$\\
	\hline
	$10$ & & & & & & & $\Z$\\
	\hline
	$8$  & & & & & & & $\Z$\\
	\hline
	& $-6$ & $-5$ & $-4$ & $-3$ & $-2$ & $-1$ & $0$\\
	\hline
	\end{tabular}
	\captionsetup{width=.8\linewidth}
	\caption{Colored $\sl(6)$ homology of the right-handed trefoil labeled $2$. The $q$-grading is vertical and the $h$-grading is horizontal.}
	\label{table:trefoil26}
\end{table}

\begin{table}[!ht]
	\centering
	\def\arraystretch{1.1}
	\begin{tabular}{ |c|c|c|c|c|c|c|c|c|c|c| }
	\hline
	$63$ & $\Z$ & \hspace{2.5em} & \hspace{3.5em} & \hspace{3.5em} & \hspace{5.5em} & \hspace{5.5em} & \hspace{3.5em} & \hspace{3.5em} & \hspace{2.5em} &\\
	\hline
	$61$ & & $\:\Z_2$ & & & & & & & &\\
	\hline
	$59$ & & $\:\Z_4$ & & & & & & & &\\
	\hline
	$57$ & & $\:(\Z_2)^2\!\!$ & & & & & & & &\\
	\hline
	$55$ & & $\!\Z\hspace{-.1em}\oplus\hspace{-.1em}\Z_2\!\!$ & $\Z_2$ & & & & & & &\\
	\hline
	$53$ & & $\!\Z\hspace{-.1em}\oplus\hspace{-.1em}\Z_2\!\!$ & $\Z_2$ & $\Z$ & & & & & &\\
	\hline
	$51$ & & $\!\Z\hspace{-.1em}\oplus\hspace{-.1em}\Z_2\!\!$ & $\:\,(\Z_2)^2$ & $\:\,\Z^2$ & & & & & &\\
	\hline
	$49$ & & & $\!\!(\Z_2)^2\!\oplus\hspace{-.15em}\Z_4\!\!$ & $\:\,\Z^3$ & & & & & &\\
	\hline
	$47$ & & & $\!\!(\Z_2)^2\!\oplus\hspace{-.15em}\Z_4\!\!$ & $\:\,\Z^3$ & $\:\,\Z_2$ & & & & &\\
	\hline
	$45$ & & & $\Z\hspace{-.1em}\oplus\hspace{-.1em}(\Z_2)^2\!\!$ & $\Z^2\!\oplus\hspace{-.1em}\Z_2$ & $\Z_2\!\oplus\hspace{-.1em}\Z_6$ & & & & &\\
	\hline
	$43$ & & & $\Z\hspace{-.1em}\oplus\hspace{-.1em}\Z_2\!$ & $\:\Z\hspace{-.1em}\oplus\hspace{-.1em}\Z_2\!$ & $\!\Z\hspace{-.1em}\oplus\hspace{-.1em}\Z_2\!\oplus\hspace{-.1em}(\Z_6)^2\!\!$ & & & & &\\
	\hline
	$41$ & & & $\Z\hspace{-.1em}\oplus\hspace{-.1em}\Z_2$ & $\:\,\Z_2$ & $\!\!\Z^3\!\oplus\hspace{-.1em}\Z_2\!\oplus\hspace{-.1em}(\Z_6)^2\!\!\!$ & & $\Z$ & & &\\
	\hline
	$39$ & & & & $\:\,(\Z_2)^2\!\!$ & $\Z^4\!\oplus\hspace{-.1em}\Z_2\!\oplus\hspace{-.1em}\Z_6$ & & $\:\,\Z^2$ & & &\\
	\hline
	$37$ & & & & $\:\,\Z_4$ & $\Z^6\!\oplus\hspace{-.1em}\Z_2$ & $\:\,\Z_2$ & $\:\,\Z^4$ & & &\\
	\hline
	$35$ & & & & $\:\,\Z_2$ & $\:\,\Z^4$ & $\Z_2\!\oplus\hspace{-.1em}\Z_6$ & $\:\,\Z^5$ & & &\\
	\hline
	$33$ & & & & $\Z$ & $\:\,\Z^3$ & $\Z_2\!\oplus\hspace{-.1em}(\Z_6)^2\!\!$ & $\:\,\Z^6$ & & &\\
	\hline
	$31$ & & & & & $\Z$ & $\!\!\Z\hspace{-.1em}\oplus\hspace{-.1em}\Z_2\!\oplus\hspace{-.1em}(\Z_6)^2\!\!\!$ & $\:\,\Z^5$ & $\Z_6$ & &\\
	\hline
	$29$ & & & & & & $\Z^2\!\oplus\hspace{-.1em}\Z_2\!\oplus\hspace{-.1em}\Z_6\!\!$ & $\:\,\Z^4$ & $\Z \hspace{-.1em}\oplus\hspace{-.1em}\Z_6$ & &\\
	\hline
	$27$ & & & & & & $\Z^3\!\oplus\hspace{-.1em}\Z_2$ & $\:\,\Z^2$ & $\!\!\!\Z^2\!\oplus\hspace{-.1em}(\Z_6)^2\!\!\!\!$ & & $\Z$\\
	\hline
	$25$ & & & & & & $\:\,\Z^3$ & $\Z$ & $\Z^4\!\oplus\hspace{-.1em}\Z_6$ & & $\Z$\\
	\hline
	$23$ & & & & & & $\:\,\Z^2$ & & $\Z^5\!\oplus\hspace{-.1em}\Z_6$ & & $\!\:\,\Z^2\!$\\
	\hline
	$21$ & & & & & & $\Z$ & & $\:\,\Z^6$ & & $\!\:\,\Z^3\!$\\
	\hline
	$19$ & & & & & & & & $\:\,\Z^5$ & & $\!\:\,\Z^3\!$\\
	\hline
	$17$ & & & & & & & & $\:\,\Z^4$ & & $\!\:\,\Z^3\!$\\
	\hline
	$15$ & & & & & & & & $\:\,\Z^2$ & & $\!\:\,\Z^3\!$\\
	\hline
	$13$ & & & & & & & & $\Z$ & & $\!\:\,\Z^2\!$\\
	\hline
	$11$ & & & & & & & & & & $\Z$\\
	\hline
	$9$  & & & & & & & & & & $\Z$\\
	\hline
	& $-9$ & $-8$ & $-7$ & $-6$ & $-5$ & $-4$ & $-3$ & $-2$ & $-1$ & $0$\\
	\hline
	\end{tabular}
	\captionsetup{width=.8\linewidth}
	\caption{Colored $\sl(6)$ homology of the right-handed trefoil labeled $3$. The $q$-grading is vertical and the $h$-grading is horizontal.}
	\label{table:trefoil36}
\end{table}

\subsection{Variations of the main result}

Colored $\sl(N)$ homology carries much more structure than simply that of an abelian group. Much of this structure is reflected in the cohomology of the $\SU(N)$ representation space $\sr R_N(L)$. In particular, we discuss bigradings, the module structure induced by a basepoint, and the reduced and equivariant versions of colored $\sl(N)$ homology. Although we consider each of these structures individually, the results discussed can be combined in the natural way.

\subsubsection*{Module structure and reduced homology}

If $D$ is a diagram of an oriented labeled link $L$, there is an associated colored $\sl(N)$ chain complex $\KRC_N(D)$ whose homology is $\KR_N(L)$. A basepoint $p$ on an arc labeled $a$ determines an action of the cohomology ring of $\G(a,N)$ on $\KRC_N(D)$ through chain maps, where $\G(a,N)$ is the complex Grassmannian of $a$-dimensional vector subspaces of $\C^N$. The induced action of $H^*(\G(a,N))$ on $\KR_N(L)$ is the \textit{module structure induced by the basepoint $p$}. The image of the action of the fundamental class in $H^*(\G(a,N))$ on $\KRC_N(D)$ is a subcomplex whose homology, denoted $\rKR_N(L,p)$, is the \textit{reduced colored $\sl(N)$ homology of $L$ with respect to $p$}. 

Let $C_a \subset \SU(N)$ denote the conjugacy class of $\Phi_a \in \SU(N)$, which is given in Definition~\ref{df:representationSpace}. It turns out that associating to a matrix in $C_a$ its $(-e^{\,a\pi i/N})$-eigenspace gives an identification between $C_a$ and $\G(a,N)$. 
The basepoint $p$ on an arc of $D$ determines a meridian $\mu_p \in \pi_1(\R^3\setminus L)$. It is the generator of the Wirtinger presentation associated to the arc containing $p$. This choice of basepoint $p$ thereby determines a map $\sr R_N(L) \to C_a$ by sending $\rho \in \sr R_N(L)$ to $\rho(\mu_p) \in C_a$. This map to $C_a = \G(a,N)$ is the projection map of a fiber bundle \[
	\ol{\sr R_N}(L,p) \to \sr R_N(L) \to \G(a,N)
\]where $\ol{\sr R_N}(L,p)$ is the space of representations $\rho \in \sr R_N(L)$ for which $\rho(\mu_p) = \Phi_a$. The map on cohomology induced by $\sr R_N(L) \to \G(a,N)$ makes $H^*(\sr R_N(L))$ into a module over $H^*(\G(a,N))$. 

\begin{prop}\label{prop:moduleandReducedIsomorphism}
	Fix integers $0 \leq a,b \leq N$, and let $L$ be either the right-handed trefoil labeled $a$ or the positive Hopf link with components labeled $a,b$. If $p$ is a basepoint on the component labeled $a$ of $L$, then $\KR_N(L) \cong H^*(\sr R_N(L))$ as modules over $H^*(\G(a,N))$ and $\rKR_N(L,p) \cong H^*(\ol{\sr R_N}(L,p))$ as abelian groups.
\end{prop}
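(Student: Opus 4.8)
The plan is to upgrade both of the computations underlying Theorem~\ref{thm:plainisomorphismforHopfLinkAndTrefoil} so that they keep track of the relevant $H^*(\G(a,N))$-actions, and then to match the results; the reduced statement falls out along the way. Throughout, write $R = H^*(\G(a,N))$: a graded Poincaré-duality algebra over $\Z$ that is free as an abelian group, whose socle (the top-degree part, annihilated by $R^{+}$) is the rank-one subgroup spanned by the fundamental class. On the topological side, $\mathcal{E}\to\sr R_N(L)$, the rank-$a$ complex bundle whose fibre at $\rho$ is the $(-e^{\,a\pi i/N})$-eigenspace of $\rho(\mu_p)$, is canonically the pullback of the tautological bundle under $\pi\colon\sr R_N(L)\to\G(a,N) = C_a$, so the $R$-action on $H^*(\sr R_N(L))$ is read off from the Chern classes of $\mathcal{E}$.

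For the positive Hopf link, $\pi_1(\R^3\setminus L) = \Z^2$ and $\sr R_N(L) = \{(X,Y)\in C_a\times C_b : XY = YX\}$ with $\pi(X,Y) = X$. Stratifying by the dimension $j$ of the intersection of the $(-e^{\,a\pi i/N})$-eigenspace of $X$ with the $(-e^{\,b\pi i/N})$-eigenspace of $Y$ exhibits $\sr R_N(L)$ as a disjoint union, over $\max(0,a+b-N)\le j\le\min(a,b)$, of partial flag manifolds, each of which is a Grassmannian bundle $\G(j,a)\times\G(b-j,N-a)\to\sr R_N(L)_j\to\G(a,N)$ with fibre $\pi^{-1}(\Phi_a)_j = \G(j,a)\times\G(b-j,N-a)$. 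Leray--Hirsch for flag bundles then gives $H^*(\sr R_N(L))\cong R\otimes_\Z H^*(\ol{\sr R_N}(L,p))$ as $R$-modules, with $R$ acting on the left factor; in particular $H^*(\sr R_N(L))$ is $R$-free. For the right-handed trefoil, $\pi_1(\R^3\setminus L) = \langle x,y\mid xyx = yxy\rangle$ and $\sr R_N(L) = \{(X,Y)\in C_a\times C_a : XYX = YXY\}$ with $\pi(X,Y) = X$; the abelian representations $X\mapsto(X,X)$ give a section, but now $\sr R_N(L)$ is singular and $H^*(\sr R_N(L))$ carries torsion (consistent with Table~\ref{table:trefoil24}), so I would instead compute $H^*(\sr R_N(L))$ as an $R$-module and $H^*(\ol{\sr R_N}(L,p))$ as an abelian group directly from these explicit descriptions.

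For the algebraic side, fix a diagram $D$ of $L$ with $p$ on the labeled-$a$ arc; the basepoint decoration makes $\KRC_N(D)$ a complex of $R$-modules. The key point is that the delooping, neck-cutting and Gaussian-elimination reductions that compute $\KR_N(L)$ in the proof of Theorem~\ref{thm:plainisomorphismforHopfLinkAndTrefoil} can be carried out through $R$-linear chain homotopy equivalences, because the foam evaluation and the local relations are $R$-linear in the marked region. This produces $\KR_N(L)$ together with its $R$-module structure; and since an $R$-linear homotopy equivalence (with $R$-linear homotopies) carries $\operatorname{soc}(R)\cdot(-)$ into $\operatorname{soc}(R)\cdot(-)$ and restricts to a homotopy equivalence there, it simultaneously computes $\rKR_N(L,p)$, the homology of the image of the fundamental class. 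For the Hopf link, where $\KR_N(L)$ turns out $R$-free, this yields $\KR_N(L)\cong R\otimes_\Z\rKR_N(L,p)$ with no $\Tor$ correction (as $R$ is $\Z$-free), so it suffices to check $\rKR_N(L,p)\cong H^*(\ol{\sr R_N}(L,p))$; for the trefoil one compares the explicit $R$-modules obtained on the two sides.

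The comparison is then a matter of matching the two explicit computations, and this is where the work lies. For the Hopf link it reduces to the group isomorphism $\rKR_N(L,p)\cong\bigoplus_j H^*(\G(j,a)\times\G(b-j,N-a))$, which follows from Theorem~\ref{thm:plainisomorphismforHopfLinkAndTrefoil} together with the $R$-freeness on both sides and the fact that $A^{\oplus k}\cong B^{\oplus k}$ forces $A\cong B$ for finitely generated abelian groups ($k = \binom{N}{a}$). The main obstacle is the trefoil: because $\KR_N(L)$ is not $R$-free over $\Z$, Theorem~\ref{thm:plainisomorphismforHopfLinkAndTrefoil} alone does not pin down the $R$-module structure, so one must genuinely carry the colored $\sl(N)$ computation through $R$-equivariantly and control the $\Z$-torsion at every stage — a substantially finer bookkeeping than the bare-group computation — and then verify that the resulting $R$-module and its socle-image agree with the cohomology of the singular space $\sr R_N(L)$ and of its fibre $\ol{\sr R_N}(L,p)$.
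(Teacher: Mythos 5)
The Hopf-link half of your plan runs parallel to the paper's argument and can be made to work, but two points need attention. First, the assertion that $\KR_N(L)$ ``turns out $R$-free'' is not free of charge: to establish it you must identify how the basepoint dot maps act on the state spaces of the theta webs in the simplified complex, which is exactly the content of the explicit isomorphism $\sr F(\Theta)\cong H^*_{\U(N)}(\F(k,a-k,b-k;N))$ intertwining dot maps with multiplication by Chern classes (Proposition~\ref{prop:generalizedThetaPartialFlag}); once that is in hand, your Leray--Hirsch-plus-cancellation route to the reduced statement is a legitimate variant of the paper's. Second, a factual slip: $\sr R_N(K)$ for the trefoil is not singular --- it is a disjoint union of smooth closed homogeneous spaces $\U(N)/K_l$ of varying dimensions --- it merely has torsion in its cohomology.

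The genuine gap is the trefoil case, which your proposal defers rather than proves. Saying one must ``carry the computation through $R$-equivariantly \dots\ and then verify'' that the resulting module agrees with $H^*(\sr R_N(K))$ skips the one step that does not follow from explicit computation on either side: the $H^*(\G(a,N))$-module structure on $H^*(\U(N)/K_l)$ induced by $\U(N)/K_l\to\G(a,N)$ is only accessible through the Eilenberg--Moore spectral sequence, and while Gugenheim--May give a collapse $H^*(G/K_l)\cong\Tor_{H^*(\B G)}(\Z,H^*(\B K_l))$ as graded abelian groups, you need this isomorphism to respect the $H^*(\B K_l)$-module structure (hence the $H^*(\G(a,N))$-action pulled back through $\B K_l\to\B J$) before you can match it against the module structure you compute on the homology of the Koszul complex. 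That refinement is not part of Gugenheim--May's theorem and requires its own argument --- the paper supplies it as Proposition~\ref{prop:moduleStructGugenheimMay}, by tracing the module structure through Baum's reduction to a maximal torus. Without it there is no bridge between the two $R$-module structures, no matter how carefully the torsion is bookkept; and the reduced trefoil statement needs the same ingredient again, since identifying $[\G(a,N)]\cdot\KR_N(K)$ with $\bigoplus_l H^*(J/K_l)$ amounts to a second application of Gugenheim--May, now over $H^*(\B J)$.
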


\subsubsection*{Equivariant homology}

We briefly review equivariant cohomology for the unitary group $G = \U(N)$. Recall that there is a principal $G$-bundle $\E G \to \B G$ where $\E G$ is contractible and $\B G$ is the classifying space of $G$. The cohomology of $\B G$ can be canonically identified with the ring of symmetric polynomials in $N$ variables \[
	H^*(\B G) = \Z[X_1,\ldots,X_N]^{\fk{S}_N} = \Z[e_1,\ldots,e_N] \eqqcolon \Sym(N)
\]where $e_i$ denotes the $i$th elementary symmetric polynomial. The identification is given by sending the Chern class $c_i \in H^*(\B G)$ to $e_i \in \Sym(N)$. If $X$ is a space equipped with a continuous left action of $G$, then the \textit{$G$-equivariant cohomology of $X$} is a graded-commutative ring $H^*_G(X)$ that is naturally a module over $H^*(\B G)$. It is defined to be the ordinary cohomology of the balanced product $\E G \x_G X$ \[
	H^*_G(X) \coloneq H^*(\E G \x_G X)
\]and its module structure over $H^*(\B G)$ arises from the natural map $\E G \x_G X \to \B G$. 

The $\SU(N)$ representation space $\sr R_N(L)$ of an oriented labeled link $L$ naturally admits a continuous left action of $G = \U(N)$, given by conjugation. Explicitly, a matrix $U \in \U(N)$ sends $\rho \in \sr R_N(L)$ to the representation $U \rho U^{-1} \in \sr R_N(L)$ defined by $\gamma \mapsto U\rho(\gamma)U^{-1}$ for $\gamma \in \pi_1(\R^3\setminus L)$. We may therefore consider the $\U(N)$-equivariant cohomology $H^*_{\U(N)}(\sr R_N(L))$ of the $\SU(N)$ representation space with respect to this action. It takes the form of a module over $H^*(\BU(N)) = \Sym(N)$. 

The \textit{$\U(N)$-equivariant colored $\sl(N)$ homology} \cite{MR4164001,MR3877770} of an oriented labeled link $L$, which we denote $\KR_{\U(N)}(L)$, is a bigraded module over $H^*(\BU(N)) = \Sym(N)$. It is the homology of a chain complex $\KRC_{\U(N)}(D)$ over $\Sym(N)$ associated to a diagram $D$. It generalizes $\U(2)$-equivariant Khovanov homology, which is the theory associated to the Frobenius system $\cl F_5$ of \cite{MR2232858}. The complex underlying $\U(2)$-equivariant Khovanov homology determines all other versions of Khovanov homology including Lee homology \cite{MR2173845} and Bar-Natan homology \cite{MR2174270}. In a similar way, $\KRC_{\U(N)}(D)$ determines the different variations of colored $\sl(N)$ homology analogous to Lee homology and Bar-Natan homology. 

\begin{prop}\label{prop:equivariantCohomologyisomorphism}
	Fix integers $0 \leq a,b \leq N$, and let $L$ be either the right-handed trefoil labeled $a$ or the positive Hopf link with components labeled $a,b$. Then $\KR^{\phantom{}}_{\U(N)}(L) \cong H^*_{\U(N)}(\sr R_N(L))$ as modules over $H^*(\BU(N)) = \Sym(N)$. 
\end{prop}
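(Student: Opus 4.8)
The plan is to compute both sides explicitly for the two families of links and match them as $\Sym(N)$-modules. On the topological side, I would first analyze $\sr R_N(L)$ and its $\U(N)$-equivariant cohomology. For the Hopf link with labels $a,b$, the fundamental group of the complement is free abelian on the two meridians $\mu,\lambda$, so $\sr R_N(L)$ is the space of commuting pairs $(A,B)$ with $A$ conjugate to $\Phi_a$ and $B$ conjugate to $\Phi_b$; since $A,B$ commute they are simultaneously diagonalizable, and one checks that $\sr R_N(L)$ is a disjoint union (or a bundle) whose equivariant cohomology can be read off from flag-type varieties. For the right-handed trefoil, $\pi_1(\R^3\setminus K) = \langle x,y \mid xyx = yxy\rangle$, and $\sr R_N(K)$ consists of pairs $(X,Y)$ each conjugate to $\Phi_a$ satisfying the braid relation; here I would use the standard description of trefoil representation varieties, decomposing according to the conjugacy class of $XY$ (equivalently the eigenvalues of the image of the central element), reducing the computation to a union of Grassmannian and flag bundles whose $\U(N)$-equivariant cohomology is a direct sum of free and torsion $\Sym(N)$-modules. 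The key point is that in both cases $\sr R_N(L)$ is built from homogeneous spaces $\U(N)/P$, so $H^*_{\U(N)}(\sr R_N(L))$ is computable via $H^*_{\U(N)}(\U(N)/P) = H^*(\B P)$.

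On the algebraic side, I would compute $\KR^{\phantom{}}_{\U(N)}(L)$ directly from the equivariant colored $\sl(N)$ chain complex $\KRC_{\U(N)}(D)$ of a convenient diagram $D$: the standard two-crossing diagram for the Hopf link, and the standard three-crossing diagram for the right-handed trefoil. Using the combinatorial (foam-theoretic) definition of \cite{MR4164001}, the complex of each crossing is built from the $\Sym(N)$-modules attached to the relevant colored webs via the Hopf-link/theta-web evaluations. I would evaluate the colored Hopf link web — the $\Theta$-web pictured in the macro \texttt{Thetak} — which is a known MOY-type computation giving a free $\Sym(N)$-module with an explicit basis indexed by lattice points (or Littlewood–Richardson data) of a box; then assemble the crossing complexes and take homology of the resulting (small) complex over $\Sym(N)$. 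Because $\Sym(N)$ is a polynomial ring and the modules involved are direct sums of cyclic modules $\Sym(N)/I$ for monomial-type ideals $I$, the homology computation is a finite linear-algebra problem over $\Sym(N)$, and I expect it to match the topological answer term by term, including torsion.

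The main obstacle will be the trefoil: the equivariant complex has three crossings and the colored webs appearing in intermediate stages are more intricate than for the Hopf link, so extracting the differentials in closed form and then computing homology over $\Sym(N)$ — keeping track of the $\Sym(N)$-module structure and not merely the underlying graded group — is where the real work lies. Concretely, I anticipate needing: (i) a clean normal form for the $\Sym(N)$-module assigned to the relevant thetafoam/theta-web evaluations, with an explicit basis compatible with the neck-cutting and bubble relations; (ii) a careful identification of the basepoint action of $H^*(\G(a,N))$ with a sub-quotient of the $\Sym(N)$-action, which lets me leverage Proposition~\ref{prop:moduleandReducedIsomorphism} as a consistency check by setting $e_i \mapsto 0$; and (iii) a matching lemma asserting that the $\Sym(N)$-module $\KR^{\phantom{}}_{\U(N)}(L)$ and the $\Sym(N)$-module $H^*_{\U(N)}(\sr R_N(L))$ agree once they agree after the non-equivariant specialization and agree after inverting a suitable regular element, together with a finiteness/flatness argument. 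Once the trefoil case is settled, the Hopf link follows by the same method with strictly less bookkeeping, and specializing $e_i \mapsto 0$ recovers Theorem~\ref{thm:plainisomorphismforHopfLinkAndTrefoil} and the reduced statement of Proposition~\ref{prop:moduleandReducedIsomorphism} as sanity checks.
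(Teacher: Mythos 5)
Your overall strategy---compute both sides explicitly and match them---is the same as the paper's, and your topological side is essentially correct: the paper identifies $\sr R_N(L)$ for the Hopf link as a disjoint union of partial flag manifolds $\F(k,a-k,b-k;N)$ via simultaneous diagonalizability, and $\sr R_N(K)$ for the trefoil as a disjoint union of homogeneous spaces $\U(N)/K_l$ with $K_l = \U(l)\x\Delta\!\U(a-l)\x\U(N-2a+l)$ (using principal angles rather than the conjugacy class of $XY$, but this is a cosmetic difference), and then uses $H^*_{\U(N)}(G/K) \cong H^*(\B K)$ exactly as you propose.

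The genuine gap is on the algebraic side. ``Assemble the crossing complexes and take homology of the resulting (small) complex over $\Sym(N)$'' is not executable as stated for the trefoil: the three-crossing cube of resolutions is large, its differentials are foam maps that are not in closed form, and the bulk of the paper (all of Section~\ref{sec:complexesOfHopfAndTref}) is devoted to making this tractable. The missing ideas are: (i) a homological perturbation lemma over a poset, used to collapse one crossing's worth of complexity (this alone kills the Hopf link, since the resulting complex sits in even homological degrees and so has zero differential); (ii) an adaptation of Hogancamp--Rose--Wedrich's simplification of the colored full twist, needed to put the remaining two crossings of the trefoil into a normal form; and (iii), crucially, the recognition that the resulting complex for the trefoil is, under the identification of the theta-web state space with $H^*_{\U(N)}(\F(l,a-l,a-l;N))$ intertwining dot maps with Chern classes, precisely the Koszul complex on the regular sequence $y_1-z_1,\ldots,y_l-z_l$, i.e.\ a free resolution of $H^*(\B K_l)$, whose homology is therefore $H^*(\B K_l)$ on the nose. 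Without (iii), your claim that the homology is ``a finite linear-algebra problem'' whose answer is ``a direct sum of cyclic modules $\Sym(N)/I$ for monomial-type ideals'' is off: the chain groups are free $\Sym(N)$-modules and the answer $H^*(\B K_l)$ is not a sum of cyclic $\Sym(N)$-modules. Finally, your proposed matching lemma in (iii) of your obstacle list---deducing the equivariant isomorphism from the nonequivariant specialization plus a localization---runs in the wrong logical direction (the paper obtains the nonequivariant statement \emph{from} the equivariant one by tensoring with $\Z$) and is false in the generality you state it: two finitely generated $\Sym(N)$-modules can agree after setting the variables to zero and after inverting a regular element without being isomorphic, so this cannot substitute for the explicit identification of the complex up to homotopy.
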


Since $\Sym(N)$ is not a principal ideal domain, the universal coefficient theorem does not apply to the chain complex $\KRC_{\U(N)}$. Our proof of Proposition~\ref{prop:equivariantCohomologyisomorphism} gives an explicit description of the $\U(N)$-equivariant complex up to homotopy from which the analogues of Bar-Natan homology and Lee homology for colored $\sl(N)$ homology can be determined. 

\subsubsection*{Bigradings}

Colored $\sl(N)$ homology is equipped with a homological grading called the $h$-grading and another grading called the $q$-grading. Its graded Euler characteristic with respect to the $h$-grading is the Reshetikhin--Turaev $\sl(N)$ polynomial. To relate the bigrading information of colored $\sl(N)$ homology to $\sr R_N(L)$, we first describe the structure of $\sr R_N(L)$ as a left $\U(N)$-space in more detail. If $L$ is the trefoil or the Hopf link, then $\sr R_N(L)$ turns out to be a disjoint union of finitely many orbits of the $\U(N)$ action given by conjugation. This observation extends to any labeled $2$-bridge knot or link. Hence each connected component of $\sr R_N(L)$ is a homogeneous space $G/K$, which is the space of left cosets of a subgroup $K$ of $G = \U(N)$. The subgroups $K$ that arise for us are always isomorphic to a product of unitary groups. There is a fiber bundle \[
	G/K \to \B K \to \B G
\]obtained by viewing $\B K$ as the quotient of $\E G$ by the action of $K$ induced by the natural action of $G$. The Eilenberg--Moore spectral sequence associated to this bundle has $E_2$-page \[
	\Tor_{H^*(\B G)}(\Z,H^*(\B K))
\]and converges to $H^*(G/K)$. This $\Tor$ group is bigraded; it is the version of $\Tor$ for graded modules over a graded ring, see for example \cite[Chapter 7]{MR1793722}. We call the grading on $\Tor_{H^*(\B G)}(\Z,H^*(\B K))$ arising from the internal cohomological gradings on $H^*(\B K)$ and $H^*(\B G)$ the $q$-grading, and we call the homological grading of $\Tor$ the $h$-grading. Under general conditions on $K$ that are satisfied if $K$ is a product of unitary groups, Gugenheim and May \cite{MR0394720} prove that this spectral sequence degenerates and \[
	H^*(G/K) \cong \Tor_{H^*(\B G)}(\Z,H^*(\B K))
\]as graded abelian groups. The bigrading on $\Tor_{H^*(\B G)}(\Z,H^*(\B K))$ is collapsed to a single grading by declaring that the summand in bidegree $(i,j)$ lies in total degree $i + j$. 
If $V$ is a bigraded abelian group whose direct summand in $(h,q)$-bidegree $(i,j)$ is $V_{i,j}$, then define $h^kq^lV$ by $(h^kq^lV)_{k+i,l+j} = V_{i,j}$.

\begin{prop}\label{prop:bigradedTrefoil}
	Let $K$ be the right-handed trefoil labeled $0 \leq a \leq N$. Then \[
		\sr R_N(K) = \bigsqcup_{l = \max(2a - N,0)}^a \U(N)/K_l
	\]where $K_l \subseteq \U(N)$ is the subgroup $\U(l) \x \Delta\!\U(a - l) \x \U(N - 2a + l)$ of block diagonal matrices of the form \[
		\begin{pmatrix}
			U\\ & V\\ & & V\\ & & & W
		\end{pmatrix} \in \U(N) \quad\qquad U \in \U(l),\quad V \in \U(a - l),\quad W \in \U(N - 2a + l)
	\]and there is an isomorphism of bigraded abelian groups \[
		\KR_N(K) \cong \bigoplus_{l=\max(2a - N,0)}^a h^{-2(a-l)}q^{a(N-a)+2(a-l)(a-l+1)} \Tor_{H^*(\BU(N))}(\Z,H^*(\B K_l)).
	\]
\end{prop}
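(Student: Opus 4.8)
The plan is to establish the two assertions separately --- the decomposition of $\sr R_N(K)$ into $\U(N)$-orbits, and the bigraded description of $\KR_N(K)$ --- and then to match them term by term, using the Gugenheim--May identification $H^*(\U(N)/K_l) \cong \Tor_{H^*(\BU(N))}(\Z, H^*(\B K_l))$ recalled above (valid since each $K_l$ is a product of unitary groups) to see that the second computes $H^*(\sr R_N(K))$.

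\textbf{The representation variety.} First I would use the Wirtinger presentation $\pi_1(\R^3\setminus K) = \langle x,y \mid xyx = yxy\rangle$ with $x,y$ meridians, so that a point of $\sr R_N(K)$ is a pair $(X,Y)$ of matrices in the conjugacy class $C_a$ of $\Phi_a$ with $XYX = YXY$. Since every element of $C_a$ is $e^{a\pi i/N}$ times a Hermitian involution, I write $X = e^{a\pi i/N}(\Id - 2P)$ and $Y = e^{a\pi i/N}(\Id - 2Q)$ for rank-$a$ orthogonal projections $P,Q$; the scalar cancels from both sides, and the braid relation becomes a relation in $P$ and $Q$ alone. Setting $l = \dim(\Image P \cap \Image Q)$, I would decompose $\C^N$ orthogonally as $(\Image P \cap \Image Q) \oplus (\Image P + \Image Q)^\perp \oplus B$, of dimensions $l$, $N - 2a + l$ (which forces $l \geq 2a - N$), and $2(a - l)$; both $P$ and $Q$ preserve this decomposition, act as the identity and as zero on the first two summands, and restrict to the interacting block $B$ as rank-$(a-l)$ projections onto subspaces in general position. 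The braid relation, restricted to $B$, rigidifies the pair: all principal angles between the two $(a-l)$-dimensional images are forced to a single fixed value (reflecting that $K$ is the $(2,3)$-torus knot), so by Witt's theorem and the principal-angle classification of pairs of subspaces, $\U(N)$ acts transitively on the locus with a given $l$. A direct computation of the stabilizer of this standard configuration gives exactly $K_l = \U(l) \x \Delta\!\U(a - l) \x \U(N - 2a + l)$ --- the middle diagonal factor being the unitaries on $B$ that commute with both projections --- so $\sr R_N(K) = \bigsqcup_{l} \U(N)/K_l$.

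\textbf{The homology and the matching.} For the homology side I would compute $\KRC_N$ of the closure of the $2$-strand braid $\sigma_1^3$ directly. Iterating the complex of the colored positive crossing produces a cube of resolutions built from theta-webs labeled by $a$; I would evaluate these webs and simplify the cube by Gaussian elimination so that the surviving summands are indexed by the same parameter $l$. The expectation is that the $l$-summand is precisely a Koszul-type complex computing $\Tor_{H^*(\BU(N))}(\Z, H^*(\B K_l))$, where $H^*(\B K_l) = \Sym(l)\otimes\Sym(a-l)\otimes\Sym(N-2a+l)$ is regarded as a module over $\Sym(N) = H^*(\BU(N))$ via the Chern roots of the block embedding of $K_l$; crucially, the middle factor enters through the \emph{doubled} embedding of $\Delta\!\U(a-l)$, which is exactly what makes the module non-free and produces both the higher $\Tor$ (hence the negative $h$-degrees) and the extra $q$-shift. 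Tracking gradings through the foam evaluations should yield the homological shift $h^{-2(a-l)}$ and the $q$-shift $q^{a(N-a) + 2(a-l)(a-l+1)}$, in which $q^{a(N-a)}$ reproduces the shift present in $\KR_N$ of the unknot labeled $a$ and the remaining contribution comes from the interacting block $B$. Comparing with the first part and the Gugenheim--May identification then gives the claimed isomorphism, and recovers Theorem~\ref{thm:plainisomorphismforHopfLinkAndTrefoil} for the trefoil after collapsing the $\Tor$ bigrading to total degree.

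\textbf{The main obstacle.} The heart of the difficulty is the direct computation of $\KRC_N$: explicit computations of colored $\sl(N)$ homology are precisely what the paper advertises as hard, and the crux is organizing the reduction of the cube of resolutions for $\sigma_1^3$ so that the surviving pieces are manifestly the Koszul complexes computing the $\Tor$ groups, while pinning down every grading shift --- and every torsion summand, since the proposition is an isomorphism of bigraded abelian groups, not merely of ranks --- exactly. The orbit analysis of $\sr R_N(K)$ is comparatively routine, though verifying transitivity and computing the stabilizer uniformly in $l$ still requires care.
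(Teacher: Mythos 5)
Your computation of $\sr R_N(K)$ is essentially the paper's (Proposition~\ref{prop:TrefoilRepSpace}): using $M^{-1} = e^{-2a\pi i/N}M$ on $C_a$, the braid relation becomes $(e^{-2a\pi i/N}M_BM_A)^3 = \Id$, which pins each nonzero principal angle to $\pi/3$; transitivity on each locus and the stabilizer computation give $\U(N)/K_l$. Your target for the homology side is also the correct one --- the $l$-th piece should be a Koszul complex on classes $y_i - z_i$ whose cokernel is $H^*(\B K_l)$, with the diagonal $\Delta\!\U(a-l)$ responsible for the non-freeness and hence the higher $\Tor$ --- and this is exactly what the paper produces.

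The genuine gap is that this central step is asserted rather than proved: ``simplify the cube by Gaussian elimination'' and ``the expectation is that the $l$-summand is precisely a Koszul-type complex'' describe the answer, not an argument, and this is precisely the part of the proposition that occupies most of the paper (Section~\ref{sec:complexesOfHopfAndTref}). Concretely: (i) a direct elimination on the colored cube for $\sigma_1^3$ is not controllable; the paper first simplifies the full twist $\sigma_1^2$ into an explicit double complex by adapting Hogancamp--Rose--Wedrich, and because the coefficients are $\Z$ this needs separate primitivity arguments (Lemmas~\ref{lem:shiftedDifferentialPrimitive} and~\ref{lem:Eiprimitive}, and the two cases in Proposition~\ref{prop:fulltwist}) where over $\Q$ one could rescale; the third crossing is then absorbed by a homological perturbation lemma over a poset (Lemma~\ref{lem:homologicalPerturbationLemma}), whose poset structure is what rules out surviving ``long'' differentials between summands in the same homological degree. (ii) Even with the simplified complex in hand, identifying the dot-map differences $F_j$ with multiplication by $y_j - z_j$ on $H^*_{\U(N)}(\F(l,a-l,a-l;N))$ requires the explicit intertwining isomorphism of Proposition~\ref{prop:generalizedThetaPartialFlag}, and recognizing the Koszul complex as a free resolution of $H^*(\B K_l)$ over $H^*(\BU(N))$ --- rather than merely over $H^*(\B H_l)$ --- uses Borel's theorem (Theorem~\ref{thm:borel}) that $H^*(\B H_l)$ is $H^*(\BU(N))$-free. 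None of these steps can be waved away if the conclusion is to be an isomorphism of bigraded abelian groups, torsion included.
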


\begin{prop}\label{prop:bigradedHopfLink}
	Let $L$ be the positive Hopf link with components labeled $0 \leq a,b \leq N$. Then \[
		\sr R_N(L) = \bigsqcup_{k=\max(a + b - N,0)}^{\min(a,b)} \F(k,a-k,b-k;N)
	\]where $\F(k,a-k,b-k;N)$ denotes the partial flag manifold consisting of triples of pairwise orthogonal vector subspaces of $\C^N$ of dimensions $k,a-k,b-k$. There is an isomorphism of bigraded abelian groups
	\[
		\KR_N(L) \cong \bigoplus_{k=\max(a + b - N,0)}^{\min(a,b)} h^{2k} q^{(a+b)(a+b-N-2k)+2k^2} H^*(\F(k,a-k,b-k;N))
	\]
	where $H^*(\F(k,a-k,b-k;N))$ is supported in $h$-grading zero and its cohomological grading is its $q$-grading.
\end{prop}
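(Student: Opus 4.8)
The plan is to treat the two assertions separately. For the space $\sr R_N(L)$, recall that $\R^3\setminus L$ has fundamental group $\Z^2$ generated by the two meridians $\mu_1,\mu_2$, which commute, so a point of $\sr R_N(L)$ is a commuting pair $(A,B)\in\SU(N)^2$ with $A$ conjugate to $\Phi_a$ and $B$ conjugate to $\Phi_b$. Commuting unitary matrices are simultaneously diagonalizable, so $A$ is determined by its $(-e^{\,a\pi i/N})$-eigenspace $V\in\Gr(a,N)$ and $B$ by its $(-e^{\,b\pi i/N})$-eigenspace $W\in\Gr(b,N)$, and commutativity is equivalent to the splitting $\C^N=(V\cap W)\oplus(V\cap W^\perp)\oplus(V^\perp\cap W)\oplus(V^\perp\cap W^\perp)$. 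Writing $k=\dim(V\cap W)$, the four summands have dimensions $k$, $a-k$, $b-k$, $N-a-b+k$, which must all be nonnegative, forcing $\max(a+b-N,0)\le k\le\min(a,b)$; for fixed $k$ the configurations of such a pair form exactly $\F(k,a-k,b-k;N)$. Each stratum is a single orbit of the conjugation action of the compact group $\U(N)$ on $\sr R_N(L)$, hence closed, and since there are only finitely many, each is also open, which gives the claimed homeomorphism.

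For $\KR_N(L)$ I would work in a combinatorial model of colored $\sl(N)$ homology --- the foam complex of \cite{MR4164001}, equivalently the constructions of \cite{MR3234803,MR3545951} --- and carry out the computation $\U(N)$-equivariantly over $\Sym(N)$, which simultaneously proves Proposition~\ref{prop:equivariantCohomologyisomorphism}. Starting from the standard two-crossing diagram $D$ of the positive Hopf link colored $a,b$, I would expand the two colored-crossing complexes and close up the resulting $2$-braid, presenting $\KRC_{\U(N)}(D)$ as the total complex of a double complex whose vertices are closed webs. Applying the delooping and MOY isomorphisms together with repeated Gaussian elimination, I would reduce this to a complex supported in homological degrees $2k$ for $\max(a+b-N,0)\le k\le\min(a,b)$, the term in degree $2k$ being a quantum-degree shift of the state space of a closed web $\Theta_k$ whose underlying MOY graph is the one associated to flags of type $(k,a-k,b-k)$ in $\C^N$. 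Because the reduced complex is concentrated in even homological degrees, all of its differentials vanish, so $\KR_{\U(N)}(L)$ is the direct sum over $k$ of these shifted state spaces.

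The next step is to identify the pieces. The state space of $\Theta_k$ is $H^*(\B K_k)$ with $K_k=\U(k)\x\U(a-k)\x\U(b-k)\x\U(N-a-b+k)$, by the standard identification of the foam state space of a closed MOY graph with the equivariant cohomology of the associated partial flag variety; bookkeeping the quantum shifts contributed by the two crossing complexes and the delooping isomorphisms then pins down the normalization, giving $\KR_{\U(N)}(L)\cong\bigoplus_k h^{2k}q^{(a+b)(a+b-N-2k)+2k^2}H^*(\B K_k)$. Passing to the non-equivariant theory, the specialization $\Sym(N)\to\Z$ sending each $e_i$ to $0$ recovers $\KRC_N(D)=\Z\otimes_{\Sym(N)}\KRC_{\U(N)}(D)$; since each $H^*(\B K_k)$ is a free $\Sym(N)$-module, the resulting universal-coefficient spectral sequence has no higher $\Tor$ and degenerates, so $\KR_N(L)=\Z\otimes_{\Sym(N)}\KR_{\U(N)}(L)=\bigoplus_k h^{2k}q^{(a+b)(a+b-N-2k)+2k^2}H^*(\F(k,a-k,b-k;N))$, using $\Z\otimes_{\Sym(N)}H^*(\B K_k)=H^*(\F(k,a-k,b-k;N))$ (the Borel picture, valid because $K_k$ is of maximal rank in $\U(N)$). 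As the latter cohomology is torsion-free and concentrated in even degree, it sits in $h$-grading $0$ with its cohomological grading as $q$-grading, exactly as asserted.

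The step I expect to be the main obstacle is the reduction in the second paragraph: choosing the order of the Gaussian eliminations, verifying that nothing further survives so that the reduced complex really is concentrated in even homological degree, and tracking the homological and quantum shifts precisely enough to land on the stated formula. This is a milder instance of the reduction behind Proposition~\ref{prop:bigradedTrefoil}, since there are two crossings rather than three; moreover the subgroups $K_k$ occurring here are of maximal rank --- with no diagonal factor --- which is precisely why the answer is an honest flag-variety cohomology rather than a $\Tor$ group, so much of the needed machinery can be imported from the trefoil computation.
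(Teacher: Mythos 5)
Your proposal is correct and follows essentially the same route as the paper: the representation space is computed exactly as in Proposition~\ref{prop:HopfLinkRepSpace} (simultaneous diagonalization of the commuting meridian images), and the homological computation matches the paper's, which expands one Rickard complex, flattens each resolution's closure to a shifted $\Theta_k$ via fork slides/twists (Lemma~\ref{lem:flatteningTwistClosureWk}), applies the homological perturbation lemma, observes the result is concentrated in even homological degrees so the differential vanishes, identifies each state space with $H^*_{\U(N)}(\F(k,a-k,b-k;N))$ via Proposition~\ref{prop:generalizedThetaPartialFlag}, and specializes to $\Z$ using Borel's theorem. Your ``delooping plus Gaussian elimination on the double complex'' is just a reorganization of the same reduction, and the parity argument you identify as the key point is exactly the one the paper uses.
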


The partial flag manifold $\F(k,a-k,b-k;N)$ is the homogeneous space $\U(N)/H_k$ where $H_k$ is the subgroup $\U(k) \x \U(a - k) \x \U(b - k) \x \U(N - a - b + k)$ consisting of block diagonal matrices. Because $H_k$ and $\U(N)$ have the same rank, a theorem of Borel \cite{MR51508} implies that $\Tor_{H^*(\BU(N))}(\Z,H^*(\B H_k))$ is concentrated in $h$-grading zero, so Propositions~\ref{prop:bigradedTrefoil} and \ref{prop:bigradedHopfLink} are directly analogous.

\subsection{Structure of the paper}

In section~\ref{sec:generalities}, we provide general background on colored $\sl(N)$ homology, all of which is either standard or a straightforward generalization of an existing result. We draw heavily from \cite{MR3545951,MR3590355,MR4164001,MR3877770,https://doi.org/10.48550/arxiv.2107.08117} for this material. 

The bulk of our work is done in section~\ref{sec:complexesOfHopfAndTref}, where we simplify the colored $\sl(N)$ chain complexes associated to the standard diagrams of the Hopf link and the trefoil which have $2$ and $3$ crossings, respectively. The more crossings there are in a diagram, the more complicated the associated complex is. We use a technical trick involving the homological perturbation lemma to reduce away the complexity of one of the crossings. We thereby obtain complexes for the Hopf link and the trefoil that are only as complicated as complexes associated to $1$ and $2$ crossings, respectively. At this stage, the Hopf link complex has no differential but the trefoil complex still admits further simplification. The remaining simplification that we perform comes from adapting the work of Hogancamp, Rose, and Wedrich \cite{https://doi.org/10.48550/arxiv.2107.08117}, who provide a simplification of the complex associated to a full twist on two strands in the context of colored HOMFLYPT homology. 

In section~\ref{sec:SUNRepSpaces}, we compute the representation spaces associated to the Hopf link and the trefoil. We show that they are disjoint unions of homogeneous spaces $G/K$ and explicitly describe the subgroups $K \subseteq G = \U(N)$ that arise. The same technique gives an explicit description of the representation spaces of the $(2,n)$ torus knots and links. Descriptions for all $2$-bridge knots and links are thereby obtained since the representation spaces of a $2$-bridge knot or link are the same as the representation spaces of the $(2,n)$ torus knot or link with the same determinant. We end the section by proving a slight refinement of Gugenheim--May's result concerning module structures needed for Proposition~\ref{prop:moduleandReducedIsomorphism}. 

Finally in section~\ref{sec:proofsofMainResults}, we identify the homologies of the simplified complexes obtained from section~\ref{sec:complexesOfHopfAndTref} with the cohomologies of the representation spaces computed in section~\ref{sec:SUNRepSpaces}. The simplified complex of the Hopf link has no differential, and the identification is an application of Robert--Wagner's work concerning partial flag manifolds \cite[Section 4.2]{MR4164001}. The simplified complex associated to the trefoil splits as a direct sum of complexes, one for each component of $\sr R_N(K)$. The direct summand corresponding to the component $G/K$ turns out to be isomorphic to the tensor product of $\Z$ with a free resolution of $H^*(\B K)$ as a $H^*(\B G)$-module. Hence, the homology of this complex is $\Tor_{H^*(\B G)}(\Z,H^*(\B K))$ which is isomorphic to $H^*(G/K)$ by Gugenheim--May's result. 

\theoremstyle{definition}
\newtheorem*{ack}{Acknowledgments}
\begin{ack}
	I thank Louis-Hadrien Robert, David Rose, Matt Stoffregen, Joshua Sussan, Emmanuel Wagner, and Michael Willis for many helpful discussions and correspondences. 
	I also thank my advisor Peter Kronheimer for his continued guidance, support, and encouragement. This material is based upon work supported by the NSF GRFP through grant DGE-1745303.
\end{ack}

\section{Generalities on colored \texorpdfstring{$\sl(N)$}{sl(N)} homology}\label{sec:generalities}

In section~\ref{subsec:symmetricPolys}, we review relevant aspects of the basic theory of symmetric polynomials. Our exposition is partly drawn from \cite{MR3877770,https://doi.org/10.48550/arxiv.2107.08117}, and a standard reference for this material is \cite{MR1464693}. In sections~\ref{subsec:websAndFoams} and \ref{subsec:foamEvaluationAndMOYCalc}, we review the definitions of $\sl(N)$ webs and foams and Robert and Wagner's combinatorial closed foam evaluation \cite{MR4164001}. A number of local relations on foams that can be derived from Robert--Wagner's evaluation are given. In section~\ref{subsec:RickardComplexes}, we review Rickard complexes, which are the complexes associated to crossings between labeled strands, and the corresponding results concerning invariance under Reidemeister moves. We also define dot maps and review their basic properties in this section. Finally, in section~\ref{subsec:differentVersionsOfLinkInvt}, we provide the definitions of the different versions of colored $\sl(N)$ homology. 

\subsection{Symmetric polynomials}\label{subsec:symmetricPolys}

A \textit{partition} $\lambda$ is a sequence $\lambda = (\lambda_1,\ldots,\lambda_l)$ of positive integers for which $\lambda_1 \ge \cdots \ge \lambda_l > 0$. The numbers $\lambda_1,\ldots,\lambda_l$ are called the \textit{parts} of $\lambda$, and we say that $\lambda$ is a \textit{partition of} $|\lambda| = \lambda_1 + \cdots + \lambda_l$. By convention, we set $\lambda_j = 0$ if $j > l$. A partition is often visually represented by its \textit{Young diagram}; for example, the Young diagram of the partition $(4,3,1)$ is \[
	\ydiagram{4,3,1}
\]Let $P(l,k)$ denote the set of partitions with at most $l$ parts where each part is at most $k$. Let $\mathrm{box}(l,k) \in P(l,k)$ be the partition with $l$ parts all equal to $k$. If $\lambda \in P(l,k)$, then its \textit{complement} $\lambda^c$ in $P(l,k)$ is the partition whose Young diagram is the complement of $\lambda$ within $\mathrm{box}(l,k)$ rotated by $180^\circ$. Its \textit{transpose} $\bar{\lambda} \in P(k,l)$ is obtained by reflecting its Young diagram along its main diagonal. Finally, we let $\hat{\lambda} \in P(k,l)$ denote the transpose of the complement of $\lambda$. For example \[
	\mathrm{box}(2,4) = \begin{gathered}
		\ydiagram{4,4}
	\end{gathered} \qquad \lambda = \begin{gathered}
		\ydiagram{2,1}
	\end{gathered} \qquad \lambda^c = \begin{gathered}
		\ydiagram{3,2}
	\end{gathered} \qquad \hat{\lambda} = \begin{gathered}
		\ydiagram{2,2,1}
	\end{gathered}
\]

Let $\mathbf{A} = \{X_1,\ldots,X_a\}$ be a finite set, thought of as a collection of indeterminates of a polynomial ring. We refer to such a finite set as an \textit{alphabet}. Let $\Sym(\mathbf{A}) = \Z[X_1,\ldots,X_a]^{\fk{S}_a}$ denote the ring of symmetric polynomials in the variables $X_1,\ldots,X_a$. Given a partition $\lambda$ with at most $a$ parts, the \textit{Schur polynomial} $s_\lambda(\mathbf{A})$ associated to $\lambda$ is the symmetric polynomial given by the formula \[
	s_\lambda(\mathbf{A}) = \frac{\det(X_i^{\lambda_j + a - j})}{\prod_{1 \leq i < j \leq a} (X_i - X_j)}
\]where $\det(X_i^{\lambda_j+a-j})$ is the determinant of the $a \x a$ matrix whose $(i,j)$-entry is $X_i^{\lambda_j + a -j}$. The Schur polynomial $s_\lambda(\mathbf{A})$ can also be described by \textit{semi-standard Young tableaux}, which are ways of filling the Young diagram of $\lambda$ with the numbers $1,\ldots,a$, allowing repetition, so that rows are weakly increasing while columns are strictly increasing. The semi-standard Young tableaux for $\lambda = (2,2,1)$ and $a = 3$ are \[
	\begin{ytableau}
		1 & 1\\
		2 & 2\\
		3
	\end{ytableau}\qquad\begin{ytableau}
		1 & 1\\
		2 & 3\\
		3
	\end{ytableau}\qquad\begin{ytableau}
		1 & 2\\
		2 & 3\\
		3
	\end{ytableau}
\]Associated to each tableau is a monomial $X_1^{n_1}\cdots X_a^{n_a}$ where $n_i$ is the number of times $i$ appears in the tableau. The Schur polynomial is the sum of these monomials over all tableaux; for example, \[
	s_{(2,2,1)}(X_1,X_2,X_3) = X_1^2X_2^2X_3^{\phantom{2}} + X_1^2X_2^{\phantom{2}}X_3^2 + X_1^{\phantom{2}}X_2^2X_3^2.
\]Recall that for $i \ge 0$, the \textit{elementary symmetric polynomials} $e_i(\mathbf{A})$ and the \textit{complete homogeneous symmetric polynomials} $h_i(\mathbf{A})$ are polynomials in $\Sym(\mathbf{A})$ defined by \begin{align*}
	e_i(\mathbf{A}) &\coloneq \text{sum of square-free monomials of degree $i$}\\
	h_i(\mathbf{A}) &\coloneq \text{sum of all monomials of degree $i$.}
\end{align*}The Schur polynomial of a partition whose Young diagram consists of a single column is an elementary symmetric polynomial. Similarly, the Schur polynomial of a partition whose Young diagram consists of a single row is a complete homogeneous symmetric polynomial. 

A basis for the free abelian group of homogeneous symmetric polynomials in $\mathbf{A}$ of degree $d$ is given by the Schur polynomials $s_\lambda(\mathbf{A})$ where $\lambda$ ranges over partitions of $d$ with at most $a$ parts. If $\lambda$ and $\mu$ are partitions with at most $a$ parts, there are unique numbers $c_{\lambda\mu}^\nu \in \Z$ for which \[
	s_\lambda(\mathbf{A})\cdot s_\mu(\mathbf{A}) = \sum_{\nu} c_{\lambda\mu}^\nu s_\nu(\mathbf{A})
\]where the sum is over all partitions $\nu$ with at most $a$ parts for which $|\nu| = |\lambda| + |\mu|$. These coefficients are the \textit{Littlewood--Richardson coefficients} and turn out to be independent of $a$.

We recall \textit{Pieri's formula}, which is essentially a formula for the Littlewood--Richardson coefficients in the special case that the Young diagram of $\lambda$ is either a single row or a single column. If $\mu$ is a partition with at most $a$ parts, then \[
	h_k(\mathbf{A})\cdot s_\mu(\mathbf{A}) = \sum_{\nu} s_\nu(\mathbf{A})
\]where the sum is over all partitions $\nu$ with at most $a$ parts for which \begin{itemize}[noitemsep]
	\item $|\nu| = k + |\mu|$ 
	\item the Young diagram of $\mu$ fits inside of the Young diagram of $\nu$
	\item the \textit{skew} Young diagram $\nu\setminus\mu$, obtained from the Young diagram of $\nu$ by deleting the boxes of the Young diagram of $\mu$, has at most one box in each column. 
\end{itemize}Similarly, $e_k(\mathbf{A})\cdot s_\mu(\mathbf{A}) = \sum_\nu s_\nu(\mathbf{A})$ where the sum is over all partitions $\nu$ with at most $a$ parts whose Young diagrams are obtained by adding $k$ boxes to the Young diagram of $\mu$ so that at most one box is added to each row. 

Now consider two disjoint alphabets $\A,\mathbf{B}$ of sizes $a,b$, respectively. Let $\Sym(\A|\mathbf{B})$ denote the ring of polynomials in the variables $\A \cup \mathbf{B}$ that are symmetric in $\A$ and symmetric in $\mathbf{B}$. Note that there is an isomorphism $\Sym(\A|\mathbf{B}) \cong \Sym(\A) \otimes_\Z \Sym(\mathbf{B})$. Since $\Sym(\A \cup \mathbf{B}) \subseteq \Sym(\A|\mathbf{B})$, any symmetric polynomial in $\A\cup\mathbf{B}$ can be expressed as a $\Z$-linear combination of the products $s_\lambda(\A)\cdot s_{\mu}(\mathbf{B})$ where $\lambda$ and $\mu$ range over partitions with at most $a$ and $b$ parts, respectively. It turns out that if $\nu$ is a partition with at most $a + b$ parts, then \[
	s_\nu(\A \cup \mathbf{B}) = \sum_{\lambda,\mu} c_{\lambda\mu}^\nu s_{\lambda}(\A) \cdot s_{\mu}(\mathbf{B})
\]where $c_{\lambda\mu}^\nu$ are again the Littlewood--Richardson coefficients and the sum is over partitions $\lambda,\mu$ with at most $a,b$ parts, respectively. 

Lastly, we note that the elementary and complete homogeneous symmetric polynomials have generating functions \[
	E(\A,t) = \prod_{X \in \A} (1 + Xt) \eqqcolon \sum_{j=0}^\infty e_j(\A) t^j \qquad\qquad H(\A,t) = \prod_{X \in\A} \frac{1}{1 - Xt} \eqqcolon \sum_{j=0}^\infty h_j(\A) t^j.
\]Simple identities between generating functions lead to useful relations among the symmetric polynomials. For example, if $\A$ and $\mathbf{B}$ are disjoint, then the equality $E(\A\cup \mathbf{B},t)\cdot H(\A,-t) = E(\mathbf{B},t)$ implies the following lemma. 
\begin{lem}\label{lem:generatingFunctionIdentity}
	If $\A$ and $\mathbf{B}$ are disjoint alphabets, then $\sum_{i+j=k}(-1)^j e_i(\A \cup \mathbf{B}) h_j(\mathbf{B}) = e_k(\mathbf{A})$.
\end{lem}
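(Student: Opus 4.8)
The plan is to deduce the identity by extracting the coefficient of $t^k$ from a factored generating-function identity, along the lines already indicated by the relation $E(\A\cup\mathbf{B},t)\cdot H(\A,-t) = E(\mathbf{B},t)$ (which, after interchanging the roles of $\A$ and $\mathbf{B}$, is exactly what produces the stated formula). First I would record the two elementary facts about the generating functions, viewed as identities in the ring of formal power series in $t$ over $\Sym(\A|\mathbf{B})$: since $\A$ and $\mathbf{B}$ are disjoint, the product defining $E$ factors as $E(\A\cup\mathbf{B},t) = E(\A,t)\,E(\mathbf{B},t)$; and for a single alphabet one has $E(\mathbf{B},t)\,H(\mathbf{B},-t) = \prod_{X\in\mathbf{B}}(1+Xt)\prod_{X\in\mathbf{B}}(1+Xt)^{-1} = 1$, which is the classical relation $\sum_{i+j=k}(-1)^j e_i(\mathbf{B})h_j(\mathbf{B}) = 0$ for $k>0$.

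Multiplying the factorization of $E(\A\cup\mathbf{B},t)$ by $H(\mathbf{B},-t)$ and cancelling $E(\mathbf{B},t)\,H(\mathbf{B},-t) = 1$ gives $E(\A\cup\mathbf{B},t)\,H(\mathbf{B},-t) = E(\A,t)$. Now I would equate coefficients of $t^k$ on both sides. The right-hand side contributes $e_k(\A)$ by the defining expansion of $E$. On the left-hand side, since $H(\mathbf{B},-t) = \sum_{j\ge 0}(-1)^j h_j(\mathbf{B})\,t^j$, the Cauchy product of the two series contributes $\sum_{i+j=k}(-1)^j e_i(\A\cup\mathbf{B})\,h_j(\mathbf{B})$. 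Comparing the two expressions for the $t^k$-coefficient yields the lemma.

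There is no substantial obstacle here: the argument is purely formal. The only points that deserve a moment's attention are that the disjointness hypothesis on $\A$ and $\mathbf{B}$ is precisely what makes the product over $\A\cup\mathbf{B}$ split as a product over $\A$ times a product over $\mathbf{B}$, and that the alternating sign $(-1)^j$ arises exactly from evaluating the second generating function at $-t$ rather than $t$. If one preferred to avoid formal power series entirely, one could instead argue bijectively by expanding $e_i(\A\cup\mathbf{B})$ according to how many of the variables in each square-free monomial lie in $\mathbf{B}$ and matching the resulting terms against $e_k(\A)$ and $h_j(\mathbf{B})$; but the generating-function proof is cleaner and is the one I would present.
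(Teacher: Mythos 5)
Your proof is correct and is exactly the argument the paper intends: the lemma is stated as an immediate consequence of the generating-function identity $E(\A\cup\mathbf{B},t)\cdot H(\mathbf{B},-t)=E(\A,t)$ (the paper writes this with the roles of $\A$ and $\mathbf{B}$ interchanged), and you have simply supplied the routine verification by factoring $E(\A\cup\mathbf{B},t)=E(\A,t)E(\mathbf{B},t)$, cancelling $E(\mathbf{B},t)H(\mathbf{B},-t)=1$, and comparing coefficients of $t^k$. No gaps.
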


\subsection{Webs and foams}\label{subsec:websAndFoams}

\begin{df}
	A \textit{closed dotted $\sl(N)$ foam $F$ in $\R^3$} consists of: \begin{itemize}[noitemsep]
		\item A compact space $F^2 \subset \R^3$ with compact subsets $F^0 \subset F^1 \subset F^2$ such that \begin{itemize}[noitemsep]
			\item $F^0$ is a finite set of points called \textit{singular points}, 
			\item $F^1\setminus F^0$ is a smoothly embedded $1$-manifold with finitely many components, which are called \textit{bindings},
			\item $F^2\setminus F^1$ is a smoothly embedded $2$-manifold with finitely many components, which are called \textit{facets}.
		\end{itemize}
		\item An orientation of each facet, and an orientation of each binding. 
		\item A label $\ell(f) \in \{1,\ldots,N\}$ for each facet $f$. 
		\item A finite collection of weighted points called \textit{dots} that lie on the (interiors of) facets. The weight $w(d) \in \Z$ of a dot $d$ lying on a facet $f$ must satisfy $1 \leq w(d) \leq \ell(f)$. 
	\end{itemize}We require that \begin{itemize}[noitemsep]
		\item Points on bindings and singular points have neighborhoods in $\R^3$ that intersect $F^2$ in the following local models: \[
			\labellist
			\pinlabel {\Large$\bullet$} at 61.2 87
			\pinlabel {\Large$\bullet$} at 304 55
			\endlabellist
			\begin{gathered}
				\includegraphics[width=.15\textwidth]{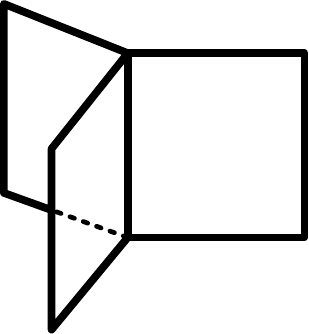}
			\end{gathered} \qquad\qquad \begin{gathered}
				\includegraphics[width=.14\textwidth]{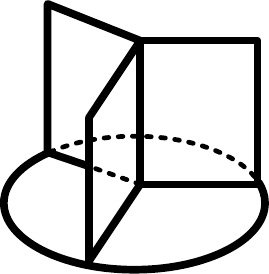}
			\end{gathered}
		\]
		\item The orientation of each binding agrees with the boundary orientations of exactly two of its three adjacent facets. The sum of the labels of those two facets equals the label of the third. 
	\end{itemize}
\end{df}
\begin{rem}\label{rem:decorations}
	Let $F$ be a foam with a facet $f$ labeled $a$. One way to interpret a dot of weight $i$ on $f$ is view it as the $i$th elementary symmetric polynomial $e_i(\mathbf{A})$ where $\mathbf{A}$ is an alphabet of size $a$ associated to $f$. A collection of dots of possibly different weights on $f$ is then thought of as a monomial in the elementary symmetric polynomials. The facet \textit{decorated} by an arbitrary symmetric polynomial $p(\mathbf{A}) \in \Sym(\mathbf{A})$ is to be interpreted as the unique formal linear combination of dotted foams corresponding to the unique expression of $p(\mathbf{A})$ as a polynomial in $e_1(\mathbf{A}),\ldots,e_a(\mathbf{A})$.
\end{rem}

\begin{df}
	A \textit{closed $\sl(N)$ web $W$ in $\R^2$} is an oriented trivalent graph embedded in the plane where multiple edges and circles without vertices are permitted. Each vertex must have both incoming and outgoing edges, and each edge $e$ is given a label $\ell(e) \in \{1,\ldots,N\}$ such that at every vertex, the sum of the labels of the incoming edges equals the sum of the labels of the outgoing edges. 
\end{df}

\begin{rem}
	We use the convention that labels of facets of foams must be positive. A foam having a facet labeled zero is understood to mean the same foam with that facet deleted. Similarly, dots of weight zero and edges of webs labeled zero may be erased. This convention agrees with \cite{MR3877770} but disagrees with \cite{MR4164001} but is ultimately inconsequential. 
\end{rem}

A closed $\sl(N)$ foam $F$ in $\R^3$ is said to transversely intersect a properly embedded surface $P \subset \R^3$ if $P$ is disjoint from all of the singular points and dots of $F$ and transversely intersects each binding and facet. If $P$ is the plane $0 \x \R^2$, then the transverse intersection $W = F \cap P$ is a closed $\sl(N)$ web. The labels on the edges of $W$ are inherited from the labels on the facets of $F$, and orientations of the edges of $W$ are induced from those of the facets of $F$ using the standard orientation of $P$. Similarly, a closed $\sl(N)$ web $W$ in $\R^2$ is said to transversely intersect a given embedded circle $C$ if $C$ is disjoint from the vertices of $W$ and transversely intersects the edges.

\begin{df}
	Let $S \subset \R^2$ be a closed subsurface, which we usually take to be disc.
	An \textit{$\sl(N)$ web $V$} in $S$ is the intersection of a closed web $W \subset \R^2$ with $S$ such that $W$ and $\partial S$ intersect transversely. Its \textit{boundary} $\partial V$ is the intersection $V \cap \partial S$ viewed as a collection of oriented labeled points on $\partial S$, where the label of a point is just the label of the corresponding edge of $V$ and where the orientation of the point indicates whether the edge is incoming or outgoing. A collection $\beta$ of oriented labeled points on $\partial S$ is called a \textit{web-boundary} if for each component $C$ of $\partial S$, the sum of the labels of the positively-oriented points on $C$ equals the sum of the labels of the negatively-oriented points on $C$. 

	Suppose $V_0$ and $V_1$ are webs in $S$ having the same boundary $\beta = \partial V_0 = \partial V_1$. Let $\ol{V}_0$ denote the web obtained from $V_0$ by reversing the orientations of its edges. An \textit{$\sl(N)$ foam $G$} from $V_0$ to $V_1$ is the intersection of a closed $\sl(N)$ foam $F \subset \R^3$ with $[0,1] \x S \subset \R \x \R^2$, subject to the requirement that $F$ and $\partial ([0,1] \x S)$ intersect transversely and this intersection consists of \[
		F \cap (0 \x S) = 0 \x \ol{V}_0 \qquad F \cap ([0,1] \x \partial S) = [0,1] \x \beta \qquad F \cap (1 \x S) = 1 \x V_1. 
	\]
	Facets, bindings, singular points, labels, and dots are all defined in the same way as in the closed case. We identify foams that are isotopic rel boundary. 
\end{df}

Suppose $V_0,V_1,V_2$ are webs in a surface $S$ all having the same web-boundary $\beta$. If $G$ is an $\sl(N)$ foam from $V_0$ to $V_1$ and $H$ is an $\sl(N)$ foam from $V_1$ to $V_2$, then they may be glued together to give an $\sl(N)$ foam $G \cup_{V_1} H$ from $V_0$ to $V_1$. This version of composition is called \textit{vertical composition}, and is obtained by stacking in the ordinary way for cobordisms. 
There is also a notion of \textit{horizontal composition}. Suppose $S$ and $S'$ are subsurfaces of $\R^2$ that intersect along boundary components and whose interiors are disjoint. Then $S \cup S'$ is another subsurface of $\R^2$. If $V \subset S$ and $V' \subset S'$ are webs such that their web-boundaries agree in an orientation-reversing way, then they glue to a web $V \cup V' \subset S \cup S'$. Foams in $[0,1] \x S$ and $[0,1] \x S'$ with compatible boundaries can also be glued. Bar-Natan's canopolies formalism encodes this structure \cite{MR2174270}. See \cite[Section 2.2]{MR3877770} for more details of this structure in the context of webs and foams. 

Finally, we define the degree of an $\sl(N)$ foam from $V_0$ to $V_1$ based on the definitions given in \cite{MR3545951,MR4164001,MR3877770}. 

\begin{df}\label{df:degreeOfFoam}
	Suppose $V_0$ and $V_1$ are $\sl(N)$ webs with the same boundary $\beta$, and let $G$ be an $\sl(N)$ foam from $V_0$ to $V_1$. We define the \textit{degree} of $G$ to be \[
		\deg(G) = - \sum_{\text{facets }f} d(f) + \sum_{\substack{\text{interval}\\\text{bindings }i}} d(i) - \sum_{\substack{\text{singular}\\\text{points }p}} d(p) + \sum_{q \in \beta} \frac{\ell(q)(N - \ell(q))}{2} + \sum_{\text{dots }d} 2w(d)
	\]where \begin{itemize}[noitemsep]
		\item if $f$ is a facet with $\ell(f) = a$, then $d(f) = a(N - a)\chi(f)$ where $\chi$ is the Euler characteristic,
		\item if $i$ is a binding diffeomorphic to an interval (as opposed to a circle) whose adjacent facets are labeled $a,b$, and $a + b$, then $d(i) = ab + (a + b)(N - a - b)$,
		\item if $p$ is a singular point whose adjacent facets are labeled $a,b,c,a + b, b + c, a + b + c$, then $d(p) = ab + bc + ac + (a + b + c)(N - a - b - c)$, 
		\item if $q$ is a point in the web-boundary $\beta = \partial V_0 = \partial V_1$, then $\ell(q)$ is the label of $q$,
		\item if $d$ is a dot, then $w(d)$ is its weight.
	\end{itemize}
\end{df}

\begin{rem}\label{rem:bendingTrick}
	The degree of $\sl(N)$ foams is additive under vertical and horizontal composition. The \textit{identity $\sl(N)$ foam} of an $\sl(N)$ web $V$ is $[0,1]\x V \subset [0,1] \x S$, viewed as an $\sl(N)$ foam from $V$ to $V$. The degree of the identity foam is zero.

	Let $G \subset [0,1] \x D$ be a foam between webs $V_0$ and $V_1$ in a disc $D$. By an isotopy of $G$ keeping $\partial G$ within the boundary of the $3$-ball $[0,1] \x D$, we may slide all of $\partial G$ into $1 \x D$. The result is a foam $G'$ from $\emp$ to the closed web $\ol{V}_0 \cup V_1$. If $\beta = \partial V_0 = \partial V_1$, then \[
		\deg(G') = \deg(G) - \sum_{q\in\beta} \frac{\ell(q)(N - \ell(q))}2.
	\]This procedure is called the \textit{bending trick} in \cite{MR3877770}.
\end{rem}

\subsection{Foam evaluation and MOY calculus}\label{subsec:foamEvaluationAndMOYCalc}

Robert and Wagner \cite{MR4164001} define an explicit combinatorial evaluation \[
	\langle F \rangle \in \Z[X_1,\ldots,X_N]^{\fk{S}_N} = \Sym(N)
\]for closed $\sl(N)$ foams $F \subset \R^3$. The evaluation takes the form of a homogeneous symmetric polynomial in $N$ variables. The variables $X_1,\ldots,X_N$ are defined to be of degree $2$, and with this definition, the degree of $\langle F \rangle$ is the degree of $F$ viewed as a foam from $\emp$ to $\emp$ given in Definition~\ref{df:degreeOfFoam}. 

Let $V_0$ and $V_1$ be $\sl(N)$ webs having the same boundary. Consider the free $\Sym(N)$-module $\smash{\bigoplus_G \Sym(N)} \cdot G$ with basis the set of all foams $G$ from $V_0$ to $V_1$. Any such foam $G$ can be viewed as a foam from $\emp$ to $\ol{V}_0 \cup V_1$, so if $H$ is a foam from $\ol{V}_0 \cup V_1$ to $\emp$, then $G \cup_{\ol{V}_0 \cup V_1} H$ is a closed $\sl(N)$ foam. Define \[
	\Hom^*(V_0,V_1) \coloneq \left(\bigoplus_G \Sym(N) \cdot G\right) \bigg/\!\sim
\]where a finite sum $\sum_i a_i G_i$ is declared to be equivalent to zero if $\sum_i a_i \:\langle G_i \smash{\cup_{\ol{V}_0 \cup V_1}} H\rangle = 0$ for every foam $H$ from $V_0 \cup \ol{V}_1$ to $\emp$. This definition is a version of the universal construction \cite{MR1362791}. Also see \cite[Section 2]{MR3877770}. Our notation $\Hom^*(V_0,V_1)$ indicates that foams $G$ of all degrees are considered; in particular, there is $\Z$-grading \[
	\Hom^*(V_0,V_1) = \bigoplus_{k \in \Z} \Hom^k(V_0,V_1)
\]where a foam $G$ of degree $k$ lies in $\Hom^k(V_0,V_1)$ and the variables $X_1,\ldots,X_N$ are homogeneous of degree $2$. 

\begin{df}\label{df:categoryCb}
	Let $\beta$ be a web-boundary, and let $\sr C(\beta)$ be the following category. For each web $V$ with $\partial V = \beta$ there is a family of objects $q^i V \in \sr C(\beta)$ for $i \in \Z$, called \textit{$q$-shifts} of $V$. An arbitrary object of $\sr C(\beta)$ is a finite formal direct sum of $q$-shifts of webs with boundary $\beta$. In particular, there is a zero object given by the empty direct sum. The morphism space between $q$-shifts of webs is given by \[
		\Hom(q^iV_0,q^jV_1) = \Hom^{i-j}(V_0,V_1)
	\]while a morphism between finite formal direct sums is given by a matrix of morphisms between $q$-shifts of webs. Composition in $\sr C(\beta)$ is given by vertical composition of foams. We let $\sr C(\emp)$ denote the case of closed webs in the plane where $\beta = \emp$. 

	If $W$ is a web and $P = \sum_i a_i q^i$ is a Laurent polynomial with $a_i \in \Z_{\ge 0}$, we let $P W$ denote the formal direct sum $PW\coloneq \bigoplus_i \smash{\left(q^i W \right)^{a_i}}$ viewed as an element in $\sr C(\partial W)$. As another matter of notation, if $W$ is a web with an edge labeled by a number greater than $N$, then we let $q^iW$ denote the zero object in $\sr C(\partial W)$. 
\end{df}

Horizontal composition gives rise to bifunctors on these categories. We refer to \cite[Section 2.2]{MR3877770} for the details of canopolies of webs and foams. 
Since $\sr C(\beta)$ is an additive category, we may consider chain complexes in $\sr C(\beta)$. A chain complex in $\sr C(\beta)$ is a sequence $\{A_i\}_{i\in\Z}$ of objects in $\sr C(\beta)$ with maps $d\colon A_i \to A_{i+1}$ satisfying $d\circ d = 0$. 
If $A$ is a chain complex, we let $h^iA$ denote the complex $(h^iA)_j = A_{j - i}$ with the same differential. 
An object $X \in \sr C(\beta)$ may be viewed as a chain complex supported in homological grading $0$ with trivial differential. 
All chain complexes we consider are bounded, which is to say that $A_i = 0$ for all but finitely many $i \in \Z$. Chain complexes may also be horizontally composed by combining the ordinary tensor product of complex with horizontal composition of foams. Since the differential on a tensor product involves negating the differential on one of the tensor factors, an ordering of the tensor factors is required to define horizontal composition. The resulting complex up to chain isomorphism is independent of this choice of ordering. See \cite{MR3877770} for more details. 

If $W$ is a closed $\sl(N)$ web, let $\langle W\rangle \in \Z[q,q^{-1}]$ be its MOY polynomial \cite{MR1659228}. This polynomial has nonnegative coefficients, is invariant under $q \mapsto q^{-1}$, and can be computed recursively from a list of relations. See for example \cite[Definition 3.2]{MR4164001} for the list. We also note that if $\ol{W}$ is obtained by reversing the orientations of all edges, then $\langle \ol{W} \rangle = \langle W \rangle$. We record in Figure~\ref{fig:MOYcalculus} the MOY calculus relations that we will use in this paper. Recall that the \textit{quantum binomial coefficient} $\qbinom{n}{k} \in \Z[q,q^{-1}]$ is $[n]!/([k]![n-k]!)$ if $0 \leq k \leq n$ and is zero otherwise where $[k]! \coloneq [k][k-1]\cdots[2][1]$ and $[m]\coloneq (q^m - q^{-m})/(q - q^{-1})$.

\begin{figure}[!ht]
	\vspace{5pt}
	\begin{equation}\label{eq:forkRelation}
		\begin{gathered}
			\centering
			\labellist
			\pinlabel {\small$a$} at 0 78
			\pinlabel {\small$b$} at 22 79
			\pinlabel {\small$c$} at 44 78
			\pinlabel {\small$a+b+c$} at 51 5
			\endlabellist
			\includegraphics[width=.072\textwidth]{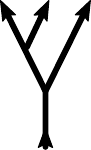}
		\end{gathered}\:\:\:\: = \:\:\:\begin{gathered}
			\centering
			\labellist
			\pinlabel {\small$a$} at 0 78
			\pinlabel {\small$b$} at 22 79
			\pinlabel {\small$c$} at 44 78
			\pinlabel {\small$a+b+c$} at 51 5
			\endlabellist
			\includegraphics[width=.072\textwidth]{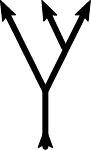}
		\end{gathered} 
	\end{equation}\vspace{5pt}\begin{equation}\label{eq:bigonRelation}
		\begin{gathered}
			\centering
		\labellist
		\pinlabel {\small$a$} at -4 35
		\pinlabel {\small$b$} at 29 36
		\pinlabel {\small${a+b}$} at 31 5
		\endlabellist
		\includegraphics[width=.037\textwidth]{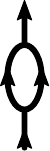}
		\end{gathered}\quad = \qbinom{a+b}{a}\:\: \begin{gathered}
			\centering
			\labellist
			\pinlabel {\small${a+b}$} at 31 5
			\endlabellist
			\includegraphics[width=.037\textwidth]{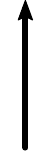}
		\end{gathered} \hspace{90pt}
		\begin{gathered}
			\centering
			\labellist
			\pinlabel {\small${a+b}$} at -15 35
			\pinlabel {\small$b$} at 29 35
			\pinlabel {\small$a$} at 21 4
			\endlabellist
			\includegraphics[width=.037\textwidth]{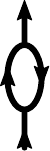}
		\end{gathered} \:\:\:\: = \qbinom{N - a}{b} \:\: \begin{gathered}
			\centering
			\labellist
			\pinlabel {\small${a}$} at 21 4
			\endlabellist
			\includegraphics[width=.037\textwidth]{verticalStrand}
		\end{gathered} 
	\end{equation}\vspace{5pt}\begin{equation}\label{eq:complicatedRelation}
		\begin{gathered}
			\centering
			\labellist
			\pinlabel {\small$a+b-k$} at -12.5 35.5
			\pinlabel {\small$k$} at 25 36
			\pinlabel {\small$a$} at 25 19
			\pinlabel {\small${a-k}$} at 11 32.5
			\pinlabel {\small$b$} at -3 19
			\pinlabel {\small${a-l}$} at 11 16
			\pinlabel {\small$l$} at 25 2.5
			\pinlabel {\small$a+b-l$} at -12.5 3
			\endlabellist
			\includegraphics[width=.07\textwidth]{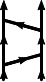}
		\end{gathered} \quad = \sum_{j=\max(a+b-N,0)}^{\min(k,l)} \qbinom{a-k-l+b}{a-k-l+j}\qquad\qquad \begin{gathered}
			\centering
			\labellist
			\pinlabel {\small$a+b-k$} at -12.5 35.5
			\pinlabel {\small$k$} at 25 36
			\pinlabel {\small$j$} at 25 19
			\pinlabel {\small${k-j}$} at 11 32.5
			\pinlabel {\small$a+b-j$} at -12.5 19
			\pinlabel {\small${l-j}$} at 11 16
			\pinlabel {\small$l$} at 25 2.5
			\pinlabel {\small$a+b-l$} at -12.5 3
			\endlabellist
			\includegraphics[width=.07\textwidth]{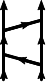}
		\end{gathered}
	\end{equation}
	\vspace{-10pt}
	\captionsetup{width=.8\linewidth}
	\caption{Some relations in MOY calculus. The same relations hold if all pictures are mirrored or if all orientations are reversed. 
	}
	\vspace{-5pt}
	\label{fig:MOYcalculus}
\end{figure}

\begin{thm}[{\cite[Theorem 3.30]{MR4164001}}]\label{thm:RWcategorificationOfMOYCalculus}
	Let $W$ be a closed $\sl(N)$ web. Then $\Hom^*(\emp,W)$ is a finitely generated free $\Sym(N)$-module, and its graded rank coincides with the MOY polynomial of $W$.
	In particular, there are isomorphisms between the two sides of each MOY calculus relation in Figure~\ref{fig:MOYcalculus}, viewed as objects in $\sr C(\beta)$ where $\beta$ depends on the relation in question. 
\end{thm}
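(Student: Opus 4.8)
The plan is to deduce everything from the universal construction together with two features of Robert--Wagner's closed evaluation $\langle-\rangle$: it is multiplicative under disjoint union of foams, and it is explicitly computable. The MOY polynomial $\langle W\rangle$ is uniquely pinned down by a recursion --- the MOY relations (of which Figure~\ref{fig:MOYcalculus} records the ones needed later), together with the values $\langle\bigcirc_a\rangle=\qbinom{N}{a}$ on labeled circles and multiplicativity under disjoint union --- and this recursion terminates at disjoint unions of labeled circles. So it suffices to establish (i) $\Hom^*(\emp,\bigcirc_a)$ is free over $\Sym(N)$ of graded rank $\qbinom{N}{a}$ for every $0\le a\le N$; (ii) $\Hom^*(\emp,-)$ is multiplicative, $\Hom^*(\emp,W_1\sqcup W_2)\cong\Hom^*(\emp,W_1)\otimes_{\Sym(N)}\Hom^*(\emp,W_2)$; and (iii) each MOY relation lifts to an isomorphism in the appropriate $\sr C(\beta)$, with the $q$-shifts and multiplicities exactly those appearing in the relation. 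Granting (i)--(iii), one inducts on the length of a reduction of $W$ to circles: each reduction step replaces a local configuration by a direct sum of $q$-shifts of simpler webs according to a MOY relation, horizontal composition with (iii) turns this into an isomorphism of $\Hom^*(\emp,-)$, and the base case is (i) and (ii). The ``in particular'' clause is then the content of (iii) for the three relations drawn in Figure~\ref{fig:MOYcalculus}.

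For (i) and (ii): a foam from $\emp$ to $\bigcirc_a$ is a dotted, $a$-labeled surface in a ball bounding the circle. Using the neck-cutting, sphere-evaluation, and dot-migration relations among foams derived from $\langle-\rangle$ in section~\ref{subsec:foamEvaluationAndMOYCalc}, every such foam is equivalent to a single disk $D_p$ decorated by a symmetric polynomial $p\in\Sym(\A)$ in an alphabet $\A$ of size $a$; and a generating-function identity of the type behind Lemma~\ref{lem:generatingFunctionIdentity}, applied on a closed foam, lets one rewrite decorations so that $p$ ranges over Schur polynomials $s_\lambda(\A)$ with $\lambda\in P(a,N-a)$. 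This is a $\Sym(N)$-spanning set of size $\binom{N}{a}$ whose degrees match the coefficients of $\qbinom{N}{a}$. For the matching lower bound I would exhibit, for each $\lambda$, a ``dual'' cap foam $H_\mu$ (a disk decorated by a complementary Schur polynomial) so that the matrix of closed evaluations $\bigl(\langle D_{s_\lambda}\cup H_\mu\rangle\bigr)_{\lambda,\mu}$ is triangular with unit entries on the diagonal --- a Schur-orthogonality computation carried out with the explicit evaluation formula. Hence the spanning set is a $\Sym(N)$-basis and $\Hom^*(\emp,\bigcirc_a)$ is free of graded rank $\qbinom{N}{a}$. For (ii): multiplicativity of $\langle-\rangle$ under disjoint union makes the pairing that defines $\Hom^*(\emp,W_1\sqcup W_2)$ factor as the tensor product of the pairings for $W_1$ and $W_2$, giving
\[
	\Hom^*(\emp,W_1\sqcup W_2)\ \cong\ \Hom^*(\emp,W_1)\otimes_{\Sym(N)}\Hom^*(\emp,W_2).
\]

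For (iii): for each MOY relation I would write down explicit foams realizing the isomorphism --- ``zip'' and ``unzip'' foams decorated by Schur polynomials for the bigon relations \eqref{eq:bigonRelation}, the two associator foams for the fork relation \eqref{eq:forkRelation}, and the family of singular foams (decorated, and indexed by the summation set) for the square relation \eqref{eq:complicatedRelation} --- and then verify that the relevant composites equal the identity foam, respectively a diagonal matrix of identity foams with the prescribed $q$-shifts. Because $\Hom^*$ is defined through $\langle-\rangle$, it suffices to check these composite identities after capping off with an arbitrary foam $H$, which is a finite closed-foam computation reducible via the derived foam relations. I expect the genuine obstacle to be the non-degeneracy input hidden in (i) and (iii): the universal construction only produces a quotient module a priori, so both the freeness and the precise rank rest on the cap--cup pairing being perfect, and this is exactly where one cannot avoid unwinding Robert--Wagner's combinatorial evaluation --- producing, for each candidate basis element, enough test foams to detect it and controlling the resulting determinants. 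Within the explicit-foam verification, the square relation \eqref{eq:complicatedRelation}, where the multiplicities are honest quantum binomials rather than $0$ or $1$, is the most delicate bookkeeping.
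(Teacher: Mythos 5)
This statement is not proved in the paper: it is imported verbatim from Robert--Wagner \cite[Theorem 3.30]{MR4164001}, so there is no in-paper argument to compare yours against. Your outline does track the strategy of the cited proof — reduce to labeled circles via categorified MOY relations realized by explicit foams, identify $\Hom^*(\emp,\bigcirc_a)$ with a free $\Sym(N)$-module on Schur decorations $s_\lambda$, $\lambda\in P(a,N-a)$, and obtain freeness and the rank from non-degeneracy of the cup--cap pairing, which is indeed the combinatorial heart of Robert--Wagner's evaluation formula. Two steps you treat as formal deserve flagging. First, multiplicativity in (ii) is not automatic: a foam bounding $W_1\sqcup W_2$ need not be a disjoint union of foams bounding $W_1$ and $W_2$, so surjectivity of the map from the tensor product requires a neck-cutting argument (your relation~\ref{item:thicknilHecke} in closed form), and injectivity then uses the freeness from (i); only after that does multiplicativity of $\langle-\rangle$ finish the job. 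Second, your induction presupposes that every closed $\sl(N)$ web can be reduced to disjoint circles by the MOY moves; this is true but is itself a (standard) lemma, not part of the recursion's definition. With those inputs supplied, the sketch is a faithful reconstruction of the cited argument.
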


\begin{rem}
	In fact, Robert and Wagner show that there is a homogeneous basis $b_1,\ldots,b_m$ for $\Hom^*(\emp,W)$ as a $\Sym(N)$-module where each $b_i$ is a $\Z$-linear combination of foams (rather than simply a $\Sym(N)$-linear combination of foams). The number of the basis elements of degree $k$ is precisely the coefficient of $q^k$ in $\langle W\rangle$. 
\end{rem}

We record some local relations for foams. These are relations on morphisms in $\sr C(\beta)$ where the web-boundary $\beta$ depends on the given relation. Most of these relations are drawn from \cite[Proposition 3.38]{MR4164001}, though some are drawn from \cite{MR3545951} and require checking that they hold equivariantly, which is done in \cite[Proposition 2.20]{MR3877770}. 

\begin{prop}\label{prop:foamRelations}
	\leavevmode\begin{enumerate}
		\item\label{item:sphereRelation} Sphere relations. If $\lambda$ is a partition in $P(a,N-a)$, then\[
			\begin{gathered}
				\centering
				\labellist
				\pinlabel {$s_\lambda$} at 37 55
				\pinlabel {\small$a$} at 6 6
				\endlabellist
				\includegraphics[width=.11\textwidth]{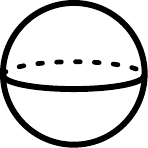}
			\end{gathered}\:\: = \begin{cases}
				(-1)^{a(a+1)/2} & \lambda = \mathrm{box}(a,N-a) \\
				0 & \text{else}
			\end{cases}
		\]where $s_\lambda$ is interpreted as linear combinations of collections of dots as in Remark~\ref{rem:decorations}. 
		\item\label{item:thicknilHecke} Thick nilHecke relations. \[
			\begin{gathered}
				\vspace{-3pt}
				\labellist
				\pinlabel {\small$a$} at 23 51
				\pinlabel {\small$b$} at 23 73
				\pinlabel {\small$a+b$} at -9 57
				\endlabellist
				\includegraphics[width=.13\textwidth]{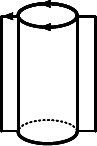}
			\end{gathered}\:\: = \sum_{\lambda \in P(a,b)} (-1)^{|\hat{\lambda}|} \:\: \begin{gathered}
				\vspace{-3pt}
				\labellist
				\pinlabel {$s_\lambda$} at 23 6
				\pinlabel {$s_{\hat{\lambda}}$} at 23 63
				\endlabellist
				\includegraphics[width=.13\textwidth]{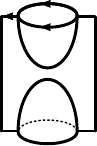}
			\end{gathered}
		\]
		\item Blister relations. If $\lambda \in P(a,b)$ and $\mu \in P(b,a)$, then \[
			(-1)^{|\mu|} \:\:\begin{gathered}
				\vspace{-3pt}
				\labellist
				\pinlabel {\small$a$} at 30 21
				\pinlabel {$s_\lambda$} at 21 20
				\pinlabel {\small$b$} at 16 40
				\pinlabel {$s_\mu$} at 25 41
				\pinlabel {\small$a+b$} at -9 57
				\endlabellist
				\includegraphics[width=.13\textwidth]{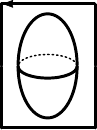}
			\end{gathered}\:\: = \delta_{\mu,\hat{\lambda}} \:\: \begin{gathered}
					\centering
					\includegraphics[width=.13\textwidth]{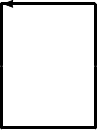}
				\end{gathered} 
		\]where $\delta_{\mu,\hat{\lambda}}$ is $1$ if $\mu = \hat{\lambda}$ and is $0$ otherwise. 
		\item\label{item:matveevPiergalini} Matveev--Piergalini relations. \[
			\begin{gathered}
				\centering
				\labellist
				\pinlabel {\small$a$} at 30 54
				\pinlabel {\small$b$} at -3 62
				\pinlabel {\small$c$} at 4 70
				\pinlabel {\small$a+b+c$} at 60 62
				\endlabellist
				\includegraphics[width=.13\textwidth]{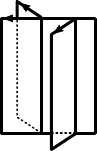}
			\end{gathered}\qquad = \qquad\begin{gathered}
				\centering
				\includegraphics[width=.13\textwidth]{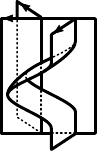}
			\end{gathered}
		\]
		\item\label{item:dotmigrationrelation} Dot-migration relations. If $\nu$ is a partition with at most $a + b$ parts, then \[
			\begin{gathered}
			\centering
			\labellist
			\pinlabel {\small$a+b$} at 123 146
			\pinlabel {\small$a$} at 43 10
			\pinlabel {\small$b$} at -7 150
			\pinlabel {$s_\nu$} at 100 90
			\endlabellist
			\includegraphics[width=.16\textwidth]{binding}
		\end{gathered}\:\: = \sum_{\lambda,\mu} c_{\lambda\mu}^\nu \:\: \begin{gathered}
			\centering
			\labellist
			\pinlabel {$s_\lambda$} at 45 75
			\pinlabel {$s_\mu$} at 25 120
			\endlabellist
			\includegraphics[width=.16\textwidth]{binding}
		\end{gathered}
		\]where the sum is over partitions $\lambda,\mu$ where $\lambda$ has at most $a$ parts and $\mu$ has at most $b$ parts. The numbers $c_{\lambda\mu}^\nu$ are the Littlewood--Richardson coefficients. 
		\item\label{item:foamForkRelation} The following relation holds: \vspace{5pt} \[
			\begin{gathered}
				\centering
				\labellist
				\pinlabel {\small$a + x + y + z$} at 15 90
				\pinlabel {\small$a$} at 121 90
				\pinlabel {\small$z$} at 91 74
				\pinlabel {\small$y$} at 70 73
				\pinlabel {\small$x$} at 49 74
				\pinlabel {\small$b + x + y + z$} at 97 -5
				\pinlabel {\small$b$} at 5 -4
				\endlabellist
				\includegraphics[width=.33\textwidth]{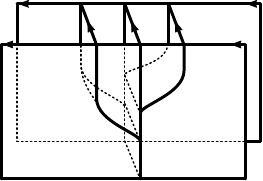}
			\end{gathered} \quad = \quad \begin{gathered}
				\includegraphics[width=.33\textwidth]{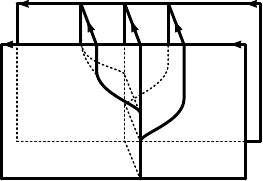}	
			\end{gathered}
		\]\vspace{0pt}
		\item\label{item:twoRungRelation} The following relation holds: \vspace{5pt}\[
			\begin{gathered}
				\centering
				\labellist
				\pinlabel {\small$b + x + y$} at 106 -5
				\pinlabel {\small$b$} at 5 -4
				\pinlabel {\small$y$} at 91 73
				\pinlabel {\small$x$} at 49 74
				\pinlabel {\small$a$} at 121 90
				\pinlabel {\small$a+x+y$} at 25 90
				\endlabellist
				\includegraphics[width=.33\textwidth]{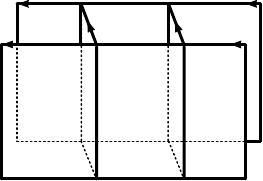}
			\end{gathered} \:\: = \hspace{-6pt}\sum_{\lambda \in P(x,y)}\hspace{-5pt} (-1)^{|\hat{\lambda}|} \:\:\:\: \begin{gathered}
				\centering
				\labellist
				\pinlabel {$s_\lambda$} at 52 30
				\pinlabel {$s_{\hat{\lambda}}$} at 75 53
				\endlabellist
				\includegraphics[width=.33\textwidth]{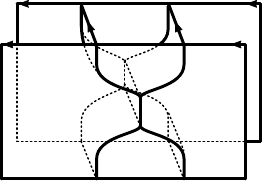}
			\end{gathered}
		\]\vspace{0pt}
		\item\label{item:oneRungRelation} If $\lambda \in P(x,y)$ and $\mu \in P(y,x)$, then \vspace{5pt} \[
			(-1)^{|\mu|}\:\:\begin{gathered}
				\centering
				\labellist
				\pinlabel {\small$b + x + y$} at 106 -5
				\pinlabel {\small$b$} at 5 -4
				\pinlabel {\small$x + y$} at 75 74
				\pinlabel {\small$x$} at 35 44
				\pinlabel {\small$y$} at 90 44
				\pinlabel {$s_\lambda$} at 52 52
				\pinlabel {$s_{\mu}$} at 75 33
				\pinlabel {\small$a$} at 121 90
				\pinlabel {\small$a+x+y$} at 25 90
				\endlabellist
			 	\includegraphics[width=.33\textwidth]{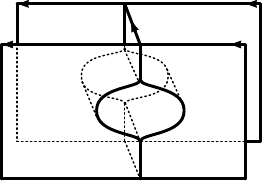}
			 \end{gathered}\:\: = \delta_{\mu,\hat{\lambda}} \:\: \begin{gathered}
				\includegraphics[width=.33\textwidth]{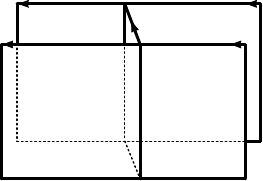}
			\end{gathered}
		\]where $\delta_{\mu,\hat{\lambda}}$ is $1$ if $\mu = \hat{\lambda}$ and is $0$ otherwise.
	\end{enumerate}
\end{prop}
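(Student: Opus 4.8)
The plan is to reduce every item to a statement about Robert--Wagner's closed foam evaluation $\langle\,\cdot\,\rangle$ and then either quote it from the literature or re-derive it. Recall that by the universal-construction definition of $\Hom^*(V_0,V_1)$, a finite combination $\sum_i a_i G_i$ of foams between webs with $\partial V_0=\partial V_1=\beta$ vanishes precisely when $\sum_i a_i\langle G_i\cup_{\ol V_0\cup V_1}H\rangle=0$ for every closing foam $H$. Each relation in the statement is \emph{local}: the two sides are foams agreeing outside a ball $B\subset\R^3$, so to prove the relation in all morphism spaces it suffices to prove the corresponding identity in $\Hom^*(\emp,\ol V_0\cup V_1)$, where by the bending trick of Remark~\ref{rem:bendingTrick} all boundary has been pushed to one side. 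Thus in each case we compare the $\Sym(N)$-valued evaluations of two closed foams against all closures; the degree formula of Definition~\ref{df:degreeOfFoam} shows that both sides are homogeneous of equal degree, which already constrains the possible coefficients.

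For items (1)--(5) --- the sphere, thick nilHecke, blister, Matveev--Piergalini, and dot-migration relations --- these are the foam identities established by Robert and Wagner for their combinatorial evaluation \cite[Prop.~3.38]{MR4164001}, except that the nilHecke and blister relations are stated here in the generality appearing in \cite{MR3545951}; for those one must check that the relation survives passage from the $\sl(N)$-specialized coefficients to the equivariant ring $\Sym(N)=\Z[X_1,\dots,X_N]^{\fk S_N}$, which is carried out in \cite[Prop.~2.20]{MR3877770}. I would organize the argument in the form that minimizes brute-force evaluation: prove the sphere relation (item 1) by a direct computation with the RW formula; then, granting it together with Theorem~\ref{thm:RWcategorificationOfMOYCalculus} (which identifies $\Hom^*(\emp,\cdot)$ of the theta/bigon web with a free $\Sym(N)$-module whose basis consists of Schur-decorated foams indexed by $\lambda\in P(a,b)$), the thick nilHecke relation (item 2) follows by expanding the identity morphism of the merged edge in this basis, the coefficients being forced because the pairing obtained by capping sends $s_\lambda$ on one facet against $s_\mu$ on the other to $\delta_{\mu,\hat\lambda}$ up to the sign $(-1)^{|\hat\lambda|}$ --- which is exactly the blister relation (item 3), itself reduced to the sphere relation via the dot-migration relation (item 5, Littlewood--Richardson bookkeeping) and Remark~\ref{rem:decorations}. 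The Matveev--Piergalini relation (item 4) is the categorified MOY fork relation~\eqref{eq:forkRelation} and follows from \cite[Prop.~3.38]{MR4164001} directly.

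For items (6)--(8) I would argue by isotoping the pictured foams into standard local models already handled. The foam-fork relation (item 6) is obtained by repeatedly applying the Matveev--Piergalini relation (item 4) to slide the three rungs across the singular locus, after which the two sides are isotopic. The two-rung relation (item 7) reduces to the thick nilHecke relation (item 2): using the fork relation to merge the $x$-- and $y$--labeled rung facets into a single $(x+y)$--labeled facet turns the configuration into a bigon between the $a$-- and $b$--labeled sheets, and the nilHecke expansion yields exactly the sum over $\lambda\in P(x,y)$ with sign $(-1)^{|\hat\lambda|}$; dot migration (item 5) distributes the Schur decorations onto the correct facets. The one-rung relation (item 8) follows the same way from the blister relation (item 3), or by capping off a single summand of item (7). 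In each case one rechecks degrees against Definition~\ref{df:degreeOfFoam} to rule out a spurious $q$-shift.

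The genuinely new work --- and \textbf{the main obstacle} --- is the equivariant upgrade: several of these identities were originally proved in \cite{MR3545951} over a quotient of $\Sym(N)$ in which the higher elementary symmetric polynomials vanish, and one must confirm they persist with full $\Sym(N)$-coefficients. This is not conceptually deep but requires care with Robert--Wagner's evaluation formula --- in particular with the sum over admissible colorings, the orientation conventions on bindings and singular points, and the convention that facets labeled $0$ or greater than $N$ are deleted or set to zero. The cleanest route, which I would follow, is to prove the sphere relation by one honest evaluation with $\langle\,\cdot\,\rangle$ and then bootstrap items (2), (3), (5), (7), (8) from it using Theorem~\ref{thm:RWcategorificationOfMOYCalculus} and the ``basis and dual basis under the capping pairing'' principle, so that no further direct computation with the foam evaluation is needed.
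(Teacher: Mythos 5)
Your proposal matches the paper's treatment: the paper gives no independent proof of these relations, but simply records that most are drawn from \cite[Proposition 3.38]{MR4164001} and the rest from \cite{MR3545951}, with the equivariant (full $\Sym(N)$-coefficient) upgrade checked in \cite[Proposition 2.20]{MR3877770} --- exactly the sources and the key issue (equivariance) that you identify. Your additional bootstrapping plan for deriving items (6)--(8) from (1)--(5) via fork/nilHecke/blister moves is a reasonable way to fill in details the paper leaves to the references, but it does not change the essential approach.
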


\subsection{Rickard complexes}\label{subsec:RickardComplexes}

Let $a,b,k$ be integers satisfying $0 \leq a,b \leq N$ and $\max(a + b - N,0) \leq k \leq \min(a,b)$, and consider the web $W_k \coloneq W_k(a,b)$ and the foam $w_k\colon W_k \to W_{k+1}$ given in Figure~\ref{fig:standardweb}. Note that $\partial W_k(a,b)$ is independent of $k$. We denote this common web-boundary $\beta(a,b)$. 

\begin{figure}[!ht]
	\centering
	$W_k\coloneq \qquad\quad\:\:\:\:\begin{gathered}
		\labellist
		\pinlabel {\small$a$} at -3 35.5
		\pinlabel {\small$b$} at 25 36
		\pinlabel {\small$k$} at 25 19
		\pinlabel {\small${b - k}$} at 11 33
		\pinlabel {\small${a + b - k}$} at -12.5 19
		\pinlabel {\small${a - k}$} at 11 16.5
		\pinlabel {\small$a$} at 25 2.5
		\pinlabel {\small$b$} at -3 3
		\endlabellist
		\includegraphics[width=.07\textwidth]{standardWeb}
	\end{gathered}$
	\hspace{40pt}
	$w_k\coloneq \quad \begin{gathered}
		\labellist
		\pinlabel {\small$b$} at 13 79
		\pinlabel {\small$a$} at 120 78
		\pinlabel {\small${k+1}$} at 66 79
		\pinlabel {\small$1$} at 66 43
		\pinlabel {\small$a$} at 6 6
		\pinlabel {\small$b$} at 113 7
		\pinlabel {\small${a + b - k}$} at 66 7
		\endlabellist
		\includegraphics[width=.38\textwidth]{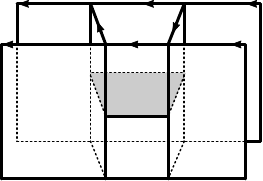}
	\end{gathered}$
	\captionsetup{width=.8\linewidth}
	\caption{Here $0 \leq a,b \leq N$ and $\max(a + b - N,0) \leq k \leq \min(a,b)$. On the left is the web $W_k\coloneq W_k(a,b)$ and on the right is the foam $w_k$ from $qW_k$ to $W_{k+1}$. The web $\ol{W_k}$ is at the bottom of the picture while $W_{k+1}$ is at the top. The remaining labels and orientations of the facets of $w_k$ are uniquely determined by the given data.}
	\vspace{-10pt}
	\label{fig:standardweb}
\end{figure}

\begin{df}\label{df:rickardComplex}
	The \textit{Rickard complex} associated to a positive crossing is the chain complex \[
		\left\llbracket \quad\begin{gathered}
			\vspace{-4pt}
			\labellist
			\pinlabel {\small$b$} at -4 4
			\pinlabel {\small$a$} at 39 3
			\endlabellist
			\includegraphics[width=.05\textwidth]{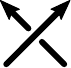}
		\end{gathered}\quad\right\rrbracket \coloneq \left(
		\begin{tikzcd}
			\cdots \ar[r] & h^kq^{-k} W_k \ar[r,"w_k"] & h^{k+1}q^{-(k+1)}W_{k+1} \ar[r] & \cdots
		\end{tikzcd}\right)
	\]where $W_k$ and $w_k$ are given in Figure~\ref{fig:standardweb}. This is a complex in $\sr C(\beta(a,b))$ supported in homological degrees $k$ where $\max(a + b - N,0) \leq k \leq \min(a,b)$. The Rickard complex associated to a negative crossing is the complex \[
		\left\llbracket \quad \begin{gathered}
			\vspace{-4pt}
			\labellist
			\pinlabel {\small$b$} at -4 4
			\pinlabel {\small$a$} at 39 3
			\endlabellist
			\includegraphics[width=.05\textwidth]{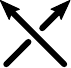}
		\end{gathered} \quad\right\rrbracket\coloneq \left( \begin{tikzcd}
			\cdots \ar[r] & h^{-(k+1)}q^{k+1} W_{k+1} \ar[r,"\ol{w}_k"] & h^{-k}q^k W_k \ar[r] & \cdots
		\end{tikzcd}\right)
	\]where $\ol{w}_k$ is the foam from $W_{k+1}$ to $W_k$ obtained by reflecting $w_k$ across a horizontal plane. This complex is supported in homological degrees $-k$ where $\max(a + b - N,0) \leq k \leq \min(a,b)$.
	We note that our grading and sign conventions match those of Wu \cite{MR3234803} but disagree with those of \cite{MR3877770}. 
\end{df}

Let $D$ be an oriented tangle diagram in a disc where strands are labeled by integers $0 \leq a \leq N$. The boundary of $D$ is naturally a web-boundary. The standard cube of resolutions construction using Rickard complexes to resolve the crossings results in a chain complex $\llbracket D\rrbracket$ in $\sr C(\partial D)$. The construction of $\llbracket D\rrbracket$ requires an arbitrary ordering of the crossings but its chain isomorphism type is independent of the ordering. If $D$ is a diagram in a disc that contains a mix of both crossings and trivalent vertices, then the same procedure may be applied to produce a complex $\llbracket D \rrbracket$ in $\sr C(\partial D)$. 

We collect a number of results concerning the behavior of $\llbracket D \rrbracket$ under local moves. They were proved by Wu \cite{MR3234803,MR2885476} in the setting of matrix factorizations over $\Q$ and were shown to hold nonequivariantly for foams over $\Z$ in \cite{MR3545951}. See \cite[Theorem 3.5]{MR3877770} for the equivariant version for foams over $\Z$. A proof of these relations in a somewhat different context is given in detail in \cite{https://doi.org/10.48550/arxiv.2111.13195}. 

\begin{thm}[{\cite[Theorem 3.5]{MR3877770}}]\label{thm:RmovesandForkMoves}
	\leavevmode
	\begin{enumerate}
		\item Reidemeister moves. Up to homotopy equivalence, $\llbracket - \rrbracket$ is invariant under the Reidemeister II and III moves. For the Reidemeister I move, there are homotopy equivalences \[
		h^a q^{-a(N-a+1)}\left\llbracket \:\:\:\begin{gathered}
			\vspace{-4pt}
			\centering
			\labellist
			\pinlabel {\small$a$} at -5 28
			\endlabellist
			\includegraphics[width=.07\textwidth]{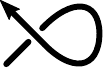}
		\end{gathered}\: \right\rrbracket \:\:\simeq\:\: \left\llbracket \:\:\:\begin{gathered}
			\vspace{-4pt}
			\centering
			\labellist
			\pinlabel {\small$a$} at -5 28
			\endlabellist
			\includegraphics[width=.022\textwidth]{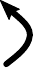}
		\end{gathered}\: \right\rrbracket \:\:\simeq\:\: h^{-a}q^{a(N - a + 1)} \left\llbracket \:\:\:\begin{gathered}
			\vspace{-4pt}
			\centering
			\labellist
			\pinlabel {\small$a$} at -5 28
			\endlabellist
			\includegraphics[width=.07\textwidth]{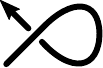}
		\end{gathered}\: \right\rrbracket\vspace{3pt}
	\]
		\item Fork sliding. There is a homotopy equivalence \[
		\left\llbracket\:\:\:\: \begin{gathered}
			\vspace{-3pt}
			\centering
			\labellist
			\pinlabel {\small$a$} at 1 63
			\pinlabel {\small$b$} at 44 65
			\pinlabel {\small$c$} at -5 34
			\pinlabel {\small${a+b}$} at 4 6
			\endlabellist
			\includegraphics[width=.07\textwidth]{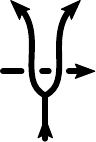}
		\end{gathered}\: \right\rrbracket \:\:\simeq\:\: \left\llbracket\:\:\:\: \begin{gathered}
			\vspace{-3pt}
			\centering
			\labellist
			\pinlabel {\small$a$} at 1 63
			\pinlabel {\small$b$} at 44 65
			\pinlabel {\small$c$} at -5 34
			\pinlabel {\small${a+b}$} at 4 6
			\endlabellist
			\includegraphics[width=.07\textwidth]{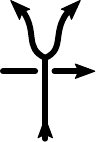}
		\end{gathered}\: \right\rrbracket
	\]There are also equivalences for the other variations of a vertex sliding over or under a strand. 
		\item Fork twisting. There are homotopy equivalences \[
		q^{ab} \left\llbracket \quad\:\begin{gathered}
			\vspace{-3pt}
			\centering
			\labellist
			\pinlabel {\small$a$} at -4 63
			\pinlabel {\small$b$} at 39 65
			\pinlabel {\small${a+b}$} at -2 6
			\endlabellist
			\includegraphics[width=.05\textwidth]{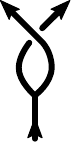}
		\end{gathered}\:\:\:\: \right\rrbracket \:\:\simeq\:\: \left\llbracket \quad\:\begin{gathered}
			\vspace{-3pt}
			\centering
			\labellist
			\pinlabel {\small$a$} at -4 63
			\pinlabel {\small$b$} at 39 65
			\pinlabel {\small${a+b}$} at -2 6
			\endlabellist
			\includegraphics[width=.05\textwidth]{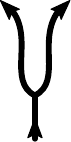}
		\end{gathered}\:\:\:\: \right\rrbracket \:\:\simeq\:\: q^{-ab}\left\llbracket \quad\:\begin{gathered}
			\vspace{-3pt}
			\centering
			\labellist
			\pinlabel {\small$a$} at -4 63
			\pinlabel {\small$b$} at 39 65
			\pinlabel {\small${a+b}$} at -2 6
			\endlabellist
			\includegraphics[width=.05\textwidth]{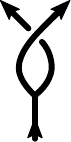}
		\end{gathered}\:\:\:\: \right\rrbracket
	\]There are also equivalences when the orientations of all the edges in these webs are reversed. 
	\end{enumerate}
\end{thm}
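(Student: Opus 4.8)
The plan is to reduce every assertion to a local computation in the categories $\sr C(\beta)$ and then dispatch it with Gaussian elimination. Since horizontal composition of complexes is a functor compatible with homotopy equivalence, and $\llbracket D\rrbracket$ is assembled from Rickard complexes and identity foams by horizontal composition and planar isotopy (the canopolis formalism of \cite[Section 2.2]{MR3877770}), it suffices to establish each equivalence for the small tangle diagrams displayed. The only tools needed beyond the MOY-categorification theorem (Theorem~\ref{thm:RWcategorificationOfMOYCalculus}) and the foam relations of Proposition~\ref{prop:foamRelations} are the cancellation lemma: if a bounded complex contains a summand $A \xrightarrow{\,\cong\,} B$ of the differential, that acyclic piece may be deleted up to homotopy equivalence.

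For Reidemeister~II I would write out the $2\times 2$ cube of resolutions of the two opposite crossings; the four vertices are webs built from the two strands, and by the bigon relation~\eqref{eq:bigonRelation} together with Theorem~\ref{thm:RWcategorificationOfMOYCalculus} the relevant component of the differential is a ``zip'' followed by an ``unzip'' foam, invertible thanks to the thick nilHecke and blister relations in Proposition~\ref{prop:foamRelations}\eqref{item:thicknilHecke}. Deleting that acyclic summand leaves precisely the complex of two parallel strands in homological degree $0$ with trivial $q$-shift. For Reidemeister~I the argument is the same in spirit: resolving the curl on a strand labeled $a$ yields webs containing a circle or bigon, all but one of which cancel in pairs using the sphere and bigon relations of Proposition~\ref{prop:foamRelations}\eqref{item:sphereRelation}; the surviving term is the bare strand, and reading off the degrees of the foams from Definition~\ref{df:degreeOfFoam} together with the $q$-shifts built into Definition~\ref{df:rickardComplex} produces the stated shift $h^{-a}q^{a(N-a+1)}$ (and its inverse for the opposite curl).

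For Reidemeister~III I would reduce to Reidemeister~II in the standard way: resolving a distinguished crossing turns each resolution into a two-crossing configuration to which RII (and, for the labeled version, the fork-sliding relations of part~2, since a labeled RIII amounts to sliding a thick strand past a crossing) applies; alternatively one expands the full $2^3$ cube and Gaussian-eliminates both sides to a common reduced complex. Fork sliding and fork twisting follow the same recipe: resolve the crossing(s) to a finite cube of webs, use the foam relations---chiefly the Matveev--Piergalini relation~\eqref{item:matveevPiergalini} and relations \eqref{item:foamForkRelation}, \eqref{item:twoRungRelation}, \eqref{item:oneRungRelation} of Proposition~\ref{prop:foamRelations}---to recognize the acyclic summands, delete them, and track the degrees to obtain the $q^{\pm ab}$ shifts in part~3.

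The hard part is not conceptual but bookkeeping: orienting every facet of each cube of resolutions consistently, controlling the Koszul signs introduced by horizontal composition of complexes, and---most importantly---checking that each candidate isomorphism in a cube really is invertible over $\Sym(N)$, rather than just over $\Q$ where Wu's matrix-factorization proofs \cite{MR3234803,MR2885476} take place. That last point is precisely where one uses that the relations of Proposition~\ref{prop:foamRelations} hold over $\Z$ and equivariantly, so that all the Gaussian-elimination steps go through integrally and equivariantly; this is the content of \cite[Theorem~3.5]{MR3877770}, to which we defer for the precise argument.
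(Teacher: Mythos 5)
The paper does not prove this theorem: it is stated as a result imported from the literature (Wu's matrix-factorization arguments, their integral foam versions in \cite{MR3545951}, and the equivariant version in \cite[Theorem 3.5]{MR3877770}), and your proposal, after sketching the Gaussian-elimination strategy, likewise defers to that same reference for the actual argument, so the two are consistent. One caveat on the sketch itself: for colored crossings the Reidemeister II cube is not $2\times 2$ but has $(\min(a,b)-\max(a+b-N,0)+1)^2$ vertices coming from the Rickard complexes, so the cancellation pattern is considerably more involved than the uncolored zip--unzip picture suggests.
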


\begin{rem}
	Another common normalization of the gradings in the Rickard complex makes $\llbracket -\rrbracket$ invariant under Reidemeister I moves at the cost of less natural grading shifts for the fork sliding and fork twisting equivalences. See for example \cite{MR3877770}. 
\end{rem}

\begin{df}
	Let $W$ be an $\sl(N)$ web with an edge $e$ labeled $a$. If $p$ is a symmetric polynomial in $a$ variables, then the \textit{dot map} on $e$ associated to $p$, which is represented as \[
		\begin{gathered}
			\centering
			\labellist
			\pinlabel {\large$\bullet$} at 4 13
			\pinlabel {\small$p$} at 14 13
			\endlabellist
			\includegraphics[width=.011\textwidth]{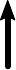}
		\end{gathered}
	\]is defined to be the identity foam of $W$ where the facet corresponding to $e$ is decorated by $p$. This is a map of degree $\deg(p)$ where the variables of $p$ are of degree $2$. A foam with a facet decorated by symmetric polynomial is a linear combination of dotted foams as explained in Remark~\ref{rem:decorations}. 
\end{df}

\begin{rem}\label{rem:centralDotMaps}
	Suppose $V$ and $W$ are webs with the same boundary, and $F$ is a foam from $V$ to $W$. Fix a point $q \in \partial V = \partial W$, and let $e$ and $f$ be the edges of $V$ and $W$, respectively, that are incident to $q$. Then the map given by $F$ from $V$ to $W$ intertwines the dot maps associated to $e$ and $f$. In particular, the dot maps associated to $e$ are central in the algebra of endomorphisms of $V$. 

	Let $D$ be an oriented tangle diagram in a disc with labeled strands. Given a basepoint $r$ on an arc labeled $a$ of $D$ away from crossings, any symmetric polynomial $p$ in $a$ variables defines a chain map $q^{\deg(p)}\llbracket D \rrbracket \to \llbracket D \rrbracket$ which is given on each complete resolution by the dot map on the edge containing $r$. A chain map of this sort is also called a dot map. 
\end{rem}

The following result is sometimes called \textit{dot sliding}, see \cite[Section 5.1]{MR3590355}.

\begin{prop}\label{prop:dotSliding}
	Let $p$ and $q$ be symmetric polynomials in $a$ and $b$ variables, respectively. Then the following dot maps are chain homotopic: \[
		\left\llbracket \:\:\:\begin{gathered}
			\vspace{-4pt}
			\labellist
			\pinlabel {\small$q$} at 23 30
			\pinlabel {\large$\bullet$} at 27 24
			\pinlabel {\small$b$} at -2 4
			\pinlabel {\small$a$} at 37 3
			\endlabellist
			\includegraphics[width=.08\textwidth]{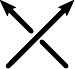}
		\end{gathered}\:\:\:\right\rrbracket \:\:\sim\:\: \left\llbracket \:\:\:\begin{gathered}
			\vspace{-4pt}
			\labellist
			\pinlabel {\small$q$} at 5 12
			\pinlabel {\large$\bullet$} at 9 6
			\pinlabel {\small$b$} at -2 4
			\pinlabel {\small$a$} at 37 3
			\endlabellist
			\includegraphics[width=.08\textwidth]{bigPosCrossing}
		\end{gathered}\:\:\:\right\rrbracket \hspace{30pt} \left\llbracket \:\:\:\begin{gathered}
			\vspace{-4pt}
			\labellist
			\pinlabel {\small$p$} at 14 30
			\pinlabel {\large$\bullet$} at 8.5 24
			\pinlabel {\small$b$} at -2 4
			\pinlabel {\small$a$} at 37 3
			\endlabellist
			\includegraphics[width=.08\textwidth]{bigPosCrossing}
		\end{gathered}\:\:\:\right\rrbracket \:\:\sim\:\: \left\llbracket \:\:\:\begin{gathered}
			\vspace{-4pt}
			\labellist
			\pinlabel {\small$p$} at 32 12
			\pinlabel {\large$\bullet$} at 26.5 6
			\pinlabel {\small$b$} at -2 4
			\pinlabel {\small$a$} at 37 3
			\endlabellist
			\includegraphics[width=.08\textwidth]{bigPosCrossing}
		\end{gathered}\:\:\:\right\rrbracket
	\]The analogous dot maps for the negative crossing are also chain homotopic. 
\end{prop}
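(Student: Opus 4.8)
The plan is to adapt the explicit ``dot sliding'' chain homotopy of Rose \cite[Section~5.1]{MR3590355} from the non-equivariant $\sl(N)$ setting to the present integral and equivariant one, using the foam relations of Proposition~\ref{prop:foamRelations} in the form established over $\Z$ in \cite{MR3877770}. Before building the homotopy I would make two reductions. First, since a dot map depends only on its basepoint edge and decorating symmetric polynomial (Remark~\ref{rem:centralDotMaps}), since dot maps on a fixed edge compose by multiplying polynomials, and since pre- or post-composing a null-homotopic chain map with a chain map is again null-homotopic, it suffices to treat the case where the decorating polynomial is a single elementary symmetric polynomial $e_i$; the general case then follows by multiplicativity and $\Z$-linearity (cf.\ Remark~\ref{rem:decorations}). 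Second, I would dispose of degenerate cases: if some label is $0$ or $N$ the relevant Rickard complex is concentrated in a single homological degree, and on that single web the dot-migration relations (Proposition~\ref{prop:foamRelations}\ref{item:dotmigrationrelation}) identify the decoration at the top of the strand with the one at the bottom on the nose; hence one may assume $1 \le a,b \le N-1$, so that the Rickard complex has at least two nonzero terms.

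By symmetry it then suffices to prove the claim for a dot labeled $e_i$ on the $a$-strand of a positive crossing $X$: the $b$-strand case is proved identically, and the two negative-crossing cases follow by an analogous argument (or from the positive cases via the reflection built into Definition~\ref{df:rickardComplex}, which exchanges the positive and negative Rickard complexes and intertwines the corresponding dot maps). I would write $\llbracket X\rrbracket = (\cdots \to h^{k}q^{-k}W_k \xrightarrow{w_k} h^{k+1}q^{-(k+1)}W_{k+1} \to \cdots)$ as in Definition~\ref{df:rickardComplex} and Figure~\ref{fig:standardweb}, and let $\delta^{\mathrm{bot}}$ and $\delta^{\mathrm{top}}$ be the chain endomorphisms decorating, on each $W_k$, the external $a$-labeled facet at the bottom, respectively the top, of the $a$-strand by $e_i$; these are chain maps by Remark~\ref{rem:centralDotMaps}. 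The plan is to exhibit a homotopy $H$ with components $H_k\colon h^{k}q^{-k}W_k \to h^{k-1}q^{-(k-1)}W_{k-1}$, each obtained from the reflected differential foam $\ol{w}_{k-1}$ of Figure~\ref{fig:standardweb} decorated by a single dot distributed over the facets created by the zip (following \cite[Section~5.1]{MR3590355}), and to verify that $w_{k-1}\circ H_k + H_{k+1}\circ w_k = \delta^{\mathrm{top}} - \delta^{\mathrm{bot}}$ on each $W_k$.

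The verification of this identity is the substantive step and the main obstacle. It reduces to computing the two ``zip-then-unzip'' composites $w_{k-1}\ol{w}_{k-1}$ and $\ol{w}_k w_k$ on $W_k$ as decorated identity foams: using the bigon and Matveev--Piergalini moves together with the thick nilHecke and blister relations of Proposition~\ref{prop:foamRelations}, each composite collapses to an explicit sum of dotted copies of the identity foam of $W_k$; then the dot-migration relations (Proposition~\ref{prop:foamRelations}\ref{item:dotmigrationrelation}) move all dots onto the external $a$-labeled facets, and a generating-function identity in the style of Lemma~\ref{lem:generatingFunctionIdentity} collapses the resulting alternating sum to precisely $\delta^{\mathrm{top}} - \delta^{\mathrm{bot}}$. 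The bookkeeping of signs and of the index shifts in $k$ requires care, and one must confirm that each foam relation used holds equivariantly over $\Z$; the latter is exactly what \cite{MR3877770} provides. Once the identity is checked for all $k$, Proposition~\ref{prop:dotSliding} follows.
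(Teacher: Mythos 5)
Your reductions are sound: degenerate labels, the passage from general $p$ to $e_i$ via $\delta^{\mathrm{top}}_{pq}-\delta^{\mathrm{bot}}_{pq}=\delta^{\mathrm{top}}_p(\delta^{\mathrm{top}}_q-\delta^{\mathrm{bot}}_q)+(\delta^{\mathrm{top}}_p-\delta^{\mathrm{bot}}_p)\delta^{\mathrm{bot}}_q$, and the reflection argument for the negative crossing are all fine. The difficulty is that everything you have actually argued stops exactly where the proof has to begin. The components $H_k$ are not pinned down: for $e_i$ with $i>1$ the homotopy is not ``a single dot distributed over the facets created by the zip'' but $\ol{w}_{k-1}$ decorated by a degree-$(i-1)$ expression of the form $\sum_{s+t=i-1}(\pm1)^{?}\,e_s\otimes h_t$ spread over the facets adjacent to the $1$-labelled rung, and the identity $w_{k-1}H_k+H_{k+1}w_k=\delta^{\mathrm{top}}-\delta^{\mathrm{bot}}$ is asserted rather than verified. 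That anticommutator computation --- resolving $w_{k-1}\ol{w}_{k-1}$ and $\ol{w}_kw_k$ via relations \ref{item:thicknilHecke}, \ref{item:twoRungRelation}, \ref{item:oneRungRelation} of Proposition~\ref{prop:foamRelations}, migrating dots with \ref{item:dotmigrationrelation}, and collapsing the alternating sums --- is the entire content of the proposition, so as written the proposal has a genuine gap at its only substantive step.

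It is worth knowing that the paper deliberately avoids this computation. The identity you would be reproving is precisely the ``square flop'' relation of the extended graphical calculus for categorified quantum $\sl(2)$ \cite[Lemma 4.6.4]{MR2963085}, interpreted in the foam category of \cite{MR3545951}; that relation holds equivariantly over $\Z$ because the Queffelec--Rose defining relations do \cite[Proposition 2.20]{MR3877770}; and Rose--Wedrich already derived dot sliding from square flop \cite[Proof of Proposition 5.7]{MR3590355}. So the paper's proof is a three-step citation chain, whereas yours would be a self-contained foam calculation. The latter is a legitimate and arguably more transparent route, but to make it a proof you must either carry out the decorated nilHecke computation in full (with the signs and the $k$-shifts), or recognize that what you are computing is the square flop relation and cite it as the paper does.
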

\begin{proof}
	This is the equivariant version of \cite[Proposition 5.7]{MR3590355}, which is proved using the foam categories of Queffelec and Rose \cite{MR3545951}. These foam categories are defined by a series of local relations \cite[Equations (3.8)--(3.20)]{MR3545951}. These defining foam relations are imposed so that, among other things, the relations of the extended graphical calculus for categorified quantum $\sl(2)$ \cite{MR2963085}, suitably interpreted as relations among foams, are valid. The key relation in the extended graphical calculus needed for dot sliding is the ``square flop'' relation \cite[Lemma 4.6.4]{MR2963085}. By \cite[Proposition 2.20]{MR3877770}, the equivariant versions of Queffelec--Rose's defining relations are valid, so the equivariant version of the square flop relation is also valid. Rose--Wedrich's proof of dot sliding from the square flop relation \cite[Proof of Proposition 5.7]{MR3590355} finishes the argument. 
\end{proof}

\subsection{Different versions of the link invariant}\label{subsec:differentVersionsOfLinkInvt}

Let $D$ be a diagram of an oriented link $L$ with labeled components. Recall that $D$ induces a framing of $L$ called the \textit{blackboard framing}. If $L$ is a knot, then the blackboard framing is the Seifert framing plus the writhe of the diagram. By Theorem~\ref{thm:RmovesandForkMoves}, the chain homotopy type of $\llbracket D\rrbracket$ is an invariant of $L$ viewed as an oriented framed link with labeled components. To obtain a homological invariant, we pass $\llbracket D \rrbracket$ from the additive category $\sr C(\emp)$ to an abelian category and then take homology.

\begin{df}\label{df:stateSpace}
	The \textit{state space} $\sr F(W)$ of a closed $\sl(N)$ web $W$ is defined to be the graded finitely-generated free $\Sym(N)$-module $\Hom^*(\emp,W)$. This assignment extends to a $\Sym(N)$-linear additive functor $\sr F$ defined on $\sr C(\emp)$ sending $q^iW$ to $q^i\sr F(W)$. If $G$ is an $\sl(N)$ foam from $W_0$ to $W_1$ of degree $k$, viewed as a map from $q^{i+k}W_0$ to $q^{i}W_1$ in $\sr C(\emp)$, then $\sr F(G)$ is the grading-preserving map \[
		q^{i+k}\,\sr F(W_0) \to q^{i}\,\sr F(W_1)
	\]given by vertical composition $F \in \Hom(\emp,W_0) \mapsto F \cup_{W_0} G \in \Hom(\emp,W_1)$. 
\end{df}

The isomorphism type of $\sr F(W)$ is straightforward to determine using MOY calculus by Theorem~\ref{thm:RWcategorificationOfMOYCalculus}, but the maps induced by foams are less straightforward to determine explicitly. 
Since $\sr F$ is an additive functor, if $A$ is a chain complex in $\sr C(\emp)$ then $\sr F(A)$ is naturally a chain complex of graded $\Sym(N)$-modules. 

\begin{df}
	Let $D$ be a diagram of an oriented link $L$ with labeled components. The \textit{equivariant colored $\sl(N)$ complex} of $D$ is defined to be \[
		\KRC_{\U(N)}(D) \coloneq \sr F(\llbracket D \rrbracket).
	\]The chain homotopy type of $\KRC_{\U(N)}(D)$ is an invariant of the oriented framed labeled link $L$. 
	The \textit{equivariant colored $\sl(N)$ homology} of the oriented framed labeled link $L$, denoted $\KR_{\U(N)}(L)$, is the homology of $\KRC_{\U(N)}(D)$. 
\end{df}

\begin{rem}
	Different authors use different normalizations to obtain a link invariant that is independent of the framing. See \cite[Definition 3.3]{MR3877770} and \cite[Definition 12.16]{MR3234803} for two different conventions. Since these normalizations are local, they also shift gradings based on pairwise linking numbers of different components of the link. We work with the framed link invariant for simplicity. Furthermore, we adopt the convention that if $L$ is an oriented labeled link without an explicit framing, then we give each component its Seifert framing to define the colored $\sl(N)$ homology of $L$. The computations in Tables~\ref{table:trefoil24} to \ref{table:trefoil36} are with respect to the Seifert framing of the trefoil. 
\end{rem}

\begin{example}
	Let $U^a$ denote the unknot labeled $a$. The equivariant colored $\sl(N)$ homology of $U^a$ with the Seifert framing is \[
		\KR_{\U(N)}(U^a) \cong q^{-a(N-a)} H^*_{\U(N)}(\G(a,N))
	\]supported in homological grading zero. 
\end{example}

The nonequivariant version of colored $\sl(N)$ homology is defined using the evaluation \[
	\langle F \rangle_\Z\coloneq \langle F \rangle \,\big|_{X_1 = \cdots = X_N = 0} \in \Z
\]obtained from the Robert--Wagner evaluation by setting all of the variables equal to zero. For webs $V_0,V_1$ with the same boundary, we set \[
	\Hom^*_\Z(V_0,V_1) \coloneq \left(\bigoplus_G \Z \cdot G \right)\bigg/\!\sim
\]where $\sum_i a_i G_i \sim 0$ if $\sum_i a_i \: \langle G_i \cup_{\ol{V}_0 \cup V_1} H\rangle_\Z = 0$ for every foam $H$ from $\emp$ to $V_0 \cup \ol{V}_1$. We then let $\sr C_\Z(\beta)$ be the category having the same objects of $\sr C(\beta)$, but its morphism spaces are built from $\Hom^*_\Z(V_0,V_1)$ instead of $\Hom^*(V_0,V_1)$. In particular, morphisms consist only of $\Z$-linear combinations of foams rather than $\Sym(N)$-linear combinations. Note that there is an additive functor $\sr C(\beta) \to \sr C_\Z(\beta)$ arising from the natural maps $\Hom^*(V_0,V_1) \to \Hom^*_\Z(V_0,V_1)$. If $A$ is a chain complex in $\sr C(\beta)$, we let $A_\Z$ denote the resulting complex in $\sr C_\Z(\beta)$. Finally, we define the state space functor $\sr F_\Z$ taking $q^iW \in \sr C_\Z(\emp)$ to $q^i\Hom^*_\Z(\emp,W)$, viewed as a graded abelian group. It follows from the proof of \cite[Theorem 3.30]{MR4164001} that $\sr F_\Z(W)$ and $\sr F(W) \otimes_{\Sym(N)} \Z$ are naturally isomorphic functors. In particular, the Poincar\'e polynomial of $\sr F_\Z(W)$ is precisely the MOY polynomial of $W$.

\begin{df}
	Let $D$ be a diagram of an oriented labeled link $L$. The \textit{colored $\sl(N)$ complex} of $D$ is defined to be \[
		\KRC_N(D) \coloneq \sr F_\Z(\llbracket D \rrbracket_\Z) \cong \KRC_{\U(N)}(D) \otimes_{\Sym(N)} \Z
	\]The chain homotopy type of $\KRC_N(D)$ is an invariant of the oriented framed labeled link $L$. 
	The \textit{colored $\sl(N)$ homology} of $L$, denoted $\KR_N(L)$, is the homology of $\KRC_N(D)$. 
\end{df}

\begin{example}
	The colored $\sl(N)$ link homology of the unknot $U^a$ labeled $a$ with the Seifert framing is \[
		\KR_N(U^a) = q^{-a(N-a)} H^*(\G(a,N))
	\]supported in homological grading zero. 
\end{example}

To describe the module structures on colored $\sl(N)$ homology, we first point out a tautological module structure arising from the bending trick described in Remark~\ref{rem:bendingTrick}. If $V$ is a web in a disc, then \[
	\Hom^*_\Z(V,V) = \Hom^{* - s}_\Z(\emp,\ol{V} \cup V) = q^{s}\sr F_\Z(\ol{V} \cup V)
\]where $s$ is a grading shift depending only on $\partial V$ given explicitly in Remark~\ref{rem:bendingTrick}. We may view this identification as providing an action of the state space $\sr F_\Z(\ol{V}\cup V)$ on $V$. In particular, if $U^a$ is an unknot labeled $a$, then there is an action of $\KR_N(U^a) = \sr F_\Z(U^a)$ on the web consisting solely of a single strand labeled $a$. This is just another viewpoint on the dot maps discussed in section~\ref{subsec:RickardComplexes}, with an added description of relations that the dot maps satisfy. By horizontal composition, $\KR_N(U^a)$ acts on any web with a distinguished strand labeled $a$. Finally, if $D$ is an oriented link diagram with labeled components together with a basepoint on an arc labeled $a$, then there is an induced action of $\KR_N(U^a)$ on $\KRC_N(D)$ through chain maps. Since the action is through chain maps, there is an induced action of $\KR_N(U^a)$ on $\KR_N(L)$. This module structure only depends on the component of $L$ containing the basepoint, which follows from the argument in \cite[Section 3]{MR2034399}. 

\begin{df}
	Let $D$ be a diagram of an oriented labeled link $L$, and let $p \in D$ be a basepoint on an arc labeled $a$ away from crossings. The \textit{reduced colored $\sl(N)$ complex} of $D$ with respect to $p$ is defined to be \[
		\ol{\KRC}_N(D,p) \coloneq q^{-a(N-a)}([\G(a,N)]\cdot\KRC_N(D))
	\]where $[\G(a,N)] \in H^{2a(N-a)}(\G(a,N))$ is the fundamental class of $\G(a,N)$, and it acts on $\KRC_N(D)$ by the module structure induced by the basepoint $p$. The chain homotopy type of $\ol{\KRC}_N(D,p)$ is an invariant of the oriented framed labeled link $L$ together with its basepoint $p$. The \textit{reduced colored $\sl(N)$ homology} of $L$ with respect to $p$, denoted $\ol{\KR}_N(L,p)$ is the homology of $\ol{\KRC}_N(D,p)$. 
\end{df}
\begin{example}
	The reduced colored $\sl(N)$ homology of the unknot $U^a$ labeled $a$ with the Seifert framing is \[
		\ol{\KR}_N(U^a,p) = \Z
	\]supported in bigrading $(0,0)$. 
\end{example}

\section{Colored \texorpdfstring{$\sl(N)$}{sl(N)} complexes of the Hopf link and the trefoil}\label{sec:complexesOfHopfAndTref}

In section~\ref{subsec:homologicalPerturbationLemma}, we prove a version of the homological perturbation lemma. In section~\ref{subsec:HopfLinkComplex}, we apply the lemma in a simple way to show that the complex associated to the Hopf link is homotopy equivalent to a complex with no differential. We then turn our attention to simplifying the complex associated to the trefoil, where the bulk of our work is to adapt \cite[Theorem 3.24]{https://doi.org/10.48550/arxiv.2107.08117} to our setting. We follow their proof closely, but because they work with chain complexes with rational coefficients, we frequently are required to use different arguments in order to work with integer coefficients. They work with (nonequivariant) singular Soergel bimodules in the context of HOMFLYPT homology instead of $\sl(N)$ foams, but translating between these two languages is fairly standard \cite[Appendix A]{https://doi.org/10.48550/arxiv.2107.08117}. In section~\ref{subsec:shiftedRickardComplexes}, we adapt \cite[Proposition 2.31]{https://doi.org/10.48550/arxiv.2107.08117} concerning shifted Rickard complexes, which are homotopy equivalent to complexes associated to a crossing with a ``rung''. In section~\ref{subsec:fullTwist}, we finish proving our adaptation of \cite[Theorem 3.24]{https://doi.org/10.48550/arxiv.2107.08117}. Finally, in section~\ref{subsec:trefoilComplex}, we apply the homological perturbation lemma to complete our simplification of the complex associated to the trefoil.

\subsection{A homological perturbation lemma}\label{subsec:homologicalPerturbationLemma}

We review strong deformation retracts and prove a version of the homological perturbation lemma. Our chain complexes always lie in $\sr C(\beta)$ for a web-boundary $\beta$, though the results in this section are valid for bounded complexes in an additive category. 

\begin{df}\label{df:strongDeformationRetract}
	A \textit{strong deformation retract} of a chain complex $A$ onto a chain complex $\ol{A}$ consists of chain maps $\pi\colon A \to \ol{A}$ and $\iota\colon \ol{A} \to A$ and a homotopy $h\colon A \to A$ for which \[
		\pi\circ \iota = \Id_{\ol{A}} \qquad \iota\circ \pi - \Id_A = d\circ h + h\circ d
	\]and $h\circ \iota = 0$, $\pi\circ h = 0$, and $h^2 = 0$. These last three identities are called the \textit{side conditions}. If $\pi,\iota,h$ form a strong deformation retract that does not necessarily satisfy the side conditions, then $h$ can be replaced by $h'' = h'dh'$ where $h' = (\Id - \iota\pi)h(\Id - \iota\pi)$ so that $\pi,\iota,h''$ is a strong deformation retract satisfying the three side conditions \cite[Section 2]{MR893160}. 
\end{df}

\begin{df}\label{df:splitsOverPoset}
	Let $P$ be a finite poset and let $A$ be a chain complex. A \textit{splitting of $A$ over $P$} is a direct sum decomposition \[
		A = \bigoplus_{p \in P} A_p \qquad d = \sum_{p,q\in P} d_{q,p}
	\]for which the component of the differential $d_{q,p}\colon A_p \to A_q$ is zero unless $p \leq q$. 
\end{df}

\begin{examples}
	A chain complex that splits over the two element poset $\{0,1\}$ is essentially just a mapping cone. Let $A$ be the mapping cone of a chain map $f\colon B \to C$, and set $A_0 = B$ and $A_1 = hC$ with $d_{0,0} = d_B$, $d_{1,1} = -d_C$, and $d_{1,0} = f$, depicted as \[
		\begin{tikzcd}
			A_0 \ar[loop left,"d_{0,0}"] \ar[r,"d_{1,0}"] & A_1 \ar[loop right,"d_{1,1}"]
		\end{tikzcd}
	\]

	Let $P = \{(x,y) \in \Z^2 \:|\: 0 \leq y \leq x \leq 2\}$ with standard poset structure that $(x,y) \leq (x',y')$ if and only if $x \leq x'$ and $y \leq y'$. A complex $A$ that splits over $P$ may be depicted as \[
		\begin{tikzcd}[row sep=large]
			& & A_{22}\\
			& A_{11} \ar[ru] \ar[r] & A_{21} \ar[u]\\
			A_{00} \ar[rruu,bend left] \ar[r] \ar[ru] \ar[rru] \ar[rr,bend right] & A_{10} \ar[r] \ar[u] \ar[ru] \ar[ruu] & A_{20} \ar[u] \ar[uu,bend right]
		\end{tikzcd}
	\]with the understanding that there is an arrow from each term to itself. Note that there are no arrows between $A_{11}$ and $A_{20}$ since $(1,1)$ and $(2,0)$ are incomparable in $P$. 
\end{examples}

We provide a homological perturbation lemma for complexes that split over a finite poset. See for example \cite[Section 2.7]{https://doi.org/10.48550/arxiv.2205.12798} for the same result with characteristic two coefficients in the case that $P$ is the hypercube $\{0,1\}^k$. 

\begin{lem}[(Homological Perturbation Lemma)]\label{lem:homologicalPerturbationLemma}
	Let $A$ be a complex with a splitting over a finite poset $P$. Suppose that for each $p \in P$, there is a strong deformation retract \[
		\pi_p\colon A_p \to \ol{A}_p \qquad \iota_p\colon \ol{A}_p \to A_p \qquad h_p\colon A_p \to A_p
	\]of $A_p = (A_p,d_{p,p})$ onto a complex $\ol{A}_p = (\ol{A}_p,\ol{d}_{p})$. Then there is a strong deformation retract of $A$ onto the complex $\ol{A} = \bigoplus_{p} \ol{A}_p$ with differential $\ol{d} = \sum_{p\leq q} \ol{d}_{q,p}$ where $\ol{d}_{p,p} \coloneq \ol{d}_p$ and for $p < q$, \[
		\ol{d}_{q,p} \coloneq \sum_{p = p_1 <\cdots < p_k = q} \pi_{p_k} \circ d_{p_k,p_{k-1}}\circ h_{p_{k-1}} \circ \cdots \circ d_{p_3,p_2} \circ h_{p_2} \circ d_{p_2,p_1} \circ \iota_{p_1}.
	\]The strong deformation retract $\pi\colon A \to \ol{A}$, $\iota\colon \ol{A} \to A$, $h\colon A \to A$ is given in components by $\pi_{p,p} = \pi_p$, $\iota_{p,p} = \iota_p$, $h_{p,p} = h_p$ and for $p < q$, \begin{align*}
		\pi_{q,p} &\coloneq \sum_{p = p_1 <\cdots < p_k = q} \pi_{p_k} \circ d_{p_k,p_{k-1}}\circ h_{p_{k-1}} \circ \cdots \circ d_{p_3,p_2} \circ h_{p_2} \circ d_{p_2,p_1} \circ h_{p_1}\\
		\iota_{q,p} &\coloneq \sum_{p = p_1 <\cdots < p_k = q} h_{p_k} \circ d_{p_k,p_{k-1}}\circ h_{p_{k-1}} \circ \cdots \circ d_{p_3,p_2} \circ h_{p_2} \circ d_{p_2,p_1} \circ \iota_{p_1}\\
		h_{q,p} &\coloneq \sum_{p = p_1 <\cdots < p_k = q} h_{p_k} \circ d_{p_k,p_{k-1}}\circ h_{p_{k-1}} \circ \cdots \circ d_{p_3,p_2} \circ h_{p_2} \circ d_{p_2,p_1} \circ h_{p_1}. 
	\end{align*}
\end{lem}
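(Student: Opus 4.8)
The plan is to recognise this as an instance of the classical homological perturbation lemma, applied to the ``diagonal'' part of $d$, with the finiteness of $P$ supplying the nilpotence that makes all the series involved terminate.

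First I would split off the diagonal differential. Write $d = d^0 + \delta$ where $d^0 \coloneq \sum_{p\in P} d_{p,p}$ collects the diagonal components and $\delta \coloneq \sum_{p < q} d_{q,p}$ collects the strictly increasing ones. Taking the $(p,p)$-component of $d\circ d = 0$, and using that the only index $q$ with $p \leq q \leq p$ is $q = p$, gives $d_{p,p}\circ d_{p,p} = 0$; hence $d^0$ is a differential and $(A, d^0) = \bigoplus_{p\in P}(A_p, d_{p,p})$ as chain complexes. Consequently $\pi^0 \coloneq \bigoplus_p \pi_p$, $\iota^0 \coloneq \bigoplus_p \iota_p$, $h^0 \coloneq \bigoplus_p h_p$ assemble into a strong deformation retract of $(A, d^0)$ onto $\bigl(\ol A, \bigoplus_p \ol d_p\bigr)$, the three side conditions holding because they hold summand by summand, and $\delta$ is a perturbation of $d^0$ since $d^0 + \delta = d$ squares to zero.

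Next I would observe that $\delta$ strictly raises the $P$-grading while $h^0$ preserves it, so $h^0\delta$ and $\delta h^0$ strictly raise it; because $P$ is finite this makes them nilpotent, so $\Id - h^0\delta$ and $\Id - \delta h^0$ are invertible with inverses the \emph{finite} sums $\sum_{n\ge 0}(h^0\delta)^n$ and $\sum_{n\ge 0}(\delta h^0)^n$. The classical perturbation lemma (see, e.g., \cite{MR893160}) then produces a strong deformation retract of $(A, d) = A$ onto $(\ol A, \ol d)$, with the side conditions preserved, given by
\[
	\ol d = \ts\bigoplus_p \ol d_p + \pi^0\,\delta\,(\Id - h^0\delta)^{-1}\iota^0, \qquad \pi = \pi^0(\Id - \delta h^0)^{-1},
\]
\[
	\iota = (\Id - h^0\delta)^{-1}\iota^0, \qquad h = (\Id - h^0\delta)^{-1}h^0.
\]
The remaining step is purely combinatorial bookkeeping: expanding, say, $\delta(h^0\delta)^{k-1}$ and reading right to left, each factor $\delta$ contributes one off-diagonal component $d_{p_{i+1},p_i}$ and each factor $h^0$ the matching $h_{p_i}$, so the $(q,p)$-component of each correction term is a sum over chains $p = p_1 < \cdots < p_k = q$; pre- and post-composing with $\iota_{p_1}$, $\pi_{p_k}$, $h_{p_1}$, or $h_{p_k}$ as dictated by the four displayed formulas recovers precisely the stated expressions for $\ol d_{q,p}$, $\pi_{q,p}$, $\iota_{q,p}$, $h_{q,p}$, while the diagonal components receive no correction because $\delta$ strictly raises the $P$-grading.

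The main obstacle is one of exposition rather than mathematics: if one prefers to keep section~\ref{subsec:homologicalPerturbationLemma} self-contained and not quote the perturbation lemma as a black box, one must check by hand that the perturbed data is still a strong deformation retract --- above all that $h\circ h = 0$, $h\circ\iota = 0$, $\pi\circ h = 0$ and $\iota\circ\pi - \Id = d\circ h + h\circ d$ survive. This is the familiar telescoping computation: substitute the summand-wise relation $\iota_p\circ\pi_p - \Id = d_{p,p}\circ h_p + h_p\circ d_{p,p}$ into each chain-indexed term and use $h_p\circ h_p = 0$, $h_p\circ\iota_p = 0$, $\pi_p\circ h_p = 0$ to cancel all the cross terms. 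It is routine but notation-heavy, which is exactly why routing the proof through the classical perturbation lemma, whose preservation of the side conditions is standard, is the cleaner path.
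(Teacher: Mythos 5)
Your proposal is correct, but it proves the lemma by a genuinely different route than the paper does. You decompose $d = d^0 + \delta$ into its diagonal and strictly increasing parts, observe that $h^0\delta$ is nilpotent because $\delta$ strictly raises the $P$-degree and $P$ is finite, and then invoke the classical basic perturbation lemma, recovering the chain-indexed formulas by expanding the geometric series $(\Id - h^0\delta)^{-1}$. The paper instead verifies the $P = \{0,1\}$ case by hand and then inducts on $|P|$ by peeling off a minimal element $p$ and viewing $A$ as split over $\{0,1\}$ with $A_0 = A_p$ and $A_1 = A_{P\setminus p}$; the bulk of its work is the bookkeeping showing that the composite of the two-step retracts reproduces the stated chain sums. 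Your approach buys brevity and places the result in its standard context, at the cost of outsourcing to the literature the fact that the perturbed data again satisfies the three side conditions (this is true precisely because you check they hold for the unperturbed direct-sum retract, and is proved for instance in \cite{MR893160}, which the paper already cites for a related point); the paper's induction buys self-containedness and avoids any appeal to the classical lemma. Your expansion bookkeeping is right: in $\pi^0\delta(h^0\delta)^{k-2}\iota^0$ each $\delta$ contributes one $d_{p_{i+1},p_i}$ and each $h^0$ the matching $h_{p_i}$, and the diagonal components receive no correction, so the four displayed formulas come out exactly as stated. The only small gap is expository: you should either cite the side-condition preservation explicitly or note, as you suggest, that the telescoping verification is routine given the summand-wise side conditions.
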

\begin{proof}
	We first verify the lemma directly for the poset $P = \{0,1\}$. We then show that the general case follows from this case by induction. Let $(A,d)$ be a complex that splits over $P = \{0,1\}$. In particular, we have \[
		\begin{tikzcd}
			A_0 \ar[loop left,"d_{0,0}"] \ar[r,"d_{1,0}"] & A_1 \ar[loop right,"d_{1,1}"]
		\end{tikzcd}
	\]where $d_{0,0}d_{0,0} = 0$, $d_{1,1}d_{1,1} = 0$, and $d_{1,0}d_{0,0} + d_{1,1}d_{1,0} = 0$. Then $(\ol{A},\ol{d})$ is \[
		\begin{tikzcd}[column sep=large]
			\ol{A}_0 \ar[loop left,"\ol{d}_0"] \ar[r,"\pi_1 d_{1,0} \iota_0"] & \ol{A}_1 \ar[loop right, "\ol{d}_1"]
		\end{tikzcd}
	\]which is easily verified to be a chain complex. The maps $\pi\colon A \to\ol{A}$, $\iota\colon \ol{A} \to A$, and $h\colon A \to A$ are given by \[
		\pi = \begin{tikzcd}[row sep=large,column sep=large]
			A_0 \ar[d,"\pi_0"] \ar[rd,"\pi_1d_{1,0}h_0"] & A_1 \ar[d,"\pi_1"]\\
			\ol{A}_0 & \ol{A}_1
		\end{tikzcd} \qquad \iota = \begin{tikzcd}[row sep=large,column sep=large]
			A_0 & A_1 \\
			\ol{A}_0 \ar[u,"\iota_0"] \ar[ru,"h_1d_{1,0}\iota_0"] & \ol{A}_1 \ar[u,"\iota_1"]
		\end{tikzcd} \qquad h = \begin{tikzcd}[column sep=large]
			A_0 \ar[loop left,"h_0"] \ar[r,"h_1d_{1,0}h_0"] & A_1 \ar[loop right,"h_1"]
		\end{tikzcd}
	\]It is straightforward to check that $\pi$ and $\iota$ are chain maps and that $\iota\pi - \Id = dh + hd$. In the verification of the identity $\pi \iota = \Id$, the component of $\pi\iota$ from $\ol{A}_0$ to $\ol{A}_1$ is $\pi_1h_1d_{1,0}\iota_0 + \pi_1d_{1,0}h_0\iota_0$ where both terms are zero by the side conditions $\pi_1h_1 = 0$ and $h_0\iota_0 = 0$. The side conditions $\pi h = 0$, $h \iota = 0$, and $h^2 = 0$ are also all straightforward. 

	We now prove the result by induction on the size of the poset $P$. If $P$ has one element, the claim is tautological. For the inductive step, let $p \in P$ be a minimal element and let $Q = P\setminus p$. Then $A_Q \coloneq \bigoplus_{q \in Q} A_q$ is a subcomplex of $A$ that splits over the poset $Q$. By induction, the homological perturbation lemma applies to $A_Q$, giving a strong deformation retract onto $\smash{(\ol{A}_Q,\ol{d}_Q)}$. Now observe that $A$ splits over the poset $\{0,1\}$ with $A_0 = A_p$ and $A_1 = A_Q$. The case of the lemma for complexes that split over $\{0,1\}$ gives a strong deformation retract from $A$ to $\smash{\ol{A}_p \oplus \ol{A}_Q}$. It therefore suffices to check that $\smash{\ol{A}_p \oplus \ol{A}_Q}$ is indeed the complex $\smash{\ol{A}}$ described in the lemma, and that the strong deformation retract is given by the maps described in the lemma. This is straightforward; for example, we compute the induced differential $\smash{\ol{d}_{q,p}\colon \ol{A}_p \to \ol{A}_q}$ for some $q \in Q$ satisfying $p < q$. Let $d_{Q,p}\colon A_p \to A_Q$ denote $\sum_{p < q'} d_{q',p}$, and let $\pi_{q,Q}\colon A_Q \to \smash{\ol{A}_q}$ denote $\pi_{q} + \sum_{q_1 < q}\pi_{q,q_1}$. Then $\smash{\ol{d}_{q,p}}$ is \begin{align*}
		\pi_{q,Q}\circ d_{Q,p}\circ \iota_p &= \pi_{q,Q}\circ \left(d_{q,p} + \sum_{p < q_1 < q} d_{q_1,p}\right)\circ \iota_p\\
		&= \pi_{q} \circ d_{q,p}\circ \iota_p + \sum_{p < q_1 < q} \pi_{q,q_1}\circ d_{q_1,p}\circ \iota_p\\
		&= \pi_{q} \circ d_{q,p}\circ \iota_p + \sum_{p < q_1 < q} \left(\sum_{q_1 < q_2 < \cdots < q_k = q} \pi_{q_k}\circ d_{q_k,q_{k-1}}\circ h_{q_{k-1}}\circ\cdots\circ d_{q_2,q_1}\circ h_{q_1} \right)\circ d_{q_1,p}\circ\iota_p\\
		&= \sum_{p < q_1 < \cdots < q_k = q} \pi_{q_k}\circ d_{q_k,q_{k-1}} \circ h_{q_{k-1}}\circ\cdots\circ h_{q_1} \circ d_{q_1,p} \circ\iota_p
	\end{align*}as claimed. The other computations are similar. 
\end{proof}

\subsection{The Hopf link complex}\label{subsec:HopfLinkComplex}

\begin{lem}\label{lem:flatteningTwistClosureWk}
	If $\max(a + b - N,0) \leq k \leq \min(a,b)$, then there is a chain homotopy equivalence 
	\[
		\left\llbracket \:\:\begin{gathered}
			\vspace{-3pt}
			\centering
			\labellist
			\pinlabel {\small$a$} at 29 83
			\pinlabel {\small$b$} at 74 84
			\pinlabel {\small${b-k}$} at 51 84
			\pinlabel {\small$k$} at 75 55
			\pinlabel {\small${a-k}$} at 51 54
			\pinlabel {\small$a$} at 71 26
			\pinlabel {\small$b$} at 31 27
			\endlabellist
			\includegraphics[width=.17\textwidth]{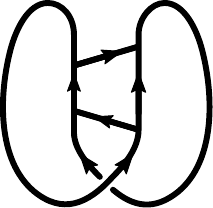}
		\end{gathered}\:\: \right\rrbracket \:\:\simeq\:\: h^kq^{ab - k(N+1)} \left\llbracket \qquad\quad\:\:\begin{gathered}
			\vspace{-3pt}
			\centering			
			\labellist
			\pinlabel {\small${b-k}$} at 30 80
			\pinlabel {\small$k$} at 59 44
			\pinlabel {\small${a+b-k}$} at -22 46
			\pinlabel {\small$a$} at 33 55
			\pinlabel {\small${a - k}$} at 35 18
			\pinlabel {\small$b$} at 70 5
			\endlabellist
			\includegraphics[width=.13\textwidth]{Theta}
		\end{gathered}\:\: \right\rrbracket
	\]
\end{lem}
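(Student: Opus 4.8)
The plan is to resolve the single crossing in the closure on the left‐hand side using the Rickard complex of Definition~\ref{df:rickardComplex}, to identify the resulting closed webs by MOY calculus, and then to collapse the complex by repeated Gaussian elimination. The web on the left is the closure of the tangle obtained by stacking a single positive crossing between an $a$‑labeled and a $b$‑labeled strand on top of the web $W_k = W_k(a,b)$ of Figure~\ref{fig:standardweb}. Expanding that crossing and isotoping its resolutions down onto $W_k$, the complex $\llbracket\,\cdot\,\rrbracket$ of the left‐hand side is isomorphic in $\sr C(\emp)$ to the total complex
\[
	\cdots\longrightarrow h^{j}q^{-j}\,\mathrm{cl}\!\left(W_j\cdot W_k\right)\xrightarrow{\ \mathrm{cl}(w_j)\ }h^{j+1}q^{-(j+1)}\,\mathrm{cl}\!\left(W_{j+1}\cdot W_k\right)\longrightarrow\cdots,
\]
where $j$ runs over $\max(a+b-N,0)\le j\le\min(a,b)$, the object $\mathrm{cl}(W_j\cdot W_k)$ is the closed web obtained by closing up the two stacked rungs $W_j$ and $W_k$, and the differential is induced by the foam $w_j$ of Figure~\ref{fig:standardweb} capped off by the identity foam on the closure arcs.

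First I would identify each $\mathrm{cl}(W_j\cdot W_k)$. Because it consists of two stacked rungs joining the two closed strands, relation~\eqref{eq:complicatedRelation} of Figure~\ref{fig:MOYcalculus} rewrites $W_j\cdot W_k$ as a direct sum, with quantum‑binomial multiplicities, of single rungs; closing up and using the digon relations~\eqref{eq:bigonRelation} then identifies $\mathrm{cl}(W_j\cdot W_k)$ with a direct sum of $q$‑shifted copies of the $\Theta$‑webs $\Theta_i$, for $\max(a+b-N,0)\le i\le\min(j,k)$, where $\Theta_k$ is the web on the right‐hand side. By Theorem~\ref{thm:RWcategorificationOfMOYCalculus} each of these web identifications is realized by a pair of mutually inverse foams, so conjugating the complex above by them yields an isomorphic complex whose chain objects are sums of $q$‑shifted $\Theta_i$'s and whose differential is an explicit matrix of foams between $\Theta$‑webs. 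As an alternative to this step, one could first apply fork sliding and fork twisting (Theorem~\ref{thm:RmovesandForkMoves}) to move the crossing next to the trivalent vertex of $W_k$ where the edges labeled $a-k$ and $b$ meet the edge labeled $a+b-k\le N$ before resolving; I would use whichever version makes the bookkeeping shorter.

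Next I would collapse this complex. The claim to establish is that, modulo the foam relations of Proposition~\ref{prop:foamRelations}, the matrix of foams is as nondegenerate as the gradings permit: every chain‑object summand other than a single copy of $\Theta_k$ in homological degree $k$ is paired off with a neighboring summand by an invertible component of $\mathrm{cl}(w_j)$. Granting this, iterated Gaussian elimination --- equivalently, one application of Lemma~\ref{lem:homologicalPerturbationLemma} with $P$ the totally ordered index set $\{\,j:\max(a+b-N,0)\le j\le\min(a,b)\,\}$ and with each $A_j$ strongly deformation retracted onto its surviving $\Theta_k$ summand (or onto $0$) --- gives a strong deformation retract of the whole complex onto a single shifted copy of $\llbracket\Theta_k\rrbracket$ sitting in homological degree $k$. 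Verifying this nondegeneracy, by evaluating the composites of $w_j$ with the MOY‑identification foams using the sphere, thick nilHecke, blister, and dot‑migration relations of Proposition~\ref{prop:foamRelations}, is the step I expect to be the main obstacle: the subtlety is that we work over $\Z$ rather than $\Q$, so the signs $(-1)^{|\hat\lambda|}$ and $(-1)^{|\mu|}$ in those relations must be tracked carefully --- exactly the kind of integral refinement of the Hogancamp--Rose--Wedrich arguments carried out elsewhere in this section.

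Finally I would pin down the grading shift. The surviving $\Theta_k$ comes from the $j=k$ term of the Rickard complex, contributing $h^{k}q^{-k}$, and the residual factor $q^{ab-kN}$ is the degree of the foam identifying the $\Theta_k$ summand of $\mathrm{cl}(W_k\cdot W_k)$, computed either directly from Definition~\ref{df:degreeOfFoam} or, more quickly, from the $q$‑degree shifts in~\eqref{eq:bigonRelation} and~\eqref{eq:complicatedRelation} together with the MOY polynomial of $\mathrm{cl}(W_k\cdot W_k)$. Multiplying, $q^{-k}\cdot q^{ab-kN}=q^{ab-k(N+1)}$, which is the asserted shift, completing the homotopy equivalence.
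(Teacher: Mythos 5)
Your primary route --- resolve the crossing into its Rickard complex, MOY-decompose the closed webs $\mathrm{cl}(W_j\cdot W_k)$ into $q$-shifted $\Theta_i$'s, and then Gaussian-eliminate down to a single $\Theta_k$ --- is not the paper's argument, and as written it has a genuine gap exactly where you flag one. The entire content of the lemma is concentrated in the claim that ``every chain-object summand other than a single copy of $\Theta_k$ is paired off with a neighboring summand by an invertible component of $\mathrm{cl}(w_j)$.'' You do not verify this, and over $\Z$ it is not a routine check: one must show that specific composites of $w_j$ with the MOY-identification foams are $\pm\Id$ rather than merely nonzero integer multiples of a generator. This is precisely the kind of primitivity statement that the paper spends real effort on elsewhere (Lemma~\ref{lem:shiftedDifferentialPrimitive}, the proofs of Propositions~\ref{prop:shiftedRickardComplex} and~\ref{prop:fulltwist}), and there it is established only by indirect arguments (closing off with inverse twists, comparing lowest $q$-degrees of summands) rather than by direct evaluation with the relations of Proposition~\ref{prop:foamRelations}. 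Deferring it as ``the main obstacle'' leaves the proof incomplete; there is also no a priori argument in your outline that the eliminated complex is supported in a single homological degree, which you would need before concluding the differential on the survivor vanishes.

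The paper avoids all of this. Its proof never resolves the crossing: it fork-slides the trivalent vertex of $W_k$ through the crossing (Theorem~\ref{thm:RmovesandForkMoves}), then applies a ``fork-twist variant'' --- itself decomposed into a Reidemeister~I move, a fork slide, and an ordinary fork twist --- and one more fork twist. Each of these moves is an already-established homotopy equivalence carrying an explicit $h$- and $q$-shift, so the collapse to a single homological degree and the shift $h^kq^{ab-k(N+1)}$ come out automatically, with no Gaussian elimination and no integrality issues. This is essentially the ``alternative'' you mention in passing for the web-identification step, but you do not pursue it; had you done so you would have found that it replaces the entire elimination argument, not just the bookkeeping. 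To salvage your route you would either need to carry out the invertibility verification over $\Z$ in full, or switch to the fork-slide/fork-twist argument outright.
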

\begin{proof}
	We claim that there are homotopy equivalences \[
		\left\llbracket \:\:\begin{gathered}
			\vspace{-3pt}
			\centering
			\includegraphics[width=.17\textwidth]{twistClosureWk}
		\end{gathered}\:\: \right\rrbracket \:\simeq\: \left\llbracket \begin{gathered}
			\vspace{-3pt}
			\centering
			\includegraphics[width=.17\textwidth]{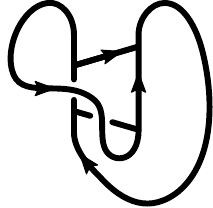}
		\end{gathered}\:\: \right\rrbracket \:\simeq\: h^{k-a}q^{(a-k)(N-a+1)} \left\llbracket \begin{gathered}
			\vspace{-3pt}
			\centering
			\includegraphics[width=.17\textwidth]{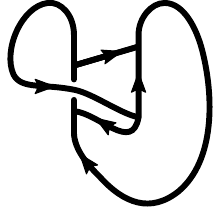}
		\end{gathered}\:\: \right\rrbracket
	\]The first equivalence is fork-sliding (Theorem~\ref{thm:RmovesandForkMoves}), while the second arises
	from the following variation of a fork-twist \begin{align*}
		\left\llbracket \quad\: \begin{gathered}
			\centering
			\labellist
			\pinlabel {\small$k$} at 12 65
			\pinlabel {\small${a-k}$} at -3 34
			\pinlabel {\small$a$} at 0 5
			\endlabellist
			\includegraphics[width=.07\textwidth]{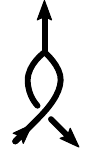}
		\end{gathered}\: \right\rrbracket &\:\:\simeq\:\: h^{k-a}q^{(a-k)(N-a+k+1)} \left\llbracket \:\begin{gathered}
			\centering
			\includegraphics[width=.07\textwidth]{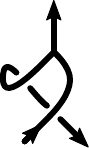}
		\end{gathered}\: \right\rrbracket\\
		&\:\:\simeq\:\: h^{k-a}q^{(a-k)(N-a+k+1)} \left\llbracket \:\begin{gathered}
			\centering
			\includegraphics[width=.07\textwidth]{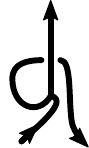}
		\end{gathered}\: \right\rrbracket \:\:\simeq\:\: h^{k-a}q^{(a-k)(N-a+1)} \hspace{8pt}\left\llbracket \:\begin{gathered}
			\centering
			\includegraphics[width=.07\textwidth]{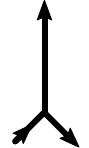}
		\end{gathered}\: \right\rrbracket 
	\end{align*}where the first equivalence is a Reidemeister I move, the second is a fork-slide, and the third is an ordinary fork-twist. Similarly, \[
		h^{k-a}q^{(a-k)(N-a+1)} \left\llbracket \begin{gathered}
			\vspace{-3pt}
			\centering
			\includegraphics[width=.17\textwidth]{firstForkTwist}
		\end{gathered}\:\: \right\rrbracket \:\:\simeq\:\: h^{k}q^{ab - k(N+1)} \left\llbracket \begin{gathered}
			\vspace{-3pt}
			\centering
			\includegraphics[width=.17\textwidth]{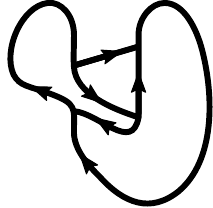}
		\end{gathered}\:\: \right\rrbracket
	\]which concludes the proof. 
\end{proof}

\begin{thm}\label{thm:slncomplexofHopfLink}
	If $0 \leq a,b \leq N$, then there is a homotopy equivalence \[
		\left\llbracket \HopfLink\right\rrbracket \simeq \bigoplus_{k = \max(a+b-N,0)}^{\min(a,b)} h^{2k} q^{ab - kN} \left\llbracket \Thetak \right\rrbracket
	\]where the differential of the complex on the right-hand side is zero. 
\end{thm}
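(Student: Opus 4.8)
The plan is to resolve one of the two crossings of the standard two-crossing diagram $D$ of the positive Hopf link, identify each resulting partially-resolved diagram with one handled by Lemma~\ref{lem:flatteningTwistClosureWk}, and then feed everything into the homological perturbation lemma; a parity argument on the homological grading will force the induced differential to be zero. Concretely, I would pick one of the two crossings of $D$ and expand it using the Rickard complex of Definition~\ref{df:rickardComplex}. This exhibits $\llbracket D\rrbracket$ as a complex split (in the sense of Definition~\ref{df:splitsOverPoset}) over the linearly ordered poset $P=\{\,k\in\Z:\max(a+b-N,0)\le k\le\min(a,b)\,\}$: the summand $A_k$ in position $k$ is $h^kq^{-k}\llbracket D_k\rrbracket$, where $D_k$ is obtained from $D$ by replacing the chosen crossing with the web $W_k$ and $\llbracket D_k\rrbracket$ is the complex obtained by resolving the remaining crossing, and the only off-diagonal components of the differential are the maps $A_k\to A_{k+1}$ induced by the foams $w_k$, so $d_{q,p}=0$ unless $p\le q$. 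Up to planar isotopy, $D_k$ is precisely the twist-closed diagram on the left-hand side of Lemma~\ref{lem:flatteningTwistClosureWk}.

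By Lemma~\ref{lem:flatteningTwistClosureWk} there is a homotopy equivalence between $\llbracket D_k\rrbracket$ and a grading shift of $\llbracket\Theta_k\rrbracket$, where $\Theta_k$ is the corresponding theta web; since $\Theta_k$ is crossingless, $\llbracket\Theta_k\rrbracket$ is supported in a single homological degree. Each equivalence in the proof of that lemma is a composite of Reidemeister and fork-move equivalences, each of which can be arranged to be a strong deformation retract (Definition~\ref{df:strongDeformationRetract}), and strong deformation retracts compose; so each summand $A_k$ with its internal differential strong-deformation-retracts onto a complex $T_k$ whose underlying object is $\llbracket\Theta_k\rrbracket$, sitting in homological degree $2k$, with the $q$-shift obtained by combining the $h^kq^{-k}$ from the Rickard complex with the shift supplied by Lemma~\ref{lem:flatteningTwistClosureWk}; collecting the shifts yields $T_k=h^{2k}q^{ab-kN}\llbracket\Theta_k\rrbracket$. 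Applying the homological perturbation lemma (Lemma~\ref{lem:homologicalPerturbationLemma}) to the splitting $\llbracket D\rrbracket=\bigoplus_{k\in P}A_k$ with these strong deformation retracts then gives a strong deformation retract of $\llbracket D\rrbracket$ onto a complex with underlying object $\bigoplus_{k\in P}T_k=\bigoplus_k h^{2k}q^{ab-kN}\llbracket\Theta_k\rrbracket$ carrying some induced differential $\bar d$.

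The only remaining point is that $\bar d=0$, which holds for degree reasons: each $T_k$ is concentrated in homological degree $2k$, so $\bigoplus_k T_k$ is supported in even homological degrees only, whereas $\bar d$ raises homological degree by one. (Equivalently, because the original off-diagonal differential connects only consecutive summands, the formula in Lemma~\ref{lem:homologicalPerturbationLemma} collapses $\bar d$ to its components $\bar d_{k+1,k}=\pi_{k+1}\circ(w_k\text{-induced map})\circ\iota_k$, each of which is a degree-raising map from homological degree $2k$ to homological degree $2k+2$ and hence zero.) Thus $\llbracket D\rrbracket\simeq\bigoplus_k h^{2k}q^{ab-kN}\llbracket\Theta_k\rrbracket$ with trivial differential, which is the assertion.

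I do not expect a genuine obstacle here: the substantive content has been pushed into Lemma~\ref{lem:flatteningTwistClosureWk} (built from fork slides, fork twists, and a Reidemeister~I move), and the argument above is essentially organizational. The two things that need care are checking that the homotopy equivalences coming from that lemma really can be promoted to strong deformation retracts so that Lemma~\ref{lem:homologicalPerturbationLemma} applies, and tracking the grading shifts precisely enough to confirm that the composite shift is exactly $h^{2k}q^{ab-kN}$.
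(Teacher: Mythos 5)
Your proposal is correct and follows the paper's proof essentially verbatim: the same splitting of $\llbracket D\rrbracket$ over the linearly ordered poset indexed by the resolutions of one crossing, the same appeal to Lemma~\ref{lem:flatteningTwistClosureWk} to flatten each summand to a $\Theta$-web in a single homological degree, the homological perturbation lemma, and the even-parity argument forcing the induced differential to vanish. The only cosmetic difference is that the paper upgrades the homotopy equivalence $C_k\simeq \ol{C}_k$ to a strong deformation retract simply because $\ol{C}_k$ is concentrated in one homological degree (hence admits no nontrivial homotopies), rather than arguing move-by-move that each Reidemeister or fork equivalence is a strong deformation retract.
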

\begin{proof}
	Let $C$ be the complex associated to the given diagram of the Hopf link, and note that $C$ splits over the poset $P = \{\: k \:|\: \max(a + b - N,0) \leq k \leq \min(a,b) \:\}$ in the following way. We let $(C_k,(-1)^k d_{k,k})$ be the complex \[
		h^kq^{-k} \left\llbracket \:\:\begin{gathered}
			\vspace{-3pt}
			\centering
			\labellist
			\pinlabel {\small$a$} at 29 83
			\pinlabel {\small$b$} at 74 84
			\pinlabel {\small$k$} at 75 55
			\endlabellist
			\includegraphics[width=.17\textwidth]{twistClosureWk}
		\end{gathered}\:\: \right\rrbracket 
	\]and we let $d_{k+1,k}\colon C_k \to C_{k+1}$ be the map induced by the foam $w_k$ given in Figure~\ref{fig:standardweb} used in the definition of the Rickard complex. 
	By Lemma~\ref{lem:flatteningTwistClosureWk}, there is a homotopy equivalence between $C_k$ and \[
		\ol{C}_k \coloneq h^{2k} q^{ab - k(N+1)} \left\llbracket \Thetak \right\rrbracket
	\]which is supported in a single homological grading. Since there are no nontrivial homotopies on $\ol{C}_k$, any homotopy equivalence between $C_k$ and $\ol{C}_k$ can be upgraded to a strong deformation retract $C_k \to \ol{C}_k$ satisfying the side conditions given in Definition~\ref{df:strongDeformationRetract}. By the homological perturbation lemma (Lemma~\ref{lem:homologicalPerturbationLemma}), we know that $C$ is homotopy equivalent to \[
		\ol{C} \coloneq \bigoplus_{k = \max(a + b - N,0)}^{\min(a,b)} \ol{C}_k
	\]equipped with some differential. Since $\ol{C}$ is supported only in even homological degrees, its differential must be zero which completes the proof. 
\end{proof}

\subsection{Shifted Rickard complexes}\label{subsec:shiftedRickardComplexes}

In this section, we adapt \cite[Proposition 2.31]{https://doi.org/10.48550/arxiv.2107.08117} to our setting. Our result is directly analogous, but we give a different proof in order to work with integer coefficients. 
We assume that $0 \leq b\leq a \leq N$. This assumption simplifies matters only because the webs in our Rickard complexes are biased towards the left (the edge with the highest label is on the left). There is an analogous complex defined with webs biased towards the right that turns out to be equivalent to the Rickard complex, see for example \cite[Corollary 12.17]{MR3234803} or \cite[Remark 2.24]{https://doi.org/10.48550/arxiv.2107.08117}. 

\begin{prop}\label{prop:shiftedRickardComplex}
	There is a homotopy equivalence \[
		\left\llbracket \qquad\quad\:\:\:\:\begin{gathered}
		  	\vspace{-3pt}
		  	\centering
		  	\labellist
			\pinlabel {\small${a+b-k}$} at -21 65
			\pinlabel {\small$k$} at 43 65
			\pinlabel {\small${a-k}$} at 19 57
			\pinlabel {\small$a$} at -3 2
			\pinlabel {\small$b$} at 41 3
			\endlabellist
		  	\includegraphics[width=.065\textwidth]{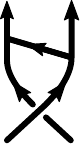}
		\end{gathered}\:\:\:\: \right\rrbracket \:\simeq\: q^{b(a-k)}\left(\begin{tikzcd}
		 	\cdots \ar[r] & h^mq^{-m(a - k+1)} V_m \ar[r,"v_m"] & h^{m+1}q^{-(m+1)(a-k+1)} V_{m+1} \ar[r] & \cdots
		 \end{tikzcd} \right)
	\]The complex on the right is called the \emph{shifted Rickard complex} and is supported in homological degrees $m$ where $\max(a + b-N,0) \leq m \leq k$. The webs $V_m$ and foams $v_m$ are given in Figure~\ref{fig:shiftedStandardWeb}.
\end{prop}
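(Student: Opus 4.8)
The plan is to run a ``delooping and Gaussian elimination'' argument entirely inside the foam categories $\sr C(\beta)$, so that nothing rational is used. First I would resolve the unique crossing in the diagram on the left to express the complex $\llbracket\,\cdot\,\rrbracket$ as the complex obtained from the Rickard complex of that crossing (Definition~\ref{df:rickardComplex}) by gluing each resolution $W_j$ to the fixed rung subweb; since there are no other crossings, this rung persists unchanged in every resolution. This produces a complex whose terms are the composite webs $\widetilde W_j$ (the resolution $W_j$ with the rung attached) over the appropriate range of $j$, with differentials induced by the foams $w_j$ of Figure~\ref{fig:standardweb}.

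The next step is to simplify each $\widetilde W_j$ using MOY calculus. The rung, placed against the ladder web $W_j$, creates bigons and ``square'' configurations, and applying the bigon relation \eqref{eq:bigonRelation} and the square relation \eqref{eq:complicatedRelation}---together with their categorified form, Theorem~\ref{thm:RWcategorificationOfMOYCalculus}---rewrites each $\widetilde W_j$, as an object of $\sr C$, as a finite direct sum of $q$-shifted copies of the webs $V_m$ of Figure~\ref{fig:shiftedStandardWeb}, with multiplicities given by quantum binomial coefficients and with, for each $V_m$ that occurs, its lowest-degree copy appearing with multiplicity one. I would then compute the components of the differential $w_j$ with respect to these decompositions using the explicit foam relations of Proposition~\ref{prop:foamRelations}---principally the two-rung relation~(\ref{item:twoRungRelation}), the one-rung relation~(\ref{item:oneRungRelation}), and dot migration~(\ref{item:dotmigrationrelation}). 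The goal of that computation is to exhibit, for every summand $V_m$ lying outside the window $\max(a+b-N,0)\le m\le k$, and for every copy of $V_m$ beyond the first one inside that window, a matching summand in an adjacent $\widetilde W_j$ together with a component of $w_j$ between them that is $\pm$ an isomorphism.

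Having done this, I would cancel all of these matched pairs by Gaussian elimination---the $P=\{0,1\}$ instance of the homological perturbation lemma (Lemma~\ref{lem:homologicalPerturbationLemma}), applied iteratively, or organized through a splitting over an auxiliary poset. The surviving complex has exactly one copy of $V_m$ for each $m$ with $\max(a+b-N,0)\le m\le k$; evaluating the induced differential $\pi\circ w\circ\iota$ between consecutive survivors and simplifying the resulting foam via Proposition~\ref{prop:foamRelations} identifies it with the foam $v_m$ of Figure~\ref{fig:shiftedStandardWeb}. Tallying the $q$-shifts introduced by the MOY rewrites and by the eliminations produces the global shift $q^{b(a-k)}$ and the per-term shift $h^m q^{-m(a-k+1)}$, and checking the homological positions of the survivors finishes the identification. (Throughout, the standing hypothesis $b\le a$ is what makes the left-biased webs $W_j$ and $V_m$ the natural choice; the right-biased variant, if needed, follows from the equivalence recalled just before the statement.)

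The main obstacle---and the precise point at which this argument must diverge from that of \cite[Proposition~2.31]{https://doi.org/10.48550/arxiv.2107.08117}---is that Gaussian elimination over $\Z$ requires the cancelled components of the differential to be isomorphisms over $\Z$, i.e. to be $\pm 1$ times an isomorphism, whereas the rational (singular Soergel bimodule) setting of loc.\ cit.\ can recognize these components as isomorphisms by a rank or dimension count. Establishing integrality forces one to evaluate the relevant closed foams explicitly using the Robert--Wagner formula \cite{MR4164001} (equivalently, using the foam relations of Proposition~\ref{prop:foamRelations}). The second delicate point is pinning down the surviving differential on the nose---signs in particular, since they propagate through the iterated elimination---and a third, purely bookkeeping, nuisance is keeping the $q$- and $h$-grading shifts consistent through the many MOY rewrites.
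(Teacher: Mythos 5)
Your route — decompose each term of the complex into $q$-shifts of the $V_m$ via MOY calculus and then Gaussian-eliminate down to the shifted Rickard complex — is genuinely different from the paper's, and it has a real gap at exactly the step you flag as "the goal of that computation." You assert that for every superfluous summand there is a matching summand in an adjacent homological degree joined by a component of the differential that is $\pm$ an isomorphism, but this assertion is the entire content of the proposition over $\Z$, and nothing in the proposal establishes it or indicates that the deferred "explicit foam evaluation" is tractable. The decomposition isomorphisms promised by Theorem~\ref{thm:RWcategorificationOfMOYCalculus} for relations \eqref{eq:bigonRelation} and \eqref{eq:complicatedRelation} are existence statements; to compute matrix entries of $w_j$ in those bases you would first have to exhibit the decomposing foams explicitly (for \eqref{eq:complicatedRelation} these involve sums over partitions), compose them with $w_j$, and reduce — and even then there is no a priori reason the relevant entries are units rather than, say, $2$. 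A second unaddressed point is that the perturbed differential after iterated elimination is not $\pi\circ w\circ\iota$ alone but includes zig-zag terms $\pi\circ w\circ h\circ w\circ\cdots\circ\iota$ (Lemma~\ref{lem:homologicalPerturbationLemma}), so identifying the survivor with $v_m$ requires controlling the homotopies as well; the paper's Lemma~\ref{lem:shiftedDifferentialPrimitive}, which shows $\Hom(q^{a-k+1}V_m,V_{m+1})\cong\Z\cdot v_m$, would at least reduce this to a single integer, but you do not invoke it.

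For contrast, the paper never computes a single matrix entry of the big complex. It uses the fork-slide and fork-twist equivalences of Theorem~\ref{thm:RmovesandForkMoves} to produce a second complex $B$ homotopy equivalent to $A$ for free, decomposes both into $q$-shifts of the $V_m$, and observes via the Hom-space vanishing of Lemma~\ref{lem:homSpaceCalculationShiftedRickard} that both complexes and any homotopy equivalence between them are filtered by $n+m(a-k+1)$. Since $A$ lies entirely in filtration level $b(a-k)$ while $\sr F^{b(a-k)}(B)$ has exactly the objects of the shifted Rickard complex, one gets $A\simeq\sr F^{b(a-k)}(B)$ with no cancellation at all; the only remaining issue is that the surviving differential is $n_m v_m$ for some integer $n_m$, and primitivity is extracted by closing the webs off, using that the closed complex is concentrated in one homological degree, and running a lowest-$q$-degree argument to show the identity map factors through $d_m$. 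If you want to salvage your approach, you would need to supply a mechanism of comparable strength for the unit-ness of the cancelled components; as written, the proposal pushes the whole difficulty into an unexecuted computation.
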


\begin{figure}[!ht]
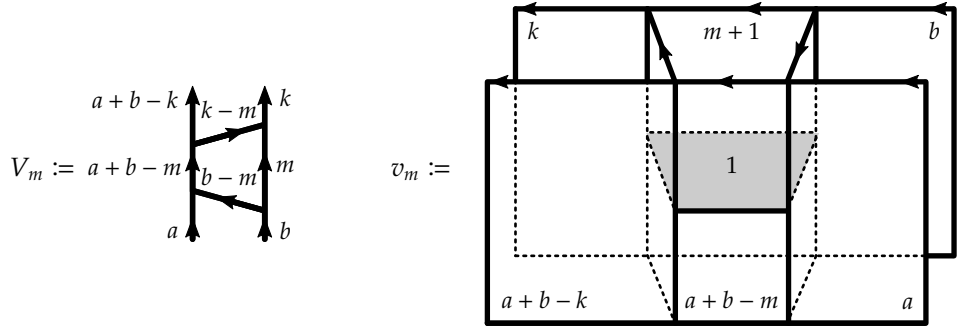

	\centering
	$V_m\coloneq \qquad\quad\:\:\:\:\begin{gathered}
		\labellist
		\pinlabel {\small${a+b-k}$} at -12.5 35.5
		\pinlabel {\small$k$} at 25 36
		\pinlabel {\small$m$} at 25 19
		\pinlabel {\small${k - m}$} at 11 33
		\pinlabel {\small${a + b - m}$} at -12.5 19
		\pinlabel {\small${b - m}$} at 11 16.5
		\pinlabel {\small$b$} at 25 3
		\pinlabel {\small$a$} at -3 2.5
		\endlabellist
		\includegraphics[width=.07\textwidth]{standardWeb}
	\end{gathered}$
	\hspace{40pt}
	$v_m\coloneq \quad\begin{gathered}
		\labellist
		\pinlabel {\small$k$} at 13 79
		\pinlabel {\small$b$} at 120 79
		\pinlabel {\small${m+1}$} at 66 79
		\pinlabel {\small$1$} at 66 43
		\pinlabel {\small${a+b-k}$} at 16 7
		\pinlabel {\small$a$} at 113 6
		\pinlabel {\small${a + b - m}$} at 66 7
		\endlabellist
		\includegraphics[width=.38\textwidth]{differential}
	\end{gathered}$
	\captionsetup{width=.8\linewidth}
	\caption{The labels satisfy $\max(a + b - N,0) \leq m \leq k \leq b \leq a \leq N$. On the left is the web $V_m = V_{m}(a,b,k)$ and on the right is the foam $v_m$ from $q^{a-k+1}V_m$ to $V_{m+1}$. The web $\ol{V_m}$ is at the bottom of the picture while $V_{m+1}$ is at the top. 
	}
	\vspace{-5pt}
	\label{fig:shiftedStandardWeb}
\end{figure}

We prove Proposition~\ref{prop:shiftedRickardComplex} using the following two lemmas. Recall that the (unshifted) Rickard complex is built from the webs $W_k$ and foams $w_k$ in Figure~\ref{fig:standardweb}. 

\begin{lem}\label{lem:homSpaceCalculationShiftedRickard}
	Let $k$ and $k + u$ satisfy $\max(a + b- N,0) \leq k\leq k + u \leq b$. Then \[
		\rank_\Z \Hom(q^jW_k,W_{k+u}) = \rank_\Z \Hom(q^jW_{k+u},W_k) = \begin{cases}
			0 & j < u^2 \\
			1 & j = u^2
		\end{cases}
	\]Now suppose $m$ and $m + u$ satisfy $\max(a + b - N,0) \leq m \leq m+u \leq k$. Then \[
		\rank_\Z \Hom(q^jV_m,V_{m+u}) = \rank_\Z \Hom(q^jV_{m+u},V_m) = \begin{cases}
			0 & j < u(a - k + u)\\
			1 & j = u(a - k + u)
		\end{cases}
	\]
\end{lem}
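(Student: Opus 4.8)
The plan is to turn each rank computation into a question about the lowest-degree coefficient of a MOY polynomial. Since $W_k$ and $W_{k+u}$ share the web-boundary $\beta(a,b)$, the bending trick of Remark~\ref{rem:bendingTrick} gives a grading shift $s=s(\beta(a,b))$ with $\Hom^{j}(W_k,W_{k+u})\cong\Hom^{j-s}_\Z(\emp,\ol{W_k}\cup W_{k+u})$ after passing to $\Z$-coefficients (the $\Sym(N)$-module $\Hom^*(W_k,W_{k+u})$ is graded free with the same graded rank by Theorem~\ref{thm:RWcategorificationOfMOYCalculus}, and since $\Sym(N)$ is nonnegatively graded with degree-zero part $\Z$, the $\Z$-rank in each degree at or below the bottom of its support is unchanged). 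By the remark closing \S2.4, the Poincar\'e polynomial of $\Hom^*_\Z(\emp,W)$ is the MOY polynomial $\langle W\rangle$, so $\rank_\Z\Hom(q^jW_k,W_{k+u})$ is the coefficient of $q^{j-s}$ in $\langle\ol{W_k}\cup W_{k+u}\rangle$. Thus the first claim is exactly the assertion that $\langle\ol{W_k}\cup W_{k+u}\rangle$ is a palindromic polynomial with nonnegative coefficients whose lowest nonzero term is $q^{u^2-s}$ with coefficient $1$; the second claim is the analogous statement for $\langle\ol{V_m}\cup V_{m+u}\rangle$ with shift $s'=s(\partial V_m)$ and bottom degree $u(a-k+u)-s'$.

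Next I would compute these MOY polynomials. Up to reversing orientations (which preserves $\langle-\rangle$), the closed web $\ol{W_k}\cup W_{k+u}$ is the closure of the ladder-on-ladder web on the left of~\eqref{eq:complicatedRelation} with its two rung-indices specialized to $(k+u,k)$; that relation rewrites it as a $\Z_{\ge 0}[q,q^{-1}]$-linear combination, indexed by $j$ with $\max(a+b-N,0)\le j\le k$, of (closures of) theta-webs, with coefficients explicit quantum binomials $\qbinom{\cdot}{\cdot}$. Each such closed theta-web is then reduced by the bigon relations~\eqref{eq:bigonRelation} — collapse a bigon, then collapse the resulting circle — into a product of two quantum binomials. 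Since $\qbinom{n}{m}$ has lowest term $q^{-m(n-m)}$ with coefficient $1$, the lowest term of the $j$-th summand is a \emph{monic} monomial whose degree is an explicit quadratic function of $j$. Collecting terms, the lowest term of $\langle\ol{W_k}\cup W_{k+u}\rangle$ is $|J|\cdot q^{\min}$ where $J$ is the set of indices attaining the minimal degree; because all the relevant lowest coefficients are $1$ and positive (no cancellation), getting coefficient $1$ forces $|J|=1$. The $V_m$ case is identical in structure, starting from the webs of Figure~\ref{fig:shiftedStandardWeb} and the corresponding specialization of~\eqref{eq:complicatedRelation}, with only the boundary shift and the location of the minimizing index changing, reflecting the asymmetry of $V_m$.

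The main obstacle is precisely this last piece of bookkeeping: checking that the quadratic-in-$j$ degree function coming out of the quantum-binomial product attains its minimum over the admissible range at a \emph{single} value of $j$, and that this minimum, once the shift $s$ (resp.\ $s'$) is added back, equals $u^2$ (resp.\ $u(a-k+u)$). I expect the extremal index to be the endpoint $j=k$ (resp.\ $j=m+u$), corresponding to the theta-web whose three edge-labels are forced to be smallest, and the verification to reduce to showing that decreasing $j$ by one strictly increases the total degree — a finite but fiddly comparison of the contributions $m(n-m)$ of the binomials appearing. It is worth noting that working over $\Z$ rather than $\Q$ costs nothing here, since only the (always monic) bottom terms of the quantum binomials enter, so the argument is genuinely a $q$-degree count and never sees torsion.
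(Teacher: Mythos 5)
Your proposal follows essentially the same route as the paper: convert each $\Hom$-space to a state space of a closed web via the bending trick, invoke Robert--Wagner's categorification of MOY calculus to reduce the rank computation to the MOY polynomial, use invariance of $\langle\,\cdot\,\rangle$ under orientation reversal for the first equality, and then establish that the polynomial is monic with extreme term in the claimed degree by a direct MOY-calculus computation. The degree bookkeeping you flag as the remaining "fiddly" step is exactly the part the paper also compresses into "by direct computation using the MOY calculus relations," so your outline is a faithful (indeed slightly more detailed) match.
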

\begin{proof}
	Note that \[
		\Hom(q^jW_k,W_{k+u}) = \Hom^j(W_k,W_{k+u}) = \Hom^{j - a(N-a) - b(N-b)}(\emp,\ol{W_k} \cup W_{k+u})
	\]where the second equality is from the bending trick discussed in Remark~\ref{rem:bendingTrick}. By Theorem~\ref{thm:RWcategorificationOfMOYCalculus}, the rank of this free abelian group is determined by the MOY polynomial of $\ol{W_k} \cup W_{k+u}$. The first equality in the first statement of the lemma follows from the fact that \[
		\langle \ol{W_k} \cup W_{k+u}\rangle = \langle \ol{\ol{W_k} \cup W_{k+u}} \rangle = \langle \ol{W_{k+u}} \cup W_k\rangle. 
	\]The second equality follows from the fact that $\langle \ol{W}_k \cup W_{k+u}\rangle$ is monic of degree $a(N-a) + b(N-b) - u^2$ by direct computation using the MOY calculus relations in Figure~\ref{fig:MOYcalculus}. 

	Similarly for the second statement in the lemma, it suffices to show that $\langle \ol{V_m} \cup V_{m+u} \rangle = \langle \ol{V_{m+u}} \cup V_m\rangle$ is monic of degree \[
		\frac{a(N-a) + b(N-b) + k(N-k) + (a + b - k)(N - a - b + k)}{2} - u(a - k + u)
	\]which again is verified by direct computation using the MOY calculus. 
\end{proof}

\begin{lem}\label{lem:shiftedDifferentialPrimitive}
	$\Hom(qW_k,W_{k+1}) \cong \Z$ is generated by $w_k$. Similarly, $\Hom(q^{a - k + 1}V_m,V_{m+1}) \cong \Z$ is generated by $v_m$. 
\end{lem}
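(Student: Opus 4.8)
The plan has two stages: first identify both Hom groups with $\Z$, and then show that $w_k$ and $v_m$ generate by producing an explicit pairing whose value is a unit.

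For the first stage I would record that these groups are free abelian. By the bending trick (Remark~\ref{rem:bendingTrick}), $\Hom(qW_k,W_{k+1}) = \Hom^1(W_k,W_{k+1})$ is a graded piece of $\Hom^*(\emp,\ol{W_k}\cup W_{k+1})$, which is a free graded $\Sym(N)$-module by Theorem~\ref{thm:RWcategorificationOfMOYCalculus}, so each of its graded pieces is free over $\Z$. Combined with the rank-one computation of Lemma~\ref{lem:homSpaceCalculationShiftedRickard} in the case $u=1$, this gives $\Hom(qW_k,W_{k+1}) \cong \Z$, and the identical argument gives $\Hom(q^{a-k+1}V_m,V_{m+1}) \cong \Z$. (Here the relevant graded piece is the bottom one that occurs: $\langle\ol{W_k}\cup W_{k+1}\rangle$ is monic of top degree $a(N-a)+b(N-b)-1$ and is invariant under $q\mapsto q^{-1}$, so this piece is $\Z$ times a single Robert--Wagner basis element.) It then remains only to prove that $w_k$ and $v_m$ are generators, equivalently that they are not divisible by any prime.

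For the second stage I would pair $w_k$ with an explicit foam and show the value is $\pm1$. It suffices to produce a $\Z$-linear functional on $\Hom(qW_k,W_{k+1})$ sending $w_k$ to a unit; such a functional is obtained by bending $w_k$ into a foam $\emp \to \ol{W_k}\cup W_{k+1}$ and then capping with a foam $\ol{W_k}\cup W_{k+1}\to\emp$ built from a suitably dotted copy of the reflection $\ol w_k$ of $w_k$. The composite $\ol w_k\circ w_k$ zips a label-$1$ sheet onto the label-$k$ facet and then unzips it, so the resulting closed foam contains a bigon; I would reduce it using the bigon, blister, sphere, Matveev--Piergalini, and dot-migration relations of Proposition~\ref{prop:foamRelations} (together with the behavior of the Robert--Wagner evaluation under these moves), and choose the decoration on the cap so that the evaluation becomes an explicitly computable closed-foam value, which works out to $\pm1$ by the sphere relation (Proposition~\ref{prop:foamRelations}, part~\ref{item:sphereRelation}) and, if needed, the theta-foam evaluations. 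This forces $w_k$ to be primitive, hence a generator of $\Hom(qW_k,W_{k+1}) \cong \Z$. The argument for $v_m$ is entirely analogous, with $v_m$ reflected and capped in the same way producing the analogous bigon between a label-$1$ facet and a label-$m$ facet. (Alternatively, one could simply invoke that the components of the Rickard differential are, by construction, generators of the relevant rank-one Hom groups in \cite{MR3234803,MR3545951,MR3877770}.) The step I expect to be the main obstacle is exactly this second one: carrying out the local reduction of the capped-off foam via the foam relations and confirming that the resulting integer evaluation is genuinely $\pm1$ rather than merely nonzero.
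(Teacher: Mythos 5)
Your first stage is fine and matches the paper: freeness plus the rank computation of Lemma~\ref{lem:homSpaceCalculationShiftedRickard} identifies $\Hom(qW_k,W_{k+1})$ with the bottom graded piece of a free graded $\Sym(N)$-module, which is $\Z$. The problem is entirely in your second stage, and the step you yourself flag as "the main obstacle" is a genuine gap, not a routine verification. To conclude primitivity you must exhibit a specific cap $H\colon \ol{W_k}\cup W_{k+1}\to\emp$ with $\langle w_k\cup H\rangle=\pm1$. Degree bookkeeping forces $H$ to carry decorations of total weight $a(N-a)+b(N-b)-1$, and the closed foam $H\circ w_k$ is the double of $w_k$ along its entire boundary: its facets are closed surfaces of the various labels $a$, $b$, $k$, $a+b-k$, $1$, each of which evaluates to zero under the sphere relation unless it carries exactly the box decoration $s_{\mathrm{box}(c,N-c)}$. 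Choosing a decoration that simultaneously satisfies this on every facet, migrating dots across the seams, and confirming that the signs conspire to give $\pm1$ rather than $0$ is precisely the content of Robert--Wagner's tree-like basis construction; nothing in your sketch guarantees that the particular "dotted reflection of $w_k$" you propose is such a cap. As written, the argument establishes only that $w_k$ is nonzero (which already follows from the rank computation), not that it is a generator. Your parenthetical fallback --- citing that Rickard differentials are generators "by construction" --- does not apply here either, since the cited constructions work over $\Q$ or in a different (non-equivariant, non-foam) setting, which is exactly why the paper supplies its own proof.

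The paper circumvents the closed-foam evaluation entirely by a homological argument. It closes off $W_k$ with a curl so that the resulting complex $C$ is, by a fork-twist, homotopy equivalent to a complex supported in the single homological degree $b$; hence there are contracting homotopies with $h_{k+1}d_k+d_{k-1}h_k=\Id$ below degree $b$. Decomposing each term into $q$-shifts of a fixed web $Y$ and observing that the lowest shift occurring in degree $k$ is strictly decreasing in $k$ while $\Hom(q^jY,Y)=0$ for $j<0$, the component of $h_k$ out of that lowest summand vanishes, so $h_{k+1}d_k=\Id_Y$ there. If $w_k$ were divisible by $n$, so would be $d_k$ and hence $\Id_Y$; but $\Id_Y$ is primitive by the idempotent trick ($\Id=mG$ and $G\circ G=lG$ give $m(ml-1)=0$, so $m=\pm1$). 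The $v_m$ statement is then deduced from the $w_k$ statement by capping the extra rung of $v_m$ with a box-decorated fork, which produces $w_m$ and transports divisibility --- a reduction you should make explicit rather than calling the two cases "entirely analogous." If you want to keep your direct-evaluation strategy, you would need to import Robert--Wagner's dual basis for $\Hom^*(\emp,\ol{W_k}\cup W_{k+1})$ and identify $w_k$ with the basis element in the bottom degree; otherwise the paper's homological route is the one to follow.
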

\begin{proof}
	We claim that the first statement implies the second.  
	Let $G$ be a generator of $\Hom(q^{a-k+1}V_m,V_{m+1})$ and write $v_m = nG$ for some $n \in \Z$. Then the foam \[
		\begin{gathered}
			\centering
			\labellist
			\pinlabel {\small$a$} at 13 79
			\pinlabel {\small$b$} at 120 79
			\pinlabel {\small$a-k$} at 30 55
			\pinlabel {$s_\lambda$} at 33 32
			\pinlabel {\small$k-m$} at 55 32
			\pinlabel {\small${m+1}$} at 80 79
			\pinlabel {\small$1$} at 80 43
			\pinlabel {\small$b$} at 5 7
			\pinlabel {\small$a$} at 113 6
			\pinlabel {\small${a + b - m}$} at 80 7
			\endlabellist
			\includegraphics[width=.33\textwidth]{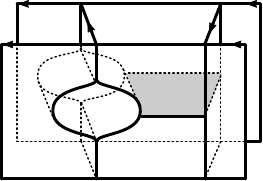}
		\end{gathered}
	\]for any partition $\lambda \in P(a - k,N-a+k)$ is also divisible by $n$. This is a foam from $W_m(b,a)$ to $W_{m+1}(b,a)$ of some degree depending on $\lambda$. If we set $\lambda = \mathrm{box}(a-k,N-a+k)$, then by foam relations~\ref{item:foamForkRelation} and \ref{item:oneRungRelation} of Proposition~\ref{prop:foamRelations}, the foam is of degree $1$ and agrees with $w_m$. If $w_m$ is a generator of $\Hom(qW_m(b,a),W_{m+1}(b,a))$, then $n = \pm1$ so $v_m$ is also a generator. 

	There are a number of ways to see that $w_k$ is a generator of $\Hom(qW_k,W_{k+1})$. We give a proof analogous to an argument we will use in the proof of Proposition~\ref{prop:shiftedRickardComplex}. Observe that \[
		C\coloneq \left\llbracket\:\:\:\:\begin{gathered}
			\vspace{-3pt}
			\labellist
			\pinlabel {\small$a-b$} at 65 8
			\pinlabel {\small$a$} at -5 29
			\pinlabel {\small$b$} at -4 4
			\endlabellist
			\includegraphics[width=.12\textwidth]{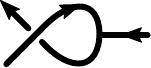}
		\end{gathered}\:\:\right\rrbracket \:\:\simeq\:\: h^bq^{ab - b(N+1)} \left\llbracket\:\:\:\:\begin{gathered}
			\vspace{-3pt}
			\labellist
			\pinlabel {\small$a-b$} at 65 8
			\pinlabel {\small$a$} at -5 29
			\pinlabel {\small$b$} at -4 4
			\endlabellist
			\includegraphics[width=.12\textwidth]{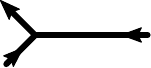}
		\end{gathered}\:\:\right\rrbracket
	\]by a variation of the fork twist as in the proof of Lemma~\ref{lem:flatteningTwistClosureWk}. We let $Y$ denote the web on the right. For $\max(a+b-N,0) \leq k \leq b$, there is an isomorphism between the summand $C_k$ in homological degree $k$ of the complex $C$ with a direct sum of $q$-shifts of $Y$ \[
		C_k = h^kq^{-k} \:\:\:\:\begin{gathered}
			\labellist
			\pinlabel {\small$a$} at -3 50
			\pinlabel {\small$b-k$} at 22 49
			\pinlabel {\small$b$} at 50 42
			\pinlabel {\small$k$} at 40 27
			\pinlabel {\small$a-b$} at 70 20
			\pinlabel {\small$a$} at 50 14
			\pinlabel {\small$a-k$} at 22 6
			\pinlabel {\small$b$} at -2 4
			\endlabellist
			\includegraphics[width=.15\textwidth]{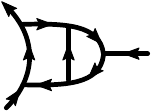}
		\end{gathered}\quad \:\:\cong\:\: h^kq^{-k}\: \qbinom{N-a+k}{k}\qbinom{N-a}{b-k} \quad\:\:\begin{gathered}
			\vspace{-3pt}
			\labellist
			\pinlabel {\small$a-b$} at 65 8
			\pinlabel {\small$a$} at -5 29
			\pinlabel {\small$b$} at -4 4
			\endlabellist
			\includegraphics[width=.12\textwidth]{sideTwistFlat}
		\end{gathered} \:\:\:\:\eqqcolon B_k
	\]The isomorphism follows by Robert and Wagner's categorification of MOY calculus (Theorem~\ref{thm:RWcategorificationOfMOYCalculus}). Note that the lowest power of $q$ that arises in $B_k$ is \[
		-b(N - a - b) + k(k - 2b - 1).
	\]This is a strictly decreasing function of $k$ in the range $\max(a + b - N,0) \leq k \leq b$. A calculation analogous to Lemma~\ref{lem:homSpaceCalculationShiftedRickard} gives \[
		\rank_\Z \Hom(q^jY,Y) = \begin{cases}
			0 & j < 0\\
			1 & j = 0.
		\end{cases}
	\]

	Now let $d_k\colon B_k \to B_{k+1}$ be the differential induced from the differential of $C$ by the isomorphisms $C_k \cong B_k$. By construction, the complex $B\coloneq \bigoplus B_k$ with differential $d_k$ is chain isomorphic to $C$. Because $C$, and therefore $B$, is homotopy equivalent to a complex supported in homological degree $b$, there are maps $h_k\colon B_{k} \to B_{k-1}$ for which $h_{k+1}d_k + d_{k-1}h_{k} = \Id_{B_k}$ when $\max(a + b - N,0) \leq k < b$. Consider the component of $h_k$ mapping out of \[
		q^{-b(N-a-b)+k(k-2b-1)} Y
	\]which is the lowest $q$-shift of $Y$ appearing as a direct summand of $B_k$. Since all $q$-shifts of $Y$ appearing in $B_{k-1}$ have a strictly larger power of $q$, it follows that this component of $h_k$ is zero. Thus, the restriction of the identity $h_{k+1}d_k + d_{k-1}h_k = \Id_{B_k}$ to this $q$-shift of $Y$ is $h_{k+1}d_k = \Id$. If $w_k = n G$ for some $n \in \Z$, then $d_k$ is also divisible by $n$. It then follows that the identity map on $Y$ is divisible by $n$. We now claim that $\Id_Y$ is a generator of $\Hom(Y,Y) \cong \Z$ from which it follows that $n = \pm1$. 

	Let $G$ be a generator of $\Hom(Y,Y) \cong \Z$ and write $\Id = mG$ for some $m \in \Z$. We know that $m\neq 0$ since $\Hom(Y,Y) \cong \Z$ implies that $Y$ is not isomorphic to zero. Express $G\circ G \in \Hom(Y,Y)$ as $lG$ for some $l\in \Z$. The identity $\Id\circ\Id = \Id \in \Hom(Y,Y)$ gives $m^2 l G = m G$ from which it follows that $m(ml - 1) = 0$. Thus $m = \pm 1$.
\end{proof}

\begin{proof}[Proof of Proposition~\ref{prop:shiftedRickardComplex}]
	Let $A$ denote the complex on the left-hand side in the statement of the proposition. The direct summand of $A$ in homological degree $l$ is \begin{align*}
		h^l q^{-l}\qquad\qquad\begin{gathered}
			\labellist
			\pinlabel {\small${a+b-k}$} at -12 53
			\pinlabel {\small$k$} at 24 53
			\pinlabel {\small$a$} at 24 35
			\pinlabel {\small$b$} at -3 35
			\pinlabel {\small$a-l$} at 11 33
			\pinlabel {\small${a+b-l}$} at -12 18
			\pinlabel {\small$l$} at 24 18
			\pinlabel {\small$b-l$} at 11 16
			\pinlabel {\small${a-k}$} at 11 49
			\pinlabel {\small$a$} at -3 2
			\pinlabel {\small$b$} at 24 3
			\endlabellist
			\includegraphics[width=.065\textwidth]{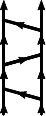}
		\end{gathered} \:\:\:\: &\cong \:\: h^l q^{-l}\bigoplus_{j=\max(a + b - N,0)}^{\min(k,l)} \qbinom{a - k - l + b}{a - k - l + j} \qquad\qquad\begin{gathered}
			\labellist
			\pinlabel {\small${a+b-k}$} at -12 53
			\pinlabel {\small$k$} at 24 53
			\pinlabel {\small$j$} at 24 35
			\pinlabel {\small$a+b-j$} at -12 35
			\pinlabel {\small$l - j$} at 11 33
			\pinlabel {\small${a+b-l}$} at -12 18
			\pinlabel {\small$l$} at 24 18
			\pinlabel {\small$b-l$} at 11 16
			\pinlabel {\small${k-j}$} at 11 49
			\pinlabel {\small$a$} at -3 2
			\pinlabel {\small$b$} at 24 3
			\endlabellist
			\includegraphics[width=.065\textwidth]{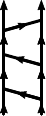}
		\end{gathered}\\
		&\cong \:\: h^l q^{-l}\bigoplus_{j=\max(a + b - N,0)}^{\min(k,l)} \qbinom{a - k - l + b}{a - k - l + j} \qbinom{b - j}{l - j}\:\: V_j
	\end{align*}for $\max(a + b - N,0) \leq l \leq b$, where the isomorphisms follow from Theorem~\ref{thm:RWcategorificationOfMOYCalculus}. Define a finite-length filtration $\cdots \subseteq\sr F^i(A) \subseteq \sr F^{i+1}(A) \subseteq \cdots$ by letting $\sr F^i(A)$ be the direct sum of the terms $q^nV_m$ for which $n + m(a - k + 1) \leq i$. By Lemma~\ref{lem:homSpaceCalculationShiftedRickard}, the differential of $A$ sends $\sr F^i(A)$ to itself. 

	Next, note that by Theorem~\ref{thm:RmovesandForkMoves}, we have homotopy equivalences \[
		A\coloneq\left\llbracket \qquad\quad\:\:\:\begin{gathered}
		  	\vspace{-3pt}
		  	\centering
		  	\labellist
			\pinlabel {\small${a+b-k}$} at -21 65
			\pinlabel {\small$k$} at 43 65
			\pinlabel {\small${a-k}$} at 19 57
			\pinlabel {\small$a$} at -3 2
			\pinlabel {\small$b$} at 41 3
			\endlabellist
		  	\includegraphics[width=.065\textwidth]{rungTwist}
		\end{gathered}\:\:\:\: \right\rrbracket \:\simeq\: \left\llbracket \:\: \begin{gathered}
			\vspace{-3pt}
		  	\centering
		  	\includegraphics[width=.065\textwidth]{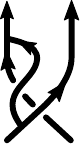}
		\end{gathered}\:\: \right\rrbracket \:\simeq\: q^{b(a-k)}\left\llbracket \:\: \begin{gathered}
			\vspace{-3pt}
		  	\centering
		  	\includegraphics[width=.065\textwidth]{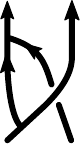}
		\end{gathered}\:\: \right\rrbracket \eqqcolon B
	\]where the first equivalence is a fork slide and the second is a fork twist. Just as we did for $A$, we express $B$ in terms of $q$-shifts of the webs $V_m$. The direct summand of $B$ in homological degree $l$ is \begin{align*}
		h^l q^{b(a-k)-l} \qquad\quad \begin{gathered}
			\labellist
			\pinlabel {\small$a+b-k$} at -19 65
			\pinlabel {\small$k$} at 78 65
			\pinlabel {\small$b$} at 20 55
			\pinlabel {\small$k-l$} at 50 55
			\pinlabel {\small$a-k$} at -11 32
			\pinlabel {\small$l$} at 77 32
			\pinlabel {\small$k$} at 20 8
			\pinlabel {\small$b-l$} at 50 8
			\pinlabel {\small$a$} at -3 2
			\pinlabel {\small$b$} at 77 3
			\endlabellist
			\includegraphics[width=.14\textwidth]{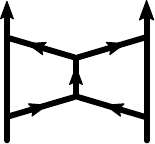}
		\end{gathered} \:\:\:\: &\cong \:\:h^l q^{b(a-k)-l} \bigoplus_{j=\max(a + b - l - N,0)}^{k-l} \qbinom{a -k + l}{l+j} \qquad\begin{gathered}
			\labellist
			\pinlabel {\small$a+b-k$} at -19 65
			\pinlabel {\small$k$} at 78 65
			\pinlabel {\small$k-l$} at 50 57
			\pinlabel {\small$l$} at 77 32
			\pinlabel {\small$j$} at 43 31
			\pinlabel {\small$b-l$} at 50 6
			\pinlabel {\small$a$} at -3 2
			\pinlabel {\small$b$} at 77 3
			\endlabellist
			\includegraphics[width=.14\textwidth]{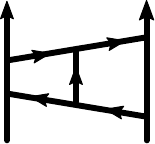}
		\end{gathered}\\
		&\cong \:\: h^l q^{b(a-k)-l} \bigoplus_{j=\max(a+b-N,l)}^{k} \qbinom{a - k + l}{j} \qbinom{j}{l} \:\: V_j
	\end{align*}for $\max(b + k-N,0) \leq l \leq k$, where the isomorphisms are again from Theorem~\ref{thm:RWcategorificationOfMOYCalculus}. We define a filtration $\cdots \subseteq \sr F^i(B) \subseteq \sr F^{i+1}(B) \subseteq \cdots$ in the same way as for $A$. We let $\sr F^i(B)$ be the direct sum of the terms $q^nV_m$ for which $n + m(a - k + 1) \leq i$, and again Lemma~\ref{lem:homSpaceCalculationShiftedRickard} implies that the differential of $B$ sends $\sr F^i(B)$ to itself. 

	By Lemma~\ref{lem:homSpaceCalculationShiftedRickard}, any homotopy equivalence between $A$ and $B$ is a filtered homotopy equivalence. In particular, the subcomplex $\sr F^i(A)$ is homotopy equivalent to $\sr F^i(B)$ for all $i$. We claim that for $i = b(a-k)$, we have $A = \sr F^i(A)$. This follows by direct computation from our explicit decomposition of $A$ into $q$-shifts of the webs $V_j$. It follows that $A$ is homotopy equivalent to the subcomplex $\sr F^{b(a - k)}(B)$. Again by direct computation using our explicit decomposition of $B$, we see that $\sr F^{b(a - k)}(B)$ is supported in homological gradings $m$ for which $\max(a + b - N,0) \leq m \leq k$ and its direct summand in degree $m$ is \[
		h^mq^{b(a - k) - m(a - k + 1)} V_m.
	\]Thus, $\sr F^{b(a - k)}(B)$ and the shifted Rickard complex are complexes having the same underlying objects but potentially different differentials. 

	By Lemma~\ref{lem:shiftedDifferentialPrimitive}, the differential in $\sr F^{b(a - k)}(B)$ from homological degree $m$ to degree $m+1$ is equal to an integral multiple $n_m v_m$ of $v_m$. It suffices to show that $n_m = \pm 1$ for each $m$. To do so, we apply the same argument used in the proof of Lemma~\ref{lem:shiftedDifferentialPrimitive}. First, observe that the complex \[
		\left\llbracket\:\:\:\quad\qquad\begin{gathered}
			\vspace{-3pt}
			\labellist
			\pinlabel {\small${a+b-k}$} at -21 65
			\pinlabel {\small$k$} at 43 65
			\pinlabel {\small${a-k}$} at 19 57
			\pinlabel {\small$b-k$} at 85 50
			\pinlabel {\small$a$} at -3 2
			\pinlabel {\small$b$} at 57 9
			\endlabellist
			\includegraphics[width=.17\textwidth]{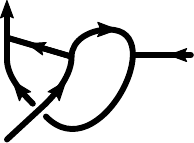}
		\end{gathered}\:\:\right\rrbracket
	\]is homotopy equivalent to a complex supported in homological degree $k$. This follows from a sequence of moves analogous to those used in the proof of Lemma~\ref{lem:shiftedDifferentialPrimitive}. Next, we horizontally compose the same web with $\sr F^{b(a-k)}(B)$ so that the resulting complex in homological degree $m$ is \[
		h^mq^{-m(a-k+1)} \qquad\quad\:\:\begin{gathered}
			\labellist
			\pinlabel {\small$a+b-k$} at -18 50
			\pinlabel {\small$k-m$} at 22 49
			\pinlabel {\small$k$} at 50 42
			\pinlabel {\small$m$} at 41 27
			\pinlabel {\small$a+b-m$} at -9 27
			\pinlabel {\small$b-k$} at 70 35
			\pinlabel {\small$b$} at 50 14
			\pinlabel {\small$b-m$} at 22 6
			\pinlabel {\small$a$} at -2 4
			\endlabellist
			\includegraphics[width=.15\textwidth]{sideTwistWk}
		\end{gathered}\quad \:\:\cong\:\: h^mq^{-m(a-k+1)}\: \qbinom{N-b+m}{m}\qbinom{N-a-b+k}{k-m} \quad\:\:\begin{gathered}
			\vspace{-3pt}
			\labellist
			\pinlabel {\small$b-k$} at 70 25
			\pinlabel {\small$a+b-k$} at 15 39
			\pinlabel {\small$b$} at -4 4
			\endlabellist
			\includegraphics[width=.12\textwidth]{sideTwistFlat}
		\end{gathered} 
	\]The lowest power of $q$ arising in this direct sum is \[
		-k(N - a - b) + m(m-2a - 1)
	\]which is strictly decreasing as a function of $m$ in the range $\max(a + b - N,0) \leq m \leq k$. We finish the proof exactly as in Lemma~\ref{lem:shiftedDifferentialPrimitive}. 
\end{proof}

\subsection{The full twist on two strands}\label{subsec:fullTwist}

We now use Proposition~\ref{prop:shiftedRickardComplex} to explicitly describe a chain complex $S$ that is homotopy equivalent to the complex \[
	\left\llbracket \:\:\:\begin{gathered}
		\vspace{-3pt}
		\labellist
		\pinlabel {\small$a$} at -4 60
		\pinlabel {\small$b$} at 38 61
		\pinlabel {\small$a$} at -3 3
		\pinlabel {\small$b$} at 37 4
		\endlabellist
		\includegraphics[width=.06\textwidth]{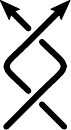}
	\end{gathered} \:\:\:\right\rrbracket
\]associated to a full twist on two strands where we continue to assume $0 \leq b \leq a \leq N$. This ``simplified'' complex $S$ is directly analogous to the complex in \cite[Theorem 3.24]{https://doi.org/10.48550/arxiv.2107.08117}. 

Because the construction is fairly long, we first describe the shape of $S$. We will define $S$ as a direct sum $\bigoplus_{k,l} S^k_l$ where $S^k_l$ is supported in homological degree $k + l$. The indices $l$ and $k$ satisfy $\max(a + b - N,0) \leq l \leq k \leq b$, and we imagine that these direct summands are arranged at lattice points $(k,l)$ in the plane. \[
	\begin{tikzcd}[row sep=1.5em, column sep=1.5em]
		& {} & & {} &\\
		{} \ar[r] & S^k_{l+1} \ar[rr,"s^k_{l+1}"] \ar[u] & & S^{k+1}_{l+1} \ar[r] \ar[u] & {}\\
		\\
		{} \ar[r] & S^k_{l} \ar[rr,"s^k_l"] \ar[uu,"d^k_l"] & & S^{k+1}_l \ar[uu,"d^{k+1}_l"] \ar[r] & {}\\
		& {} \ar[u] & & {} \ar[u] &
	\end{tikzcd}
\]We let $S^k = \bigoplus_l S^k_l$ be the direct sum of the terms in the $k$th column. There are ``vertical'' maps $d^k_l\colon S^k_l \to S^k_{l+1}$ that make $S^k$ into a chain complex, and there are ``horizontal'' maps $s^k_l\colon S^k_l \to S^{k+1}_l$ which together define chain maps $s^k \colon S^k \to S^{k+1}$. These chain maps satisfy $s^{k+1}\circ s^k = 0$ so all together, we have a double complex. The differential on $S$ is the total differential of this double complex, obtained by negating the vertical maps of the column $S^k$ for $k$ odd. 

We turn to the task of constructing the column complexes $S^k$ and proving Proposition~\ref{prop:twistWk}. 
Following \cite[Lemma 3.23]{https://doi.org/10.48550/arxiv.2107.08117}, we first describe a well-known bijection between the partitions in $P(r,s)$ for $r,s \ge 0$ and the set of square-free monomials of degree $s$ in $r+s$ variables $\zeta_1,\ldots,\zeta_{r + s}$. Note that partitions in $P(r,s)$ are in natural bijection with lattice paths from $(0,0)$ to $(s,r)$ moving only rightward and upward. Here is an illustration of the correspondence for $(r,s) = (4,5)$: \[
	(5,3,1) \quad\longleftrightarrow\quad \begin{gathered}
		\ydiagram{5,3,1,0}
	\end{gathered} \quad\longleftrightarrow \quad 
	\begin{gathered}
		\begin{tikzpicture}
		\draw[step=1.5em] (0,0) grid (5*1.5em,4*1.5em);
		\draw[line width=2pt] (0,0) -- (0,1.5em) -- (1.5em,1.5em) -- (1.5em,3em) -- (4.5em,3em) -- (4.5em,4.5em) -- (7.5em,4.5em) -- (7.5em,6em);
		\end{tikzpicture}
	\end{gathered}
\]Record the reverse of path going from $(s,r)$ to $(0,0)$ as a string of ``D''s (for \textit{down}) and ``L''s (for \textit{left}). The string for the given example is ``DLLDLLDLD''. Finally, associate to this string the monomial $\zeta_{j_1}\cdots \zeta_{j_s}$ where the numbers $1 \leq j_1 < \ldots < j_s \leq r+s$ are the locations of the ``L''s in the string. The monomial for this example is $\zeta_2\zeta_3\zeta_5\zeta_6\zeta_8$. Next, if $W$ is a web, then let $\zeta_j W$ denote the $q$-shift $q^{2j}W$. There is a natural identification \[
	q^{s(r+s+1)} \qbinom{r+s}{s}\: W = \bigoplus_{1 \leq j_1 < \cdots < j_s \leq r+s} \zeta_{j_1}\cdots \zeta_{j_s} W = \bigoplus_J \zeta_J W
\]where we have essentially just given labels to an explicit basis for the left-hand side. The second equality is simply the shorthand notation $\zeta_J\coloneq \zeta_{j_1}\cdots \zeta_{j_s}$ where $J = (j_1,\ldots,j_s)$.

\begin{df}\label{df:columnComplexSk}
	We define the column complex $S^k$ where $\max(a + b - N,0) \leq k \leq b$. If $\max(a + b - N,0) \leq l \leq k$, then set \[
		S^k_l \coloneq h^{k+l}q^{-l(a - k + 1) + b(a - k)-k} \qbinom{b-l}{b-k}\: U_l = h^{k+l} q^{-l(a - b + 1) + b(a - b - 1)} \bigoplus_J \zeta_J U_l
	\]where $U_l$ is given in Figure~\ref{fig:fullyShiftedRickardWebs} and where the direct sum is over $J = (j_1,\ldots,j_{b-k})$ for which $1 \leq j_1 < \cdots < j_{b-k} \leq b-l$. 
	The component of the differential $d^k_l\colon S^k_l \to S^k_{l+1}$ from $\zeta_J U_l$ to $\zeta_{J'} U_{l+1}$ is zero unless $j_i^{} - j_i' \in \{0,1\}$ for each $i = 1,\ldots,b-k$ in which case the map is $X^nu_l$ where $n = \sum_i j_i^{} - j_i'$, also given in Figure~\ref{fig:fullyShiftedRickardWebs}. 
\end{df}

\begin{figure}[!ht]
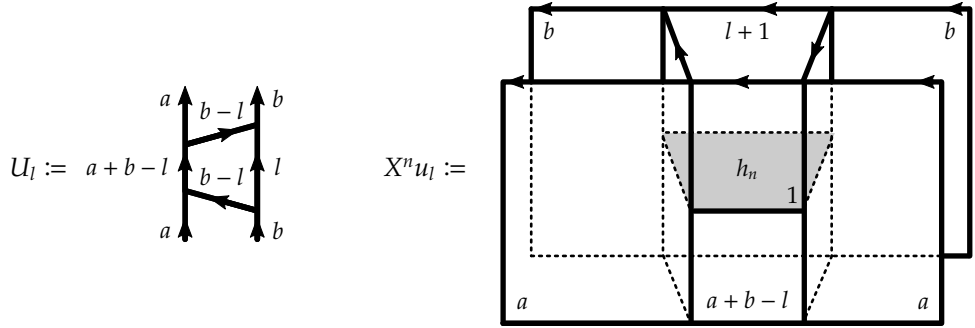

	\centering
	$U_l \coloneq \qquad\quad\:\:\:\:\begin{gathered}
		\labellist
		\pinlabel {\small$a$} at -3 35.5
		\pinlabel {\small$b$} at 25 36
		\pinlabel {\small$l$} at 25 19
		\pinlabel {\small${b - l}$} at 11 33
		\pinlabel {\small${a + b - l}$} at -12.5 19
		\pinlabel {\small${b - l}$} at 11 16.5
		\pinlabel {\small$b$} at 25 3
		\pinlabel {\small$a$} at -3 2.5
		\endlabellist
		\includegraphics[width=.07\textwidth]{standardWeb}
		\end{gathered}$
	\hspace{40pt}
	$X^nu_l\coloneq\quad\begin{gathered}
		\labellist
		\pinlabel {\small$b$} at 13 79
		\pinlabel {\small$b$} at 120 79
		\pinlabel {\small${l+1}$} at 66 79
		\pinlabel {\small$h_n$} at 66 42
		\pinlabel {\small$1$} at 78 35
		\pinlabel {\small$a$} at 6 6
		\pinlabel {\small$a$} at 113 6
		\pinlabel {\small${a + b - l}$} at 66 7
		\endlabellist
		\includegraphics[width=.38\textwidth]{differential}
	\end{gathered}$
	\captionsetup{width=.8\linewidth}
	\caption{Here $\max(a + b - N,0) \leq l \leq b \leq a \leq N$. On the left is the web $U_l = U_l(a,b)$ and on the right is the foam $X^nu_l$ from $q^{a-b+1+2n}U_l$ to $U_{l+1}$. The web $\ol{U_l}$ is at the bottom of the picture while $U_{l+1}$ is at the top. The shaded facet labeled $1$ is decorated by $h_n(X) = X^n$, which is equivalent to placing $n$ dots of weight $1$ on the facet.}
	\label{fig:fullyShiftedRickardWebs}
\end{figure}

\begin{prop}\label{prop:twistWk}
	There is a chain homotopy equivalence \[
		h^kq^{-k}\left\llbracket \:\:\:\quad\qquad\begin{gathered}
			\vspace{-3pt}
			\labellist
			\pinlabel {\small$a$} at -3 85
			\pinlabel {\small$b$} at 41 86
			\pinlabel {\small${b-k}$} at 19 79
			\pinlabel {\small${a+b-k}$} at -21 57
			\pinlabel {\small$k$} at 43 57
			\pinlabel {\small${a-k}$} at 19 57
			\pinlabel {\small$a$} at -3 2
			\pinlabel {\small$b$} at 41 3
			\endlabellist
			\includegraphics[width=.07\textwidth]{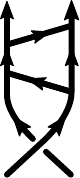}
		\end{gathered}\quad \right\rrbracket \:\:\simeq\:\: S^k
	\]
\end{prop}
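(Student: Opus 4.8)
The plan is to construct $S^k$ directly from the shifted Rickard complex of Proposition~\ref{prop:shiftedRickardComplex}, following the strategy of \cite[Theorem 3.24]{https://doi.org/10.48550/arxiv.2107.08117} but keeping careful track of integrality. First I would identify the diagram on the left-hand side of the proposition: up to planar isotopy it is the square-ladder web $W_k(a,b)$ (with rungs labeled $a-k$ and $b-k$) carrying one extra positive crossing, and using the fork-slide and fork-twist equivalences of Theorem~\ref{thm:RmovesandForkMoves} one may reposition this crossing so that the bottom portion of the diagram becomes exactly the rung-twist diagram appearing in Proposition~\ref{prop:shiftedRickardComplex} while the top portion becomes a single $(b-k)$-rung, at the cost of an explicit overall grading shift which one checks matches the shift recorded in Definition~\ref{df:columnComplexSk}. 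Applying Proposition~\ref{prop:shiftedRickardComplex} then replaces the crossing by the shifted Rickard complex with terms $h^mq^{-m(a-k+1)}V_m(a,b,k)$ and differentials $v_m$ for $\max(a+b-N,0)\le m\le k$, so that $h^kq^{-k}\llbracket\cdot\rrbracket$ is homotopy equivalent to the complex obtained by stacking the $(b-k)$-rung on top of every $V_m(a,b,k)$ and every $v_m$.

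Second I would identify the underlying objects. Stacking a $(b-k)$-rung onto $V_m(a,b,k)$ produces a three-rung ladder web, and applying the categorified MOY relation \eqref{eq:complicatedRelation} (valid by Theorem~\ref{thm:RWcategorificationOfMOYCalculus}) to its top two rungs decomposes it as $\bigoplus_l \qbinom{b-l}{b-k}\,U_l(a,b)$ over $\max(a+b-N,0)\le l\le k$. Under the bijection between partitions and square-free monomials recalled before Definition~\ref{df:columnComplexSk}, one has $\qbinom{b-l}{b-k}\,U_l = \bigoplus_J \zeta_J U_l$ with $J$ ranging over $1\le j_1<\cdots<j_{b-k}\le b-l$, so after matching grading shifts the underlying graded object of the stacked complex in homological degree $k+l$ is precisely $S^k_l$ from Definition~\ref{df:columnComplexSk}. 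Pinning down the exact $q$-shifts here uses monic-degree computations in the spirit of Lemma~\ref{lem:homSpaceCalculationShiftedRickard}.

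Third, and this is the crux, I would identify the differential. Its components out of $\zeta_J U_l$ come from the foam $v_m$ of Figure~\ref{fig:shiftedStandardWeb} with the identity foam of the $(b-k)$-rung glued on, re-expanded in the $\zeta_J$-basis via the MOY decomposition above; combining the fork relations~\ref{item:foamForkRelation} and \ref{item:oneRungRelation}, the thick nilHecke relation~\ref{item:thicknilHecke}, and the dot-migration relation~\ref{item:dotmigrationrelation} of Proposition~\ref{prop:foamRelations} with dot sliding (Proposition~\ref{prop:dotSliding}), one computes that the component to $\zeta_{J'}U_{l+1}$ vanishes unless $j_i-j_i'\in\{0,1\}$ for all $i$, in which case it equals $\pm X^n u_l$ with $n=\sum_i(j_i-j_i')$. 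To force the coefficient to be a unit rather than merely a nonzero integer I would rerun the $q$-degree filtration and primitivity argument from the proof of Proposition~\ref{prop:shiftedRickardComplex} and Lemma~\ref{lem:shiftedDifferentialPrimitive}: compose with a suitable twisted-closure web so that the whole complex becomes homotopy equivalent to one supported in a single homological degree, then use the resulting contracting homotopy together with the fact that the minimal $q$-power summand strictly decreases in the homological index to conclude that each differential component is a unit multiple of the asserted generator. Finally, $d^k\circ d^k=0$ and the stated homotopy equivalence follow since every step above is itself a homotopy equivalence.

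The main obstacle is this last paragraph. The fork-move reduction and the MOY decomposition of the objects are routine, but the differential is where essentially all the work lies: matching the $\zeta_J$-combinatorics to the output of \eqref{eq:complicatedRelation}, computing the glued foam $v_m$ explicitly through the foam relations, and establishing over $\Z$ — not merely over $\Q$, where the idempotent arguments of \cite{https://doi.org/10.48550/arxiv.2107.08117} would suffice — that the induced differential is exactly $X^n u_l$ with unit coefficient and the correct sign convention.
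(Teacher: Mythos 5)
Your overall architecture matches the paper's: apply Proposition~\ref{prop:shiftedRickardComplex} to replace the crossing by the shifted Rickard complex with the top $(b-k)$-rung glued on, decompose the resulting objects into $\bigoplus_J \zeta_J U_l$ via the categorified MOY calculus, and then identify the induced differential with $d^k_l$. You also correctly locate the crux in the last step. However, the method you propose for that step has a genuine gap. You plan to pin down the differential components by ``rerunning the $q$-degree filtration and primitivity argument'' of Lemma~\ref{lem:shiftedDifferentialPrimitive}. That argument only works when the relevant morphism space is free of rank one in the given degree, so that ``primitive'' determines the map up to sign; and the minimal-$q$-power filtration trick only controls the component of lowest $q$-degree. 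Here the components $X^n u_l$ with $n>0$ live in $\Hom(q^{a-b+1+2n}U_l,U_{l+1})$, which has rank greater than one once $n>0$ (the rank-one statement, proved in the course of Proposition~\ref{prop:fulltwist}, holds only at $j=a-b+1$). So primitivity cannot identify these components with $X^nu_l$, and even if each component were known up to a unit you would still owe a sign-coherence argument to gauge all the units to $+1$ simultaneously.

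The paper closes this gap differently: rather than decomposing the objects abstractly and then trying to constrain the differential a posteriori, it \emph{chooses} the decomposition isomorphism explicitly. The maps $\phi_l(J)$ and $\psi_l(J)$ are foams decorated by the Schur polynomials $s_\mu$ and $(-1)^{|\mu|}s_{\hat\mu}$, where $\mu\in P(k-l,b-k)$ is the partition corresponding to $\zeta_J$ under the lattice-path bijection; relations~\ref{item:twoRungRelation} and \ref{item:oneRungRelation} of Proposition~\ref{prop:foamRelations} show these are mutually inverse, and the conjugated differential $\psi_{l+1}(J')\circ d_l\circ\phi_l(J)$ is then computed \emph{exactly} (coefficient $+1$, not merely a unit) by Pieri's formula combined with dot migration, relation~\ref{item:foamForkRelation}, and relation~\ref{item:oneRungRelation}; the condition $j_i-j'_i\in\{0,1\}$ falls out of the horizontal-strip condition in Pieri's formula. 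If you want to complete your proof, you should replace the primitivity step with this explicit basis and exact foam computation.
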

\begin{proof}
	Let $A$ be the complex on the left-hand side in the statement of the proposition. By Proposition~\ref{prop:shiftedRickardComplex}, $A$ is homotopy equivalent to the complex $B = \bigoplus_l B_l$ with differential $d_l\colon B_l \to B_{l+1}$ given by \[
		B_l\coloneq h^{k+l}q^{-l(a-k+1)+b(a-k)-k}\qquad\quad\:\:\:\:\begin{gathered}
		 	\labellist
		 	\pinlabel {\small$a$} at -3 52
		 	\pinlabel {\small$b$} at 25 53
		 	\pinlabel {\small$b-k$} at 11 49
		 	\pinlabel {\small${a+b-k}$} at -12.5 35.5
			\pinlabel {\small$k$} at 25 36
			\pinlabel {\small$l$} at 25 19
			\pinlabel {\small${k - l}$} at 11 33
			\pinlabel {\small${a + b - l}$} at -12.5 19
			\pinlabel {\small${b - l}$} at 11 16.5
			\pinlabel {\small$b$} at 25 3
			\pinlabel {\small$a$} at -3 2.5
		 	\endlabellist
		 	\includegraphics[width=.07\textwidth]{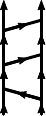}
		 \end{gathered} \hspace{37pt}d_l\coloneq \:\begin{gathered}
			\labellist
			\pinlabel {\small$b$} at 13 79
			\pinlabel {\small$b$} at 120 79
			\pinlabel {\small$k$} at 46 79
			\pinlabel {\small${l+1}$} at 79 79
			\pinlabel {\small$1$} at 79 42
			\pinlabel {\small$a$} at 6 6
			\pinlabel {\small$a$} at 113 6
			\pinlabel {\small${a + b - l}$} at 79 7
			\endlabellist
			\includegraphics[width=.38\textwidth]{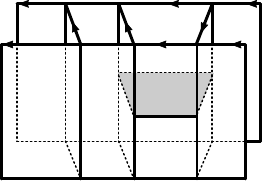}
		\end{gathered}
	\]We now define an explicit chain isomorphism between $B$ and $S^k$. We define $\phi_l\colon S^k_l \to B_l$ and $\psi_l\colon B_l \to S^k_l$ by specifying their component maps $\phi_l(J)\colon \zeta_JU_l \to B_l$ and $\psi_l(J)\colon B_l \to \zeta_JU_l$. Let $\mu$ be the partition in $P(k-l,b-k)$ corresponding to $\zeta_J = \zeta_{j_1}\cdots\zeta_{j_{b-k}}$ and let \[
		\phi_l(J)\coloneq\begin{gathered}
			\labellist
			\pinlabel {$s_\mu$} at 60 44
			\endlabellist
			\includegraphics[width=.38\textwidth]{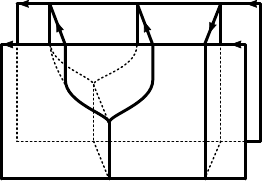}
		\end{gathered} \hspace{15pt} \psi_l(J)\coloneq (-1)^{|\mu|} \: \begin{gathered}
			\labellist
			\pinlabel {${s_{\hat{\mu}}}$} at 40 40
			\endlabellist
			\includegraphics[width=.38\textwidth]{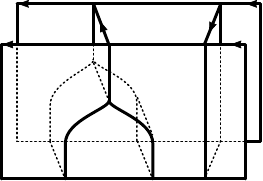}
		\end{gathered}
	\]The maps $\phi_l$ and $\psi_l$ are inverses by foam relations~\ref{item:twoRungRelation} and \ref{item:oneRungRelation} of \ref{prop:foamRelations}. To show that $\phi_l$ and $\psi_l$ define chain maps, we must verify that the map $\zeta_J U_l \to \zeta_{J'} U_{l+1}$ given by \[
		\psi_{l+1}(J')\circ d_l\circ \phi_l(J) = (-1)^{|\lambda|}\:\:\begin{gathered}
			\labellist
			\pinlabel {$s_{\hat{\lambda}}$} at 33 52
			\pinlabel {$s_\mu$} at 55 32
			\endlabellist
			\includegraphics[width=.38\textwidth]{primitive}
		\end{gathered}
	\]agrees with the component of the differential $d^k_l$ from $\zeta_J U_l$ to $\zeta_{J'}U_{l+1}$. Here $\lambda$ is the partition in $P(k-l-1,b-k)$ corresponding to $\zeta_{J'}$. By Pieri's formula and relations~\ref{item:dotmigrationrelation} and \ref{item:foamForkRelation} of Proposition~\ref{prop:foamRelations}, the map $\psi_{l+1}(J')\circ d_l \circ \phi_l(J)$ is equal to \[
		\sum_\nu (-1)^{|\lambda|}\:\:\begin{gathered}
			\labellist
			\pinlabel {$s_{\hat{\lambda}}$} at 35 65
			\pinlabel {$s_\nu$} at 60 45
			\pinlabel {\small$h_{|\mu| - |\nu|}$} at 75 25
			\endlabellist
			\includegraphics[width=.38\textwidth]{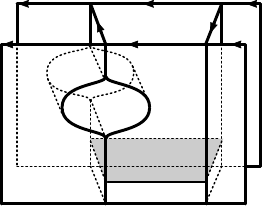}
		\end{gathered}
	\]where the sum is over partitions $\nu \in P(k-l-1,b-k)$ for which the Young diagram of $\nu$ fits inside of the Young diagram of $\mu$ and for which the skew Young diagram $\mu\setminus \nu$ has at most one box in each column. By foam relation~\ref{item:oneRungRelation}, the term in the sum corresponding to $\nu$ is equal to $X^{|\mu| - |\nu|}u_l$ if $\nu = \lambda$ and is zero otherwise. 

	For $\mu \in P(k-l,b-k)$ and $\nu \in P(k-l-1,b-k)$ with corresponding monomials $\zeta_{j_1}\cdots\zeta_{j_{b-k}}$ and $\zeta_{j_1'}\cdots\zeta_{j_{b-k}'}$, respectively, it suffices to show that $j_i^{} - j_i' \in \{0,1\}$ for $i = 1,\ldots,b-k$ if and only if $\nu$ fits inside $\mu$ and $\mu\setminus \nu$ has at most one box in each column. This latter condition is equivalent to the claim that the string of ``D''s and ``L''s corresponding to $\nu$ can be obtained from the string corresponding to $\nu$ by replacing a collection of disjoint substrings of the form ``DLL $\cdots$ L'' by the substrings ``LL $\cdots$ LD'' so that the last letter is ``D'', and then deleting this final ``D''. This condition is then equivalent to the claim that for each $i = 1,\ldots,b-k$, the $i$th ``L'' in the string of $\mu$ is in the same position as the $i$th ``L'' in the string of $\nu$, or is in one slot to the right, which proves the result. 
\end{proof}

We now define the ``horizontal'' maps $s^k_l\colon S^k_l \to S^{k+1}_{l}$ in the double complex $S$.

\begin{df}\label{df:rowMapsWk}
	Define the component of $s^k_l\colon S^k_l \to S^{k+1}_l$ from $\zeta_J U_l$ to $\zeta_{J'}U_l$ to be zero unless $J'$ is obtained from $J$ by deleting an element $j_i \in J = \{j_1 < \ldots < j_{b-k}\}$, in which case define the component map to be $(-1)^{i-1}E_{j_i}$ where $E_j \in \Hom(q^{2j}U_l,U_l)$ is the following difference of dot maps \[
		E_j\coloneq \:\:\begin{gathered}
			\labellist
			\pinlabel {\large$\bullet$} at 11 27
			\pinlabel {\small$e_j$} at 11 37
			\endlabellist
			\includegraphics[width=.035\textwidth]{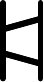}
		\end{gathered} \:\: - \:\: \begin{gathered}
			\labellist
			\pinlabel {\large$\bullet$} at 11 10.5
			\pinlabel {\small$e_j$} at 11 1
			\endlabellist
			\includegraphics[width=.035\textwidth]{smallLadder}
		\end{gathered}
	\]
\end{df}

It is straightforward to see that $s^{k+1}_l\circ s^k_l = 0$. The component map of $s^{k+1}_l\circ s^k_l$ from $\zeta_J U_l$ to $\zeta_{J''}U_l$ is clearly zero except when $\zeta_{J''}$ is obtained by deleting two elements $j_i,j_{i'}$ from $J$. In this case, the component map is simply $E_{j_i}E_{j_{i'}} - E_{j_{i'}}E_{j_i} = 0$ which is zero because the $E_j$ maps commute. 

To complete the construction of the double complex $S$, we turn to the task of proving that the horizontal and vertical maps commute, which is to say $s^k_{l+1}\circ d^k_l = d^{k+1}_l\circ s^k_l$. We view this as the assertion that $s^k\colon S^k \to S^{k+1}$ is a chain map between the column complexes. Strictly speaking, we mean that $s^k\colon S^k \to h^{-1}S^{k+1}$ is a chain map because chain maps are supposed to preserve homological degree, though we omit this extra notation when there is little possibility of confusion.  
Our proof is based on how the analogous result is proved in \cite[Proposition 3.10]{https://doi.org/10.48550/arxiv.2107.08117}. We summarize the argument. First, we define a chain complex $\hat{S}^k$ which has $S^k$ as a natural quotient complex. We then define a map $\hat{s}^k\colon\hat{S}^k \to \hat{S}^{k+1}$ that descends to $s^k\colon S^k \to S^{k+1}$, from which it suffices to prove that $\hat{s}^k$ is a chain map. Next, we construct another complex $\hat{R}^k$ together with a chain isomorphism $\Psi^k\colon \hat{R}^k \to \hat{S}^k$, which can be viewed as a change of basis. Finally, we verify that composite map $(\Psi^{k+1})^{-1}\circ \hat{s}^k\circ \Psi^k \colon \hat{R}^k \to \hat{R}^{k+1}$ is a chain map which completes the argument.

Recall that for $\max(a +b-N,0) \leq l \leq k \leq b$, we have \[
	S^k_l = h^{k+l}q^{\bullet} \bigoplus_{\substack{J \subseteq \{1,\ldots,b-l\}\\|J| = b-k}} \zeta_J U_l
\]where the precise $q$-shift $q^\bullet$ is given in Definition~\ref{df:columnComplexSk}. Let $m = b - \max(a + b - N,0)$ and set \[
	\hat{S}^k_l \coloneq h^{k+l}q^\bullet \bigoplus_{\substack{J \subseteq \{1,\ldots,m\}\\|J| = b-k}} \zeta_J U_l \hspace{30pt} \hat{R}^k_l \coloneq h^{k+l}q^\bullet \bigoplus_{\substack{I \subseteq \{1,\ldots,m\}\\|I| = b-k}} \xi_I U_l
\]where $\xi_i$ carries a $q$-shift of $q^{2i}$ just like $\zeta_i$. We define the differential $\hat{S}^k_l\to\hat{S}^k_{l+1}$ by the same formula for the differential $d^k_l\colon S^k_l \to S^k_{l+1}$. In particular, the component from $\zeta_JU_l$ to $\zeta_{J'}U_{l+1}$ is zero unless $j_i^{} - j_i' \in \{0,1\}$ for each $i = 1,\ldots,b-k$ in which case the map is $X^nu_l$ where $n = \sum_i j_i^{} - j_i'$. The obvious projection map $\hat{S}^k \to S^k$ is a chain map so $S^k$ is a quotient of $\hat{S}^k$. The differential $\hat{R}^k_l \to \hat{R}^k_{l+1}$ is simpler. Its component from $\xi_IU_l$ to $\xi_{I'}U_{l+1}$ is zero unless $I = I'$ in which case the map is $u_l$. 

\begin{lem}\label{lem:PsikChainIsomorphism}
	There is a chain isomorphism $\Psi^k\colon \hat{R}^k \to \hat{S}^k$. 
\end{lem}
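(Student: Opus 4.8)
The plan is to write down $\Psi^k$ explicitly and then verify directly that it is a chain map and an isomorphism, following the organisation of \cite[Lemma 3.23]{https://doi.org/10.48550/arxiv.2107.08117} but arranged so that everything stays $\Z$-linear. The starting observation is that $\hat R^k_l$ and $\hat S^k_l$ are \emph{literally the same} graded object: both equal $h^{k+l}q^\bullet\bigoplus_{|J|=b-k}\zeta_J U_l$ once one renames $\xi_i\leftrightarrow\zeta_i$ and $I\leftrightarrow J$, and the attached $q$-shifts agree. What differs is only the vertical differential: $d^{\hat R}_l$ is the diagonal map which is $u_l$ on each $\zeta_J$-summand, while the component of $d^{\hat S}_l$ from $\zeta_JU_l$ to $\zeta_{J'}U_{l+1}$ is $X^{n}u_l$ with $n=\sum_i(j_i-j_i')$ when $j_i-j_i'\in\{0,1\}$ for all $i$, and is $0$ otherwise.

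First I would record that, by the dot-migration relation~\ref{item:dotmigrationrelation} of Proposition~\ref{prop:foamRelations} together with the centrality and dot-sliding statements of Section~\ref{subsec:RickardComplexes} (Remark~\ref{rem:centralDotMaps}, Proposition~\ref{prop:dotSliding}), a dot of weight $1$ on the shaded facet of the differential foam can be rewritten as a central dot decoration on $U_l$, intertwined by $u_l$ with the corresponding decoration on $U_{l+1}$ and commuting with the other such decorations. Consequently one may write $d^{\hat S}_l=u_l\cdot\mathbb{M}$, where $\mathbb{M}$ is a \emph{single} matrix, independent of $l$, with entries in the commutative ring $\mathcal{E}$ of central dot decorations; the combinatorial rule of Definition~\ref{df:columnComplexSk} says $\mathbb{M}_{JJ}=1$ for all $J$ and $\mathbb{M}_{J'J}=0$ unless $J'\leq J$ componentwise. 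Hence $\mathbb{M}=I+\mathcal{N}$ with $\mathcal{N}$ strictly lower triangular for any linear refinement of this partial order, so $\mathcal{N}$ is nilpotent and $\mathbb{M}$ and all of its powers are invertible over $\mathcal{E}$, with inverses finite $\Z$-linear combinations of powers of $\mathcal{N}$.

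Writing $l_0\coloneq\max(a+b-N,0)$ for the bottom homological degree, I would then define $\Psi^k_l\coloneq\mathbb{M}^{\,l-l_0}$, interpreted as the automorphism of $\bigoplus_{|J|=b-k}U_l$ given by dot-decorated identity foams; in particular $\Psi^k_{l_0}$ is the identity. Each $\Psi^k_l$ is a degree-preserving isomorphism because $\mathbb{M}$ is invertible over $\mathcal{E}$ and is homogeneous of degree $0$ (the entry $\mathbb{M}_{J'J}$ has $q$-degree $2n$, exactly compensating the shift from $\zeta_{J'}$ to $\zeta_J$). The chain-map identity $\Psi^k_{l+1}\circ d^{\hat R}_l=d^{\hat S}_l\circ\Psi^k_l$ reduces, using that the entries of $\mathbb{M}$ commute with $u_l$, to $u_l\cdot\mathbb{M}^{\,l+1-l_0}=(u_l\cdot\mathbb{M})\cdot\mathbb{M}^{\,l-l_0}$, which is immediate; invertibility of each $\Psi^k_l$ then upgrades this to a chain isomorphism. (One checks similarly that this $\Psi^k$ restricts along $\hat S^k\twoheadrightarrow S^k$ to the map used in proving that $s^k$ is a chain map, which is the reason for introducing $\hat R^k$ and $\hat S^k$ in the first place.)

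The main obstacle is the bookkeeping behind the identity $d^{\hat S}_l=u_l\cdot\mathbb{M}$ with an $l$-independent $\mathbb{M}$: one must check that the dot-migration rewriting of ``$n$ dots on the shaded facet'', once expressed through central decorations on the outer $a$- and $b$-edges of $U_l$, yields coefficients that are independent of $l$ and that reproduce the combinatorial prescription of Definition~\ref{df:columnComplexSk}. This is precisely where the argument departs from \cite{https://doi.org/10.48550/arxiv.2107.08117}: rational coefficients there allow freer manipulations, whereas here every rewriting must stay $\Z$-linear — which is automatic above, since the only inversions used are of unitriangular matrices, whose inverses and powers have integer coefficients. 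The remaining points — well-definedness on the enlarged index sets $\{1,\dots,m\}$ and the exact form of the $q$-shifts — are routine.
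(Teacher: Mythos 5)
Your reduction to a single $l$-independent matrix $\mathbb{M}$ with $d^{\hat S}_l = u_l\cdot\mathbb{M}$ and $\Psi^k_l=\mathbb{M}^{\,l-l_0}$ does not go through, and the failure is exactly at the step you defer as ``bookkeeping.'' The entries $X^n$ of the putative $\mathbb{M}$ are powers of the variable attached to the thin facet labeled $1$, which exists only inside the cobordism $u_l$ and is not an edge of $U_l$ or $U_{l+1}$. The dot-migration relation (relation~\ref{item:dotmigrationrelation} of Proposition~\ref{prop:foamRelations}) only lets you trade a \emph{symmetric} function of the merged alphabet $\mathbf{A}=\mathbf{B}\sqcup\{X\}$ on the facet labeled $b-l$ for decorations on the facets labeled $b-l-1$ and $1$; a bare power $X^n$ with $n\ge 1$ is not symmetric in $\mathbf{A}$ and cannot be pushed onto $U_l$ at all, let alone onto the boundary edges labeled $a$ and $b$ where Remark~\ref{rem:centralDotMaps} gives centrality. (The internal rung edges of $U_l$, where decorations \emph{can} absorb such data, are not central, and their alphabet has size $b-l$, which changes with $l$.) So there is no ring $\mathcal{E}$ of central decorations in which $\mathbb{M}$ lives, no factorization $d^{\hat S}_l=u_l\cdot\mathbb{M}$ by honest endomorphisms, and hence no map $\mathbb{M}^{\,l-l_0}$ to take powers of. The unitriangularity/nilpotence observation is fine combinatorially but is being applied to objects that do not exist as morphisms.

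The paper's proof is structured around precisely this obstruction: $\Psi^k_l$ is defined by decorating the \emph{internal} rung edge labeled $b-l$ with the coefficients $P(I,J)\in\Sym(\mathbf{A})$, $|\mathbf{A}|=b-l$, of the algebraic change of basis $\psi_{\mathbf{A}}(\xi_j)=\sum_s(-1)^{s-1}h_{j-s}(\mathbf{A})\zeta_s$, so the maps genuinely vary with $l$ and are manifestly invertible because $\psi_{\mathbf{A}}$ is. The chain-map property is then not a formal commutation but the identity $d_\zeta=\psi_{\mathbf{B}|X}\circ d_\xi\circ\psi_{\mathbf{A}}^{-1}$, verified on generators via the generating-function identity of Lemma~\ref{lem:generatingFunctionIdentity}; the splitting $\mathbf{A}=\mathbf{B}\sqcup\{X\}$ across the foam $u_l$ is what produces the $X^n$'s in $d^{\hat S}_l$ in the first place. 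If you want to salvage your outline, you need to replace ``$\mathbb{M}^{\,l-l_0}$'' by this level-dependent family and supply the transition computation; as written, the key existence claim is false.
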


\begin{proof}
	Let $\mathbf{A}$ be a finite alphabet, and let $\Sym(\mathbf{A})\langle \zeta_1,\ldots,\zeta_{m}\rangle$ denote the free $\Sym(\mathbf{A})$-module with basis the formal symbols $\zeta_1,\ldots,\zeta_{m}$. Then let $\Lambda_\zeta(\mathbf{A})$ denote the exterior algebra of $\Sym(\mathbf{A})\langle \zeta_1,\ldots,\zeta_{m}\rangle$ over the ring $\Sym(\mathbf{A})$. A basis of $\Lambda_\zeta(\mathbf{A})$ as a free $\Sym(\mathbf{A})$-module is given by $\zeta_J = \zeta_{j_1}\cdots \zeta_{j_n}$ where $J = \{j_1 < \cdots < j_n\} \subseteq \{1,\ldots,m\}$. Define $\Lambda_\xi(\mathbf{A})$ in the same way except with $\zeta_1,\ldots,\zeta_{m}$ replaced by $\xi_1,\ldots,\xi_{m}$, respectively. Next, define an isomorphism $\psi_{\mathbf{A}}\colon \Sym(\mathbf{A})\langle \xi_1,\ldots,\xi_m\rangle \to \Sym(\mathbf{A})\langle \zeta_1,\ldots,\zeta_m\rangle$ by \[
		\psi_{\mathbf{A}}(\xi_j) = h_{j-1}(\mathbf{A})\zeta_1 - h_{j-2}(\mathbf{A})\zeta_2 + \cdots + (-1)^{j-1} \zeta_j
	\]whose inverse is given by \[
		\psi_{\mathbf{A}}^{-1}(\zeta_i) = e_{i-1}(\mathbf{A})\xi_1 - e_{i-2}(\mathbf{A})\xi_2 + \cdots + (-1)^{i-1} \xi_i. 
	\]The fact that these two maps are inverses follows from the identity \[
		e_n(\mathbf{A}) - e_{n-1}(\mathbf{A})h_1(\mathbf{A}) + \cdots + (-1)^{n-1} e_1(\mathbf{A}) h_{n-1}(\mathbf{A}) + (-1)^n h_n(\mathbf{A}) = 0
	\]for all $n \ge 1$. It follows that $\psi_{\mathbf{A}}$ induces a graded $\Sym(\A)$-algebra isomorphism $\Lambda_\xi(\mathbf{A}) \to \Lambda_\zeta(\mathbf{A})$ which we also denote by $\psi_{\mathbf{A}}$. 

	Now assume that $|\mathbf{A}| = b-l$. We use $\psi_{\mathbf{A}}$ to define the isomorphism $\Psi_l^k\colon \hat{R}^k_l \to \hat{S}^k_l$. Let the component of $\Psi_l^k$ from $\xi_I U_l$ to $\zeta_J U_l$ be the following map. Consider the coefficient $P = P(I,J) \in \Sym(\mathbf{A})$ of $\zeta_J$ in the expression of $\psi_{\mathbf{A}}(\xi_I) \in \Lambda_\zeta(\mathbf{A})$ in terms of the basis. Define the component map to be\[
		\begin{gathered}
			\labellist
			\pinlabel {\large$\bullet$} at 11 27
			\pinlabel {\small$P$} at 11 38
			\endlabellist
			\includegraphics[width=.035\textwidth]{smallLadder}	
		\end{gathered}
	\]where we note that the dot lies on an edge labeled $b - l = |\mathbf{A}|$. An explicit formula for $P$ is \[
		P = (-1)^{j_1 + \cdots + j_{b-k} - (b - k)} \sum_{\sigma \in \fk{S}_{b-k}} (-1)^{\sign(\sigma)} \prod_{p=1}^{b-k} h_{i_{\sigma(p)} - j_p}(\mathbf{A})
	\]where $h_n = 0$ if $n < 0$ by convention. The inverse of $\Psi^k_l$ is constructed from $\psi_{\mathbf{A}}^{-1}$. In particular, its component map from $\zeta_J U_l$ to $\xi_I U_l$ is given by a dot map on the same edge as above except now the polynomial in $\Sym(\mathbf{A})$ is the coefficient of $\xi_I$ in $\psi_{\mathbf{A}}^{-1}(\zeta_J)$. The polynomial is \[
		(-1)^{i_1+\cdots+i_{b-k}-(b-k)} \sum_{\sigma \in \fk{S}_{b-k}} (-1)^{\sign(\sigma)} \prod_{p=1}^{b-k} e_{j_{\sigma(p)}-i_p}(\mathbf{A}).
	\]These component maps could be used to define $\Psi^k_l$ directly. However, the purpose of introducing the purely algebraic isomorphism $\psi_{\A}\colon \Lambda_\xi(\A) \to \Lambda_\zeta(\A)$ is to avoid having to check that $\Psi^k_l$ is an isomorphism just from the component maps. We now employ a similar strategy to show that $\Psi^k\colon \hat{R}^k \to \hat{S}^k$ is a chain map. 

	Let $\mathbf{B} \subset \mathbf{A}$ be a subset of size $|\mathbf{B}| = |\mathbf{A}| - 1 = b - l - 1$ and let $\{X\} = \mathbf{A}\setminus\mathbf{B}$. Let $\Sym(\mathbf{B}|X)$ denote the polynomials in the alphabet $\mathbf{A}$ that are symmetric in $\mathbf{B}$. In particular, there is an inclusion $\Sym(\mathbf{A}) \subset \Sym(\mathbf{B}|X)$. Define maps \[
		d_\xi\colon \Lambda_\xi(\mathbf{A}) \to \Lambda_\xi(\mathbf{A}) \otimes_{\Sym(\mathbf{A})} \Sym(\mathbf{B}|X) \hspace{30pt} d_\zeta\colon \Lambda_\zeta(\mathbf{A}) \to \Lambda_\zeta(\mathbf{A}) \otimes_{\Sym(\mathbf{A})} \Sym(\mathbf{B}|X)
	\]by declaring that $d_\xi(\xi_i) = \xi_i$ and $d_\zeta(\zeta_i) = \zeta_i + X\zeta_{i-1}$ with the convention $\zeta_0 = 0$. Then extend both maps to the entirety of their domains multiplicatively and $\Sym(\mathbf{B}|X)$-linearly. Then let\[
		\psi_{\mathbf{B}|X}\colon \Lambda_\xi(\mathbf{B}) \otimes_{\Sym(\mathbf{B})} \Sym(\mathbf{B}|X) \to \Lambda_\zeta(\mathbf{B}) \otimes_{\Sym(\mathbf{B})} \Sym(\mathbf{B}|X)
	\]be the isomorphism $\psi_{\mathbf{B}} \otimes \Id$. We now claim that \[
		d_\zeta = \psi_{\mathbf{B}|X}\circ d_\xi \circ \psi_{\mathbf{A}}^{-1}.
	\]To check this identity, it suffices to verify it on $\zeta_1,\ldots,\zeta_m$ because both sides are $\Sym(\mathbf{B}|X)$-algebra maps. We compute that \begin{align*}
		(\psi_{\mathbf{B}|X}\circ d_{\xi} \circ \psi_{\mathbf{A}}^{-1})(\zeta_i) &= \psi_{\mathbf{B}|X}\left(\sum_{j=1}^i (-1)^{j-1} e_{i-j}(\mathbf{A}) \xi_j \right)\\
		&= \sum_{j=1}^i (-1)^{j-1} e_{i-j}(\mathbf{A}) \sum_{s = 1}^j (-1)^{s-1} h_{j-s}(\mathbf{B}) \zeta_s = \sum_{s=1}^i \sum_{j=s}^i (-1)^{j-s} e_{i-j}(\mathbf{A}) h_{j-s}(\mathbf{B}) \zeta_s
	\end{align*}where the last equality involves a simple reindexing of the double sum. Now note that \[
		\sum_{j=s}^i (-1)^{j-s} e_{i-j}(\mathbf{A}) h_{j-s}(\mathbf{B}) = \sum_{t = 0}^{i-s} (-1)^t e_{i-s-t}(\mathbf{A}) h_t(\mathbf{B}) = e_{i-s}(X) = \begin{cases}
			1 & i -s = 0\\
			X & i - s = 1\\
			0 & i - s > 1
		\end{cases}
	\]where we have used Lemma~\ref{lem:generatingFunctionIdentity}. Thus $(\psi_{\mathbf{B}|X}\circ d_\xi \circ \psi_{\mathbf{A}}^{-1})(\zeta_i) = d_\zeta(\zeta_i)$ as claimed.

	We now use the identity $d_\zeta = \psi_{\mathbf{B}|X} \circ d_\xi \circ \psi_A^{-1}$ to show that $\Psi^k$ is a chain map. Note that the component of the differential $\hat{R}^k_l \to \hat{R}^k_{l+1}$ from $\xi_I U_l \to \xi_{I'} U_{l+1}$ is precisely $Pu_l$ where $P$ is the coefficient of $\xi_{I'}$ in the expression of $d_\xi(\xi_I)$. This coefficient $P$ is just $1$ when $\{i_1,\ldots,i_{b-k}\} = \{i_1',\ldots,i_{b-k}'\}$ and $0$ otherwise. Similarly, the component of the differential $\hat{S}^k_l \to \hat{S}^k_{l+1}$ from $\zeta_J U_l \to \zeta_{J'}U_{l+1}$ is $Qu_{l}$ where $Q$ is the coefficient of $\zeta_{J'}$ in the expression of $d_\zeta(\zeta_J)$. Notice our abuse of notation: since $Q$ is either $0$ or a power of $X$, we may interpret $Qu_l$ as the dotted foam given in Figure~\ref{fig:fullyShiftedRickardWebs}. The fact that $\Psi^k$ is a chain map now follows from the dot-migration relation (foam relation~\ref{item:dotmigrationrelation}) applied to the following three facets with associated formal alphabets $\mathbf{A},\mathbf{B},\{X\}$ where $|\mathbf{A}| = b-l$ and $|\mathbf{B}| = b - l - 1$. \[
		\begin{gathered}
			\labellist
		\pinlabel {\small$b$} at 13 79
		\pinlabel {\small$b$} at 120 79
		\pinlabel {\small${l+1}$} at 66 79
		\pinlabel {\small$\mathbf{B}$} at 47 60
		\pinlabel {\small$\{X\}$} at 66 43
		\pinlabel {\small$\mathbf{A}$} at 47 25
		\pinlabel {\small$1$} at 78 35
		\pinlabel {\small$a$} at 6 6
		\pinlabel {\small$a$} at 113 6
		\pinlabel {\small${a + b - l}$} at 66 7
		\endlabellist
			\includegraphics[width=.38\textwidth]{differential}
		\end{gathered}
	\]
	The relations imply that a decoration $f \in \Sym(\mathbf{A})$ on the facet labeled $b-l$ can be migrated to the linear combination of decorations on the facets labeled $b - l - 1$ and $1$ corresponding to the same polynomial $f$ under the inclusion $\Sym(\mathbf{A}) \subset \Sym(\mathbf{B}|X) = \Sym(\mathbf{B}) \otimes \Sym(X)$ induced by any identification $\mathbf{A} = \mathbf{B} \sqcup \{X\}$. 
\end{proof}
	
\begin{prop}\label{prop:WisaChainMap}
	The horizontal maps commute with the vertical maps, which is to say that $s^k_{l+1}\circ d^k_l = d^{k+1}_l \circ s^k_l$.
\end{prop}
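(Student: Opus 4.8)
\emph{Plan.} The claim is the commutativity of a square of foam maps, and the strategy is the one sketched in the paragraph preceding the statement: lift the horizontal maps to the auxiliary complexes $\hat S^k$, reduce the problem to showing the lift $\hat s^k$ is a chain map, and then transport everything through the change-of-basis isomorphism $\Psi^k$ of Lemma~\ref{lem:PsikChainIsomorphism} into $\hat R^k$, where the vertical differential is ``diagonal'' and the obstructing $X$-corrections have been engineered away.

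\emph{Step 1 (lift the horizontal maps).} I would define $\hat s^k_l\colon \hat S^k_l \to \hat S^{k+1}_l$ by the very formula of Definition~\ref{df:rowMapsWk}, now allowing $J\subseteq\{1,\dots,m\}$: the component from $\zeta_J U_l$ to $\zeta_{J'}U_l$ is $(-1)^{i-1}E_{j_i}$ when $J' = J\setminus\{j_i\}$ and zero otherwise. Since $e_j(\mathbf A) = 0$ for $j > |\mathbf A| = b-l$, every component of $\hat s^k_l$ indexed by some $j_i > b-l$ vanishes, so $\hat s^k$ descends along the projection $\hat S^k\to S^k$ to the map $s^k$; the identity $s^{k+1}_l\circ s^k_l = 0$ lifts to $\hat s^{k+1}\circ\hat s^k = 0$ for the same reason (the $E_j$ commute). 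As the projection $\hat S^k\to S^k$ is a surjective chain map in each column, the squares expressing ``$\hat s^k$ commutes with the vertical differentials $\hat d$'' project onto the squares $s^k_{l+1}\circ d^k_l = d^{k+1}_l\circ s^k_l$. Hence it suffices to prove that $\hat s^k\colon\hat S^k\to\hat S^{k+1}$ is a chain map.

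\emph{Step 2 (pass to $\hat R^k$ and compute).} By Lemma~\ref{lem:PsikChainIsomorphism}, $\Psi^k\colon\hat R^k\to\hat S^k$ is a chain isomorphism, so $\hat s^k$ is a chain map iff $\tilde s^k \coloneq (\Psi^{k+1})^{-1}\circ\hat s^k\circ\Psi^k\colon\hat R^k\to\hat R^{k+1}$ is. The point is that the vertical differential of $\hat R^k$ has a single nonzero component from $\xi_I U_l$ to $\xi_{I'}U_{l+1}$, namely $\delta_{I,I'}\,u_l$; consequently $\tilde s^k$ is a chain map precisely when each component $c^{I,I'}_l\in\Hom(\xi_I U_l,\xi_{I'}U_l)$ of $\tilde s^k_l$ intertwines the foams $u_l$, i.e. $c^{I,I'}_{l+1}\circ u_l = u_l\circ c^{I,I'}_l$. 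I would then compute $\tilde s^k$ explicitly: the maps $\Psi^k$ and $\hat s^k$ are both given by dot decorations on the rung facets of $U_l$, and conjugating $\hat s^k$ by $\psi_{\mathbf A}^{\pm1}$ — exactly as in the proof of Lemma~\ref{lem:PsikChainIsomorphism}, using the $e$--$h$ collapsing identity of Lemma~\ref{lem:generatingFunctionIdentity} — causes the $X$-corrections to cancel, so that $\tilde s^k_l$ becomes a contraction operator on the exterior factor with components on the $U_\bullet$ factor that visibly intertwine the $u_l$ (this is where the pass to the $\xi$-basis earns its keep, the point being $d_\xi(\xi_i)=\xi_i$ with no $X$-term).

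\emph{Main obstacle.} The delicate part is the explicit computation in Step~2: carrying out $(\Psi^{k+1})^{-1}\hat s^k\Psi^k$ through $\psi_{\mathbf A}^{\pm1}$, tracking the signs $(-1)^{i-1}$, and confirming via the dot-migration relation \ref{item:dotmigrationrelation} of Proposition~\ref{prop:foamRelations} at the trivalent vertices of $U_l$ that the residual foams commute with $u_l$ on the nose over $\Z$ — this is precisely the bookkeeping that forces one to work with the $\xi$-basis rather than arguing naively, and it must be done with the same care over $\Z$ as in Lemma~\ref{lem:PsikChainIsomorphism}. As an alternative, one can sidestep $\hat R^k$ and instead verify $\hat s^{k+1}\circ\hat d = \hat d\circ\hat s^k$ directly on $\hat S^k$ by pushing the rung-dots $e_j(\mathbf A)$ through the foam $u_l$ via dot migration, where the splitting $\mathbf A = \mathbf B\sqcup\{X\}$ and the relation $e_j(\mathbf A)=e_j(\mathbf B)+X\,e_{j-1}(\mathbf B)$ reproduce exactly the $X^n u_l$ corrections built into $\hat d$; but this route is combinatorially heavier, and the $\hat R^k$ computation is the cleaner one.
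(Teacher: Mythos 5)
Your proposal is correct and follows essentially the same route as the paper: lift $s^k$ to $\hat s^k$, conjugate by the change-of-basis $\Psi^k$ into the $\xi$-basis where the vertical differential is diagonal, and use the $e$--$h$ generating-function identity (Lemma~\ref{lem:generatingFunctionIdentity}) together with dot migration to see that the resulting components commute with $u_l$. The paper runs the same computation in the opposite direction — it first defines the map $\hat r^k$ on $\hat R^k$ with components $H_i=\sum_{s+t=i}(-1)^t h_s e_t$, which is a chain map because these are central dot maps, and then verifies $\Psi^{k+1}\circ\hat r^k\circ(\Psi^k)^{-1}=\hat s^k$ via the anti-derivation identity $\psi_{\A|\C}\circ r\circ\psi_{\A|\C}^{-1}=s$ — but the content is identical.
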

\begin{proof}
	First, define $\hat{s}^k\colon \hat{S}^k \to \hat{S}^{k+1}$ by the same formula used to define $s^k$. In particular, define the component from $\zeta_J U_l$ to $\zeta_{J'}U_l$ to be zero unless $J'$ is obtained from $J$ by deleting an element $j_i \in J = \{j_1 < \cdots < j_{b-k}\}$, in which case define the map to be $(-1)^{i-1}E_{j_i}$. It suffices to show that $\hat{s}^k\colon\hat{S}^k \to \hat{S}^{k+1}$ is a chain map. 

	Define a map $\hat{r}^k\colon \hat{R}^k \to \hat{R}^{k+1}$ by declaring that the component map from $\xi_I U_l$ to $\xi_{I'}U_l$ is zero unless $I'$ is obtained from $I$ by deleting an element $i_j \in I = \{i_1 < \cdots < i_{b-k}\}$, in which case define the component map to be $(-1)^{j-1}H_{i_j}$ where $H_i \in \Hom(q^{2i}U_l,U_l)$ is the following map \[
		H_i \coloneq \sum_{s + t = i}(-1)^t\:\:\begin{gathered}
			\labellist
			\pinlabel {\large$\bullet$} at 20 36
			\pinlabel {\small$h_s$} at 29 36
			\pinlabel {\large$\bullet$} at 20 2
			\pinlabel {\small$e_t$} at 29 2
			\endlabellist
			\includegraphics[width=.035\textwidth]{smallLadder}
		\end{gathered}
	\]Because dot maps on edges incident to the boundary of the web are central by Remark~\ref{rem:centralDotMaps}, it follows that $\hat{r}^k$ is a chain map. By the dot-migration relation (foam relation~\ref{item:dotmigrationrelation}), we have \[
		H_i = \sum_{s + t = i_j}(-1)^t\:\:\begin{gathered}
			\labellist
			\pinlabel {\large$\bullet$} at 20 36
			\pinlabel {\small$h_s$} at 29 36
			\pinlabel {\large$\bullet$} at 20 2
			\pinlabel {\small$e_t$} at 29 2
			\endlabellist
			\includegraphics[width=.035\textwidth]{smallLadder}
		\end{gathered}\quad =\: \sum_{s + t=i_j}(-1)^t \:\:\begin{gathered}
			\labellist
			\pinlabel {\large$\bullet$} at 11 27
			\pinlabel {\small$h_s$} at 11 38
			\pinlabel {\large$\bullet$} at 11 10.5
			\pinlabel {\small$e_t$} at 11 2
			\endlabellist
			\includegraphics[width=.035\textwidth]{smallLadder}
		\end{gathered}
	\]We now show that the composite map $\Psi^{k+1} \circ r^k \circ (\Psi^k)^{-1}\colon \hat{S}^k \to \hat{S}^{k+1}$ is precisely $s^k$, where $\Psi^k$ is the isomorphism constructed in Lemma~\ref{lem:PsikChainIsomorphism}. 

	Continuing the notation from the proof of Lemma~\ref{lem:PsikChainIsomorphism}, let $\A$ be an alphabet of size $|\A| = b-l$ and recall that we have a $\Sym(\A)$-algebra isomorphism $\psi_{\A}\colon \Lambda_\xi(\A) \to \Lambda_\zeta(\A)$ given by \begin{align*}
		\psi_{\A}(\xi_j) &= h_{j-1}(\A)\zeta_1 - \cdots + (-1)^{j-1}\zeta_j\\
		\psi_{\A}^{-1}(\zeta_i) &= e_{i-1}(\A)\xi_1 -\cdots + (-1)^{i-1}\xi_i.
	\end{align*}Now let $\C$ be another alphabet of size $|\C| = b-l$ and let \[
		\psi_{\A|\C}\colon \Lambda_\xi(\A) \otimes_{\Sym(\A)} \Sym(\A|\C) \to \Lambda_\zeta(\A) \otimes_{\Sym(\A)} \Sym(\A|\C)
	\]be the isomorphism $\psi_{\A} \otimes \Id$. Here $\Sym(\A|\C) = \Sym(\A) \otimes_{\Z} \Sym(\C)$ denotes the ring of polynomials in the alphabet $\A \sqcup \C$ that are symmetric in $\A$ and also symmetric in $\C$. Define $\Sym(\A|\C)$-linear anti-derivations \[
		r\colon \Lambda_\xi\otimes_{\Sym(\A)}\Sym(\A|\C) \to \Lambda_\xi\otimes_{\Sym(\A)}\Sym(\A|\C), \qquad s\colon \Lambda_\zeta\otimes_{\Sym(\A)}\Sym(\A|\C) \to \Lambda_\zeta\otimes_{\Sym(\A)}\Sym(\A|\C)
	\]by the formulas \[
		r(\xi_j) = \sum_{s + t = j}(-1)^t h_s(\A)e_t(\C), \qquad s(\zeta_i) = e_i(\A) - e_i(\C)
	\]By definition of an anti-derivation, they satisfy the signed Leibniz rule $r(\xi_{I}\xi_{J}) = r(\xi_{I})\xi_{J} + (-1)^{|I|}\xi_{I}r(\xi_{J})$. Just as in the proof of Lemma~\ref{lem:PsikChainIsomorphism}, the coefficient of $\xi_{I'}$ in the expression of $r(\xi_I)$ in terms of the given basis is precisely the component map of $r^k\colon \hat{R}^k \to \hat{R}^{k+1}$ from $\xi_I U_l$ to $\xi_{I'}U_l$ where $\A$ and $\C$ are interpreted as the formal alphabets associated to the two ``rungs'' in $U_l$ labeled $b-l$. Similarly, the coefficients of $s(\zeta_J)$ give the component maps of $s^k\colon \hat{S}^k \to \hat{S}^{k+1}$. To show that $\Psi^k\circ r^k\circ (\Psi^k)^{-1} = s^k$, it suffices to verify that \[
		\psi_{\A|\C} \circ r \circ (\psi_{\A|\C})^{-1} = s.
	\]As both sides are anti-derivations, it suffices to check the identity on $\zeta_1,\ldots,\zeta_m$. We compute \begin{align*}
		(\psi_{\A|\C} \circ r \circ (\psi_{\A|\C})^{-1}) (\zeta_i) &= (\psi_{\A|\C}\circ r)\left(\sum_{j=1}^i (-1)^{j-1} e_{i-j}(\A) \xi_j \right)\\
		&= \sum_{j=1}^i (-1)^{j-1} e_{i-j}(\A) \sum_{t = 0}^j (-1)^t h_{j-t}(\A)e_t(\C)\\
		&= \left(\sum_{j=1}^i (-1)^{j-1}e_{i-j}(\A)h_j(\A) \right) - \sum_{t=1}^i e_t(\C) \left(\sum_{j=t}^i (-1)^{j + t}e_{i-j}(\A)h_{j-t}(\A)\right)
	\end{align*}where in the last line, the first sum corresponds to the terms where $t = 0$. By Lemma~\ref{lem:generatingFunctionIdentity}, the first sum is equal to $e_i(\A)$ while \[
		\sum_{j=t}^i(-1)^{j+t}e_{i-j}(\A)h_{j-t}(\A) = \begin{cases}
			1 & i = t\\
			0 & \text{else}.
		\end{cases}
	\]Thus $(\psi_{\A|\C}\circ r\circ(\psi_{\A|\C})^{-1})(\zeta_i) = e_i(\A) - e_i(\C)$ as required. 
\end{proof}

We have finished the construction of the double complex $S$, and we turn to the proof that it is homotopy equivalent to the complex associated to a full twist on two strands. We do so by horizontally composing $S$ with a negative twist below and proving that the resulting complex is homotopy equivalent to the Rickard complex. We begin with two lemmas.

\begin{lem}\label{lem:spaceOfClassesOfMapsSktoSkplus1}
	For $\max(a + b - N,0) \leq k < \min(a,b)$, the space of homotopy classes of chain maps from $S^k$ to $h^{-1}S^{k+1}$ is isomorphic to $\Z$. 
\end{lem}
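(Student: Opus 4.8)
The plan is to identify $[S^k, h^{-1}S^{k+1}]$ — the group of homotopy classes of chain maps — with $H^0$ of the hom complex $\underline{\Hom}(S^k, h^{-1}S^{k+1})$, the bounded complex of abelian groups with $\underline{\Hom}^n = \bigoplus_i \Hom_{\sr C}\big((S^k)_i,\, (h^{-1}S^{k+1})_{i+n}\big)$ and differential $f \mapsto d\circ f - (-1)^{|f|} f\circ d$, and then to compute that group. Because the $q$-grading is recorded by the $q$-shifts in $\sr C$, a chain map $S^k\to h^{-1}S^{k+1}$ is automatically a collection of grading-preserving foam components $f_l$ from the $l$-th term of $S^k$ to the $l$-th term of $S^{k+1}$, for $\max(a+b-N,0)\le l\le k$, with $d^{k+1}_l f_l = f_{l+1} d^k_l$; a homotopy is a collection $h_l$ from the $l$-th term of $S^k$ to the $(l-1)$-st term of $S^{k+1}$. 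Definition~\ref{df:rowMapsWk} and Proposition~\ref{prop:WisaChainMap} already exhibit one such chain map $s^k$, so the substance of the lemma is that every chain map is homotopic to an integer multiple of $s^k$ and that $[s^k]$ has infinite order.

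The first step is to determine the $\Hom$-groups that appear. By the bending trick (Remark~\ref{rem:bendingTrick}) together with Theorem~\ref{thm:RWcategorificationOfMOYCalculus}, each $\Hom^{\ast}_\Z$ between a $q$-shift of $U_l$ and a $q$-shift of $U_{l'}$ is free over $\Z$, with graded rank a fixed shift of the MOY polynomial of the closed web $\overline{U_l}\cup U_{l'}$; these MOY polynomials are evaluated from the relations of Figure~\ref{fig:MOYcalculus}, just as in Lemma~\ref{lem:homSpaceCalculationShiftedRickard}. (If convenient, one may first use Proposition~\ref{prop:twistWk} and Proposition~\ref{prop:shiftedRickardComplex} to replace $S^k$ and $S^{k+1}$ by homotopy equivalent complexes of planar webs before doing this, which does not change the group of homotopy classes of maps between them.) Feeding these ranks in presents $\underline{\Hom}^{-1}$, $\underline{\Hom}^{0}$, $\underline{\Hom}^{1}$ as explicit finitely generated free abelian groups whose ranks are particular coefficients of the MOY polynomials above, and a rank count — equivalently an Euler-characteristic computation — shows that the alternating sum of all these ranks equals $\pm 1$.

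With the ranks understood, the remaining task is to compute $H^0$ on the nose, and in particular to rule out torsion. For this I would run a filtration-by-lowest-$q$-power argument of exactly the kind used at the ends of the proofs of Lemma~\ref{lem:shiftedDifferentialPrimitive} and Proposition~\ref{prop:shiftedRickardComplex}: ordering the web summands appearing in $\underline{\Hom}^{-1}$, $\underline{\Hom}^{0}$, $\underline{\Hom}^{1}$ by the smallest power of $q$ they carry, the differentials adjacent to $\underline{\Hom}^{0}$ are seen to have the largest possible rank and, moreover, unit matrix entries in the relevant spots; this simultaneously forces $\operatorname{rk} H^0 = 1$ and kills any torsion, so $[S^k, h^{-1}S^{k+1}]\cong \Z$. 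A short primitivity check of the same flavour as the one ending the proof of Proposition~\ref{prop:shiftedRickardComplex} then shows that $[s^k]$ is a generator.

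I expect the main obstacle to be the second step: carrying the two grading shifts correctly through the bending trick and the homological reindexing of $\underline{\Hom}$, and evaluating enough MOY polynomials of the webs $\overline{U_l}\cup U_{l'}$ to be certain that the cohomology is exactly $\Z$ — that is, ruling out both an unexpected extra free summand and, since we work over $\Z$ rather than over a field as in the model argument in \cite{https://doi.org/10.48550/arxiv.2107.08117}, any torsion. As everywhere in this section, it is the $q$-power filtration that controls the torsion.
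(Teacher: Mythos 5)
Your overall strategy---compute $H^0$ of the hom complex $\underline{\Hom}(S^k,h^{-1}S^{k+1})$ directly---is not the paper's route, and as written it has a genuine gap at the decisive step. The rank computation via the bending trick and MOY calculus (your first step) is fine and matches Lemma~\ref{lem:homSpaceCalculationShiftedRickard}, but it only gives you the underlying free abelian groups $\underline{\Hom}^n$. An Euler-characteristic count then tells you $\sum_n(-1)^n\rank H^n(\underline{\Hom})$, which mixes all cohomological degrees of the hom complex and cannot isolate $H^0$. Everything therefore hinges on your claim that the differentials adjacent to $\underline{\Hom}^0$ have ``the largest possible rank and unit matrix entries in the relevant spots.'' Those differentials are pre- and post-composition with the foams $X^nu_l$, and evaluating such compositions against the abstract bases furnished by Theorem~\ref{thm:RWcategorificationOfMOYCalculus} is exactly the hard part: the paper itself notes that while the isomorphism type of state spaces is easy to read off from MOY calculus, the maps induced by foams are not. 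The filtration arguments you cite from Lemma~\ref{lem:shiftedDifferentialPrimitive} and Proposition~\ref{prop:shiftedRickardComplex} do something different---they kill individual components of a homotopy for $q$-degree reasons and then restrict the identity $hd+dh=\Id$ to the lowest filtration level---and do not by themselves produce unit entries in a hom-complex differential. So the proposal, as it stands, asserts rather than proves the key point.

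The paper sidesteps this entirely with a retraction trick. By Proposition~\ref{prop:twistWk}, $S^k\simeq\llbracket T^+(W_k)\rrbracket$, the complex of $W_k$ with a positive crossing attached. Horizontal composition with a negative crossing below gives a map
\[
\Psi\colon \Hom(q\llbracket T^+(W_k)\rrbracket,\llbracket T^+(W_{k+1})\rrbracket)\to \Hom(q\llbracket T^-T^+(W_k)\rrbracket,\llbracket T^-T^+(W_{k+1})\rrbracket),
\]
and Reidemeister~II invariance identifies the target with $\Hom(q\llbracket W_k\rrbracket,\llbracket W_{k+1}\rrbracket)$. Composing $T^+$, then $\Psi$, then the R2 isomorphism gives the identity, so $\Psi$ is surjective; composing in the other order shows $\Psi$ is injective. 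Hence the space you want is isomorphic to $\Hom(q\llbracket W_k\rrbracket,\llbracket W_{k+1}\rrbracket)$, where both complexes are single webs concentrated in one homological degree, so there are no differentials or homotopies at all and the answer is exactly the $\Z$ of Lemma~\ref{lem:homSpaceCalculationShiftedRickard}. If you want to keep a direct computation of the hom complex, you would need to supply the missing foam-composition analysis; otherwise I would adopt the R2 retraction. (Also note the lemma itself only asks for the isomorphism with $\Z$; the fact that $s^k$ generates is established separately in the proof of Proposition~\ref{prop:fulltwist}.)
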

\begin{proof}
	First, note that the space of homotopy classes of chain maps from $q\llbracket W_k \rrbracket$ to $\llbracket W_{k+1} \rrbracket$ is isomorphic to $\Z$ by Lemma~\ref{lem:homSpaceCalculationShiftedRickard}. Next, let \[
		\llbracket T^+(W_k) \rrbracket \coloneq \left\llbracket \:\:\:\quad\qquad\begin{gathered}
			\vspace{-3pt}
			\labellist
			\pinlabel {\small$a$} at -3 85
			\pinlabel {\small$b$} at 41 86
			\pinlabel {\small${b-k}$} at 19 79
			\pinlabel {\small${a+b-k}$} at -21 57
			\pinlabel {\small$k$} at 43 57
			\pinlabel {\small${a-k}$} at 19 57
			\pinlabel {\small$a$} at -3 2
			\pinlabel {\small$b$} at 41 3
			\endlabellist
			\includegraphics[width=.07\textwidth]{twistWk}
		\end{gathered}\quad \right\rrbracket \qquad \llbracket T^+(W_{k+1})\rrbracket \coloneq \left\llbracket \:\:\:\:\:\qquad\qquad\begin{gathered}
			\vspace{-3pt}
			\labellist
			\pinlabel {\small$a$} at -3 85
			\pinlabel {\small$b$} at 41 86
			\pinlabel {\small${a+b-k-1}$} at -29 57
			\pinlabel {\small$k+1$} at 51 57
			\pinlabel {\small$a$} at -3 2
			\pinlabel {\small$b$} at 41 3
			\endlabellist
			\includegraphics[width=.07\textwidth]{twistWk}
		\end{gathered}\qquad \right\rrbracket
	\]To prove the lemma, note that by Proposition~\ref{prop:twistWk}, it suffices to prove that the space of homotopy classes of chain maps $q\llbracket T^+(W_k) \rrbracket \to \llbracket T^+(W_{k+1}) \rrbracket$ is isomorphic to $\Z$. Let $\Hom(q\llbracket T^+(W_k) \rrbracket,\llbracket T^+(W_{k+1}) \rrbracket)$ denote this space of homotopy classes of chain maps. 

	Consider the following composite of maps given by horizontal composition \[
		\begin{tikzcd}
			\Hom(q\llbracket W_k\rrbracket,\llbracket W_{k+1}\rrbracket) \ar[r] & \Hom(q\llbracket T^+(W_k) \rrbracket,\llbracket T^+(W_{k+1}) \rrbracket) \ar[r,"\Psi"] & \Hom(q\llbracket T^-T^+(W_k) \rrbracket,\llbracket T^-T^+(W_{k+1}) \rrbracket)
		\end{tikzcd}
	\]where $T^-$ denotes horizontal composition with a negative crossing below, analogous to the notation $T^+$. Composing this map with the isomorphism \[
		\begin{tikzcd}
			\Hom(q\llbracket T^-T^+(W_k) \rrbracket,\llbracket T^-T^+(W_{k+1}) \rrbracket) \ar[r] & \Hom(q\llbracket W_k \rrbracket, \llbracket W_{k+1}\rrbracket)
		\end{tikzcd}
	\]given by Reidemeister II invariance (Theorem~\ref{thm:RmovesandForkMoves}) gives the identity map on $\Hom(q\llbracket W_k \rrbracket, \llbracket W_{k+1}\rrbracket)$. Thus $\Psi$ is surjective. Similarly, the composite map \[
		\begin{tikzcd}
			\Hom(q\llbracket T^+(W_k) \rrbracket,\llbracket T^+(W_{k+1}) \rrbracket) \ar[r,"\Psi"] & \Hom(q\llbracket T^-T^+(W_k) \rrbracket,\llbracket T^-T^+(W_{k+1}) \rrbracket) \ar[d]\\
			\Hom(q\llbracket T^+(W_k) \rrbracket,\llbracket T^+(W_{k+1}) \rrbracket) & \Hom(q\llbracket T^+T^-T^+(W_k) \rrbracket,\llbracket T^+T^-T^+(W_{k+1}) \rrbracket) \ar[l]
		\end{tikzcd}
	\]is also the identity so $\Psi$ is injective. Thus $\Psi$ is an isomorphism which proves the result.
\end{proof}

\begin{lem}\label{lem:Eiprimitive}
	Suppose $l = \max(a + b - N,0)$. Then the maps $E_i\in \Hom(q^{2i}U_l,U_l)$ for $i = 1,\ldots,b-l$ are primitive. 
\end{lem}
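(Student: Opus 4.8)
The goal is to produce, for each $i=1,\dots,b-l$, a $\Z$-linear functional $\Hom(q^{2i}U_l,U_l)\to\Z$ sending $E_i$ to $\pm1$; this immediately forces $n=\pm1$ in any expression $E_i=nG$, which is what primitivity means. Such functionals are supplied by capping foams. By the bending trick (Remark~\ref{rem:bendingTrick}) there is a grading shift $s$ with $\Hom^{j}(U_l,U_l)=\Hom^{j-s}(\emp,\ol{U_l}\cup U_l)$, and for any foam $H$ from the closed web $\ol{U_l}\cup U_l$ to $\emp$ of degree $s-2i$, the assignment $F\mapsto\langle\widehat{F}\cup_{\ol{U_l}\cup U_l}H\rangle$ — where $\widehat{F}$ denotes $F$ with its boundary slid into one side — is $\Z$-linear on $\Hom^{2i}(U_l,U_l)$ and takes values in $\Z$ (the evaluation is a homogeneous symmetric polynomial of degree zero). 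So it suffices to build, for each $i$, a foam $H_i$ of degree $s-2i$ with $\langle\widehat{E_i}\cup H_i\rangle=\pm1$. Write $E_i=D_i^{\mathrm{top}}-D_i^{\mathrm{bot}}$, the difference of the two dot maps placing a dot of weight $i$ — that is, the decoration $e_i$ in an alphabet of size $b-l$ — on the top rung, resp.\ the bottom rung, of $U_l$; both rungs are labelled $b-l$. The foam $H_i$ must therefore break the top/bottom symmetry of the pair of rungs.

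This is precisely where the hypothesis $l=\max(a+b-N,0)$ enters. At this smallest admissible value the far-left edge of $U_l$ carries the maximal label $a+b-l=\min(a+b,N)$, and the part of $\ol{U_l}\cup U_l$ lying away from the two rungs can be collapsed — by a foam assembled from the fork, bigon, Matveev--Piergalini and one-rung relations of Proposition~\ref{prop:foamRelations} and the MOY relations of Figure~\ref{fig:MOYcalculus} — so as to leave only the top rung and the bottom rung, each closed into a sphere labelled $b-l$, joined to an undecorated remainder whose evaluation is $\pm1$. I would take $H_i$ to be such a collapsing foam, with the top-rung sphere decorated by $s_{\mathrm{box}(b-l,\,N-b+l)}$ and the bottom-rung sphere decorated by $s_\nu$, where $\nu$ is the complement of the column partition $(1^i)$ inside $P(b-l,N-b+l)$. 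With these decorations, two applications of the sphere relation (Proposition~\ref{prop:foamRelations}\ref{item:sphereRelation}) complete the computation. In the $D_i^{\mathrm{bot}}$-term the bottom-rung sphere is decorated by $e_i\cdot s_\nu=s_{(1^i)}\,s_\nu$, which by Pieri's formula contains $s_{\mathrm{box}(b-l,N-b+l)}$ with coefficient exactly $1$ — the only way to add a height-$i$ column to $\nu$ one box per row is to refill the column deleted from $\mathrm{box}(b-l,N-b+l)$ — so this term evaluates to $(\pm1)(\pm1)(\pm1)=\pm1$; in the $D_i^{\mathrm{top}}$-term the top-rung sphere is decorated by $e_i\cdot s_{\mathrm{box}(b-l,N-b+l)}=s_{((N-b+l+1)^i,\,(N-b+l)^{b-l-i})}$, a Schur polynomial with a part exceeding $N-b+l$, hence $\neq s_{\mathrm{box}}$, so this term vanishes by the sphere relation. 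Therefore $\langle\widehat{E_i}\cup H_i\rangle=0-(\pm1)=\pm1$, as needed. (If $b-l=0$ there is nothing to prove, and the degenerate cases in which $a=0$ or $b=N$ only simplify the construction, via the convention that edges labelled $0$ may be erased.)

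The one genuinely non-formal step is the construction of the collapsing foam $H_i$ and the verification that, when $l$ is extremal, one can indeed reduce the capped closed web to the two rung-spheres joined to an evaluable undecorated remainder; this is a manipulation with the relations of Proposition~\ref{prop:foamRelations} and Figure~\ref{fig:MOYcalculus} of the same flavour as — though somewhat more intricate than — the web computations in the proofs of Lemma~\ref{lem:shiftedDifferentialPrimitive} and Proposition~\ref{prop:shiftedRickardComplex}. Everything else is formal: the functional is well defined because the evaluation lands in $\Z$ by degree considerations, and $\langle\widehat{E_i}\cup H_i\rangle=\pm1$ yields the primitivity of $E_i$ at once. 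As a variant, one could instead first reduce modulo $\Sym(N)$, identifying $\Hom(q^{2i}U_l,U_l)\otimes_{\Sym(N)}\Z$ with $q^{s}\sr F_\Z(\ol{U_l}\cup U_l)$ and computing the latter directly from the MOY polynomial of $\ol{U_l}\cup U_l$ (Theorem~\ref{thm:RWcategorificationOfMOYCalculus}); since primitivity of the image of $E_i$ in this free $\Z$-module implies primitivity of $E_i$, the same capping-foam computation then applies with the simpler nonequivariant evaluation.
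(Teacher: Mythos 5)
Your overall strategy is sound and is essentially the dual of the paper's: you build an explicit $\Z$-valued functional (a capping foam) sending $E_i$ to $\pm 1$, whereas the paper exhibits $E_i$ as the difference of two distinct elements of an explicit basis of the free module $\Hom^*(\emp,\ol{U_l}\cup U_l)$. Concretely, the paper observes that for $l=\max(a+b-N,0)$ the closed web $\ol{U_l}\cup U_l$ is two bigons on a circle (for $l=0$; for $l=a+b-N$ one first removes the edges labelled $N$ via \cite[Claim 3.34]{MR4164001}), so Theorem~\ref{thm:RWcategorificationOfMOYCalculus} together with the thick nilHecke relation (Proposition~\ref{prop:foamRelations}\ref{item:thicknilHecke}) produces a basis in which the two dotted identity foams making up $E_i$ are distinct basis elements; primitivity is then immediate, with no foam evaluation needed. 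Your identification of where the hypothesis on $l$ enters is correct, and your Pieri/complementary-partition arithmetic is the right numerology — it is exactly the computation hidden inside the paper's basis statement.

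The soft spot is the step you yourself flag: the construction of $H_i$ and the claim that the two rungs close up into disjoint spheres labelled $b-l$ to which the sphere relation (Proposition~\ref{prop:foamRelations}\ref{item:sphereRelation}) applies. That is not the actual topology of $\widehat{E_i}\cup H_i$: the rung facet of the bent identity is a square, and gluing it to its mirror along the two copies of the rung edge produces an annulus attached along \emph{both} of its boundary circles to the neighbouring facets by seams, not a closed sphere component. The evaluation therefore has to go through the neck-cutting/blister-type relations (items \ref{item:thicknilHecke}, \ref{item:twoRungRelation}, \ref{item:oneRungRelation} of Proposition~\ref{prop:foamRelations}), whose pairing is $\delta_{\mu,\hat\lambda}$ — the \emph{transposed} complement — together with signs $(-1)^{|\mu|}$, rather than the sphere relation's $\lambda^c$ pairing; your chosen decoration $s_{(1^i)^c}$ and the final sign bookkeeping would need to be adjusted accordingly, and the "undecorated remainder evaluates to $\pm1$" claim also needs these relations rather than a bare sphere count. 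This is fixable (it is the same manipulation Robert--Wagner use to prove their basis is a basis), but since it is the crux of the lemma it cannot be left at the level of a sketch. If you want to avoid the foam surgery entirely, your closing variant is the better route: pass to $\sr F_\Z(\ol{U_l}\cup U_l)$, take the basis supplied by the bigon decomposition, and read off that $E_i$ is a difference of two basis vectors — which is precisely the paper's proof.
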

\begin{proof}
	First consider the case that $\max(a + b - N,0) = 0$ so that $l = 0$. Recall that there is a homogeneous identification between $\Hom^*(U_0,U_0)$ and $\Hom^*(\emp,\ol{U_0} \cup U_0)$ where \[
		\ol{U_0} \cup U_0 = \begin{gathered}
			\labellist
			\pinlabel {\small$b$} at 35 37
			\pinlabel {\small$b$} at 81 37
			\pinlabel {\small$a$} at 35 9
			\pinlabel {\small$a$} at 81 9
			\pinlabel {\small$a+b$} at 153 17
			\endlabellist
			\includegraphics[width=.25\textwidth]{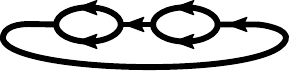}
		\end{gathered}
	\]where we have erased the edges labeled $0$. As a particular instance of Theorem~\ref{thm:RWcategorificationOfMOYCalculus}, a basis for $\Hom(\emp,\ol{U_0} \cup U_0)$ as a free $\Sym(N)$-module can be constructed using relation~\ref{item:thicknilHecke} of Proposition~\ref{prop:foamRelations}. In particular, a basis can be obtained so that under the identification $\Hom^*(U_0,U_0) = \Hom^*(\emp,\ol{U_0} \cup U_0)$, the map $E_i$ is sent to the difference of two distinct basis elements. It follows that $E_i$ is primitive. 

	The case that $\max(a + b - N,0) = a + b - N$ is handled similarly. The only difference is that in this case, the closed web $\ol{U_l} \cup \ol{U_l}$ has two edges labeled $N$. After applying the isomorphism of \cite[Claim 3.34]{MR4164001}, a similar argument shows that $E_i$ is primitive. 
\end{proof}

\begin{prop}\label{prop:fulltwist}
	There is a chain homotopy equivalence \[
		S \:\: \simeq\:\: \left\llbracket \:\:\:\begin{gathered}
		\vspace{-3pt}
		\labellist
		\pinlabel {\small$a$} at -4 60
		\pinlabel {\small$b$} at 38 61
		\pinlabel {\small$a$} at -3 3
		\pinlabel {\small$b$} at 37 4
		\endlabellist
		\includegraphics[width=.06\textwidth]{fullTwist}
	\end{gathered} \:\:\:\right\rrbracket
	\]
\end{prop}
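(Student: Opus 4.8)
The plan is to horizontally compose $S$ with a \emph{negative} crossing placed below and to show that the resulting complex is homotopy equivalent to the Rickard complex of a single positive crossing; undoing the negative crossing by Reidemeister~II then gives the proposition. Concretely, write $\llbracket+\rrbracket$ for the Rickard complex $\bigl(\cdots\to h^{k}q^{-k}W_{k}\xrightarrow{w_{k}}h^{k+1}q^{-(k+1)}W_{k+1}\to\cdots\bigr)$ of a positive crossing between strands labeled $a,b$, and let $T^{\pm}$ denote horizontal composition with a positive (resp.\ negative) crossing below, as in Lemma~\ref{lem:spaceOfClassesOfMapsSktoSkplus1}. Since two stacked positive crossings form a full twist, $T^{+}(\llbracket+\rrbracket)$ is exactly the complex on the right-hand side of the proposition, and since $T^{+}$ and $T^{-}$ are inverse to one another up to homotopy equivalence by Reidemeister~II invariance (Theorem~\ref{thm:RmovesandForkMoves}), it suffices to prove
\[
	T^{-}(S)\:\simeq\:\llbracket+\rrbracket;
\]
then $S\simeq T^{+}T^{-}(S)\simeq T^{+}(\llbracket+\rrbracket)$, as desired.

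First I would split $T^{-}(S)$ over a poset and retract. By construction (Definitions~\ref{df:columnComplexSk} and~\ref{df:rowMapsWk}), $S$ splits over the poset $P=\{\,k\mid\max(a+b-N,0)\le k\le\min(a,b)\,\}$ in the sense of Definition~\ref{df:splitsOverPoset}, with $k$th summand the column complex $S^{k}$, with the degree-$(k,k+1)$ part of the differential equal to the chain map $s^{k}$, and with no components between non-consecutive columns. Horizontal composition with a negative crossing is an additive functor, so $T^{-}(S)$ inherits this splitting, and by Proposition~\ref{prop:twistWk} together with Reidemeister~II invariance each column satisfies $T^{-}(S^{k})\simeq h^{k}q^{-k}\llbracket T^{-}T^{+}(W_{k})\rrbracket\simeq h^{k}q^{-k}W_{k}$, a complex concentrated in one homological degree. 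Such a complex has no nonzero homotopies, so exactly as in the proof of Theorem~\ref{thm:slncomplexofHopfLink} any homotopy equivalence onto it upgrades to a strong deformation retract obeying the side conditions, and the homological perturbation lemma (Lemma~\ref{lem:homologicalPerturbationLemma}) then produces a homotopy equivalence from $T^{-}(S)$ onto $\bigoplus_{k}h^{k}q^{-k}W_{k}$ equipped with some differential $\widetilde{d}=\sum_{k\le k'}\widetilde{d}_{k',k}$ with vanishing diagonal part. Each component $\widetilde{d}_{k+u,k}$ is an element of $\Hom^{u}(W_{k},W_{k+u})$, which vanishes for $u<u^{2}$ by Lemma~\ref{lem:homSpaceCalculationShiftedRickard}; hence $\widetilde{d}_{k+u,k}=0$ for $u\ge2$, while $\Hom^{1}(W_{k},W_{k+1})\cong\Z$ is generated by $w_{k}$ by Lemma~\ref{lem:shiftedDifferentialPrimitive}, so $\widetilde{d}_{k+1,k}=n_{k}w_{k}$ for integers $n_{k}$ and all other components are zero. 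A complex of this shape is chain isomorphic to $\llbracket+\rrbracket$ (rescale the summands $W_{k}$) as soon as every $n_{k}$ is a unit, so the proof reduces to showing $n_{k}=\pm1$.

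The hard part will be showing $n_{k}=\pm1$, and this is precisely what Lemmas~\ref{lem:spaceOfClassesOfMapsSktoSkplus1} and~\ref{lem:Eiprimitive} are for. Under the retraction, $\widetilde{d}_{k+1,k}$ is the image of the homotopy class of $T^{-}(s^{k})$ under the identification between homotopy classes of chain maps $S^{k}\to h^{-1}S^{k+1}$ and $\Hom^{1}(W_{k},W_{k+1})$ obtained by unwinding the proof of Lemma~\ref{lem:spaceOfClassesOfMapsSktoSkplus1} (a $T^{-}$-then-$T^{+}$ stabilization argument) together with Proposition~\ref{prop:twistWk}; so $n_{k}=\pm1$ if and only if $s^{k}$ generates the group of homotopy classes of chain maps $S^{k}\to h^{-1}S^{k+1}$, which is $\Z$ by Lemma~\ref{lem:spaceOfClassesOfMapsSktoSkplus1}. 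To see that $s^{k}$ is a generator I would restrict to the lowest homological degree $l=\max(a+b-N,0)$: there $S^{k}_{l}$ is a direct sum of $q$-shifts $\zeta_{J}U_{l}$, and by Definition~\ref{df:rowMapsWk} the component of $s^{k}_{l}$ from $\zeta_{J}U_{l}$ to $\zeta_{J'}U_{l}$ with $J'=J\setminus\{j_{i}\}$ is $\pm E_{j_{i}}$, a primitive element of $\Hom^{2j_{i}}(U_{l},U_{l})$ by Lemma~\ref{lem:Eiprimitive}. Since the corresponding component of any generator of the group of homotopy classes $S^{k}\to h^{-1}S^{k+1}$ must likewise be a primitive multiple of this $E_{j_{i}}$ (tracing the identifications above back through Proposition~\ref{prop:twistWk}), primitivity of $E_{j_{i}}$ forces $s^{k}$ itself to be a generator, hence $n_{k}=\pm1$. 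This matching of the explicit bottom-degree component of $s^{k}$ against that of a generator — in the spirit of the lowest-$q$-shift arguments in the proofs of Lemma~\ref{lem:shiftedDifferentialPrimitive} and Proposition~\ref{prop:shiftedRickardComplex} — is where the genuine care is required; the rest is routine assembly.
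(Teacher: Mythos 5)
Your overall strategy is the same as the paper's: compose with a negative crossing below, retract each column $T^-(S^k)$ onto $h^kq^{-k}W_k$ via Proposition~\ref{prop:twistWk} and the homological perturbation lemma, observe that the only surviving differential components are integer multiples $n_k w_k$, and reduce (via Lemmas~\ref{lem:shiftedDifferentialPrimitive} and \ref{lem:spaceOfClassesOfMapsSktoSkplus1}) to showing that $s^k$ generates the group $\Z$ of homotopy classes of chain maps $S^k \to h^{-1}S^{k+1}$. Up to that point the proposal is sound and matches the paper step for step.

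The gap is in the final step, which is also the hardest one. To show $s^k$ is a generator you must show that $s^k + d^{k+1}g + gd^k$ is primitive for \emph{every} degree-decreasing map $g$; it is not enough to exhibit one primitive component of $s^k$ itself, because a homotopy correction could alter that component. Your phrase ``the corresponding component of any generator \ldots\ must likewise be a primitive multiple of this $E_{j_i}$'' assumes exactly what needs proving, namely that the chosen component is a homotopy invariant. The paper establishes this by a careful choice: at the bottom homological degree $n=\max(a+b-N,0)$ one has $g_n=0$ automatically, but the term $g_{n+1}d^k_n$ survives, and to kill its contribution one must take the component from $\zeta_K U_n$ to $\zeta_{K\setminus 1}U_n$ for a subset $K$ containing \emph{both} $1$ and $b-n$. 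Containing $b-n$ forces every nonzero component of $d^k_n$ out of $\zeta_K U_n$ to be $X^{\ast}u_n$ with $\ast>0$, and then a computation of $\rank_\Z\Hom(q^jU_n,U_{n+1})$ shows the composite $g_{n+1}d^k_n$ lands in $q$-degrees too low to reach $\zeta_{K\setminus1}U_n$. Crucially, such a $K$ exists only when $|K|=b-k\ge 2$, so this argument covers only $k\le b-2$. The remaining case $k=b-1$ genuinely fails for this approach and requires a separate argument: the paper introduces an auxiliary chain map $\Theta\colon h^{-1}q^2S^b\to S^{b-1}$, composes with $T^-$, and uses dot sliding (Proposition~\ref{prop:dotSliding}) together with an analogue of Lemma~\ref{lem:Eiprimitive} to show the composite $\pi_b T^-(s^{b-1}\Theta)\iota_b$ is a primitive difference of dot maps on $W_b$, whence $\pi_b T^-(s^{b-1})\iota_{b-1}$ is primitive. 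Your proposal is missing both the degree bookkeeping that controls the homotopy corrections and the entire $k=b-1$ case.
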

\begin{proof}
	By Reidemeister II invariance (Theorem~\ref{thm:RmovesandForkMoves}), it suffices to show that the horizontal composition of $S$ with a negative twist below, which we denote $T^-(S)$, is homotopy equivalent to the Rickard complex associated to a positive crossing where the overstrand is labeled $b$ as in Definition~\ref{df:rickardComplex}. By Proposition~\ref{prop:twistWk}, the complex $T^-(S^k)$, the horizontal composition of the column complex $S^k$ with a negative twist below, is homotopy equivalent to the complex $h^kq^{-k}\llbracket W_k\rrbracket$. Since $h^kq^{-k}\llbracket W_k \rrbracket$ is supported in a single homological degree, there is a strong deformation retract \[
		\pi_k\colon T^-(S^k) \to h^kq^k\llbracket W_k\rrbracket, \qquad \iota_k\colon h^kq^k\llbracket W_k\rrbracket \to T^-(S^k), \qquad h_k\colon T^-(S^k) \to T^-(S^k)
	\]satisfying the side conditions of Definition~\ref{df:strongDeformationRetract}. By the homological perturbation lemma (Lemma~\ref{lem:homologicalPerturbationLemma}) for the poset $\{\:k\:|\: \max(a + b - N,0) \leq k \leq b\:\}$, there is a homotopy equivalence between $S$ and the complex \[
		\begin{tikzcd}[column sep=large]
			\cdots \ar[r] & h^kq^{-k}W_k \ar[rr,"\pi_{k+1}\circ T^-(s^k)\circ\iota_k"] & & h^{k+1}q^{-k-1}W_{k+1} \ar[r] & \cdots
		\end{tikzcd}
	\]The component of the differential from $h^kq^{-k}W_k$ to $h^{k+1}q^{-k-1}W_{k+1}$ is $\pi_{k+1}\circ T^-(s^k)\circ \iota_k$ by the formula given in the homological perturbation lemma, and all other components of the differential are zero by homological degree considerations. By Lemma~\ref{lem:shiftedDifferentialPrimitive}, it suffices to show that $\pi_{k+1}\circ T^-(s^k)\circ \iota_k$ is primitive for each $k$ satisfying $\max(a+b-N,0) \leq k \leq b - 1$. If $\pi_{k+1}\circ T^-(s^k)\circ\iota_k$ is not primitive, then the chain map $\iota_{k+1}\circ (\pi_{k+1}\circ T^-(s^k)\circ \iota_k)\circ\pi_k$, which is homotopic to $T^-(s^k)$, is also not primitive. It therefore suffices to show that $T^-(s^k)$ is a generator of the space of homotopy classes of chain maps from $T^-(S^k)$ to $h^{-1}T^-(S^{k+1})$. By the proof of Lemma~\ref{lem:spaceOfClassesOfMapsSktoSkplus1}, it suffices to show that $s^k$ is a generator of the space of homotopy classes of chain maps from $S^k$ to $h^{-1}S^{k+1}$. 

	We separate the proof into two cases. \begin{itemize}
		\item Case: $\max(a + b - N,0) \leq k \leq b - 2$. 

		Suppose $s^k$ is not a generator. Then there is a map $g\colon S^k \to h^{-1}S^{k+1}$ that decreases homological degree by one for which $s^k + d^{k+1}g + gd^k$ is not primitive. Recall that $S^k = \bigoplus_l S^k_l$ where $S^k_l$ is supported in homological degree $k + l$ and $\max(a+b-N,0) \leq l \leq k \leq b$. We let $g_l$ denote the component of $g$ from $S^k_l$ to $h^{-1}S^{k+1}_{l-1}$. 
		Let $n = \max(a+b-N,0)$ and focus on the component $s^k_n + d^{k+1}_n g_n + g_{n+1} d^k_n$ from $S^k_n$ to $h^{-1}S^{k+1}_n$. Note that $g_n = 0$ because $h^{-1}S^{k+1}_{n-1} = 0$ so the map is $s^k_n + g_{n+1}d^k_n$. \[
		 	\begin{tikzcd}
		 		S^k_{n+1} \ar[dr,"g_{n+1}"] &\\
		 		S^k_n \ar[r,"s^k_n"] \ar[u,"d_n^k"] & h^{-1}S^{k+1}_n
		 	\end{tikzcd}
		\]Now recall that $S^k_l = h^{k+1}q^\bullet \bigoplus_J \zeta_J U_l$ where $J$ ranges over subsets of $\{1,\ldots,b-l\}$ of size $b-k$. Fix a subset $K \subseteq \{1,\ldots,b-n\}$ of size $b - k$ for which $1 \in K$ and $b - n \in K$ and consider the direct summand $\zeta_K U_n$ of $S^k_n$. Note that $K$ exists because $b - k \ge 2$. The component of $s^k_n$ from $\zeta_K U_n$ to the direct summand $\zeta_{K\setminus 1} U_n$ of $h^{-1}S^{k+1}_n$ is $\pm E_1$ which is primitive by Lemma~\ref{lem:Eiprimitive}. We claim that the component of $g_{n+1}d^k_n$ from $\zeta_K U_n$ to $\zeta_{K\setminus1}U_n$ is zero. The nonzero component maps of $d^k_n$ out of $\zeta_KU_n$ are all of the form $X^\ast u_n$ where $\ast > 0$ because $b - n \in K$. By a computation exactly analogous to Lemma~\ref{lem:homSpaceCalculationShiftedRickard}, we have \[
			\rank_\Z \Hom(q^j U_n,U_{n+1}) = \rank_\Z \Hom(q^jU_{n+1},U_n) = \begin{cases}
				0 & j < a - b + 1\\
				1 & j = a - b + 1
			\end{cases}
		\]The $q$-shift of $\zeta_{K\setminus1}U_n$ is $2$ less than the $q$-shift of $\zeta_K U_n$. However, the highest value of $s$ for which there is a nonzero component map of $g_{n+1}d^k_n$ from $\zeta_KU_n$ to $q^sU_n$ is at least $(a - b + 3) + (a - b + 1)$ less than the $q$-shift of $\zeta_K U_n$. Thus, the component map of $s^k + d^{k+1}g + gd^k$ from $\zeta_K U_n$ to $\zeta_{K\setminus 1}U_n$ is $\pm E_1$, which is primitive. Thus $s^k$ is a generator. 
		\item Case: $k = b - 1$. 

		Our proof of this case is analogous to the argument in \cite[Proof of Theorem 3.24]{https://doi.org/10.48550/arxiv.2107.08117}. We must show that $s^{b-1}\colon S^{b-1} \to h^{-1}S^b$ is a generator of the space of homotopy classes of chain maps. Recall that \[
			S^{b-1}_l = h^{b-1+l}q^\bullet \bigoplus_{i=1}^{b-l} \zeta_i U_l \qquad\quad h^{-1}S^b_l = h^{b-1+l}q^\bullet U_l
		\]where $\bullet = -l(a - b + 1) + b(a - b - 1)$. Consider the map $\Theta\colon h^{-1}q^2S^b \to S^{b-1}$ whose component map from $U_l$ to $\zeta_iU_l$ is zero unless $i = 1$ in which case the component map is the identity map. It is easy to see that $\Theta$ is a chain map. The component of $s^{b-1}\circ \Theta$ from $h^{-1}q^2 S^b_l$ to $h^{-1}S^b_l$ is \[
			E_1 = H_1 = \:\:\begin{gathered}
			\labellist
			\pinlabel {\large$\bullet$} at 20 36
			\pinlabel {\small$e_1$} at 29 36
			\endlabellist
			\includegraphics[width=.035\textwidth]{smallLadder}
			\end{gathered}\:\: - \:\:\begin{gathered}
			\labellist
			\pinlabel {\large$\bullet$} at 20 2
			\pinlabel {\small$e_1$} at 29 2
			\endlabellist
			\includegraphics[width=.035\textwidth]{smallLadder}
			\end{gathered} \quad \colon h^{b-1+l}q^{\bullet+2} U_l \to h^{b-1+l}q^\bullet U_l
		\]Horizontally composing with a negative crossing below gives us chain maps \[
			\begin{tikzcd}
				h^{-1}q^2T^-(S^{b}) \ar[rr,"T^-(\Theta)"] & & T^-(S^{b-1}) \ar[rr,"T^-(s^{b-1})"] & & h^{-1} T^-(S^b).
			\end{tikzcd}
		\]Now consider the composite map \[
			\begin{tikzcd}[column sep=large]
				h^{b-1}q^{-b+2}W_b \ar[rr,"\pi_{b-1}\circ T^-(\Theta)\circ \iota_b"] & & h^{b-1}q^{-b+1}W_{b-1} \ar[rr,"\pi_b\circ T^-(s^{b-1})\circ \iota_{b-1}"] & & h^{b-1}q^{-b}W_b.
			\end{tikzcd}
		\]Note that this composite map is equal to \[
			\pi_b T^-(s^{b-1}) \iota_{b-1}  \pi_{b-1}  T^-(\Theta)  \iota_b = \pi_b T^-(s^{b-1})T^-(\Theta)\iota_b + \pi_b  (d h_{b-1} + h_{b-1}d)\iota_b = \pi_b T^-(s^{b-1}\Theta) \iota_b
		\]where the second equality is due to the fact that the complexes $h^{b-1}q^{-b+2}W_b$ and $h^{b-1}q^{-b}W_b$ have no differential. Furthermore, $\pi_bT^-(s^{b-1}\Theta)\iota_b$ is chain homotopic to \[
			\:\:\begin{gathered}
			\labellist
			\pinlabel {\large$\bullet$} at 20 36
			\pinlabel {\small$e_1$} at 29 36
			\endlabellist
			\includegraphics[width=.035\textwidth]{smallLadder}
			\end{gathered}\:\: - \:\:\begin{gathered}
			\labellist
			\pinlabel {\large$\bullet$} at 1.5 2
			\pinlabel {\small$e_1$} at -6 2
			\endlabellist
			\includegraphics[width=.035\textwidth]{smallLadder}
			\end{gathered} \quad\colon h^{b-1}q^{-b+2}W_b \to h^{b-1}q^{-b}W_b
		\]by dot sliding (Proposition~\ref{prop:dotSliding}), which means that $\pi_bT^-(s^{b-1}\Theta)\iota_b$ is equal to this difference of dot maps since there are no nontrivial homotopies. If $\pi_b T^-(s^{b-1})\iota_{b-1}$ is not primitive, then this difference of dot maps is also not primitive. However, an argument analogous to the proof of Lemma~\ref{lem:Eiprimitive} shows that this difference of dots map is indeed primitive. 
		\qedhere
	\end{itemize}
\end{proof}

\subsection{The trefoil complex}\label{subsec:trefoilComplex}

In this section, we describe a simple complex $\ol{T}$ that is homotopy equivalent to the complex associated to the trefoil labeled $a$. 
For each pair of integers $k,l$ satisfying $\max(2a-N,0) \leq l \leq k \leq a$, define \[
	\smash{\ol{T}}_k^{\,l}\coloneq h^{k + 2l} q^{-a-2l+a^2-lN} \bigoplus_{J} \zeta_J \Theta_l \qquad\text{where }\Theta_l\coloneq\qquad\:\: \begin{gathered}
			\vspace{-3pt}
			\centering			
			\labellist
			\pinlabel {\small${a-l}$} at 28 82
			\pinlabel {\small$l$} at 59 44
			\pinlabel {\small${2a-l}$} at -16 46
			\pinlabel {\small$a$} at 33 55
			\pinlabel {\small${a - l}$} at 35 17
			\pinlabel {\small$a$} at 70 5
			\endlabellist
			\includegraphics[width=.13\textwidth]{Theta}
		\end{gathered}\:
\]and where the direct sum is over subsets $J \subseteq \{1,\ldots,a-l\}$ of size $|J| = a-k$. Just as before, $\zeta_J = \zeta_{j_1}\cdots\zeta_{j_{a-k}}$ and $\zeta_j$ carries a $q$-shift of $q^{2j}$. Define a differential $\smash{\ol{T}}^{\,l}_k \to \smash{\ol{T}}^{\,l}_{k+1}$ by declaring that the component map from $\zeta_J\Theta_l$ to $\zeta_{J'}\Theta_l$ is zero unless $J'$ is obtained from $J$ by deleting an element $j_i \in J = \{j_1 < \cdots < j_{a-k}\}$, in which case define the component map to be $(-1)^{i-1}F_{j_i}$ where $F_j \in \Hom(q^{2j}\Theta_l,\Theta_l)$ is the following difference of dot maps \[
	F_j \coloneq \:\:\begin{gathered}
			\labellist
			\pinlabel {\large$\bullet$} at 27 72
			\pinlabel {\small$e_j$} at 32 60
			\endlabellist
			\includegraphics[width=.09\textwidth]{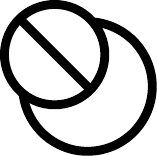}
		\end{gathered} \:\: - \:\: \begin{gathered}
			\labellist
			\pinlabel {\large$\bullet$} at 27 24
			\pinlabel {\small$e_j$} at 37 15
			\endlabellist
			\includegraphics[width=.09\textwidth]{mediumTheta}
		\end{gathered}
\]Let $\ol{T}\coloneq \bigoplus_{k,l} \smash{\ol{T}}^{\,l}_k$ equipped with this differential. 

\begin{thm}\label{thm:slncomplexTrefoil}
	If $0 \leq a \leq N$, then there is a homotopy equivalence \[
		\left\llbracket \:\:\begin{gathered}
			\vspace{-3pt}
			\centering
			\labellist
			\pinlabel {\small$a$} at 57 50
			\endlabellist
			\includegraphics[width=.13\textwidth]{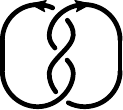}
		\end{gathered}\:\: \right\rrbracket \:\:\simeq\:\: \ol{T}
	\]
\end{thm}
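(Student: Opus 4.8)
The plan is to realize the right-handed trefoil $K$ (labeled $a$) as the annular closure of three positive crossings on two strands each labeled $a$, to absorb two of those crossings into the simplified full-twist complex $S$ of Proposition~\ref{prop:fulltwist} (specialized to $b=a$), and then to flatten the one remaining crossing and invoke the homological perturbation lemma. For the first reduction, note that $\llbracket-\rrbracket$ takes composition of tangle diagrams to horizontal composition of complexes, and that horizontal composition and the annular closure operation are functorial (the canopoly formalism) and so preserve chain homotopy equivalence — compare the deduction of Reidemeister invariance in Theorem~\ref{thm:RmovesandForkMoves}. Writing the three crossings as one positive crossing stacked on a full twist and applying Proposition~\ref{prop:fulltwist} then yields $\llbracket K\rrbracket\simeq C$, where $C$ is the closure of the horizontal composite of the Rickard complex $\bigoplus_m h^mq^{-m}W_m$ of a single positive crossing on two $a$-labeled strands (with $\max(2a-N,0)\le m\le a$ and differentials $w_m$) with the complex $S$.

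Next I would unwind $C$ and exhibit a poset splitting. Using $S=\bigoplus_{k,l}S^k_l$ with $S^k_l=h^{k+l}q^{-a-l}\bigoplus_J\zeta_JU_l$ (Definition~\ref{df:columnComplexSk} at $b=a$), one obtains $C=\bigoplus_{k,l}h^{k+l}q^{-a-l}\bigoplus_{|J|=a-k}\zeta_J\,D_l$, where $D_l=\bigoplus_m h^mq^{-m}\closure(W_mU_l)$ is the complex obtained by stacking the one-crossing Rickard complex on the two top legs of $U_l$ and closing the result into an annulus, with the induced $w_m$-differential. The differential of $C$ has a $w_m$-component internal to each pair $(k,l)$, a $k$-component coming from the horizontal maps $s^k_l$ of $S$ (which only relabel $\zeta_J$ via the difference-of-dots maps $E_j$ of Definition~\ref{df:rowMapsWk}), and an $l$-component coming from the vertical maps $d^k_l$. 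Hence $C$ splits, in the sense of Definition~\ref{df:splitsOverPoset}, over the poset $P=\{(k,l):\max(2a-N,0)\le l\le k\le a\}$ with the product order, with $C_{(k,l)}=h^{k+l}q^{-a-l}\bigoplus_{|J|=a-k}\zeta_J D_l$ carrying only the $w_m$-differential.

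The heart of the argument is a flattening lemma: $D_l$ should be homotopy equivalent to $h^lq^{a^2-l(N+1)}\Theta_l$, concentrated in homological degree $l$. Geometrically $D_l$ is the closure of ``$U_l$ with one positive crossing on top'', and because $U_l$ already carries a half-twist's worth of ladder structure, a sequence of fork slides, fork twists, and a Reidemeister~I move (Theorem~\ref{thm:RmovesandForkMoves}) should flatten the extra crossing against it, after which Theorem~\ref{thm:RWcategorificationOfMOYCalculus} identifies the surviving web as $\Theta_l$ and leaves no room for a differential. This is the exact analogue of Lemma~\ref{lem:flatteningTwistClosureWk} — the delicate point, as there, being to run the argument over $\Z$ — and this diagrammatic reduction, together with the bookkeeping that pins the shift to $h^lq^{a^2-l(N+1)}$, is where essentially all the remaining work lies. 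Granting it, $C_{(k,l)}\simeq\ol C_{(k,l)}:=h^{k+2l}q^{-a-2l+a^2-lN}\bigoplus_{|J|=a-k}\zeta_J\Theta_l$, concentrated in homological degree $k+2l$; being supported in a single homological degree, this equivalence upgrades to a strong deformation retract satisfying the side conditions of Definition~\ref{df:strongDeformationRetract}.

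Finally I would apply the homological perturbation lemma (Lemma~\ref{lem:homologicalPerturbationLemma}) to get $C\simeq\ol C=\bigoplus_{(k,l)}\ol C_{(k,l)}$ with an induced differential vanishing on $\ol C_{(k,l)}\to\ol C_{(k',l')}$ unless $(k,l)\le(k',l')$, and then identify $\ol C$ with $\ol T$. Since the induced differential raises homological degree by one, a nonzero such component forces $k'+2l'=k+2l+1$ with $k'\ge k$ and $l'\ge l$, whose only solution is $(k',l')=(k+1,l)$; thus $\ol C$ has no $l$-direction differential, splits as $\bigoplus_l\ol C^{(l)}$ exactly like $\ol T$, and $\ol C_{(k,l)}=\ol T_k^{l}$ as bigraded objects. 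Moreover the product-order interval from $(k,l)$ to $(k+1,l)$ contains no intermediate element, so the perturbation formula collapses $\ol d_{(k+1,l),(k,l)}$ to the single zigzag $\pi_{(k+1,l)}\circ d_{(k+1,l),(k,l)}\circ\iota_{(k,l)}$; as $d_{(k+1,l),(k,l)}$ is built from the central dot maps $E_j$ on the rungs of $U_l$ (Remark~\ref{rem:centralDotMaps}), the strong deformation retract of the previous step carries it to the difference-of-dots map $F_j$ on the corresponding two $(a-l)$-labeled edges of $\Theta_l$, so $\ol d_{(k+1,l),(k,l)}$ agrees, on the summand from $\zeta_JU_l$ to $\zeta_{J\setminus j_i}U_l$, with $\pm(-1)^{i-1}F_{j_i}$. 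Checking that the residual signs, the $q$-shift $q^{-a-2l+a^2-lN}$, and the internal $\zeta_J$-shifts all match the definition of $\ol T$ is then routine, yielding $\llbracket K\rrbracket\simeq\ol C=\ol T$. The main obstacle throughout is the flattening lemma of the third paragraph: performing the Hopf-link-style simplification of $\closure(W_mU_l)$ over $\Z$ and verifying that it collapses the entire $w_m$-complex onto a single theta-web in the predicted bidegree.
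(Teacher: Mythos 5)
Your proposal is correct and follows essentially the same route as the paper: absorb two crossings into the full-twist complex $S$ via Proposition~\ref{prop:fulltwist}, split the closure of $S$ composed with the remaining crossing over the poset $\{(k,l)\}$, flatten each piece onto a shifted $\Theta_l$, and apply Lemma~\ref{lem:homologicalPerturbationLemma} together with the parity/poset argument and dot-sliding to identify the surviving differential with the maps $F_{j_i}$. The one thing to note is that the ``flattening lemma'' you single out as the main remaining obstacle requires no new work: your $D_l$ is exactly the left-hand side of Lemma~\ref{lem:flatteningTwistClosureWk} with $a=b$ and $k=l$, which is already proved (over $\Z$) and used for the Hopf link, and it yields precisely the shift $h^lq^{a^2-l(N+1)}$ you predict.
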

\begin{proof}
	Consider the complex $S$ defined in section~\ref{subsec:fullTwist} in the special case that $b = a$. Just as the given diagram of the trefoil is obtained from the full twist on two strands by horizontally composing with a positive crossing below and closing off on both sides, let $T$ denote the complex obtained from $S$ by horizontally composing with a positive crossing below and closing off on both sides. By Proposition~\ref{prop:fulltwist}, the complex associated to the given diagram of the trefoil is homotopy equivalent to $T$. 
	We describe $T = \bigoplus_{k,l} T^l_k$ explicitly. We have \[
		T^l_k = h^{k+l}q^{-l-a} \bigoplus_J\zeta_J \left\llbracket \:\:\begin{gathered}
			\vspace{-3pt}
			\centering
			\labellist
			\pinlabel {\small$a$} at 29 83
			\pinlabel {\small$a$} at 74 84
			\pinlabel {\small${a-l}$} at 51 84
			\pinlabel {\small$l$} at 75 55
			\pinlabel {\small${a-l}$} at 51 54
			\pinlabel {\small$a$} at 71 26
			\pinlabel {\small$a$} at 31 27
			\endlabellist
			\includegraphics[width=.17\textwidth]{twistClosureWk}
		\end{gathered}\:\: \right\rrbracket 
	\]where we negate the differential of $T^l_k$ when $k + l$ is odd. The other nonzero components of the differential of $T$ are the maps $T^l_k \to T^l_{k+1}$ and $T^l_k \to T^{l+1}_k$ induced by horizontal composition from the differential on $S$. 

	By Lemma~\ref{lem:flatteningTwistClosureWk}, there is a strong deformation retract \[
		\pi^l_k \colon T^l_k \to \smash{\ol{T}}^{\,l}_k, \qquad \iota^l_k\colon \smash{\ol{T}}^{\,l}_k \to T^l_k, \qquad h^l_k\colon T^l_k \to T^l_k.
	\]We may assume that these maps preserve the internal direct sum splittings over subsets $J \subseteq \{1,\ldots,a-l\}$ of size $a-k$. We claim that the homological perturbation lemma (Lemma~\ref{lem:homologicalPerturbationLemma}) for the poset $\{\:(k,l) \:|\: \max(2a-N,0) \leq l \leq k \leq a\:\}$ gives a strong deformation retract from $T$ to $\ol{T}$. To see this, first note that the differential on $\smash{\bigoplus_{k,l}} \smash{\ol{T}}_k^{\,l}$ arising from the lemma respects the poset splitting. Since $\smash{\ol{T}}_k^{\,l}$ is supported in homological degree $k + 2l$, the only components of the differential that are possibly nontrivial are the maps $\smash{\ol{T}}_k^{\,l} \to \smash{\ol{T}}_{k+1}^{\,l}$. Other summands, such as $\smash{\ol{T}}_{k-1}^{\,l+1}$, may also be supported in the same homological degree as $\smash{\ol{T}}_{k+1}^{\,l}$, but the fact that the differential respects the poset structure implies that the component from $\smash{\ol{T}}_k^{\,l}$ to any such summand is zero. The component of the map $\smash{\ol{T}}_k^{\,l} \to \smash{\ol{T}}_{k+1}^{\,l}$ from $\zeta_J\Theta_l$ to $\zeta_{J'}\Theta_l$ is zero unless $J'$ is obtained from $J$ by deleting an element $j_i \in J = \{j_1 < \cdots < j_{a-k}\}$, in which case the component map is \[
		(-1)^{i-1} \pi^l_{k+1} \circ \left(\begin{gathered}
			\vspace{-3pt}
			\centering
			\labellist
			\pinlabel {\small$e_{j_i}$} at 53 84
			\pinlabel {\large$\bullet$} at 51 73
			\endlabellist
			\includegraphics[width=.15\textwidth]{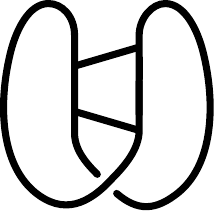}
		\end{gathered}\:\: - \:\:\begin{gathered}
			\vspace{-3pt}
			\centering
			\labellist
			\pinlabel {\small$e_{j_i}$} at 53 53
			\pinlabel {\large$\bullet$} at 51 42
			\endlabellist
			\includegraphics[width=.15\textwidth]{twistClosureWknoarrows}
		\end{gathered} \right) \circ \iota^l_k
	\]By following the sequence of the homotopies equivalences in Lemma~\ref{lem:flatteningTwistClosureWk} together with dot-sliding (Proposition~\ref{prop:dotSliding}), we find that this component map is chain homotopic to $(-1)^{i-1}F_{j_i}$ as maps from $\zeta_J\Theta_l$ to $\zeta_{J'}\Theta_l$. Since these complexes have no differential, chain homotopic maps are equal so the component map is precisely $(-1)^{i-1}F_{j_i}$ as claimed. 
\end{proof}

\section{\texorpdfstring{$\SU(N)$}{SU(N)} representation spaces}\label{sec:SUNRepSpaces}

In section~\ref{subsec:defsPrelimsRepSpaces}, we recall the notion of principal angles between a pair of vector subspaces. In section~\ref{subsec:repSpacesHopfTrefoil}, we explicitly describe the representation spaces of the Hopf link and the trefoil with arbitrary labels. The connected components of these spaces turn out to be homogeneous spaces. In section~\ref{subsec:cohomologyOfHomogeneousSpaces}, we review a theorem of Gugenheim and May \cite{MR0394720} concerning the cohomology of homogeneous spaces and prove a slight refinement.

\subsection{Principal angles}\label{subsec:defsPrelimsRepSpaces}

We recall the notion of principal angles between a pair of vector subspaces, introduced by Jordan \cite{MR1503705}. A single angle completely determines the relative position of a pair of lines passing through the origin. In the same way, the principal angles between a pair of vector subspaces are a sequence of angles that determine their relative position. 

\begin{df}
	Let $\Lambda_A$ and $\Lambda_B$ be complex vector subspaces of $\C^N$, of dimensions $a$ and $b$, respectively. Fix matrices $P_A$ and $P_B$ whose columns form orthonormal bases for $A$ and $B$, respectively. Let $c_1,\ldots,c_{\min(a,b)}$ be the singular values of the $a \x b$ matrix $(P_A)^*P_B$, where $(P_A)^*$ is the conjugate transpose of $P_A$. These singular values satisfy $1 \ge c_1 \ge \cdots \ge c_{\min(a,b)} \ge 0$. The \textit{principal angles} between $\Lambda_A$ and $\Lambda_B$ are the sequence of angles $0 \leq \theta_1 \leq \cdots \leq \theta_{\min(a,b)} \leq \pi/2$ for which $\cos(\theta_i) = c_i$. 

	Relative to fixed numbers $0 \leq a,b \leq N$, \textit{a sequence of principal angles} is a sequence $\theta_1,\ldots,\theta_{\min(a,b)}$ for which $0 \leq\theta_1 \leq \cdots \leq \theta_{\min(a,b)} \leq \pi/2$ and for which $\theta_1 = \cdots = \theta_{a + b - N} = 0$. The space of such sequences is naturally a simplex of dimension $N + \min(a,b) - a - b$. 
\end{df}

To elaborate on this definition, we recall that a singular value decomposition of $(P_A)^*P_B$ consists of unitary matrices $U \in \U(a)$ and $V \in \U(b)$ and a diagonal $a \x b$ matrix $\Sigma$ with real nonnegative entries for which $U\Sigma V = (P_A)^* P_B$. If the diagonal entries of $\Sigma$ are required to be in descending order, then $\Sigma$ is uniquely determined by $(P_A)^*P_B$. The diagonal entries of $\Sigma$ are the singular values of $(P_A)^*P_B$. The assumption that the columns of $P_A$ and $P_B$ are orthonormal implies that singular values are at most $1$. Furthermore, the number of principal angles between $\Lambda_A$ and $\Lambda_B$ that are equal to zero is precisely the dimension of $\Lambda_A \cap \Lambda_B$. Hence, if $a + b > N$, then it is always the case that $\theta_1 = \cdots = \theta_{a + b - N} = 0$, which explains the terminology for a sequence of principal angles. Any sequence of principal angles is realized by a pair of subspaces. 

It is clear from the definition that if $T \in \U(N)$, then the principal angles between $T\Lambda_A$ and $T\Lambda_B$ are the same as those between $\Lambda_A$ and $\Lambda_B$. If the principal angles between $\Lambda_A$ and $\Lambda_B$ agree with those between $\Lambda_A'$ and $\Lambda_B'$ where $\dim \Lambda_A' = \dim \Lambda_A$ and $\dim \Lambda_B' = \dim \Lambda_B$, then there is a unitary transformation $T \in \U(N)$ for which $\Lambda_A' = T\Lambda_A$ and $\Lambda_B' = T\Lambda_B$. This assertion can be verified directly. We record this observation in the following lemma. 

\begin{lem}\label{lem:unitaryOrbitsProductOfGrassmannians}
	Fix $0 \leq a,b \leq N$ and consider the diagonal action of $\U(N)$ on $\G(a,N) \x \G(b,N)$ given by \[
		T\cdot(\Lambda_A,\Lambda_B) = (T\Lambda_A,T\Lambda_B).
	\]The map sending $(\Lambda_A,\Lambda_B)$ to the sequence of principal angles between $\Lambda_A$ and $\Lambda_B$ is invariant under the diagonal action of $\U(N)$. Furthermore, it provides an identification of the orbit space of this $\U(N)$ action with the space of sequences of principal angles. 
\end{lem}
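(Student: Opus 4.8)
The plan is to prove Lemma~\ref{lem:unitaryOrbitsProductOfGrassmannians} by establishing two things: that the map to sequences of principal angles is $\U(N)$-invariant (easy), and that two pairs with the same principal angle sequence lie in the same $\U(N)$-orbit (the substantive part). Invariance is immediate from the definition: if $P_A, P_B$ are matrices of orthonormal bases for $\Lambda_A, \Lambda_B$, then $TP_A, TP_B$ have orthonormal columns spanning $T\Lambda_A, T\Lambda_B$, and $(TP_A)^*(TP_B) = (P_A)^* T^* T P_B = (P_A)^*P_B$, so the singular values — hence the principal angles — are unchanged. One should also note the map is well-defined, i.e.\ independent of the choice of orthonormal bases $P_A, P_B$: replacing $P_A$ by $P_A Q_A$ and $P_B$ by $P_B Q_B$ for unitaries $Q_A \in \U(a)$, $Q_B \in \U(b)$ multiplies $(P_A)^*P_B$ on each side by a unitary, which preserves singular values.

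The heart of the argument is that the principal angle sequence is a complete invariant of the orbit. First I would recall the structure theorem for a pair of subspaces (Jordan's): given $\Lambda_A, \Lambda_B \subseteq \C^N$ of dimensions $a,b$, there is an orthonormal basis of $\C^N$ in which the pair decomposes as an orthogonal direct sum of pieces — copies of $\C$ lying in $\Lambda_A \cap \Lambda_B$, copies lying in $\Lambda_A \cap \Lambda_B^\perp$, copies lying in $\Lambda_A^\perp \cap \Lambda_B$, copies lying in $\Lambda_A^\perp \cap \Lambda_B^\perp$, and a collection of $2$-dimensional blocks on each of which $\Lambda_A$ is a coordinate line and $\Lambda_B$ is a line making a fixed angle $\theta_i \in (0,\pi/2)$ with it. Concretely, perform the singular value decomposition $(P_A)^*P_B = U\Sigma V$; the columns of $P_A U$ and $P_B V^*$ give orthonormal bases of $\Lambda_A$ and $\Lambda_B$ which are ``paired'' so that the $i$th basis vector of one makes angle $\theta_i$ with the $i$th basis vector of the other and is orthogonal to all the others. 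Extending to an orthonormal basis of $\C^N$ adapted to $(\Lambda_A \cap \Lambda_B)^\perp$ inside $\Lambda_A + \Lambda_B$ and to $(\Lambda_A + \Lambda_B)^\perp$ exhibits the canonical block form, and the block form depends only on $a$, $b$, $N$, and the sequence $\theta_1 \le \cdots \le \theta_{\min(a,b)}$.

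From the canonical form the orbit statement follows by transport of structure: if $(\Lambda_A, \Lambda_B)$ and $(\Lambda_A', \Lambda_B')$ have the same dimensions and the same principal angle sequence, build the adapted orthonormal bases $\{u_1,\dots,u_N\}$ and $\{u_1',\dots,u_N'\}$ of $\C^N$ as above; the unitary $T$ sending $u_j \mapsto u_j'$ then satisfies $T\Lambda_A = \Lambda_A'$ and $T\Lambda_B = \Lambda_B'$ because $T$ carries each canonical block to the corresponding canonical block. Finally, to conclude the ``identification of the orbit space with the space of sequences of principal angles'', I would observe that the map descends to a continuous bijection from the (compact) orbit space $(\G(a,N)\x\G(b,N))/\U(N)$ to the (Hausdorff) space of principal angle sequences — surjectivity because any sequence with $\theta_1 = \cdots = \theta_{a+b-N} = 0$ is realized by an explicit block construction, injectivity by the orbit statement just proved — and a continuous bijection from a compact space to a Hausdorff space is a homeomorphism.

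The main obstacle is pinning down the canonical block decomposition cleanly enough to make the transport-of-structure step rigorous — in particular handling the vectors corresponding to principal angles equal to $0$ or $\pi/2$ (which live in the intersections $\Lambda_A \cap \Lambda_B$, $\Lambda_A \cap \Lambda_B^\perp$, etc.) separately from the genuine $2$-dimensional rotation blocks, and checking that the count of each type of piece is determined by $a$, $b$, $N$, and the multiplicities of $0$ and $\pi/2$ in the angle sequence. This is standard linear algebra (it is exactly the CS decomposition / Jordan's principal-angle theorem), so the lemma itself is routine; I would cite \cite{MR1503705} for the existence of the decomposition and keep the write-up to the transport-of-structure and compactness arguments.
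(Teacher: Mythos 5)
Your proposal is correct and follows the route the paper intends: the paper simply asserts that invariance is clear from the definition and that the converse ``can be verified directly,'' and your SVD/canonical-form argument is exactly that direct verification, with the extra (harmless) care about continuity and compactness for the identification of orbit spaces. One small simplification you might note: once the SVD produces the paired orthonormal bases of $\Lambda_A$ and $\Lambda_B$ with inner-product matrix $\Sigma$, the combined tuples of basis vectors for the two pairs have identical Gram matrices, so a unitary carrying one tuple to the other exists immediately; this sidesteps the bookkeeping of $0$ and $\pi/2$ blocks that you flag as the main obstacle.
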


\subsection{Representation spaces of the Hopf link and the trefoil}\label{subsec:repSpacesHopfTrefoil}

Recall that for $a$ satisfying $0 \leq a \leq N$, we have a matrix \[
	\Phi_a \coloneq e^{\,a\pi i/N}\begin{pmatrix}
			-\Id_a & 0\\
			0 & \Id_{N-a}
		\end{pmatrix} \in \SU(N)
\]whose conjugacy class is denoted $C_a\subset \SU(N)$. By Definition~\ref{df:representationSpace}, if $L$ is an oriented labeled link, then the space $\sr R_N(L)$ is the set of homomorphisms $\rho\colon \pi_1(\R^3\setminus L) \to \SU(N)$ for which $\rho(\mu) \in C_{a(\mu)}$ for each meridian $\mu$. 

\begin{rem}\label{rem:identificationCkandGkN}
	There is an identification between $C_a$ and the complex Grassmannian $\G(a,N)$ given by \[
		M\in C_a \longleftrightarrow \Lambda \in \G(a,N)
	\]where $\Lambda$ is the eigenspace of $M$ associated to the eigenvalue $-e^{\,a\pi i/N}$. The eigenspace of $M$ with eigenvalue $e^{\,a\pi i/N}$ is the orthogonal complement of $\Lambda$ defined by the standard Hermitian inner product on $\C^N$, so the correspondence $M \mapsto\Lambda$ is indeed a bijection. 
\end{rem}

We first describe the representation spaces of the Hopf link labeled $0 \leq a,b \leq N$. The result follows quickly from simultaneous diagonalizability of commuting diagonalizable operators. If $a_1,\ldots,a_k$ is a sequence of nonnegative numbers, let $\F(a_1,\ldots,a_k;N)$ denote the partial flag manifold consisting of $k$-tuples $(\Lambda_1,\ldots,\Lambda_k)$ of pairwise orthogonal complex vector subspaces of $\C^N$ for which $\dim \Lambda_i = a_i$. In particular, the partial flag manifold $\F(a;N)$ is the complex Grassmannian $\G(a,N)$. Note that if $a_1 + \cdots + a_k > N$, then $\F(a_1,\ldots,a_k;N) = \emp$. 

\begin{prop}\label{prop:HopfLinkRepSpace}
	Let $L$ denote the positive Hopf link whose components are labeled by integers $0 \leq a,b \leq N$. Then \[
		\sr R_N(L) = \bigsqcup_{k=\max(a + b - N,\,0)}^{\min(a,\,b)} \F(k,a-k,b-k;N).
	\]
\end{prop}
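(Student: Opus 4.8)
The plan is to exploit the fact that the Hopf link complement has abelian fundamental group, which reduces the statement to a classification of pairs of commuting elements of $\SU(N)$ lying in prescribed conjugacy classes, and then to apply simultaneous unitary diagonalization. First I would record that the complement of the Hopf link has fundamental group $\Z^2$, generated by the two meridians $\mu_1,\mu_2$, which commute (this is classical; e.g.\ the complement in $S^3$ deformation retracts onto a $2$-torus, or one reads it off the Wirtinger presentation of the standard two-crossing diagram). Since all meridians of a given component are conjugate, a homomorphism $\rho$ lies in $\sr R_N(L)$ if and only if $A\coloneqq\rho(\mu_1)\in C_a$, $B\coloneqq\rho(\mu_2)\in C_b$, and $AB=BA$. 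Using Remark~\ref{rem:identificationCkandGkN} I would then record $A$ by its $(-e^{a\pi i/N})$-eigenspace $\Lambda_A\in\G(a,N)$ and $B$ by its $(-e^{b\pi i/N})$-eigenspace $\Lambda_B\in\G(b,N)$, noting that $A$ has exactly the two eigenvalues $\pm e^{a\pi i/N}$, with eigenspaces $\Lambda_A$ and $\Lambda_A^\perp$, and similarly for $B$.

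Next I would invoke the fact that commuting unitary operators are simultaneously unitarily diagonalizable: when $AB=BA$, the space $\C^N$ is the orthogonal direct sum of the (at most four) common eigenspaces $\Lambda_A\cap\Lambda_B$, $\Lambda_A\cap\Lambda_B^\perp$, $\Lambda_A^\perp\cap\Lambda_B$, $\Lambda_A^\perp\cap\Lambda_B^\perp$; conversely, any $\Lambda_A,\Lambda_B$ of dimensions $a,b$ whose four cross-intersections are pairwise orthogonal and span $\C^N$ give rise to a commuting pair $A,B$. Writing $k\coloneqq\dim(\Lambda_A\cap\Lambda_B)$, the four dimensions are $k$, $a-k$, $b-k$, $N-a-b+k$, which are simultaneously nonnegative exactly when $\max(a+b-N,0)\leq k\leq\min(a,b)$. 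A routine linear-algebra check that $(\Lambda_1,\Lambda_2,\Lambda_3)\mapsto(\Lambda_1\oplus\Lambda_2,\Lambda_1\oplus\Lambda_3)$ and $(\Lambda_A,\Lambda_B)\mapsto(\Lambda_A\cap\Lambda_B,\ \Lambda_A\cap\Lambda_B^\perp,\ \Lambda_A^\perp\cap\Lambda_B)$ are mutually inverse for pairwise orthogonal $\Lambda_i$ then produces a set-theoretic bijection $\sr R_N(L)\cong\bigsqcup_k\F(k,a-k,b-k;N)$.

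To upgrade this to an identification of topological spaces I would work with the map in the direction $\bigsqcup_k\F(k,a-k,b-k;N)\to\sr R_N(L)\subseteq\SU(N)^2$ sending $(\Lambda_1,\Lambda_2,\Lambda_3)$ to the pair of matrices having $(-e^{a\pi i/N})$-eigenspace $\Lambda_1\oplus\Lambda_2$ and $(-e^{b\pi i/N})$-eigenspace $\Lambda_1\oplus\Lambda_3$. This map is continuous, since direct sums of subspaces depend continuously on their summands and the correspondence $\G(a,N)\to C_a$ of Remark~\ref{rem:identificationCkandGkN} is a homeomorphism. It is a bijection by the previous paragraph, its source is compact (a finite disjoint union of flag manifolds) and its target is Hausdorff, so it is a homeomorphism, which finishes the proof.

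The main obstacle is essentially bookkeeping: all the linear algebra is routine, and the one point deserving genuine care is the last step. One should \emph{not} try to verify directly that $\rho\mapsto(\Lambda_1,\Lambda_2,\Lambda_3)$ is continuous, because the intersections $\Lambda_A\cap\Lambda_B$, etc.\ jump in dimension and are not continuous across the strata indexed by different values of $k$; instead one builds the manifestly continuous inverse and concludes by compactness. (The identification of $\pi_1$ with $\Z^2$ generated by commuting meridians is classical and poses no difficulty.)
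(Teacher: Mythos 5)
Your proposal is correct and follows essentially the same route as the paper: reduce to commuting pairs $(M_A,M_B)\in C_a\times C_b$ via the presentation $\langle A,B\mid AB=BA\rangle$, encode each matrix by its distinguished eigenspace, and use simultaneous diagonalizability to identify such pairs with triples of pairwise orthogonal subspaces of dimensions $k$, $a-k$, $b-k$. The only difference is that you also spell out the point-set topology (continuity of the inverse map plus compact-to-Hausdorff), which the paper leaves implicit; this is a harmless and welcome addition.
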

\begin{proof}
	A presentation of the fundamental group $\pi_1$ of the complement of the Hopf link is \[
		\pi_1 = \langle\: A,B \:|\: AB = BA\: \rangle
	\]where $A$ and $B$ are meridians of components labeled $a$ and $b$, respectively. 

	Let $\rho$ be a representation in $\sr R_N(L)$, and let $M_A \coloneq \rho(A) \in C_a$ and $M_B \coloneq \rho(B) \in C_b$. Let $\Lambda_A$ and $\Lambda_B$ be the corresponding eigenspaces of $M_A$ and $M_B$, respectively, as considered in Remark~\ref{rem:identificationCkandGkN}. Consider the triple $(\Lambda_1,\Lambda_2,\Lambda_3)$ of vector subspaces of $\C^N$ where \begin{itemize}[noitemsep]
		\item $\Lambda_1$ is the intersection $\Lambda_A \cap \Lambda_B$,
		\item $\Lambda_2$ is the orthogonal complement of $\Lambda_A \cap \Lambda_B$ within $\Lambda_A$,
		\item $\Lambda_3$ is the orthogonal complement of $\Lambda_A \cap \Lambda_B$ within $\Lambda_B$. 
	\end{itemize}In particular, if $\dim\Lambda_1 = k$, then $\dim\Lambda_2 = a - k$ and $\dim \Lambda_3 = b - k$. Since $M_A$ and $M_B$ are diagonalizable and commute, they are simultaneously diagonalizable, which implies that $\Lambda_2$ and $\Lambda_3$ are orthogonal. Hence $(\Lambda_1,\Lambda_2,\Lambda_3)$ is an element of $\F(k,a-k,b-k;N)$. 

	Conversely, given any element $(\Lambda_1,\Lambda_2,\Lambda_3) \in \bigsqcup_k \F(k,a-k,b-k;N)$, let $\Lambda_A \coloneq \Lambda_1 \oplus \Lambda_2$ and $\Lambda_B \coloneq \Lambda_1 \oplus \Lambda_3$. By Remark~\ref{rem:identificationCkandGkN}, the subspaces $\Lambda_A$ and $\Lambda_B$ determine matrices $M_A \in C_a$ and $M_B \in C_b$, respectively, and orthogonality of $\Lambda_2$ and $\Lambda_3$ implies that the matrices commute. Hence $\rho(A) \coloneq M_A$ and $\rho(B) \coloneq M_B$ determines a representation $\rho \in \sr R_N(L)$. 
\end{proof}

\begin{rem}\label{rem:HopfPrincipalAngles}
	Let $(\Lambda_1,\Lambda_2,\Lambda_3) \in \F(k,a-k,b-k;N)$ and consider $\Lambda_A \coloneq \Lambda_1 \oplus \Lambda_2$ and $\Lambda_B \coloneq \Lambda_1 \oplus \Lambda_3$. The principal angles between $\Lambda_A$ and $\Lambda_B$ are $\theta_1 = \cdots = \theta_k = 0$ and $\theta_{k+1} = \cdots = \theta_{\min(a,b)} = \pi/2$. This correspondence gives an identification between $\F(k,a-k,b-k;N)$ and the space of pairs $(\Lambda_A,\Lambda_B) \in \G(a,N) \x \G(b,N)$ with the given sequence of principal angles. We also note that $\F(k,a-k,b-k;N)$ is the homogeneous space \[
		\F(k,a-k,b-k;N) = \frac{\U(N)}{\U(k) \x \U(a - k) \x \U(b-k) \x \U(N - a - b + k)}.
	\]
\end{rem}

We turn to the representation spaces of the trefoil labeled $0 \leq a \leq N$. The connected components are no longer partial flag manifolds, but they can be described conveniently using principal angles analogous to the description in Remark~\ref{rem:HopfPrincipalAngles}. 

\begin{prop}\label{prop:TrefoilRepSpace}
	Let $K$ denote the right-handed trefoil labeled by an integer $0 \leq a \leq N$. Then \[
		\sr R_N(K) = \bigsqcup_{l=\max(2a - N,\,0)}^a X_l
	\]where $X_l$ is the space of pairs $(\Lambda_A,\Lambda_B) \in \G(a,N) \x \G(a,N)$ with principal angles \[
		\theta_1 = \cdots = \theta_l = 0 \qquad \theta_{l+1} = \cdots = \theta_a = \pi/3.
	\]Each $X_l$ is an orbit of the conjugation action of $\U(N)$ on $\sr R_N(K)$ and can be explicitly described as the homogeneous space $X_l = \U(N)/K_l$ where $K_l \coloneq \U(l) \x \Delta\!\U(a - l) \x \U(N-2a + l)$ is the subgroup of block diagonal matrices \[
		\begin{pmatrix}
			U\\
			& V\\
			& & V\\
			& & & W
		\end{pmatrix} \in \U(N) \quad\qquad U \in \U(l),\quad V \in \U(a - l),\quad W \in \U(N - 2a + l).
	\]
\end{prop}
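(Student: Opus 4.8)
The plan is to follow the same strategy used for the Hopf link in Proposition~\ref{prop:HopfLinkRepSpace}, but now starting from the Wirtinger presentation of the trefoil group and using the matrix $\Phi_a$ explicitly. First I would write down the standard two-generator presentation $\pi_1(\R^3\setminus K) = \langle A, B \mid ABA = BAB\rangle$ where $A$ and $B$ are meridians, so that a representation $\rho \in \sr R_N(K)$ is the same data as a pair of matrices $M_A = \rho(A)$, $M_B = \rho(B)$, each conjugate to $\Phi_a$, satisfying the braid relation $M_AM_BM_A = M_BM_AM_B$. Via Remark~\ref{rem:identificationCkandGkN}, $M_A$ and $M_B$ correspond to their $(-e^{\,a\pi i/N})$-eigenspaces $\Lambda_A, \Lambda_B \in \G(a,N)$. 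The main point will be to show that the braid relation, together with the fact that both matrices have the same pair of eigenvalues $\{e^{\,a\pi i/N}, -e^{\,a\pi i/N}\}$ with the same multiplicities, forces the principal angles between $\Lambda_A$ and $\Lambda_B$ to be $0$ or $\pi/3$ and nothing else, and conversely that any such pair yields a genuine representation.

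The key computation is local: decompose $\C^N$ into the common refinement coming from $\Lambda_A$ and $\Lambda_B$. Since $M_A$ and $M_B$ are each scalar-times-reflection, on the subspace $\Lambda_A \cap \Lambda_B$ both act as $-e^{\,a\pi i/N}$, on $\Lambda_A^\perp \cap \Lambda_B^\perp$ both act as $e^{\,a\pi i/N}$, and the ``interesting'' part is the sum of the $2$-dimensional coordinate subspaces associated to the nonzero, non-right-angle principal angles (the Jordan/CS decomposition of a pair of subspaces). On each such $2$-plane $P_\theta$, in a suitable orthonormal basis $M_A|_{P_\theta}$ and $M_B|_{P_\theta}$ are $2\times 2$ unitary matrices of determinant $\pm 1$ (after pulling out the scalar $e^{\,a\pi i/N}$) with trace zero — i.e.\ conjugates of $\mathrm{diag}(1,-1)$ — whose relative position is governed by $\theta$. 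I would then directly check that $M_AM_BM_A = M_BM_AM_B$ holds on $P_\theta$ if and only if $2\cos\theta$ satisfies the equation forcing $\theta \in \{0, \pi/3, \pi/2\}$; the value $\theta = \pi/2$ (orthogonal $2$-plane) does not actually occur as an irreducible $2$-dimensional summand because it would correspond to a common eigenvector situation already absorbed into $\Lambda_A\cap\Lambda_B$ or its complement — more precisely, a $\pi/2$ pair of traceless reflections has $M_A, M_B$ anticommuting, which violates the braid relation unless the block is trivial. This leaves only $\theta = \pi/3$ in the nontrivial locus. Counting: if $l = \dim(\Lambda_A\cap\Lambda_B)$ then there are $a - l$ principal angles equal to $\pi/3$, and the complementary dimension constraint $\dim(\Lambda_A + \Lambda_B) = 2a - l \leq N$ gives the range $\max(2a-N,0) \leq l \leq a$.

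Having identified $\sr R_N(K)$ set-theoretically with $\bigsqcup_l X_l$, where $X_l$ is the set of pairs with principal angle spectrum $0^{(l)}, (\pi/3)^{(a-l)}$, I would invoke Lemma~\ref{lem:unitaryOrbitsProductOfGrassmannians}: the principal-angle map identifies the $\U(N)$-orbit space of $\G(a,N)\times\G(a,N)$ with the space of principal angle sequences, so each $X_l$ is exactly one $\U(N)$-orbit under the diagonal conjugation action, hence a homogeneous space $\U(N)/K_l$ where $K_l = \Stab(\Lambda_A,\Lambda_B)$ for any chosen base pair. To compute $K_l$ explicitly I would take the standard model: $\Lambda_A = \mathrm{span}(e_1,\ldots,e_l)\oplus\mathrm{span}(e_{l+1},\ldots,e_a)$ and $\Lambda_B = \mathrm{span}(e_1,\ldots,e_l)\oplus\mathrm{span}(v_{l+1},\ldots,v_a)$ where $v_j = \cos(\pi/3)e_j + \sin(\pi/3)e_{a+j-l}$, so that the common refinement has blocks of sizes $l$ (in $\Lambda_A\cap\Lambda_B$), $N-2a+l$ (in the joint orthogonal complement), and $a-l$ copies of a $2$-plane. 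A unitary fixing both subspaces must preserve each $2$-plane's pair of lines, hence act as a scalar on each, and the scalar on the $j$-th $2$-plane must be constant across $j$ only up to... actually it must act the same way on the $e_j$-coordinate and on the $v_j$-coordinate within that $2$-plane, which pins it to act by a single unitary on the ``$e_j$'' slot and the same unitary on the ``$e_{a+j-l}$'' slot — giving precisely the $\Delta\!\U(a-l)$ factor, together with $\U(l)$ on $\Lambda_A\cap\Lambda_B$ and $\U(N-2a+l)$ on the joint complement. This yields $K_l = \U(l)\times\Delta\!\U(a-l)\times\U(N-2a+l)$ as stated.

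\textbf{Expected main obstacle.} The delicate step is the ``only $\pi/3$'' claim — ruling out $\theta = \pi/2$ and all other angles as nontrivial $2$-dimensional blocks. One must be careful that the Jordan decomposition of the pair $(\Lambda_A,\Lambda_B)$ interacts correctly with the braid relation blockwise, and in particular that the $\pi/2$ blocks genuinely cannot appear (as opposed to appearing with multiplicity forced to zero only after a global argument). I expect this to come down to a short but slightly fiddly $2\times 2$ unitary matrix computation: parametrize two trace-zero involutions with relative angle $\theta$, impose $M_AM_BM_A = M_BM_AM_B$, and read off $4\cos^2\theta - 1 = 0$ or $\cos\theta \in \{0,1\}$, then note $\cos\theta = 0$ forces the block to anticommute which is inconsistent with the relation being nontrivial there (since on a $2$-plane with $M_A^2 = M_B^2 = \mathrm{scalar}$, anticommuting plus braid forces $M_A = \pm M_B$, contradicting $\theta = \pi/2$). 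Everything else — the set-theoretic correspondence, the orbit identification via Lemma~\ref{lem:unitaryOrbitsProductOfGrassmannians}, and the stabilizer computation — is routine linear algebra once this local model is in hand.
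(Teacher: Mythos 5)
Your proposal follows essentially the same route as the paper: the two-generator braid presentation, the eigenspace correspondence of Remark~\ref{rem:identificationCkandGkN}, reduction to the orbit space of $\G(a,N)\times\G(a,N)$ via Lemma~\ref{lem:unitaryOrbitsProductOfGrassmannians}, a blockwise $2$-plane computation, and the same explicit stabilizer model. The one place you anticipate trouble is not actually an obstacle: using $M^{-1}=e^{-2a\pi i/N}M$ for $M\in C_a$, the braid relation is equivalent to $(e^{-2a\pi i/N}M_BM_A)^3=\Id_N$, and $e^{-2a\pi i/N}M_BM_A$ acts on each $2$-plane as a rotation by $6\theta$, so the condition is exactly $3\theta\in\Z\pi$ and $\theta=\pi/2$ never appears as a spurious root needing separate exclusion. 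Also, a small correction to your side remark: two traceless reflections of a $2$-plane with $\theta=\pi/2$ \emph{commute} (their product is $-\Id$) rather than anticommute; the braid relation then forces $M_A=M_B$, which is the contradiction you want, so your conclusion stands even though the stated mechanism is off.
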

\begin{proof}
	A presentation of the fundamental group $\pi_1$ of the complement of the trefoil is \[
		\pi_1 = \langle \: A,B \:|\: ABA = BAB \:\rangle
	\]where $A$ and $B$ are meridians. 

	Let $\rho$ be a representation in $\sr R_N(K)$, and let $M_A\coloneq \rho(A) \in C_a$ and $M_B\coloneq \rho(B) \in C_b$. Let $\Lambda_A$ and $\Lambda_B$ be the associated eigenspaces of $M_A$ and $M_B$, respectively, as considered in Remark~\ref{rem:identificationCkandGkN}. Consider the inclusion \[
		\Psi\colon \sr R_N(K) \hookrightarrow \G(a,N) \x \G(a,N) \qquad \rho \mapsto (\Lambda_A,\Lambda_B).
	\]For $T \in \U(N)$, note that $\Psi(T\rho T^{-1}) = (T\Lambda_A,T\Lambda_B)$. Thus $\Psi$ realizes $\sr R_N(K)$ as a union of orbits of the diagonal action of $\U(N)$ on $\G(a,N) \x \G(a,N)$. By Lemma~\ref{lem:unitaryOrbitsProductOfGrassmannians}, the orbit space of $\G(a,N) \x \G(a,N)$ is precisely the space of sequences of principal angles. To prove the first part of the proposition, it suffices to show $\Psi(\sr R_N(K))$ is precisely the union of the orbits corresponding to the sequences of principal angles $\theta_1,\ldots,\theta_a$ for which $3\theta_i \in \Z\pi$. The inequalities $0 \leq \theta_i \leq \pi/2$ then imply that $\theta_i$ is either $0$ or $\pi/3$.

	Let $\theta_1,\ldots,\theta_a$ be a sequence of principal angles, and let $l$ be the number for which $\theta_1 = \cdots = \theta_l = 0$ while $\theta_{l+1} > 0$. Consider the $a$-dimensional subspaces of $\C^N$ given by \begin{align*}
		\Lambda_A &= \mathrm{span}(e_1,e_2,\ldots,e_l) \oplus \mathrm{span}(e_{l+1},e_{l+3},\ldots,e_{l + 2(a - l) - 1})\\
		\Lambda_B &= \mathrm{span}(e_1,e_2,\ldots,e_l)\oplus \mathrm{span}(c_{l+1} e_{l+1} + s_{l+1} e_{l+2}, c_{l+2}e_{l+3} + s_{l+2}e_{l+4},\ldots,c_a e_{2a - l - 1} + s_a e_{2a - l})
	\end{align*}where $e_1,\ldots,e_N$ is the standard basis of $\C^N$, and $c_i = \cos\theta_i$ and $s_i = \sin\theta_i$. By a computation using the angle-difference formulas for sine and cosine, the matrices $M_A,M_B \in C_a$ corresponding to $\Lambda_A,\Lambda_B$, respectively, are given by \begin{align*}
		e^{-a\pi i/N}M_A &= (-\Id_l) \oplus \bigoplus_{i=1}^{a - l} \begin{pmatrix}
			-1 & 0\\0 & 1
		\end{pmatrix} \oplus \Id_{N-2a+l}\\
		e^{-a\pi i/N}M_B &= (-\Id_l) \oplus \bigoplus_{i=1}^{a - l} \begin{pmatrix}
			-\cos(2\theta_{l+i}) & -\sin(2\theta_{l+i})\\
			-\sin(2\theta_{l+i}) & \cos(2\theta_{l+i})
		\end{pmatrix} \oplus \Id_{N-2a+l}
	\end{align*}where the direct sum notation denotes a block diagonal matrix. Because $M^{-1} = e^{-2a\pi i/N}M$ for every $M \in C_a$, the relation $M_AM_BM_A = M_BM_AM_B$ is equivalent to \[
		(e^{-2a\pi i/N}M_BM_A)^3 = \Id_N\!.
	\]By direct computation using the angle-sum formulas for sine and cosine, we find that \[
		(e^{-2a\pi i/N}M_BM_A)^3 = \Id_l \oplus \bigoplus_{i=1}^{a - l} \begin{pmatrix}
			\cos(6\theta_{l+i}) & -\sin(6\theta_{l+i})\\
			\sin(6\theta_{l+i}) & \cos(6\theta_{l+i})
		\end{pmatrix} \oplus \Id_{N - 2a + l}.
	\]It follows that $M_A$ and $M_B$ determine a representation $\rho$ if and only if $3\theta_i \in \Z\pi$ as claimed. 

	To explicitly describe the orbit $X_l$ corresponding to $\theta_1 = \cdots = \theta_l = 0$ and $\theta_{l+1} = \cdots = \theta_a = \pi/3$, we must determine the stabilizer of $(\Lambda_A,\Lambda_B) \in \G(a,N) \x \G(a,N)$ under the diagonal $\U(N)$ action, where $\Lambda_A$ and $\Lambda_B$ are any subspaces with the given principal angles. We now choose $\Lambda_A$ and $\Lambda_B$ to be the following column spaces, denoted by square brackets, \[
		\Lambda_A = \begin{bmatrix}
			\Id_a\\
			0
		\end{bmatrix} = \begin{bmatrix}
			\Id_l & 0\\
			0 & \Id_{a - l}\\
			0 & 0\\
			0 & 0
		\end{bmatrix} \qquad \Lambda_B = \begin{bmatrix}
			\Id_l & 0\\
			0 & \Id_{a - l}\\
			0 & \!\sqrt{3}\Id_{a - l}\\
			0 & 0
		\end{bmatrix}
	\]where $\!\sqrt{3}$ arises as $\tan(\pi/3)$. The stabilizer of $\Lambda_A$ is the subgroup of $\U(N)$ consisting of the block diagonal matrices $\U(a) \x \U(N-a)$. If $X \in \U(a)$ and $Y \in \U(N-a)$, then $X \oplus Y$ fixes $\Lambda_B$ if and only if \[
		Y \begin{pmatrix}
			0 & \!\sqrt{3} \Id_{a-l}\\
			0 & 0
		\end{pmatrix} = \begin{pmatrix}
			0 & \!\sqrt{3} \Id_{a-l}\\
			0 & 0
		\end{pmatrix}X.
	\]After writing $X$ and $Y$ as block matrices, a straightforward computation implies that the above condition holds if and only if \[
		X = \begin{pmatrix}
			U & 0\\
			0 & V
		\end{pmatrix} \qquad Y = \begin{pmatrix}
			V & 0\\
			0 & W
		\end{pmatrix}
	\]for $U \in \U(l)$, $V \in \U(a - l)$, and $W \in \U(N - 2a + l)$. 
\end{proof}

\begin{rem}
	We comment on the representation spaces associated to labeled $2$-bridge knots and links. For any positive number $m$, let $L$ denote the $(2,m)$ torus knot or link labeled by $0 \leq a,b \leq N$, with the understanding that $a = b$ if $m$ is odd. A similar calculation shows that $\sr R_N(L)$ can be identified with the union of orbits in $\G(a,N) \x \G(b,N)$ under the diagonal $\U(N)$ action corresponding to the sequences of principal angles $\theta_1,\ldots,\theta_{\min(a,b)}$ for which $m\theta_i \in \Z\pi$. 

	The fundamental group of the complement of any $2$-bridge knot or link $L$ has a presentation with just two generators, where both generators are meridians. It follows that $\sr R_N(L)$ embeds in $\G(a,N) \x \G(b,N)$ as a union of orbits. Using the fact that $M^{-1} = e^{-2a\pi i/N}M$ for every $M \in C_a \subset \SU(N)$, one can show from an explicit presentation that the corresponding $(2,m)$ torus knot or link $L'$ with the same determinant has the property that the images of $\sr R_N(L)$ and $\sr R_N(L')$ in $\G(a,N) \x \G(b,N)$ are the same. 
\end{rem}

\subsection{Cohomology of homogeneous spaces}\label{subsec:cohomologyOfHomogeneousSpaces}

Let $K$ be a closed Lie subgroup of a Lie group $G$, and assume both groups are compact and connected. Recall that $\B G$ is the quotient of a contractible space $\E G$ by a free action of $G$. The quotient of $\E G$ by the induced free action of $K$ yields $\B K$ with a natural map to $\B G$, giving a fiber bundle \[
	G/K \to \B K \to \B G
\]The Eilenberg--Moore spectral sequence associated to this bundle converges to $H^*(G/K)$ and has $E_2$-page \[
	\Tor_{H^*(\B G)}(\Z,H^*(\B K))
\]where $H^*(\B K)$ is a graded module over the graded commutative ring $H^*(\B G)$ by the map induced by the projection $\B K \to \B G$, and $\Z$ is a graded module over $H^*(\B G)$ by the map induced by inclusion of a point into $\B G$. 

The following theorem of Gugenheim and May \cite{MR0394720} gives general conditions on $G$ and $K$ for the collapse of the Eilenberg--Moore spectral sequence. We state their theorem with coefficients in $\Z$ for simplicity, though their result holds for any Noetherian coefficient ring with slightly different hypotheses on $G$ and $K$. The following hypotheses on $G$ and $K$ hold when they are isomorphic to products of unitary groups. 

\begin{thm}[{\cite[Theorem A]{MR0394720}}]\label{thm:gugenheimMay}
	Let $K$ be a closed connected Lie subgroup of a compact connected Lie group $G$. If $H^*(\B G)$ is a polynomial algebra and $H^*(K)$ has no torsion, then there is an isomorphism of graded abelian groups\[
		H^*(G/K) \cong \Tor_{H^*(\B G)}(\Z,H^*(\B K))
	\]where the bigrading on the right-hand side is collapsed to a single grading. 
\end{thm}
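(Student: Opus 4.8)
The plan is to prove the theorem through the Eilenberg--Moore spectral sequence of the fibration $G/K \to \B K \to \B G$, using the two hypotheses to force degeneration at $E_2$ together with the absence of additive extension problems. \textbf{Step 1 (set-up).} Since $G$ is compact and connected, $\B G$ is simply connected, so the Eilenberg--Moore spectral sequence of $G/K \to \B K \xrightarrow{p} \B G$ is available, with $E_2^{*,*} = \Tor_{H^*(\B G)}(\Z, H^*(\B K))$ converging to $H^*(G/K)$; the internal grading of $\Tor$ is the prospective $q$-grading and the homological grading is the prospective $h$-grading. What must be shown is that this spectral sequence collapses at $E_2$ and that the induced filtration of $H^*(G/K)$ splits additively. \textbf{Step 2 (Borel structure theory).} For a compact connected Lie group, $H^*(K;\Z)$ being torsion-free is equivalent to its being an exterior algebra $\Lambda_\Z(y_1,\ldots,y_n)$ on odd-degree generators, in which case $H^*(\B K;\Z) = \Z[z_1,\ldots,z_n]$ is polynomial on even-degree generators with the universal $K$-bundle transgressing $y_i$ to $z_i$. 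Combined with the hypothesis $H^*(\B G;\Z) = \Z[x_1,\ldots,x_n]$, all cohomology rings in sight are polynomial and torsion-free as abelian groups.

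\textbf{Step 3 (Koszul model for $E_2$).} Because $H^*(\B G)$ is polynomial on $x_1,\ldots,x_n$, the augmentation $\Z$ has the finite free Koszul resolution $\Lambda_\Z(u_1,\ldots,u_n)\otimes_\Z H^*(\B G)$ with $d u_i = x_i$, where $u_i$ sits in homological degree $1$ and internal degree $|x_i|$. Hence
\[
E_2 \;=\; H\!\left(\Lambda_\Z(u_1,\ldots,u_n)\otimes_\Z H^*(\B K),\ \delta\right),\qquad \delta u_i = p^*(x_i)\in H^*(\B K),
\]
a finite-dimensional complex of free abelian groups whose homology is the bigraded object appearing on the right side of the theorem.

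\textbf{Step 4 (collapse and splitting).} This is the heart of the argument. One works at the cochain level: a polynomial algebra concentrated in even degrees is intrinsically formal, so $C^*(\B G)$ is quasi-isomorphic as a DGA to $H^*(\B G)$; using Step 2 and torsion-freeness, $C^*(\B K)$ can likewise be replaced, up to quasi-isomorphism of modules, by the polynomial ring $H^*(\B K)$ equipped with a twisting differential valued in $H^*(\B G)$. Plugging these formal replacements into the two-sided bar construction $B(\Z, C^*(\B G), C^*(\B K))$ computing $C^*(G/K)$, the differential splits as the Koszul differential $\delta$ of Step 3 plus a part raising homological degree by at least $2$; that part is shown to be a boundary and removable by a change of model precisely because $H^*(\B G)$ is polynomial — so its Koszul complex is a genuine resolution with no higher syzygies — and because every module involved is $\Z$-free. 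This produces a chain homotopy equivalence between $C^*(G/K)$ and the complex of Step 3, giving the isomorphism; torsion-freeness is what guarantees the surviving filtration of $H^*(G/K)$ splits, so the bigrading on $\Tor$ descends to the stated collapsed grading.

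\textbf{Main obstacle.} The bare spectral sequence does not obviously degenerate, so Step 4 is where the real work lies: one must construct an honest small model for $C^*(G/K)$ rather than argue abstractly. The polynomial hypothesis on $H^*(\B G)$ (finite free Koszul resolution, formality of $C^*(\B G)$) and the torsion-free hypothesis on $H^*(K)$ (so the comparison can be carried out over $\Z$, not merely over $\Q$, with no residual extensions) are used exactly here. The rational version of the statement is comparatively soft — it follows from the general collapse of the Eilenberg--Moore spectral sequence over a polynomial base — so the substance is the integral upgrade, which is Gugenheim--May's contribution in \cite{MR0394720}.
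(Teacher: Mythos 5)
This theorem is quoted from Gugenheim--May and the paper does not reprove it, so the right benchmark is their argument as recounted in the proof of Proposition~\ref{prop:moduleStructGugenheimMay}. Your Steps 1--3 are correct and match the standard setup (Eilenberg--Moore spectral sequence, Borel's structure theory for torsion-free $H^*(K)$, the Koszul resolution of $\Z$ over the polynomial ring $H^*(\B G)$). The gap is Step 4, which is not an argument but a restatement of what has to be proved. The claim that $C^*(\B G)$ and $C^*(\B K)$ can be replaced by their cohomology rings \emph{compatibly with the module structure of the latter over the former} is the entire content of the theorem. ``A polynomial algebra concentrated in even degrees is intrinsically formal'' is a characteristic-zero slogan; over $\Z$, and for non-commutative cochain algebras, it is a substantial theorem (essentially \cite[Theorem 4.1]{MR0394720}), and even granting formality of each algebra separately one must still make the two equivalences commute with $p^*\colon C^*(\B G)\to C^*(\B K)$ up to coherent homotopy before the bar construction computes the algebraic $\Tor$ --- your ``that part is shown to be a boundary and removable'' is exactly the step that needs proof. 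Gugenheim--May organize things precisely to avoid this: they first pass to a maximal torus $T\subseteq K$ (Baum's trick), for which a genuine DGA map $g\colon C^*(\B T)\to H^*(\B T)$ can be constructed, and they never make $C^*(\B G)$ formal at all; instead they compare $\Tor_{C^*(\B G)}(\Z,H^*(\B T))$ with $\Tor_{H^*(\B G)}(\Z,H^*(\B T))$ via an explicit resolution computing both (their Theorem 2.3), then recover the statement for $K$ as a direct summand.

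A second, concrete error: your closing claim that ``torsion-freeness is what guarantees the surviving filtration of $H^*(G/K)$ splits'' cannot be right, because the $E_\infty$-page $\Tor_{H^*(\B G)}(\Z,H^*(\B K))$ is very far from torsion-free in general. Indeed, the torsion visible in Tables~\ref{table:trefoil24}--\ref{table:trefoil36} of this paper lives exactly in these $\Tor$ groups for the homogeneous spaces $\U(N)/K_l$ of Proposition~\ref{prop:bigradedTrefoil}. So there is no soft ``everything is free, hence extensions split'' argument; the additive splitting of the filtration must come from the same explicit chain-level small model that Step 4 does not actually construct.
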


This result may be viewed as the assertion that the Eilenberg--Moore spectral sequence collapses at the $E_2$ page, with the additional claim that there are no additive extension issues between $H^*(G/K)$ and the $E_\infty$-page of the spectral sequence. 

We briefly review the basic properties of $\Tor$ for graded modules over a graded commutative ring. For a reference, see for example \cite[Sections 1-2]{MR219085}. The graded commutative rings that arise in our applications are supported in even degrees and are therefore genuinely commutative. If $S$ and $T$ are graded modules over a graded commutative ring $R$, then $\Tor_R(S,T)$ is the bigraded group defined in the following way. First, we choose a projective resolution \[
	\begin{tikzcd}
	 	\cdots \ar[r] & Q_{-1} \ar[r] & Q_0 \ar[r] & S
	 \end{tikzcd}
\]of $S$ by graded $R$-modules $Q_i$. Then, we consider the chain complex \[
	\begin{tikzcd}
	 	\cdots \ar[r] & Q_{-1} \otimes_R T \ar[r] & Q_0 \otimes_R T \ar[r] & 0
	 \end{tikzcd}
\]where $Q_i \otimes_R T$ is declared to be in homological degree $i \leq 0$. The homology of this complex is then defined to be $\Tor_R(S,T)$, and it is equipped with a homological grading $h$ and an internal grading $q$ arising from the grading on $Q_i \otimes_R T$. Tensoring a projective resolution of $T$ with $S$ yields a homotopy equivalent complex. $\Tor$ is functorial in its three parameters. In particular, if $T$ or $S$ is a commutative ring, then functoriality gives $\Tor_R(S,T)$ the structure of a $T$- or $S$-module, respectively. 

We will need a slightly stronger version of the isomorphism in Theorem~\ref{thm:gugenheimMay} in order to identify the module structure on colored $\sl(N)$ homology with the module structure on the cohomology of the space of $\SU(N)$ representations. Both sides of the isomorphism of Theorem~\ref{thm:gugenheimMay} are naturally modules over $H^*(\B K)$. The structure on the left-hand side is given by the fiber inclusion in the bundle $G/K \hookrightarrow \B K \to \B G$ while the structure on the right-hand side arises from functoriality of $\Tor$ mentioned above. We verify that the isomorphism of Theorem~\ref{thm:gugenheimMay} is an isomorphism of $H^*(\B K)$-modules. The argument amounts to following the proof of \cite[Theorem A]{MR0394720} and checking compatibility of the module structure along the way.  

\begin{prop}\label{prop:moduleStructGugenheimMay}
	The isomorphism of Theorem~\ref{thm:gugenheimMay} is an isomorphism of $H^*(\B K)$-modules. 
\end{prop}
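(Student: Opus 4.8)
The plan is to follow the proof of \cite[Theorem A]{MR0394720} and track the $H^*(\B K)$-module structure at each stage. Recall that the Eilenberg--Moore spectral sequence for the bundle $G/K \to \B K \to \B G$ may be built from a Koszul-type resolution: since $H^*(\B G)$ is a polynomial algebra $\Z[y_1,\ldots,y_r]$, the module $\Z = H^*(\pt)$ admits the Koszul resolution $\Lambda_{H^*(\B G)}(x_1,\ldots,x_r)$ with $dx_i = y_i$, and tensoring over $H^*(\B G)$ with $H^*(\B K)$ yields a chain complex computing $\Tor_{H^*(\B G)}(\Z, H^*(\B K))$. The key point in Gugenheim--May is that, because $H^*(K)$ is torsion-free, this Koszul complex is formal enough that the differentials in the spectral sequence vanish and there are no extension problems; the isomorphism $H^*(G/K) \cong \Tor_{H^*(\B G)}(\Z,H^*(\B K))$ is realized concretely through a map of differential graded modules. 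What I would do is observe that every object in this construction is a $H^*(\B K)$-module and every map is $H^*(\B K)$-linear: the Koszul resolution tensored with $H^*(\B K)$ is a complex of free $H^*(\B K)$-modules, the quasi-isomorphism to the cochains of $G/K$ can be taken $H^*(\B K)$-equivariantly because the $H^*(\B K)$-action on $H^*(G/K)$ is induced by the fiber inclusion $G/K \hookrightarrow \B K$, which is a map \emph{over} $\B G$, hence compatible with the whole Eilenberg--Moore setup.

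Concretely, the steps I would carry out are: (1) recall the precise statement from \cite{MR0394720} that the isomorphism arises from a map of $H^*(\B G)$-dg-algebras (or dg-modules) $\Omega \to C^*(\B K)$ inducing the Koszul resolution after a change of rings, and note that $C^*(\B K)$, and hence everything downstream, carries a compatible $C^*(\B K)$-module (up to homotopy, $H^*(\B K)$-module) structure; (2) identify the $H^*(\B K)$-action on the $E_2$-page $\Tor_{H^*(\B G)}(\Z,H^*(\B K))$ with the action coming from functoriality of $\Tor$ in the second slot — this is a standard compatibility, since both are induced by the ring structure of $H^*(\B K)$ acting on itself; (3) identify the $H^*(\B K)$-action on $H^*(G/K)$ with the one induced by the fiber inclusion $G/K \hookrightarrow \B K$, using that the Eilenberg--Moore spectral sequence is a spectral sequence of $H^*(\B K)$-modules because the bundle $G/K \to \B K \to \B G$ is a bundle of spaces over $\B G$ with $\B K$ mapping to $\B G$; (4) conclude that since the spectral sequence collapses (by Theorem~\ref{thm:gugenheimMay}) and the $E_\infty$-page agrees with the $E_2$-page as $H^*(\B K)$-modules, and since there are no extension problems, the resulting isomorphism $H^*(G/K) \cong \Tor_{H^*(\B G)}(\Z,H^*(\B K))$ respects the $H^*(\B K)$-module structures.

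The main obstacle, I expect, is step (3) together with the extension-problem bookkeeping: asserting that the collapse produces an isomorphism of $H^*(\B K)$-modules rather than merely an isomorphism of associated graded modules requires knowing that the concrete comparison map in Gugenheim--May's proof — not just the abstract spectral sequence — is $H^*(\B K)$-linear. This comes down to checking that their specific dg-module quasi-isomorphism can be chosen equivariantly for the $C^*(\B K)$-action, which is natural because their construction uses the bar resolution / Eilenberg--Moore comparison functorially in the total space $\B K$ of the bundle, and the $C^*(\B K)$-action on $C^*(G/K)$ is pulled back along $G/K \to \B K$. Once one is careful that all the maps in \cite[proof of Theorem A]{MR0394720} are maps of $C^*(\B K)$-modules (which they are, since the only choices made are of resolutions over $H^*(\B G)$, a subring of $H^*(\B K)$, and these resolutions are then tensored up), the module-linearity of the final isomorphism follows. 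I would present the proof as a careful recitation of Gugenheim--May's argument with the phrase ``and this map is $H^*(\B K)$-linear'' inserted at each step, concluding with the observation that a collapsing spectral sequence of modules over a ring concentrated in even degrees, with torsion-free $E_2$-page (forced here since $H^*(G/K)$ is torsion-free when $H^*(K)$ is), has no module extension problems.
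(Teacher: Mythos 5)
Your overall strategy --- follow Gugenheim--May and insert ``and this map is $H^*(\B K)$-linear'' at each step --- is the right one, and you correctly isolate the crux: the collapse gives an isomorphism of \emph{associated graded} $H^*(\B K)$-modules, and one must show the filtered object $\Tor_{C^*(\B G)}(\Z,C^*(\B K))$ is isomorphic to its associated graded as a module, not merely as an abelian group. But your two proposed resolutions of that crux both fail. First, the closing claim that a collapsing spectral sequence with torsion-free $E_\infty$-page ``has no module extension problems'' is false: torsion-freeness splits the filtration additively, but not as modules. (Take $R=\Z[x]$ and $M=\Z[x]/(x^2)$ filtered by powers of $x$: the associated graded is free abelian with $x$ acting by zero, yet $M\not\cong\gr M$ as $R$-modules.) Additive extension problems and module extension problems are genuinely different, and the latter is exactly what needs work here.

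Second, the assertion that Gugenheim--May's comparison map is ``naturally'' $C^*(\B K)$-equivariant glosses over the actual architecture of their proof. Their argument does not produce a single module-linear quasi-isomorphism at the level of $K$; it passes to a maximal torus $T\subseteq K$, because the key formality input --- a map of differential graded algebras $g\colon C^*(\B T)\to H^*(\B T)$ inducing the identity on homology --- exists for the torus but not for general $K$. The paper's proof therefore has to do two things you omit: (i) show that $\Tor_{C^*(\B G)}(\Z,C^*(\B K))\to\Tor_{C^*(\B G)}(\Z,C^*(\B T))$ is a filtered inclusion onto an $H^*(\B K)$-module direct summand, and then run an induction on filtration levels (a splitting argument for maps of short exact sequences of $H^*(\B K)$-modules) to transport the ``filtered $\cong$ graded'' statement from $T$ back to $K$; and (ii) for the torus itself, use $g$ together with Gugenheim--May's explicit chain complex of $H^*(\B T)$-modules computing $\Tor_{C^*(\B G)}(\Z,H^*(\B T))\cong\Tor_{H^*(\B G)}(\Z,H^*(\B T))$. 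Without the detour through $T$ and the direct-summand induction, the module-linearity of the final isomorphism is not established.
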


\begin{proof}
	Eilenberg and Moore show that \[
		H^*(G/K) \cong \Tor_{C^*(\B G)}(\Z, C^*(\B K))
	\]where $C^*(\B K)$ denotes the space of cochains on $\B K$, viewed as a differential graded module over the differential graded algebra $C^*(\B G)$, and the $\Tor$ group here is the version of $\Tor$ for differential graded modules. See \cite[Chapter 7]{MR1793722}. This version of $\Tor$ is also defined as the homology of a chain complex which has $C^*(\B K)$ as a tensor factor. The chain complex is a module over $Z^*(\B K)$, the group of cocycles on $\B K$. After taking homology of the complex, the action of $Z^*(\B K)$ descends to an action of $H^*(\B K)$, thereby giving $\Tor_{C^*(\B G)}(\Z, C^*(\B K))$ the structure of a $H^*(\B K)$-module. It is clear from the definition of Eilenberg and Moore's isomorphism \cite[Theorem 7.14]{MR1793722} that it respects $H^*(\B K)$-module structures since it is induced by a chain map to $C^*(G/K)$ that is $Z^*(\B K)$-equivariant. 

	Our goal then is to show that \[
		\Tor_{C^*(\B G)}(\Z, C^*(\B K)) \cong \Tor_{H^*(\B G)}(\Z, H^*(\B K))
	\]as $H^*(\B K)$-modules. The chain complex whose homology is $\Tor_{C^*(\B G)}(\Z, C^*(\B K))$ is the total complex of a double complex and thereby carries a filtration. The Eilenberg--Moore spectral sequence is defined to be the spectral sequence associated to this filtered complex. In particular, the $E_2$-page of the spectral sequence is $\Tor_{H^*(\B G)}(\Z, H^*(\B K))$. The induced filtration on $\Tor_{C^*(\B G)}(\Z, C^*(\B K))$ is a filtration of $H^*(\B K)$-modules, and its associated graded object is isomorphic to the $E_\infty$-page of the spectral sequence as $H^*(\B K)$-modules. Gugenheim and May prove that $E_2 = E_\infty$ and that $\Tor_{C^*(\B G)}(\Z, C^*(\B K))$ is isomorphic to its associated graded object as abelian groups. Our goal is therefore equivalent to verifying that $\Tor_{C^*(\B G)}(\Z, C^*(\B K))$ is isomorphic to its associated graded object as $H^*(\B K)$-modules. 

	We proceed following Gugenheim and May. The first step, due to Baum \cite{MR219085}, is to pass to a maximal torus $T$ within $K$. See for example \cite[Proof of Theorem 4.3]{MR0394720}. Associated to the inclusion of $T$ into $G$ is $\Tor_{C^*(\B G)}(\Z, C^*(\B T))$, which we may view as a $H^*(\B K)$-module by restricting the $H^*(\B T)$-module structure using the map $H^*(\B K) \to H^*(\B T)$ induced by $\B T \to \B K$. The cochain level map $C^*(\B K) \to C^*(\B T)$ induces a filtered $H^*(\B K)$-module map \[
		\Tor_{C^*(\B G)}(\Z, C^*(\B K)) \to \Tor_{C^*(\B G)}(\Z, C^*(\B T))
	\]which is an inclusion onto a $H^*(\B K)$-module direct summand as explained in \cite[Proof of Theorem 4.3]{MR0394720}. We claim that if $\Tor_{C^*(\B G)}(\Z, C^*(\B T))$ is isomorphic to its associated graded object as $H^*(\B K)$-modules, then $\Tor_{C^*(\B G)}(\Z, C^*(\B K))$ is also isomorphic to its associated graded object as $H^*(\B K)$-modules. We prove the claim by induction on filtration level, where the inductive step is the following statement which is straightforward to verify. Suppose $0 \to A \to B \to C \to 0$ and $0 \to A' \to B' \to C' \to 0$ are short exact sequences of $H^*(\B K)$-modules, and suppose there is a map of short exact sequences \[
		\begin{tikzcd}
			0 \ar[r] & A' \ar[r] & B' \ar[r] & C' \ar[r] & 0\\
			0 \ar[r] & A \ar[r] \ar[u] & B \ar[r] \ar[u] & C \ar[r] \ar[u] & 0
		\end{tikzcd}
	\]where the maps $A \to A'$ and $C \to C'$ are inclusions onto $H^*(\B K)$-module direct summands. If the top sequence splits, then the bottom sequence splits as well and the map $B \to B'$ is an inclusion onto a $H^*(\B K)$-module direct summand. The base case of induction is handled by the fact that the filtrations are of finite length.

	Our aim now is to show that $\Tor_{C^*(\B G)}(\Z, C^*(\B T))$ is isomorphic to its associated graded object as $H^*(\B T)$-modules, from which it follows that they are isomorphic as $H^*(\B K)$-modules. This is equivalent to showing that \[
		\Tor_{C^*(\B G)}(\Z, C^*(\B T)) \cong \Tor_{H^*(\B G)}(\Z, H^*(\B T))
	\]as $H^*(\B T)$-modules. Gugenheim and May show that they are isomorphic as abelian groups. We verify that their isomorphism respects $H^*(\B T)$-module structures. 

	In \cite[Theorem 4.1]{MR0394720}, Gugenheim and May define a map \[
		g\colon C^*(\B T) \to H^*(\B T)
	\]of differential graded algebras that induces the identity on homology. The composite \[
		\begin{tikzcd}
			C^*(\B G) \ar[r] & C^*(\B T) \ar[r,"g"] & H^*(\B T)
		\end{tikzcd}
	\]gives $H^*(\B T)$ the structure of a differential graded module over $C^*(\B G)$. The map $g$ induces an isomorphism \[
		\Tor_{C^*(\B G)}(\Z, C^*(\B T)) \to \Tor_{C^*(\B G)}(\Z, H^*(\B T))
	\]which is clearly a $H^*(\B T)$-module map. Gugenheim and May show in \cite[Theorem 2.3]{MR0394720} that \[
		\Tor_{C^*(\B G)}(\Z, H^*(\B T)) \cong \Tor_{H^*(\B G)}(\Z, H^*(\B T))
	\]by explicitly constructing a chain complex of $H^*(\B T)$-modules that simultaneously computes both homology groups. Thus the groups are isomorphic as $H^*(\B T)$-modules which completes the proof. 
\end{proof}

\section{Proofs of the main results}\label{sec:proofsofMainResults}

\subsection{Dot maps and the cohomology of partial flag manifolds}

Fix integers satisfying $\max(a + b - N,0) \leq k \leq \min(a,b)$. In this section, we give an explicit isomorphism between the state space of \vspace{2pt} \[
	\Theta \coloneq \qquad\qquad\begin{gathered}
			\centering
			\labellist
			\pinlabel {\small${b-k}$} at 26 81
			\pinlabel {\small$k$} at 59 44
			\pinlabel {\small${a+b-k}$} at -22 46
			\pinlabel {\small$a$} at 33 55
			\pinlabel {\small${a - k}$} at 35 18
			\pinlabel {\small$b$} at 70 5
			\endlabellist
			\includegraphics[width=.13\textwidth]{Theta}
		\end{gathered}
\]and the $\U(N)$-equivariant cohomology of the partial flag manifold $\F(k,a-k,b-k;N)$. The web $\Theta$ appears in our simplified complexes for the Hopf link given in Theorem~\ref{thm:slncomplexofHopfLink}. In the special case that $a = b$, the web also appears in our simplified complex of the trefoil given in Theorem~\ref{thm:slncomplexTrefoil}. The key feature of our explicit isomorphism is that it intertwines certain natural endomorphisms defined on each side. The argument is an elaboration of \cite[Proposition 4.12]{MR4164001}. 

The natural endomorphisms on the state space of $\Theta$ are given by the dot maps \[
	\Xdoti \hspace{30pt} \Ydoti \hspace{30pt} \Zdoti
\]The relevance of these dot maps can be seen from the definition of the simplified complex of the trefoil given in section~\ref{subsec:trefoilComplex}. 
As for the natural endomorphisms on the $\U(N)$-equivariant cohomology of $\F(k,a-k,b-k;N)$, first note that this partial flag manifold is the homogeneous space \[
	\F(k,a-k,b-k;N) = \frac{\U(N)}{\U(k) \x \U(a -k) \x \U(b-k)\x \U(N-a-b+k)}
\]The following lemma is well-known and follows from the definitions.
\begin{lem}\label{lem:equivariantCohomologyHomogeneousSpace}
	Let $K$ be a closed connected subgroup of a Lie group $G$. The $G$-equivariant cohomology of the homogeneous space $G/K$ is isomorphic to $H^*(\B K)$ where the $H^*(\B G)$-module structure is given by the ring map $H^*(\B G) \to H^*(\B K)$ induced by $\B K \to \B G$. 
\end{lem}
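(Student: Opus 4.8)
The plan is to prove Lemma~\ref{lem:equivariantCohomologyHomogeneousSpace} by unwinding the definition of equivariant cohomology and recognizing the resulting space as a model for $\B K$. Recall that $H^*_G(G/K)$ is by definition the ordinary cohomology of the Borel construction $\E G \x_G (G/K)$, where $\E G$ is a contractible space with a free $G$-action. So the entire content of the lemma is the assertion that $\E G \x_G (G/K)$ is homotopy equivalent to $\B K$ in a way compatible with the maps to $\B G$.

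First I would exhibit the homeomorphism $\E G \x_G (G/K) \cong \E G/K$. The map sends the class of a pair $(e, gK)$ to the class of $g^{-1}e$ in $\E G/K$; one checks this is well-defined (if we replace $(e,gK)$ by $(he, hgK)$ for $h\in G$, then $(hg)^{-1}(he) = g^{-1}e$, so the image is unchanged) and that it is a bijection with continuous inverse sending $[e'] \in \E G/K$ to $[(e', eK)]$ where $e$ is the identity of $G$. Then I would observe that since $\E G$ is contractible and carries a free $G$-action, it carries a free $K$-action for the restricted action, so $\E G$ is also a legitimate model for $\E K$; hence $\E G/K$ is a model for $\B K$. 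This gives $H^*_G(G/K) \cong H^*(\B K)$ as graded rings. Here I am using exactly the compact-connected-Lie-group hypotheses that guarantee the relevant actions are free and the quotients are the honest classifying spaces (equivalently, one can cite that $\E G$ is a free contractible $K$-CW-complex).

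Next I would check the module structure. The $H^*(\B G)$-module structure on $H^*_G(G/K)$ comes from the projection $\E G \x_G (G/K) \to \E G \x_G \pt = \B G$, which under the identification above is the map $\E G/K \to \E G/G = \B G$ induced by the inclusion $K \hookrightarrow G$ (it sends a $K$-orbit to the $G$-orbit containing it). This is precisely the standard map $\B K \to \B G$ classifying the inclusion. Therefore the induced ring map $H^*(\B G) \to H^*(\B K)$ is the one in the statement, and the module structure on $H^*_G(G/K)$ agrees with the $H^*(\B G)$-module structure on $H^*(\B K)$ via this map.

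I do not anticipate a genuine obstacle here; the lemma is standard and the proof is a formal manipulation of balanced products. The only point requiring a word of care is the claim that the $G$-free contractible space $\E G$ may be used as a model for $\E K$, which rests on the freeness of the $K$-action (automatic for a closed subgroup $K$ of a Lie group $G$) and on the fact that a free action of a compact Lie group on a contractible, sufficiently nice space has a classifying-space quotient; all of this is subsumed in the hypotheses carried throughout Section~\ref{subsec:cohomologyOfHomogeneousSpaces}. Thus I would keep the proof to essentially two sentences in the paper, pointing to the homeomorphism $\E G \x_G (G/K) \cong \E G/K$ and the naturality of $\B(-)$ applied to $K \hookrightarrow G$.
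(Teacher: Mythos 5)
Your proposal is correct and follows the same route as the paper: identify $\E G \x_G (G/K)$ with $\E G/K$ and observe that this is a model for $\B K$, with the projection to $\B G$ realizing the standard map induced by $K \hookrightarrow G$. The paper's proof is terser (it leaves the module-structure check implicit), so your extra verification of compatibility with the maps to $\B G$ is a harmless elaboration of the same argument.
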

\begin{proof}
	The $G$-equivariant cohomology of $G/K$ is defined to be the singular cohomology of $\E G \x_G G/K$. It is straightforward to see that there is an identification between $\E G \x_G G/K$ and the space of left cosets $\E G/K$ of the free action of $K$ on $\E G$. Hence $\E G \x_G G/K = \E G/K = \B K$. 
\end{proof}

The $\U(N)$-equivariant cohomology of $\F(k,a-k,b-k;N)$ is therefore \begin{align*}
	H^*_{\U(N)}(\F(k,a-k,b-k;N)) &\cong H^*(\BU(k)) \otimes H^*(\BU(a-k)) \otimes H^*(\BU(b-k)) \otimes H^*(\BU(N-a-b+k))\\
	&\cong \Sym(k) \otimes \Sym(a-k) \otimes \Sym(b-k) \otimes \Sym(N-a-b+k).
\end{align*}We view this ring as the ring of polynomials in $N$ variables that are symmetric in the first $k$ variables, the next $a-k$ variables, the next $b-k$ variables, and the last $N - a -b+k$ variables. Define \[
	x_i\coloneq e_i \otimes 1 \otimes 1 \otimes 1, \quad y_i \coloneq 1 \otimes e_i \otimes 1 \otimes 1, \quad z_i\coloneq 1\otimes 1\otimes e_i \otimes 1, \quad w_i\coloneq 1\otimes 1 \otimes 1 \otimes e_i
\]where $e_i$ denotes the elementary symmetric polynomial in the appropriate number of variables. The action of $H^*(\BU(N)) = \Sym(N)$ is induced by the natural inclusion of $\Sym(N)$ into this space of partially symmetric polynomials. Explicitly, the $m$th elementary symmetric polynomial in $N$ variables is identified with \[
	\sum_{i+j+k+l = m} x_i\cdot y_j\cdot z_k\cdot w_l \in H^*_{\U(N)}(\F(k,a-k,b-k;N)).
\]

\begin{prop}\label{prop:generalizedThetaPartialFlag}
	There is a homogeneous isomorphism of graded modules over $\Sym(N) = H^*(\BU(N))$\[
		H^*_{\U(N)}(\F(k,a-k,b-k;N)) \to \sr F(\Theta)
	\]of degree $-\frac12\dim \F(k,a-k,b-k;N)$ that intertwines the endomorphisms given by multiplication by $x_i,y_i,z_i$ with the dot map endomorphisms \begin{align*}
		\Xdoti, \Ydoti, \Zdoti
	\end{align*}respectively.
\end{prop}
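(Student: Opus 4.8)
The plan is to adapt the argument of Robert--Wagner \cite[Proposition 4.12]{MR4164001}, which establishes such an isomorphism in a closely related situation, keeping careful track of the three dot maps. First I would recall the basic structure: by Theorem~\ref{thm:RWcategorificationOfMOYCalculus}, $\sr F(\Theta) = \Hom^*(\emp,\Theta)$ is a finitely generated free $\Sym(N)$-module whose graded rank is the MOY polynomial $\langle\Theta\rangle$, and by the Remark following that theorem it admits a homogeneous basis consisting of $\Z$-linear combinations of foams. On the cohomology side, the Leray--Hirsch theorem (or the well-known presentation of $H^*$ of a partial flag manifold) shows $H^*_{\U(N)}(\F(k,a-k,b-k;N)) \cong \Sym(k)\otimes\Sym(a-k)\otimes\Sym(b-k)\otimes\Sym(N-a-b+k)$ is free over $\Sym(N)$ of the same graded rank — indeed $\langle\Theta\rangle$ is exactly the quantum multinomial coefficient counting cosets, which is the Poincaré polynomial of this partial flag manifold up to the degree shift $\frac12\dim\F$. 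So a degree-matching count already makes the existence of \emph{some} graded isomorphism plausible; the content is to produce one intertwining the dot maps.

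The key step is to write down the map explicitly and verify the intertwining on generators. I would send a monomial $x^\alpha y^\beta z^\gamma$ (a product of the elementary symmetric polynomials $x_i,y_i,z_i$ in the three relevant alphabets) to the foam from $\emp$ to $\Theta$ obtained by capping off $\Theta$ with a "theta-foam" and decorating its three facets — labeled $a+b-k$, $a-k$ (the middle rung-type facet), and so on — by the corresponding Schur/elementary polynomials in the formal alphabets attached to those facets, in the manner of Remark~\ref{rem:decorations}. Concretely the facet of $\Theta$ labeled $a$ carries an alphabet that decomposes as (first $k$)$\sqcup$(the $a-k$ part), and similarly for the facet labeled $b$; the dot maps $\Xdoti$, $\Ydoti$, $\Zdoti$ act by decorating the facets associated to the $\C^k$, $\C^{a-k}$, $\C^{b-k}$ summands respectively. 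The dot-migration relations (Proposition~\ref{prop:foamRelations}\ref{item:dotmigrationrelation}) translate between decorations on a thick facet and decorations on the two facets meeting it along a binding; this is precisely the algebraic identity realizing the inclusion $\Sym(N)\hookrightarrow\Sym(k)\otimes\Sym(a-k)\otimes\Sym(b-k)\otimes\Sym(N-a-b+k)$ at the level of foams, so the map is $\Sym(N)$-linear by construction. That the map intertwines multiplication by $x_i,y_i,z_i$ with the three dot maps is then essentially the definition: decorating a facet and then composing with a foam is the same as pre-composing with the decoration, using that dot maps on a given facet are central (Remark~\ref{rem:centralDotMaps}) — once one checks that the "$x$-facet," "$y$-facet," "$z$-facet" in the explicit formula are the edges on which $\Xdoti$, $\Ydoti$, $\Zdoti$ act, which is a matter of matching the pictures.

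The main obstacle is showing the map is an \emph{isomorphism}, not merely a well-defined module homomorphism — i.e. that the decorated theta-foams corresponding to a monomial basis of the partially-symmetric polynomial ring actually form a $\Sym(N)$-basis of $\sr F(\Theta)$. Here I would use the sphere relations and blister/one-rung relations (Proposition~\ref{prop:foamRelations}) to compute the pairing of two such decorated foams (one viewed as an element of $\Hom^*(\emp,\Theta)$, the other bent to a foam $\Theta\to\emp$) against each other: this pairing should be upper-triangular with $\pm1$ diagonal entries with respect to a suitable monomial order — exactly as in \cite[Proposition 4.12]{MR4164001} — and since $\sr F(\Theta)$ is free of the right rank by Theorem~\ref{thm:RWcategorificationOfMOYCalculus}, nondegeneracy of the pairing forces the map to be an isomorphism. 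The degree count $-\frac12\dim\F(k,a-k,b-k;N)$ follows from Definition~\ref{df:degreeOfFoam} applied to the capping theta-foam together with the bending trick (Remark~\ref{rem:bendingTrick}). I expect the bookkeeping of which alphabet attaches to which facet, and the verification that the three "obvious" decorations are genuinely the images of $x_i,y_i,z_i$ under the inclusion, to be the only genuinely delicate points; everything else is a direct transcription of the Robert--Wagner computation, which already works equivariantly by Proposition~\ref{prop:foamRelations} and \cite[Proposition 2.20]{MR3877770}.
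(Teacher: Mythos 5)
Your outline follows the right template (Robert--Wagner's \cite[Proposition 4.12]{MR4164001}), but it has a gap at the point where the real work happens: the fourth alphabet. The ring $H^*_{\U(N)}(\F(k,a-k,b-k;N)) \cong \Sym(k)\otimes\Sym(a-k)\otimes\Sym(b-k)\otimes\Sym(N-a-b+k)$ has four tensor factors, and the web $\Theta$ has no edge labeled $N-a-b+k$ and no edge labeled $N$. So your ``explicit formula'' (decorate the three facets corresponding to $x,y,z$) does not by itself define an action of the full ring on $\sr F(\Theta)$, and --- more importantly --- it gives you no direct way to see that the map is $\Sym(N)$-linear, since the identity $e_m(X_1,\dots,X_N)=\sum_{i+j+k+l=m}x_iy_jz_kw_l$ involves the $w$-alphabet. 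This is exactly why the paper's proof first replaces $\Theta$ by an auxiliary web $\Theta''$ obtained via the fork and bigon MOY moves, which has four thin rungs (carrying $x_i,y_i,z_i,w_i$ as dot maps) attached to an edge labeled $N$; a dot of weight $i$ on a facet labeled $N$ multiplies the Robert--Wagner evaluation by $e_i(X_1,\dots,X_N)$, which is what identifies the two $\Sym(N)$-module structures. Your appeal to dot migration can in principle be made to work directly on $\Theta$, but as written the key compatibility is asserted rather than established.

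The second divergence is your route to injectivity. You propose computing a pairing of decorated capping foams and showing it is unimodular (``upper-triangular with $\pm1$ diagonal''); note that mere nondegeneracy would not suffice over $\Sym(N)$, and the unimodularity claim is precisely the hard computation, which you defer to \cite[Proposition 4.12]{MR4164001} --- but that is not what is done there. The paper avoids the pairing entirely: after passing to $\Theta''$, surjectivity of $\Phi$ follows because $\Theta''$ collapses to a single circle by bigon relations, so $\sr F(\Theta'')$ is spanned by the standard capping foam with dots (foam relation~\ref{item:thicknilHecke} plus dot migration); since both sides are free $\Sym(N)$-modules of the same graded rank by MOY calculus and finitely generated modules over a Noetherian ring are Hopfian, surjectivity already forces $\Phi$ to be an isomorphism. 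This Hopfian argument is both shorter and avoids any foam evaluation beyond the spanning statement. I would recommend restructuring your proof around the auxiliary web and the surjectivity-plus-rank argument; the intertwining statements for the three dot maps then do follow by tracking the dots through the two web isomorphisms, as you anticipate.
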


\begin{proof}
	The result of applying the MOY calculus relation~(\ref{eq:forkRelation}) given in Figure~\ref{fig:MOYcalculus} in a neighborhood of the edge labeled $a$ in $\Theta$ is \[
		\Theta' \coloneq \qquad\qquad\begin{gathered}
			\centering
			\labellist
			\pinlabel {\small$a+b-k$} at -15 23
			\pinlabel {\small$k$} at 56 23
			\pinlabel {\small${b-k}$} at 85 23
			\pinlabel {\small${a-k}$} at 36 23
			\endlabellist
			\includegraphics[width=.2\textwidth]{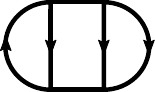}
		\end{gathered}
	\]The isomorphism $\sr F(\Theta') \to \sr F(\Theta)$ induced by this move (see Theorem~\ref{thm:RWcategorificationOfMOYCalculus} and relation~\ref{item:matveevPiergalini} of Proposition~\ref{prop:foamRelations}) intertwines \[
		\Xprimedoti, \Yprimedoti, \Zprimedoti \quad\leftrightarrow \quad \Xdoti, \Ydoti, \Zdoti
	\]respectively. Next, consider the web \[
		\Theta'' \coloneq \quad\begin{gathered}
			\centering
			\labellist
			\pinlabel {\small$N$} at 10 24
			\pinlabel {\small$m$} at 31 23
			\pinlabel {\small$a-k$} at 61 24
			\pinlabel {\small$k$} at 80 24
			\pinlabel {\small$b-k$} at 108 24
			\endlabellist
			\includegraphics[width=.25\textwidth]{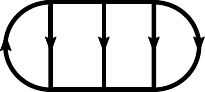}
		\end{gathered}
	\]where $m = N - a - b + k$. Let $G\colon \Theta'' \to \Theta'$ be the foam which is given by \[
		\begin{gathered}
			\centering
			\labellist
			\pinlabel {\small${a + b - k}$} at 14 40
			\pinlabel {\small$m$} at 20 16
			\pinlabel {\small$N$} at 20 5
			\endlabellist
			\includegraphics[width=.15\textwidth]{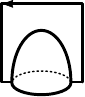}
		\end{gathered}
	\]in a neighborhood of the disc bounded by the edges labeled $N$ and $m$ and is the identity otherwise. By \cite[Equation 13 of Proposition 3.32]{MR4164001}, the foam $G$ induces an isomorphism on state spaces. It is clear that this isomorphism intertwines \[
		\Xprimeprimedoti, \Yprimeprimedoti, \Zprimeprimedoti \quad\leftrightarrow \quad \Xprimedoti, \Yprimedoti, \Zprimedoti
	\]respectively.

	We define an action of $H^*_{\U(N)}(\F(k,a-k,b-k;N))$ on $\sr F(\Theta'')$. Let $x_i,y_i,z_i,w_i$ act by the dot maps \[
		\Xprimeprimedoti, \Yprimeprimedoti, \Zprimeprimedoti, \Wprimeprimedoti
	\]respectively. The inclusion $\Sym(N)\subseteq H^*_{\U(N)}(\F(k,a-k,b-k;N))$ induces an action of $\Sym(N)$ on $\sr F(\Theta)$. By the dot-migration relation (foam relation~\ref{item:dotmigrationrelation}), the $i$th elementary symmetric polynomial in $N$ variables acts by the dot map \[
		\Nprimeprimedoti
	\]If $H$ is a closed foam and $H'$ is obtained by adding a dot of weight $1 \leq i \leq N$ to a facet of $H$ labeled $N$, then the Robert--Wagner evaluation satisfies \[
		\langle H' \rangle = e_i(X_1,\ldots,X_N)\langle H\rangle.
	\]Thus, the $\Sym(N)$-action on $\sr F(\Theta)$ induced by the inclusion $\Sym(N) \subseteq H^*_{\U(N)}(\F(k,a-k,b-k;N))$ agrees with the natural $\Sym(N)$-action on $\sr F(\Theta)$. 

	We now define a $H^*_{\U(N)}(\F(k,a-k,b-k;N))$-module isomorphism \[
		\Phi\colon H^*_{\U(N)}(\F(k,a-k,b-k;N)) \to \sr F(\Theta'')
	\]by an elaboration of \cite[Proposition 4.12]{MR4164001}. To specify such a map, it suffices to specify $\Phi(1)$. We let $\Phi(1)$ be the standard foam $F$ from $\emp$ to $\Theta''$ with no dots. This standard foam, which has seven facets, three seams, and no singular points, can be obtained by starting with a disc bounding the outer circle of $\Theta''$ and attaching three half discs in the simplest way. See \cite[Proposition 4.12]{MR4164001} for a figure. 

	We claim that $\Phi$ is surjective. Since $\Theta''$ can be reduced to a single circle using just the bigon relation of MOY calculus (relation~\ref{eq:bigonRelation}), foam relation~\ref{item:thicknilHecke} implies $\sr F(\Theta'')$ is spanned by the standard foam $F$ with varying collections of dots. The dot-migration then implies that $\Phi$ is surjective. By MOY calculus, $H^*_{\U(N)}(\F(k,a-k,b-k;N))$ and $\sr F(\Theta)$ are free $\Sym(N)$-modules of the same rank. Since finitely-generated modules over a Noetherian ring are Hopfian, it follows that $\Phi$ is an isomorphism. 
\end{proof}

\subsection{Koszul resolutions}

In this section, we prove the main results stated in the introduction. We first recall the basics of Koszul resolutions. 
Let $R$ be a commutative ring, and let $e_1,\ldots,e_n$ be a basis for $R^n$. Let $\Lambda^k R^n$ denote the $k$th exterior power of $R^n$, and recall that a basis for $\Lambda^k R^n$ as a free $R$-module consists of the elements \[
	e_I \coloneq e_{i_1} \wedge \cdots \wedge e_{i_k} \qquad 1 \leq i_1 < \cdots < i_k \leq n.
\]

\begin{df}
	Let $x_1,\ldots,x_n \in R$ be a sequence of elements. The \textit{Koszul complex} $K(x_1,\ldots,x_n)$ associated to the sequence is the chain complex of free $R$-modules \[
		\begin{tikzcd}
			0 \ar[r] & \Lambda^n R^n \ar[r,"d"] & \Lambda^{n-1} R^n \ar[r,"d"] & \cdots \ar[r,"d"] & \Lambda^1 R^n \ar[r,"d"] & \Lambda^0 R^n \ar[r] & 0
		\end{tikzcd}
	\]where $\Lambda^k R^n$ is declared to lie in homological grading $-k$. The differential $d$ on $\Lambda^1 R^n$ is given by \[
		d(e_i) = a_i
	\]and is extended to $\Lambda^k R^n$ by the Leibniz rule $d(\alpha\wedge\beta) = d\alpha \wedge \beta + (-1)^{\deg \alpha} \alpha \wedge d\beta$. In particular, the component of the differential from $R e_I$ to $R e_{I'}$ where $I'$ is obtained from $I$ by deleting an element $i_j \in I = \{i_1 < \cdots < i_k\}$ is given by multiplication by $(-1)^{j-1} a_{i_j}$. 

	If $R$ is graded and $x_i$ has degree $n_i$, then we define $e_i$ to also have degree $n_i$ so that the differential of the Koszul complex $K(x_1,\ldots,x_n)$ preserves grading. 
\end{df}

\begin{df}
	A sequence of elements $x_1,\ldots,x_n$ in a commutative ring $R$ is \textit{regular} if $R/(x_1,\ldots,x_n)\neq 0$ and if $x_i$ is not a zero divisor in $R/(x_1,\ldots,x_{i-1})$ for $i = 1,\ldots,n$.
\end{df}

The following proposition is well-known. See for example \cite[Theorem 4.6]{MR1878556}. 

\begin{prop}\label{prop:regularSeqKoszulResolution}
	If $x_1,\ldots,x_n$ is a regular sequence of elements in a commutative ring $R$, then the Koszul complex $K(x_1,\ldots,x_n)$ is a free resolution of the $R$-module $R/(x_1,\ldots,x_n)$. In other words, the complex \[
		\begin{tikzcd}
			0 \ar[r] & \Lambda^n R^n \ar[r] & \cdots \ar[r] & \Lambda^1 R^n \ar[r] & \Lambda^0 R^n \ar[r] & R/(x_1,\ldots,x_n) \ar[r] & 0
		\end{tikzcd}
	\]is exact. 
\end{prop}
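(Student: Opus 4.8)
The plan is a standard induction on $n$ exploiting the multiplicativity of the Koszul construction. First I would dispose of the case $n = 1$: here $K(x_1)$ is the complex $0 \to R \xrightarrow{\,x_1\,} R \to 0$, its degree-$0$ homology is $R/(x_1)$, which is nonzero by the definition of regularity, and its degree-$(-1)$ homology is $\ker(x_1 \colon R \to R)$, which vanishes exactly because $x_1$ is a nonzerodivisor; so $K(x_1)$ is a free resolution of $R/(x_1)$.

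For the inductive step I would write $C := K(x_1,\ldots,x_{n-1})$ and first observe that $x_1,\ldots,x_{n-1}$ is again a regular sequence: the surjection $R/(x_1,\ldots,x_{n-1}) \twoheadrightarrow R/(x_1,\ldots,x_n) \neq 0$ forces the shorter quotient to be nonzero, and the nonzerodivisor conditions for $x_1,\ldots,x_{n-1}$ are among those for $x_1,\ldots,x_n$. By the inductive hypothesis $H_i(C) = 0$ for $i \neq 0$ and $H_0(C) \cong \bar R := R/(x_1,\ldots,x_{n-1})$. The key identity is that $K(x_1,\ldots,x_n)$ is canonically isomorphic to the tensor product of complexes $C \otimes_R K(x_n)$, where $K(x_n) = [\,R \xrightarrow{x_n} R\,]$; equivalently $K(x_1,\ldots,x_n)$ is a shift of the mapping cone of the multiplication map $x_n \colon C \to C$. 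This yields a short exact sequence of complexes $0 \to C \to K(x_1,\ldots,x_n) \to C\langle -1\rangle \to 0$ (with $C\langle -1 \rangle$ the appropriate shift of $C$), whose long exact sequence in homology has connecting homomorphism equal, up to sign, to multiplication by $x_n$ on $H_*(C)$. Feeding in the inductive computation of $H_*(C)$ collapses this long exact sequence to
\[
	0 \longrightarrow H_{-1}\bigl(K(x_1,\ldots,x_n)\bigr) \longrightarrow \bar R \xrightarrow{\ \pm x_n\ } \bar R \longrightarrow H_0\bigl(K(x_1,\ldots,x_n)\bigr) \longrightarrow 0
\]
together with $H_i(K(x_1,\ldots,x_n)) = 0$ for $i \leq -2$. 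The cokernel on the right is $\bar R / x_n \bar R = R/(x_1,\ldots,x_n)$, which coincides with $R/\Image(d\colon \Lambda^1 R^n \to \Lambda^0 R^n)$, so the augmentation $\Lambda^0 R^n = R \to R/(x_1,\ldots,x_n)$ identifies $H_0$ correctly; and the kernel $H_{-1}(K(x_1,\ldots,x_n)) = \ker(x_n \colon \bar R \to \bar R)$ vanishes precisely because $x_n$ is a nonzerodivisor on $\bar R$, which is the last clause of regularity. This closes the induction, and in the graded case one simply assigns $e_i$ the degree $\deg x_i$ so that every map above is homogeneous and no separate argument is needed.

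The step I expect to require the most care is the bookkeeping in the mapping-cone identification against the degree conventions of the excerpt, in which $\Lambda^k R^n$ sits in homological degree $-k$ and the differential \emph{raises} that degree: one must check that $C \otimes_R K(x_n)$ reproduces $K(x_1,\ldots,x_n)$ on the nose, signs included, and that the connecting homomorphism of the resulting long exact sequence is genuinely $\pm x_n$ rather than something more elaborate. Everything else — the base case, the remark that an initial segment of a regular sequence is regular, and reading off the kernel and cokernel — is routine, and the statement is in any case classical (a reference is already cited in the text).
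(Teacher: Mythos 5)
Your argument is correct and complete: the induction on $n$ via the identification $K(x_1,\ldots,x_n)\cong K(x_1,\ldots,x_{n-1})\otimes_R K(x_n)$, the resulting short exact sequence of complexes, and the collapse of the long exact sequence to the four-term sequence whose kernel and cokernel are killed and identified by the two clauses of regularity is exactly the classical proof. The paper itself offers no proof of this proposition at all --- it simply labels it as well-known and cites a standard reference --- so there is nothing to compare against beyond noting that your write-up supplies the standard argument the citation points to; the one place you flag as delicate (matching the tensor-product/mapping-cone bookkeeping to the convention that $\Lambda^k R^n$ sits in degree $-k$ with a degree-raising differential) does check out, since in degree $m$ one has $(C\otimes K(x_n))_m = C_m\oplus C_{m+1}\wedge e_n = \Lambda^{-m}R^n$ and the connecting map is induced by the component $C_{m+1}\wedge e_n\to C_{m+1}$, i.e.\ multiplication by $\pm x_n$.
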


We now prove Proposition~\ref{prop:equivariantCohomologyisomorphism}, which asserts that equivariant colored $\sl(N)$ homology is isomorphic to the $\U(N)$-equivariant cohomology of $\sr R_N$ for the trefoil and the Hopf link. Our proof gives the bigrading on $\KR_{\U(N)}$. 

\begin{proof}[Proof of Proposition~\ref{prop:equivariantCohomologyisomorphism}]
	Let $L$ be the positive Hopf link. Up to homotopy equivalence, its equivariant colored $\sl(N)$ complex $\KRC_{\U(N)}(L)$ is obtained from the complex given in Theorem~\ref{thm:slncomplexofHopfLink} by applying the state space functor $\sr F$ given in Definition~\ref{df:stateSpace}. This complex has no differential so \[
		\KR_{\U(N)}(L) \cong \KRC_{\U(N)}(L) \cong \bigoplus_{k=\max(a + b - N,0)}^{\min(a,b)} h^{2k}q^{ab-kN} \sr F\left( \Thetak \right).
	\]By Proposition~\ref{prop:HopfLinkRepSpace}, we have $\sr R_N(L) = \bigsqcup_{k = \max(a + b - N,0)}^{\min(a,b)} \F(k,a-k,b-k;N)$, and by Proposition~\ref{prop:generalizedThetaPartialFlag}, we have \[
		\KR_{\U(N)}(L) \cong \bigoplus_{k=\max(a + b - N,0)}^{\min(a,b)} h^{2k}q^{ab-kN-\dim \F(k,a-k,b-k;N)/2} H^*_{\U(N)}(\F(k,a-k,b-k;N))
	\]as $\Sym(N)$-modules. We note that $ab - kN - \dim \F(k,a-k,b-k;N)/2 = (a + b)(a + b - N - 2k) + 2k^2$.

	Now let $K$ be the right-handed trefoil labeled $a$. By Proposition~\ref{prop:TrefoilRepSpace}, we have \[
		\sr R_N(K) = \bigsqcup_{l = \max(2a-N,0)}^a \U(N)/K_l
	\]where $K_l = \U(l) \x \Delta\!\U(a-l) \x \U(N-2a+l)$. By Lemma~\ref{lem:equivariantCohomologyHomogeneousSpace}, we have \[
		H^*_{\U(N)}(\sr R_N(K)) \cong \bigoplus_{l=\max(2a-N,0)}^{a} H^*(\B K_l).
	\]Up to homotopy equivalence, the complex $\KRC_{\U(N)}(K)$ is obtained from the complex given in Theorem~\ref{thm:slncomplexTrefoil} by applying the state space functor $\sr F$. By Proposition~\ref{prop:generalizedThetaPartialFlag}, we have \[
		\KRC_{\U(N)}(K) \cong \bigoplus_{l=\max(2a-N,0)}^a h^{a + 2l}q^{-a-2l+a^2-lN - \dim \F(l,a-l,a-l;N)/2} K(y_1-z_1,\ldots,y_l-z_l)
	\]where $K(y_1-z_1,\ldots,y_l-z_l)$ denotes the Koszul complex associated to the sequence \[
		y_1-z_1,\ldots,y_l-z_l \in H^*_{\U(N)}(\F(l,a-l,a-l;N)) = \Sym(l) \otimes \Sym(a-l) \otimes \Sym(a-l) \otimes \Sym(N-2a+l)
	\]where we recall that $y_i = 1 \otimes e_i \otimes 1 \otimes 1$ and $z_i = 1 \otimes 1 \otimes e_i \otimes 1$. This sequence is clearly regular and \[
		\frac{H^*_{\U(N)}(\F(l,a-l,a-l;N))}{(y_1-z_1,\ldots,y_l-z_l)} \cong H^*(\B K_l)\cong H^*_{\U(N)}(\U(N)/K_l).
	\]By Proposition~\ref{prop:regularSeqKoszulResolution}, we have \[
		\KR_{\U(N)}(K) \cong \bigoplus_{l=\max(2a-N,0)}^a h^{a+2l}q^{-a-2l+a^2-lN - \dim \F(l,a-l,a-l;N)/2} H^*(\B K_l)
	\]where $K$ is equipped with the blackboard framing in its diagram in Theorem~\ref{thm:slncomplexTrefoil}. The invariant for $K$ with the zero framing is obtained by applying the grading shift $h^{-3a}q^{3a(N-a+1)}$. For the reader's convenience, we note that \[
		-a-2l+a^2-lN -\frac12\dim \F(l,a-l,a-l;N)+3a(N-a+1) = a(N-a)+2(a-l)(a-l+1).\qedhere
	\]
\end{proof}

Now we turn to nonequivariant colored $\sl(N)$ homology. Recall that the rank of a connected compact Lie group $G$ is the dimension of a maximal torus in $G$. If $G = \U(k_1) \x \cdots \x \U(k_n)$ then the rank of $G$ is $k_1 + \cdots + k_n$. The following result of Borel computes the cohomology of $G/H$ when $H$ has the same rank as $G$. See \cite[Theorem 8.3]{MR1793722}. 

\begin{thm}[{\cite[Proposition 30.2]{MR51508}}]\label{thm:borel}
	Let $H$ be a closed connected subgroup of a compact connected Lie group $G$. If $H$ and $G$ have the same rank, then $H^*(\B H)$ is a free module over $H^*(\B G)$. Furthermore \[
		H^*(G/H) \cong \Z \otimes_{H^*(\B G)} H^*(\B H)
	\]as $H^*(\B H)$-modules. In particular, the map $H^*(\B H) \to H^*(G/H)$ induced by $G/H \to \B H$ is surjective.
\end{thm}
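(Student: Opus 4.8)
The plan is to invoke the standard structure theory for the cohomology of homogeneous spaces $G/H$ when $H$ and $G$ are compact connected Lie groups of equal rank, following Borel. First I would fix a common maximal torus $T \subseteq H \subseteq G$; equal rank guarantees such a $T$ exists. The key classical input is that $H^*(\mathrm{B}T;\Z)$ is a polynomial ring, and that $H^*(\mathrm{B}G;\Z) \to H^*(\mathrm{B}T;\Z)$ identifies $H^*(\mathrm{B}G)$ with the ring of invariants under the Weyl group $W_G$, and similarly for $H^*(\mathrm{B}H)$ with $W_H$-invariants — here one uses that $H$ and $G$ have torsion-free cohomology (true for products of unitary groups, which is the only case we need), so the Borel picture applies integrally. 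Since $W_H \subseteq W_G$ act on the same polynomial ring and $H^*(\mathrm{B}T)$ is a free module over $H^*(\mathrm{B}T)^{W_G} = H^*(\mathrm{B}G)$ (a Chevalley-type freeness statement, valid over $\Z$ in the unitary case), and $H^*(\mathrm{B}H)^{W_G/?}$ sits as a summand, one deduces that $H^*(\mathrm{B}H)$ is a free $H^*(\mathrm{B}G)$-module: concretely $H^*(\mathrm{B}H) \cong H^*(\mathrm{B}G) \otimes_{\Z} \bigl(H^*(\mathrm{B}T)^{W_H} / H^*(\mathrm{B}T)^{W_G}_+ \cdot H^*(\mathrm{B}T)^{W_H}\bigr)$, the coinvariant module, which is a finitely generated free $\Z$-module of rank $|W_G|/|W_H|$.

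Next I would run the Eilenberg--Moore spectral sequence of the bundle $G/H \to \mathrm{B}H \to \mathrm{B}G$, whose $E_2$-page is $\Tor_{H^*(\mathrm{B}G)}(\Z, H^*(\mathrm{B}H))$. Because $H^*(\mathrm{B}H)$ is \emph{free} over $H^*(\mathrm{B}G)$ by the previous paragraph, all higher $\Tor$ groups vanish, so $E_2 = \Tor^0 = \Z \otimes_{H^*(\mathrm{B}G)} H^*(\mathrm{B}H)$ is concentrated in homological degree zero. A spectral sequence concentrated in a single row degenerates, and since there is only one nonzero column there are no extension problems; hence
\[
	H^*(G/H) \cong \Z \otimes_{H^*(\mathrm{B}G)} H^*(\mathrm{B}H).
\]
Alternatively one may quote Theorem~\ref{thm:gugenheimMay} (Gugenheim--May) directly: its hypotheses — $H^*(\mathrm{B}G)$ a polynomial algebra, $H^*(H)$ torsion-free — are exactly met here, giving $H^*(G/H) \cong \Tor_{H^*(\mathrm{B}G)}(\Z,H^*(\mathrm{B}H))$, and freeness collapses the $\Tor$ to the tensor product. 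For the module structure over $H^*(\mathrm{B}H)$, I would appeal to Proposition~\ref{prop:moduleStructGugenheimMay}, which upgrades the Gugenheim--May isomorphism to an isomorphism of $H^*(\mathrm{B}H)$-modules; on the algebraic side $\Z \otimes_{H^*(\mathrm{B}G)} H^*(\mathrm{B}H)$ carries its evident $H^*(\mathrm{B}H)$-module structure, and the map $H^*(\mathrm{B}H) \to H^*(G/H)$ induced by the fiber inclusion $G/H \hookrightarrow \mathrm{B}H$ corresponds to the quotient map $H^*(\mathrm{B}H) \twoheadrightarrow \Z \otimes_{H^*(\mathrm{B}G)} H^*(\mathrm{B}H)$, which is manifestly surjective. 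This yields all three assertions.

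The main obstacle is establishing the integral freeness of $H^*(\mathrm{B}H)$ over $H^*(\mathrm{B}G)$ — equivalently, the degeneration of the Eilenberg--Moore spectral sequence — without torsion creeping in. Over $\Q$ this is elementary (equal rank forces $\chi(G/H) \neq 0$ and the Leray--Hirsch / Chevalley argument applies), but the integral statement relies on the unitary groups' cohomology being torsion-free and on the relevant invariant-ring inclusions being split; in the generality we actually use — $G = \U(N)$ and $H = \U(k) \times \U(a-k) \times \U(b-k) \times \U(N-a-b+k)$, or with a diagonal factor for the trefoil — this is standard and can be cited from \cite[Theorem 8.3]{MR1793722} or verified directly by exhibiting an explicit free basis for $H^*(\mathrm{B}H)$ over $H^*(\mathrm{B}G)$ coming from Schubert classes. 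Everything else is bookkeeping with spectral sequences already developed in Section~\ref{subsec:cohomologyOfHomogeneousSpaces}.
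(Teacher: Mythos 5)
The paper does not prove this statement: it is quoted directly from Borel (with a pointer to \cite[Theorem 8.3]{MR1793722}), so there is no internal proof to compare against. Your reconstruction is architecturally sound. Once $H^*(\B H)$ is known to be free over $H^*(\B G)$, the higher $\Tor$ vanishes, the Eilenberg--Moore (equivalently Gugenheim--May) $E_2$-page is concentrated in homological degree zero, degeneration and the absence of extension problems are automatic, and identifying the edge homomorphism with restriction to the fiber yields both the $H^*(\B H)$-module structure and the surjectivity. That part of your argument I would accept as written.

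The genuine weak point is that the freeness of $H^*(\B H)$ over $H^*(\B G)$ is itself the substantive half of the theorem, and the paragraph you devote to it is the least rigorous: the ``$H^*(\mathrm{B}H)^{W_G/?}$'' placeholder aside, freeness of $H^*(\B T)^{W_H}$ over $H^*(\B T)^{W_G}$ integrally is not a formal consequence of $H^*(\B T)$ being free over each of them separately, and the averaging argument that works over $\Q$ is unavailable over $\Z$. For the groups the paper actually uses ($G = \U(N)$ and $H$ a full-rank product of unitary subgroups --- note the theorem is never applied to the subgroups $K_l$ containing a diagonal factor, which have smaller rank) this is rescued exactly as you indicate, by the explicit Schur-polynomial basis of $\Sym(k_1)\otimes\cdots\otimes\Sym(k_n)$ over $\Sym(N)$. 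An alternative closer to how Borel argues: $G/H$ is a partial flag manifold with a CW structure having only even-dimensional cells, so $H^*(G/H)$ is free abelian and concentrated in even degrees; the Serre spectral sequence of $G/H \to \B H \to \B G$ then degenerates for parity reasons, and Leray--Hirsch gives $H^*(\B H)\cong H^*(\B G)\otimes H^*(G/H)$ as $H^*(\B G)$-modules, from which all three assertions follow at once without Eilenberg--Moore. Either route suffices for the cases needed; in the stated generality one must, as you correctly flag, impose torsion-freeness hypotheses that the clean statement suppresses.
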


\begin{proof}[Proof of Proposition~\ref{prop:bigradedHopfLink}]
	The representation space $\sr R_N(L)$ for the Hopf link $L$ is computed in Proposition~\ref{prop:HopfLinkRepSpace}. From the proof of Proposition~\ref{prop:equivariantCohomologyisomorphism}, we know that the equivariant colored $\sl(N)$ complex of $L$ is \[
		\KRC_{\U(N)}(L) \cong \bigoplus_{k=\max(a + b - N,0)}^{\min(a,b)} h^{2k}q^{(a + b)(a + b - N - 2k) + 2k^2} H^*_{\U(N)}(\F(k,a-k,b-k;N))
	\]The nonequivariant colored $\sl(N)$ complex of $L$ is obtained by tensoring with $\Z$ where the variables of $\Sym(N)$ are sent to zero. Thus \[
		\KR_N(L) \cong \KRC_N(L) \cong \bigoplus_{k=\max(a + b - N,0)}^{\min(a,b)} h^{2k}q^{(a + b)(a + b - N - 2k) + 2k^2} H^*(\F(k,a-k,b-k;N))
	\]by Borel's theorem (Theorem~\ref{thm:borel}). Here we are using Lemma~\ref{lem:equivariantCohomologyHomogeneousSpace} and the fact that \[
		\F(k,a-k,b-k;N) = \frac{\U(N)}{\U(k) \x \U(a-k) \x \U(b-k) \x \U(N-a-b+k)}
	\]where the rank of $\U(k) \x \U(a-k) \x \U(b-k) \x \U(N-a-b+k)$ is equal to $N$, the rank of $\U(N)$. 
\end{proof}
\begin{proof}[Proof of Proposition~\ref{prop:bigradedTrefoil}]
	The representation space $\sr R_N(K)$ for the trefoil $K$ is computed in Proposition~\ref{prop:TrefoilRepSpace}. For $\max(2a-N,0) \leq l \leq a$, let $K_l \subseteq H_l \subseteq \U(N)$ be the subgroups \begin{align*}
		K_l &= \U(l) \x \Delta\!\U(a-l) \x \U(N - 2a+l)\\
		H_l &= \U(l) \x \U(a-l) \x \U(a-l) \x \U(N-2a+l).
	\end{align*}
	From the proof of Proposition~\ref{prop:equivariantCohomologyisomorphism}, we know that the equivariant colored $\sl(N)$ complex of $K$ is \[
		\KRC_{\U(N)}(K) \cong \bigoplus_{l=\max(2a-N,0)}^a h^{a + 2l}q^{a(N-a) + 2(a-l)(a-l+1)} K(y_1-z_1,\ldots,y_l-z_l)
	\]where $K(y_1-z_1,\ldots,y_l-z_l)$ denotes the Koszul complex associated to the sequence \[
		y_1-z_1,\ldots,y_l-z_l \in H^*(\B H_l) = \Sym(l) \otimes \Sym(a-l) \otimes \Sym(a-l) \otimes \Sym(N-2a+l)
	\]where we recall that $y_i = 1 \otimes e_i \otimes 1 \otimes 1$ and $z_i = 1 \otimes 1 \otimes e_i \otimes 1$. By Proposition~\ref{prop:regularSeqKoszulResolution}, $K(y_1-z_1,\ldots,y_l-z_l)$ is a free resolution of $H^*(\B K_l)$ as a $H^*(\B H_l)$-module. By Borel's theorem (Theorem~\ref{thm:borel}), $H^*(\B H_l)$ is a free $H^*(\BU(N))$-module so $K(y_1-z_1,\ldots,y_l-z_l)$ is a free resolution of $H^*(\B K_l)$ as a $H^*(\BU(N))$-module. Thus, the homology of the tensor product \[
		\Z \otimes_{\BU(N)} K(y_1-z_1,\ldots,y_l-z_l)
	\]is by definition $\Tor_{H^*(\BU(N))}(\Z,H^*(\B K_l))$ so \[
		\KR_N(K) \cong \bigoplus_{l=\max(2a-N,0)}^a h^{a + 2l}q^{a(N-a) + 2(a-l)(a-l+1)} \Tor_{H^*(\BU(N))}(\Z,H^*(\B K_l)).\qedhere
	\]
\end{proof}
\begin{proof}[Proof of Theorem~\ref{thm:plainisomorphismforHopfLinkAndTrefoil}]
	The isomorphism for the Hopf link follows directly from Proposition~\ref{prop:bigradedHopfLink}. The isomorphism for the trefoil follows from Proposition~\ref{prop:bigradedTrefoil} and Gugenheim--May's theorem (Theorem~\ref{thm:gugenheimMay}).
\end{proof}

Lastly, we turn to module structures and reduced homology. We recall that the tautological bundle over $\G(a,N)$ is the rank $a$ bundle whose fiber over $\Lambda \in \G(a,N)$ is the vector space $\Lambda$. The Chern classes of this tautological bundle generate $H^*(\G(a,N))$ as a ring. The fundamental class of $\G(a,N)$ is the $(N-a)$th power of the $a$-th Chern class of this bundle. 

\begin{proof}[Proof of Proposition~\ref{prop:moduleandReducedIsomorphism} for the Hopf link]
	Consider the Hopf link $L$ together with the basepoint \[
		\:\begin{gathered}
		\vspace{-3pt}
		\centering
		\labellist
		\pinlabel {\small$a$} at 8 19
		\pinlabel {\large$\bullet$} at 7 6
		\pinlabel {\small$b$} at 52 20
		\endlabellist
		\includegraphics[width=.11\textwidth]{hopfLink}
	\end{gathered}\:
	\]Consider the dot map $X_i$ at the given basepoint associated to the elementary symmetric polynomial $e_i$. Let $C$ be the complex associated to the given diagram of the Hopf link, let $\ol{C}$ be the complex on the right-hand side of Theorem~\ref{thm:slncomplexofHopfLink}, and let $\pi\colon C \to \ol{C}$, $\iota\colon \ol{C} \to C$, $h\colon C \to C$ be the strong deformation retract given in the proof. Recall that $C$ and $\ol{C}$ split over the poset $P = \{\:k\:|\: \max(a + b - N,0) \leq k \leq \min(a,b)\:\}$ where \[
		\ol{C}_k = h^{2k} q^{ab - k(N+1)} \left\llbracket \Thetak\right\rrbracket
	\]Consider the map $\pi\circ X_i\circ \iota\colon \ol{C} \to \ol{C}$. By homological degree considerations, the component of the map from $\ol{C}_k \to \ol{C}_j$ is zero whenever $k \neq j$, so the only possibly nonzero components are $\ol{C}_k \to \ol{C}_k$ which are given by $\pi_k\circ X_i \circ \iota_k$. By Proposition~\ref{prop:dotSliding}, we know that $\pi_k\circ X_i\circ\iota_k$ is homotopic to, and therefore equal to, the dot map \[
		\:\begin{gathered}
		\labellist
		\pinlabel {\large$\bullet$} at 25 50
		\pinlabel {\small$e_i$} at 20 38
		\endlabellist
		\includegraphics[width=.08\textwidth]{smallerTheta}
	\end{gathered}\:
	\]
	The isomorphism of Proposition~\ref{prop:generalizedThetaPartialFlag} intertwines this dot map with the endomorphism of $H^*_{\U(N)}(\F(k,a-k,b-k;N))$ given by $\sum_{j+k=i} x_jy_k$. By Borel's theorem (Theorem~\ref{thm:borel}), we have \[
		H^*(\F(k,a-k,b-k;N)) \cong \Z \otimes_{H^*(\BU(N))} H^*_{\U(N)}(\F(k,a-k,b-k;N)). 
	\]Consider the vector bundle $\sr A \to \F(k,a-k,b-k;N)$ of rank $a$ whose fiber over the triple $(\Lambda_1,\Lambda_2,\Lambda_3)$ is the vector space $\Lambda_1 \oplus \Lambda_2 \subseteq \C^N$.
	Under the isomorphism, the element $1 \otimes \sum_{j+k=i} x_jy_k$ on the right-hand side is mapped to the $i$th Chern class of $\sr A$ on the left-hand side. 

	The restriction of the map $\sr R_N(L) \to \G(a,N)$ to $\F(k,a-k,b-k;N) \to \G(a,N)$ is just the map that sends $(\Lambda_1,\Lambda_2,\Lambda_3)$ to $\Lambda_1 \oplus\Lambda_2 \in \G(a,N)$. By naturality of Chern classes, the $i$th Chern class of the tautological plane bundle over $\G(a,N)$ is sent to the $i$th Chern class of $\sr A$ under the map $H^*(\G(a,N)) \to H^*(\F(k,a-k,b-k;N))$. Thus, the actions of the Chern classes of the tautological bundle over $\G(a,N)$ on $H^*(\sr R_N(L))$ match the actions on the dot maps associated to elementary symmetric polynomials on $\KR_N(L)$. The two actions of $H^*(\G(a,N))$ therefore agree.

	As for reduced homology, we first claim that there is a chain homotopy equivalence \[
		\ol{\KRC}_N \left( \:\:\begin{gathered}
			\vspace{-3pt}
			\centering
			\labellist
			\pinlabel {\small$a$} at 29 83
			\pinlabel {\small$b$} at 74 84
			\pinlabel {\small${b-k}$} at 51 84
			\pinlabel {\small$k$} at 75 55
			\pinlabel {\small${a-k}$} at 51 54
			\pinlabel {\small$a$} at 71 26
			\pinlabel {\small$b$} at 31 27
			\pinlabel {\large$\bullet$} at 12 12 
			\endlabellist
			\includegraphics[width=.17\textwidth]{twistClosureWk}
		\end{gathered}\:\: \right) \:\:\simeq\:\: h^kq^{ab - k(N+1)} \ol{\KRC}_N \left( \qquad\quad\:\:\begin{gathered}
			\vspace{-3pt}
			\centering			
			\labellist
			\pinlabel {\small${b-k}$} at 30 80
			\pinlabel {\small$k$} at 59 44
			\pinlabel {\small${a+b-k}$} at -22 46
			\pinlabel {\small$a$} at 41 45
			\pinlabel {\large$\bullet$} at 20 55
			\pinlabel {\small${a - k}$} at 35 18
			\pinlabel {\small$b$} at 70 5
			\endlabellist
			\includegraphics[width=.13\textwidth]{Theta}
		\end{gathered}\:\: \right)
	\]where these reduced complexes are defined with respect to the indicated basepoints. The equivalence is obtained by following the proof of Lemma~\ref{lem:flatteningTwistClosureWk}. At one point in the argument, the basepoint must be moved across a crossing. For this, we use the argument in \cite[Section 3]{MR2034399} where instead of moving the basepoint, we move the other strand the ``long way'' around. By the above analysis, the reduced complex on the right-hand side, which has no differential, is the image of the action of the fundamental class of $\G(a,N)$ on $H^*(\F(k,a-k,b-k;N))$, which we denote by $[\G(a,N)]\cdot H^*(\F(k,a-k,b-k;N))$. 
	The homological perturbation lemma (Lemma~\ref{lem:homologicalPerturbationLemma}) now implies that \[
		\ol{\KR}_N(L) \cong q^{-a(N-a)}\bigoplus_{k=\max(a+b-N,0)}^{\min(a,b)} h^{2k} q^{(a + b)(a + b - N - 2k) + 2k^2} [\G(a,N)] \cdot H^*(\F(k,a-k,b-k;N))
	\]For $\max(a + b -N,0) \leq k \leq \min(a,b)$, let $H_k \subseteq J \subseteq G = \U(N)$ be the subgroups \begin{align*}
		H_k = \U(k) \x \U(a -k) \x \U(b-k) \x \U(N-a-b+k) \qquad J = \U(a) \x \U(N-a).
	\end{align*}The fiber of the bundle $G/H_k = \F(k,a-k,b-k;N) \to \G(a,N) = G/J$ is $J/H_k$ so $\ol{\sr R}_N(L) = \bigsqcup_k J/H_k$. It follows from Borel's theorem (Theorem~\ref{thm:borel}) that $H^*(G/H_k) \to H^*(J/H_k)$ is surjective so by Leray--Hirsch, we have that $H^*(G/H_k) \cong H^*(J/H_K) \otimes H^*(G/J)$ as $H^*(G/J)$-modules. In particular, we have $[G/J]\cdot H^*(G/H_k) \cong H^*(J/H_k)$ so $\ol{\KR}_N(L) \cong H^*(\ol{\sr R}_N(L))$. 
\end{proof}

\begin{proof}[Proof of Proposition~\ref{prop:moduleandReducedIsomorphism} for the trefoil]
	Let $K$ be the trefoil with the basepoint \[
		\:\:\begin{gathered}
			\vspace{-3pt}
			\centering
			\labellist
			\pinlabel {\small$a$} at 57 50
			\pinlabel {\large$\bullet$} at 6 6
			\endlabellist
			\includegraphics[width=.13\textwidth]{trefoil}
		\end{gathered}\:\:
	\]By the proof of Proposition~\ref{prop:bigradedTrefoil}, we know that \[
		\KR_N(K) \cong \bigoplus_{l=\max(2a-N,0)}^a h^{a + 2l} q^{a(N-a)+2(a-l)(a-l+1)} \Tor_{H^*(\BU(N))}(\Z,H^*(\B K_l))
	\]where $K_l = \U(l) \x \Delta\!\U(a-l) \x \U(N-2a+l) \subseteq G = \U(N)$. 
	By an argument similar to the one for the Hopf link above, the dot map at the given basepoint associated to $e_i$ is given on the $l$th direct summand by the following endomorphism of $\Tor_{H^*(\B G)}(\Z,H^*(\B K_l))$. Set $H_l = \U(l) \x \U(a-l) \x \U(a-l) \x \U(N-2a+l)$ and $J = \U(a) \x \U(N-a)$. The natural action of $H^*(\B K_l)$ on $\Tor_{H^*(\B G)}(\Z,H^*(\B K_l))$ induces an action of $H^*(\B J)$ by the composite map $H^*(\B J) \to H^*(\B H_l) \to H^*(\B K_l)$. The endomorphism is the action of $e_i \otimes 1 \in H^*(\B J) = \Sym(a) \otimes \Sym(N-a)$. By Proposition~\ref{prop:moduleStructGugenheimMay}, this action agrees with the corresponding action of $H^*(\B J)$ on $H^*(G/K_l)$ arising from the composite map $G/K_l \to \B K_l \to \B H_l \to \B J$. Now consider the commutative diagrams \[
		\begin{tikzcd}
			G/K_l \ar[d] \ar[r] & \B K_l \ar[d]\\
			G/J \ar[r] & \B J
		\end{tikzcd} \qquad\qquad \begin{tikzcd}
			H^*(G/K_l) & H^*(\B K_l) \ar[l]\\
			H^*(G/J) \ar[u] & H^*(\B J) \ar[u] \ar[l]
		\end{tikzcd}
	\]It follows that the action of $e_i \otimes 1 \in H^*(\B J)$ on $H^*(G/K_l)$ agrees with the action of the $i$th Chern class of the tautological bundle of $\G(a,N) = G/J$. Thus, the actions of $H^*(\G(a,N))$ on $\KR_N(K)$ and $\sr R_N(K)$ agree.

	We now turn to reduced homology. Note that $\ol{\sr R}_N(K) = \bigsqcup_l J/K_l$ since the fiber of the bundle $G/K_l \to G/J$ is $J/K_l$. By the argument given for the reduced homology of the Hopf link, the reduced complex of the trefoil $K$ is \begin{align*}
		\ol{\KRC}_N(K) &\cong q^{-a(N-a)}\bigoplus_{l = \max(2a-N,0)}^a h^{a + 2l}q^{a(N-a) + 2(a-l)(a-l+1)} [G/J]\cdot (\Z \otimes_{H^*(\B G)} K(y_1 - z_1,\ldots,y_l-z_l))
	\end{align*}where $K(y_1-z_1,\ldots,y_l-z_l)$ is the Koszul complex associated to $y_1-z_1,\ldots,y_l-z_l \in H^*(\B H_l)$. By definition, $K(y_1-z_1,\ldots,y_l-z_l)$ is a complex of free $H^*(\B H_l)$-modules, so $\Z \otimes_{H^*(\B G)} K(y_1-z_1,\ldots,y_l-z_l)$ is a complex of free $H^*(G/H_l)$-modules by Borel's theorem. Since $H^*(G/H_l)$ is a free $H^*(G/J)$-module by Leray--Hirsch, we see that $\Z \otimes_{H^*(\B G)} K(y_1-z_1,\ldots,y_l-z_l)$ is a complex of free $H^*(G/J)$-modules where the differential commutes with the action of $H^*(G/J)$. It follows that there is an isomorphism of complexes\[
		q^{-a(N-a)}[G/J]\cdot (\Z \otimes_{H^*(\B G)} K(y_1-z_1,\ldots,y_l-z_l)) \cong q^{a(N-a)}\Z \otimes_{H^*(G/J)} (\Z \otimes_{H^*(\B G)} K(y_1-z_1,\ldots,y_l-z_l))
	\]where the latter complex is the quotient complex where all positive degree terms of $H^*(G/J)$ are set to be zero. Next, we observe that \begin{align*}
		\Z \otimes_{H^*(G/J)} (\Z \otimes_{H^*(\B G)} K(y_1 - z_1,\ldots,y_l-z_l)) 
		&\cong \Z \otimes_{H^*(\B J)} K(y_1-z_1,\ldots,y_l-z_l).
	\end{align*}Hence \[
		\ol{\KR}_N(K) \cong \bigoplus_{l = \max(2a-N,0)}^a h^{a + 2l}q^{2a(N-a)+2(a-l)(a-l+1)} \Tor_{H^*(\B J)}(\Z,H^*(\B K_l))
	\]so again by Gugenheim--May's theorem (Theorem~\ref{thm:gugenheimMay}) we have $\ol{\KR}_N(K) \cong H^*(\ol{\sr R}_N(K))$.
\end{proof}

\phantomsection
\addcontentsline{toc}{section}{\protect\numberline{}References}
\raggedright
\bibliography{slNsuN}

\begin{thebibliography}{BHMV95}

\bibitem[Bau68]{MR219085}
Paul~F. Baum.
\newblock On the cohomology of homogeneous spaces.
\newblock {\em Topology}, 7:15--38, 1968.

\bibitem[BHMV95]{MR1362791}
C.~Blanchet, N.~Habegger, G.~Masbaum, and P.~Vogel.
\newblock Topological quantum field theories derived from the {K}auffman
  bracket.
\newblock {\em Topology}, 34(4):883--927, 1995.

\bibitem[Blo11]{MR2764887}
Jonathan~M. Bloom.
\newblock A link surgery spectral sequence in monopole {F}loer homology.
\newblock {\em Adv. Math.}, 226(4):3216--3281, 2011.

\bibitem[BN05]{MR2174270}
Dror Bar-Natan.
\newblock Khovanov's homology for tangles and cobordisms.
\newblock {\em Geom. Topol.}, 9:1443--1499, 2005.

\bibitem[Bor53]{MR51508}
Armand Borel.
\newblock Sur la cohomologie des espaces fibr\'{e}s principaux et des espaces
  homog\`enes de groupes de {L}ie compacts.
\newblock {\em Ann. of Math. (2)}, 57:115--207, 1953.

\bibitem[Dow18]{https://doi.org/10.48550/arxiv.1811.07848}
Nathan Dowlin.
\newblock A spectral sequence from {K}hovanov homology to knot {F}loer
  homology.
\newblock arXiv:1811.07848, 2018.

\bibitem[ETW18]{MR3877770}
Michael Ehrig, Daniel Tubbenhauer, and Paul Wedrich.
\newblock Functoriality of colored link homologies.
\newblock {\em Proc. Lond. Math. Soc. (3)}, 117(5):996--1040, 2018.

\bibitem[Ful97]{MR1464693}
William Fulton.
\newblock {\em Young tableaux}, volume~35 of {\em London Mathematical Society
  Student Texts}.
\newblock Cambridge University Press, Cambridge, 1997.
\newblock With applications to representation theory and geometry.

\bibitem[GGS18]{MR3863060}
Eugene Gorsky, Sergei Gukov, and Marko Sto\v{s}i\'{c}.
\newblock Quadruply-graded colored homology of knots.
\newblock {\em Fund. Math.}, 243(3):209--299, 2018.

\bibitem[GIKV10]{MR2670927}
Sergei Gukov, Amer Iqbal, Can Koz\c{c}az, and Cumrun Vafa.
\newblock Link homologies and the refined topological vertex.
\newblock {\em Comm. Math. Phys.}, 298(3):757--785, 2010.

\bibitem[GM74]{MR0394720}
V.~K. A.~M. Gugenheim and J.~Peter May.
\newblock {\em On the theory and applications of differential torsion
  products}.
\newblock Memoirs of the American Mathematical Society, No. 142. American
  Mathematical Society, Providence, R.I., 1974.

\bibitem[Gra13]{MR3125899}
Jonathan Grant.
\newblock The moduli problem of {L}obb and {Z}entner and the colored
  {$\fk{sl}(N)$} graph invariant.
\newblock {\em J. Knot Theory Ramifications}, 22(10):1350060, 16, 2013.

\bibitem[GS12]{MR2985329}
Sergei Gukov and Marko Sto\v{s}i\'{c}.
\newblock Homological algebra of knots and {BPS} states.
\newblock In {\em String-{M}ath 2011}, volume~85 of {\em Proc. Sympos. Pure
  Math.}, pages 125--171. Amer. Math. Soc., Providence, RI, 2012.

\bibitem[HHK14]{MR3158776}
Matthew Hedden, Christopher~M. Herald, and Paul Kirk.
\newblock The pillowcase and perturbations of traceless representations of knot
  groups.
\newblock {\em Geom. Topol.}, 18(1):211--287, 2014.

\bibitem[HHSZ22]{https://doi.org/10.48550/arxiv.2205.12798}
Kristen Hendricks, Jennifer Hom, Matthew Stoffregen, and Ian Zemke.
\newblock An involutive dual knot surgery formula.
\newblock arXiv:2205.12798, 2022.

\bibitem[HRW21a]{https://doi.org/10.48550/arxiv.2107.09590}
Matthew Hogancamp, David E.~V. Rose, and Paul Wedrich.
\newblock Link splitting deformation of colored {K}hovanov--{R}ozansky
  homology.
\newblock arXiv:2107.09590, 2021.

\bibitem[HRW21b]{https://doi.org/10.48550/arxiv.2107.08117}
Matthew Hogancamp, David E.~V. Rose, and Paul Wedrich.
\newblock A skein relation for singular {S}oergel bimodules.
\newblock arXiv:2107.08117, 2021.

\bibitem[Jor75]{MR1503705}
Camille Jordan.
\newblock Essai sur la g\'{e}om\'{e}trie \`a {$n$} dimensions.
\newblock {\em Bull. Soc. Math. France}, 3:103--174, 1875.

\bibitem[JR08]{https://doi.org/10.48550/arxiv.0806.2902}
Magnus Jacobsson and Ryszard~L. Rubinsztein.
\newblock Symplectic topology of {SU}(2)-representation varieties and link
  homology, {I}: {S}ymplectic braid action and the first {C}hern class.
\newblock arXiv:0806.2902, 2008.

\bibitem[Kho03]{MR2034399}
Mikhail Khovanov.
\newblock Patterns in knot cohomology. {I}.
\newblock {\em Experiment. Math.}, 12(3):365--374, 2003.

\bibitem[Kho06]{MR2232858}
Mikhail Khovanov.
\newblock Link homology and {F}robenius extensions.
\newblock {\em Fund. Math.}, 190:179--190, 2006.

\bibitem[KLMS12]{MR2963085}
Mikhail Khovanov, Aaron~D. Lauda, Marco Mackaay, and Marko Sto\v{s}i\'{c}.
\newblock Extended graphical calculus for categorified quantum {${\rm sl}(2)$}.
\newblock {\em Mem. Amer. Math. Soc.}, 219(1029):vi+87, 2012.

\bibitem[KM11a]{MR2805599}
P.~B. Kronheimer and T.~S. Mrowka.
\newblock Khovanov homology is an unknot-detector.
\newblock {\em Publ. Math. Inst. Hautes \'{E}tudes Sci.}, (113):97--208, 2011.

\bibitem[KM11b]{MR2860345}
P.~B. Kronheimer and T.~S. Mrowka.
\newblock Knot homology groups from instantons.
\newblock {\em J. Topol.}, 4(4):835--918, 2011.

\bibitem[Kno]{KnotAtlas}
The {K}not {A}tlas: $3_1$.
\newblock \url{http://katlas.org/wiki/3_1}.
\newblock Retrieved September 2022.

\bibitem[KR08]{MR2391017}
Mikhail Khovanov and Lev Rozansky.
\newblock Matrix factorizations and link homology.
\newblock {\em Fund. Math.}, 199(1):1--91, 2008.

\bibitem[Kro21]{privateCom}
Peter Kronheimer.
\newblock Private communication, 2021.

\bibitem[Lan02]{MR1878556}
Serge Lang.
\newblock {\em Algebra}, volume 211 of {\em Graduate Texts in Mathematics}.
\newblock Springer-Verlag, New York, third edition, 2002.

\bibitem[Lee05]{MR2173845}
Eun~Soo Lee.
\newblock An endomorphism of the {K}hovanov invariant.
\newblock {\em Adv. Math.}, 197(2):554--586, 2005.

\bibitem[LS87]{MR893160}
Larry Lambe and Jim Stasheff.
\newblock Applications of perturbation theory to iterated fibrations.
\newblock {\em Manuscripta Math.}, 58(3):363--376, 1987.

\bibitem[LY22]{MR4407491}
Zhenkun Li and Fan Ye.
\newblock Instanton {F}loer homology, sutures, and {H}eegaard diagrams.
\newblock {\em J. Topol.}, 15(1):39--107, 2022.

\bibitem[LZ14]{MR3190356}
Andrew Lobb and Raphael Zentner.
\newblock The quantum {${\rm sl}(N)$} graph invariant and a moduli space.
\newblock {\em Int. Math. Res. Not. IMRN}, (7):1956--1972, 2014.

\bibitem[McC01]{MR1793722}
John McCleary.
\newblock {\em A user's guide to spectral sequences}, volume~58 of {\em
  Cambridge Studies in Advanced Mathematics}.
\newblock Cambridge University Press, Cambridge, second edition, 2001.

\bibitem[MOY98]{MR1659228}
Hitoshi Murakami, Tomotada Ohtsuki, and Shuji Yamada.
\newblock {HOMFLY} polynomial via an invariant of colored plane graphs.
\newblock {\em Enseign. Math. (2)}, 44(3-4):325--360, 1998.

\bibitem[OS05]{MR2141852}
Peter Ozsv\'{a}th and Zolt\'{a}n Szab\'{o}.
\newblock On the {H}eegaard {F}loer homology of branched double-covers.
\newblock {\em Adv. Math.}, 194(1):1--33, 2005.

\bibitem[QR16]{MR3545951}
Hoel Queffelec and David E.~V. Rose.
\newblock The {$\fk{sl}_n$} foam 2-category: a combinatorial formulation of
  {K}hovanov--{R}ozansky homology via categorical skew {H}owe duality.
\newblock {\em Adv. Math.}, 302:1251--1339, 2016.

\bibitem[QRSW21]{https://doi.org/10.48550/arxiv.2111.13195}
You Qi, Louis-Hadrien Robert, Joshua Sussan, and Emmanuel Wagner.
\newblock A categorification of the colored {J}ones polynomial at a root of
  unity.
\newblock arXiv:2111.13195, 2021.

\bibitem[RW16]{MR3590355}
David E.~V. Rose and Paul Wedrich.
\newblock Deformations of colored {$\mathfrak{sl}_N$} link homologies via
  foams.
\newblock {\em Geom. Topol.}, 20(6):3431--3517, 2016.

\bibitem[RW20]{MR4164001}
Louis-Hadrien Robert and Emmanuel Wagner.
\newblock A closed formula for the evaluation of foams.
\newblock {\em Quantum Topol.}, 11(3):411--487, 2020.

\bibitem[Sca15]{MR3394316}
Christopher~W. Scaduto.
\newblock Instantons and odd {K}hovanov homology.
\newblock {\em J. Topol.}, 8(3):744--810, 2015.

\bibitem[Wed16]{MR3470705}
Paul Wedrich.
\newblock Categorified {$\fk{sl}_N$} invariants of colored rational tangles.
\newblock {\em Algebr. Geom. Topol.}, 16(1):427--482, 2016.

\bibitem[Wu12]{MR2885476}
Hao Wu.
\newblock Equivariant colored {$\fk{sl}(N)$}-homology for links.
\newblock {\em J. Knot Theory Ramifications}, 21(2):1250012, 104, 2012.

\bibitem[Wu14]{MR3234803}
Hao Wu.
\newblock A colored {$\fk{sl}(N)$} homology for links in {$S^3$}.
\newblock {\em Dissertationes Math.}, 499:217, 2014.

\bibitem[Yon11]{MR2863366}
Yasuyoshi Yonezawa.
\newblock Quantum {$(\mathfrak{sl}_n,\wedge V_n)$} link invariant and matrix
  factorizations.
\newblock {\em Nagoya Math. J.}, 204:69--123, 2011.

\end{thebibliography}
\bibliographystyle{alpha}

\end{document}